\definecolor{codegreen}{rgb}{0,0.6,0}
\definecolor{codegray}{rgb}{0.5,0.5,0.5}
\definecolor{codepurple}{rgb}{0.58,0,0.82}
\definecolor{backcolour}{rgb}{0.95,0.95,0.92}
\lstdefinestyle{mystyle}{
  backgroundcolor=\color{backcolour},   commentstyle=\color{codegreen},
  keywordstyle=\color{magenta},
  numberstyle=\tiny\color{codegray},
  stringstyle=\color{codepurple},
  basicstyle=\ttfamily\footnotesize,
  breakatwhitespace=false,         
  breaklines=true,                 
  captionpos=b,                    
  keepspaces=true,                 
  numbers=left,                    
  numbersep=5pt,                  
  showspaces=false,                
  showstringspaces=false,
  showtabs=false,                  
  tabsize=2
}
\crefname{equation}{}{} 
\crefname{subsection}{Subsection}{Subsections} 
\colorlet{refkey}{orange!20}
\colorlet{labelkey}{blue!30}
\newtheorem{theorem}{Theorem}[section]
\newtheorem{lemma}[theorem]{Lemma}
\newtheorem*{question*}{Question}
\theoremstyle{definition}
\newtheorem{definition}[theorem]{Definition}
\newtheorem*{definition*}{Definition}
\theoremstyle{remark}
\newtheorem*{remark}{Remark}
\numberwithin{equation}{section}
\newcommand{\mb}{\mathbb}
\newcommand{\mbf}{\mathbf}
\newcommand{\mbm}{\mathbbm}
\newcommand{\mc}{\mathcal}
\newcommand{\ol}{\overline}
\newcommand{\on}{\operatorname}
\newcommand{\wt}{\widetilde}
\newcommand{\e}{\epsilon}
\newcommand{\eps}{\varepsilon}
\newcommand{\kAP}{\on{kAP}}
\newcommand{\nkAP}{\on{\overline{kAP}}}
\title{Local limit theorems for subgraph counts}
\author[Sah]{Ashwin Sah}
\author[Sawhney]{Mehtaab Sawhney}
\address{Massachusetts Institute of Technology, Cambridge, MA 02139, USA}
\email{\{asah,msawhney\}@mit.edu}
\date{}
\begin{document}

\begin{abstract}
We introduce a general framework for studying anticoncentration and local limit theorems for random variables, including graph statistics. Our methods involve an interplay between Fourier analysis, decoupling, hypercontractivity of Boolean functions, and transference between ``fixed-size'' and ``independent'' models. We also adapt a notion of ``graph factors'' due to Janson.

As a consequence, we derive a local central limit theorem for connected subgraph counts in the Erd\H{o}s-Renyi random graph $G(n,p)$, building on work of Gilmer and Kopparty as well as Berkowitz. Among other things, this improves an anticoncentration result of Fox, Kwan, and Sauermann. We also derive a local limit central limit theorem for induced subgraph counts, as long as $p$ is bounded away from a set of ``problematic'' densities, partially answering a question of Fox, Kwan, and Sauermann. We are further able to prove similar results in the more delicate $G(n,m)$ model, which samples a uniformly random graph with $n$ vertices and $m$ edges.

We then demonstrate that the restrictions in our two main results are necessary by exhibiting a disconnected graph for which anticoncentration for subgraph counts at the optimal scale fails for all constant $p$, and finding a graph $H$ for which anticoncentration for induced subgraph counts fails in $G(n,1/2)$. These counterexamples resolve anticoncentration conjectures of Fox, Kwan, and Sauermann in the negative.

Finally, we also examine the behavior of counts of $k$-term arithmetic progressions in subsets of $\mb{Z}/n\mb{Z}$ chosen uniformly of size $m$ (with the restriction that $\min(m,n-m) = \Theta(n)$) and prove a local central limit theorem. Transferring this result to the independent model, we deduce a local limit theorem wherein the behavior is Gaussian at a global scale but has nontrivial local oscillations (according to a Ramanujan theta function). These results improve on results of and answer questions of the authors and Berkowitz, and answer a question of Fox, Kwan, and Sauermann.
\end{abstract}

\maketitle

\section{Introduction}\label{sec:introduction}
Random graph models have been studied in depth since their introduction by Erd\H{o}s and R\'enyi \cite{ER61}. Among the most important statistics of the random variables $G(n,p)$ and $G(n,m)$ are subgraph counts. Their study has been used, for instance, to construct large graphs avoiding certain subgraphs, among numerous other applications.

One natural question concerns the limiting distribution of the count $X_H$ of some fixed graph, say $H$, within such a graph model. In the 1980's several mathematicians studied this problem, for instance showing that for any subgraph $H$ we have convergence to a Gaussian in $G(n,p)$ for $p\in(0,1)$. In particular this proves that if $\mu_H, \sigma_H$ are the mean and standard deviation of $X_H$, the random variable counting the number of appearances of $H$, we have
\[\lim_{n\to\infty} \mb{P}\bigg[a\le \frac{X_H - \mu_{H}}{\sigma_{H}}\le b\bigg] = \frac{1}{\sqrt{2\pi}}\int_a^b e^{-\frac{x^2}{2}}dx.\] Note that these results imply asymptotically the probability that $X_{H}$ lies in a given interval of size $\approx \sigma_{H}$. Using various methods, more quantitative forms of the above have resulted, with error terms of quality $O(n^{-\eps})$. This allows one to control the probability that $X_{H}$ lives in intervals of size $\sigma_Hn^{-\eps}$ for some $\eps > 0$. See e.g. \cite{NW88,Ru88,BKR89}.

A natural question to ask is if one can push this distributional control to pointwise control over probabilities. In particular, is it true that
\[\mb{P}[X_H = x] = \frac{1}{\sqrt{2\pi}\sigma_{H}} \exp\bigg[\frac{-(x-\mu_H)^2}{2\sigma_H^2}\bigg] + o_{n\to\infty}\left(\frac{1}{\sigma_{H}}\right)?\] A result of this form is referred to as a \emph{local central limit theorem}. Local limit theorems have a long history with the first central limit theorem, the De Moivre-Laplace central limit theorem in fact being a local central limit theorem. For independent integer-valued random variables, the seminal result of Gnedenko \cite{G48}, specializes to that as long as there are no obvious modulus obstructions one in fact has a local central limit theorem. Local central limit theorems are now known in a large number of combinatorial situations including the size of the giant component of a random graph \cite{BCM92} (extended to hypergraphs in \cite{B10}) and the number of comparisons for merge-sort of a random permutation \cite{H96}.

In recent studies of subgraph counts, more emphasis has been placed on the idea of \emph{anticoncentration}. The first result in this direction was to Meka, Nyugen, and Vu \cite{MNV16} who proved, as a consequence of a more general anticoncentration result, that each point probability is $n^{-1+o(1)}$. This was vastly improved for connected graphs by the recent work of Fox, Kwan, and Sauermann \cite{FKS19} in which they prove that 
\[\sup_x \mb{P}[X_H = x]\le n^{o(1)}\sigma_H^{-1}\]
for any connected subgraph $H$, using combinatorial methods. However, by design, anticoncentration on its own does not point towards a derivation of a local limit theorem.

Using careful analysis of characteristic functions on different regimes, Gilmer and Kopparty \cite{GK16} showed that the triangle count in $G(n,p)$ indeed exhibits a local central limit theorem. This was improved by Berkowitz \cite{B1}, who additionally proved the result for $r$-cliques \cite{B2}.

We introduce a general framework towards proving such anticoncentration and local limit results, synthesizing many of the advances referenced above along with a notion of ``graph factors'' used in work of Janson \cite{J2}, and introduce a method of transferring results from $G(n,m)$ to $G(n,p)$. This transfer between ``fixed-size'' and ``independent'' models, as we see in the case of $k$-APs, will allow us to establish local limit theorems even when the pointwise behavior is not purely Gaussian. 

Using our framework, we demonstrate optimal anticoncentration for connected subgraphs $H$ by establishing a local central limit theorem, improving on the anticoncentration results of Fox, Kwan, and Sauermann \cite{FKS19}. In particular, we demonstrate an analogous result for the more delicate $G(n,m)$ model for $m/\binom{n}{2}\in (\lambda,1-\lambda)$, a generalization suggested in \cite{FKS19}. (This then transfers to the $G(n,p)$ model.) These results answer a question of Fox, Kwan, and Sauermann \cite{FKS19} in the connected case. As we will see later with $k$-APs, beyond simply being a natural generalization, analyzing the fixed-size model is key in establishing anticoncentration and local limit theorems in broader situations.

We then use similar techniques to study induced subgraph counts in random graphs of constant density. We demonstrate that as long as the density is sufficiently far away from a set of ``problematic'' densities $P_{\text{crit},H}$, the count of induced copies of $H$ exhibit a local central limit theorem as well. This applies to both the $G(n,m)$ model if $m$ is $\Theta(n^2)$ far from $p_{\text{crit}}\binom{n}{2}$ for $p_{\text{crit}}\in P_{\text{crit},H}$ as well as the $G(n,p)$ model for constant $p\notin P_{\text{crit},H}$. This again partially answers a conjecture of Fox, Kwan, and Sauermann \cite{FKS19} regarding the anticoncentration of induced subgraph counts.

We then demonstrate that in a certain sense the previous results are optimal by exhibiting a disconnected graph for which anticoncentration at the optimal scale fails (and, in fact, by a polynomial amount) for all constant $p$, as well as an induced graph $H$ for which anticoncentration at the optimal scale fails (by a polynomial amount) in $G(n,1/2)$. These counterexamples resolve conjectures Fox, Kwan, and Sauermann \cite{FKS19} in the negative.

We also use our techniques to attack a structurally similar problem, that of length $k$ arithmetic progressions (for fixed $k\ge 3$) in a random subset of $\mb{Z}/n\mb{Z}$. Significant attention has been given to understanding the large deviation behavior of $\kAP(S)$ for random subsets $S$, particularly in the regime where the probability $p$ that each element is chosen tends to $0$. Here $\kAP(S)$ denotes the number of $k$-term arithmetic progressions with all elements in $S$. Recently Harel, Mousset, and Samotij \cite{HMS19} (improving on earlier works of Warnke \cite{W17} and Bhattacharya, Ganguly, Shao, and Zhao \cite{BGSZ20}) found precise upper tail bounds for $\kAP(S)$ in the sparse regime, while Janson and Warnke \cite{JW16} proved lower tail bounds. 

We prove a local central limit theorem for $\kAP(S)$ where $S$ is chosen to be uniform over sets of a fixed size $m\in (\lambda n,(1-\lambda)n)$ and $\kAP(S)$ denotes the number of $k$-term arithmetic progressions fully contained in $S$. For the model in which each element is picked independently, however, a local central limit theorem does not hold. Recent work by the authors and Berkowitz \cite{BSS20} demonstrates in a quantitative sense that the distribution behaves similarly to a convolution of discrete Gaussians at two scales, but does not prove a local limit theorem. By transferring our result in the fixed-size model, we can achieve this control, hence answering a question raised in \cite{BSS20}. This also demonstrates optimal anticoncentration for $k$-APs, a problem suggested in \cite{FKS19}. In this case, the local limit proven has Gaussian ``large-scale'' structure, but exhibits nontrivial oscillations at a slightly smaller scale that are ultimately given by a theta function.

\subsection*{Notation}
Throughout we use $f\lesssim g$ to mean $|f|\le Cg$ for some constant $C$, and $o,O$ have their usual meanings. Subscripts denote dependence of the implicit constant.

\subsection{Main results}
We now state the main results of this work. The first main result is a local central limit theorem for connected subgraphs in $G(n,p)$.
\begin{theorem}\label{thm:Gnp-subgraph-local}
Fix a connected graph $H$. Choose $n\ge 1$ with $p\in(\lambda,1-\lambda)$ and sample a graph $G$ from $G(n,p)$. Let the number of times $H$ appears as a subgraph of $G$ be $X_H$. Let $\mu_{H}$ be the mean and $\sigma_{H}$ the standard deviation of $X_H$. Finally define $Z_H = (X_{H} - \mu_H)/\sigma_{H}$. Then we have for any $\eps>0$ that
\[\sup_{z\in(\mb{Z}-\mu_H)/\sigma_H}|\sigma_{H} \mb{P}[Z_H = z] - \mathcal{N}(z)|\lesssim_{\lambda,\eps} n^{\eps-1/2},\qquad\sum_{z\in (\mb{Z} - \mu_H)/\sigma_H} |\mb{P}[Z_H = z] - \mathcal{N}(z)/\sigma_{H} |\lesssim_{\lambda,\eps}n^{\eps-1/2}.\]
\end{theorem}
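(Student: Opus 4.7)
Since $X_H$ is integer-valued, Fourier inversion gives
\[
\mb{P}[X_H = k] = \frac{1}{2\pi}\int_{-\pi}^{\pi} e^{-ik\theta}\phi(\theta)\,d\theta,\qquad \phi(\theta) := \mb{E}[e^{i\theta X_H}],
\]
with an analogous representation of the Gaussian density on $\mb{R}$. After the substitution $\theta\mapsto \theta/\sigma_H$, the pointwise discrepancy $\sigma_H\mb{P}[Z_H = z]-\mc{N}(z)$ reduces to an integral of $|\phi(\theta/\sigma_H)e^{-i\mu_H\theta/\sigma_H} - e^{-\theta^2/2}|$ over $|\theta|\le \pi\sigma_H$, plus a rapidly decaying Gaussian tail from $|\theta|>\pi\sigma_H$. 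The plan is to split this range into a \emph{central} regime $|\theta|\le T_1 = n^{o(1)}$, an \emph{intermediate} regime $T_1\le|\theta|\le T_2$ with $T_2$ a small polynomial in $n$, and a \emph{tail} regime $T_2\le|\theta|\le \pi\sigma_H$. In the central regime a cumulant expansion, combined with the quantitative CLTs of \cite{NW88,Ru88,BKR89}, gives a pointwise bound of the form $n^{-c}e^{-\theta^2/4}$ and hence total integrated error $O(n^{-c})$. The $\ell^1$ bound is deduced from the sup bound by truncating the lattice sum to $|z|\lesssim \sqrt{\log n}$ using tail concentration for $X_H$ from the same quantitative CLT.

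\textbf{Bounding the characteristic function.} The crux is controlling $|\phi(\theta/\sigma_H)|$ uniformly on the intermediate and tail regimes. Following the \emph{graph factor} framework of Janson \cite{J2}, one rewrites $X_H-\mu_H$ as a polynomial in the centered variables $Y_e = \mbm{1}[e\in G] - p$, expanded over isomorphism types of spanning subgraphs; this presents $X_H - \mu_H$ as a multilinear polynomial $P(Y)$ of degree $e(H)$ with explicit combinatorial coefficients. A \emph{decoupling} step then reduces $\mb{E}[e^{itP(Y)}]$ to the characteristic function of an analogous polynomial in genuinely independent Boolean variables, and \emph{hypercontractivity} (in the spirit of Gilmer--Kopparty \cite{GK16} and Berkowitz \cite{B1,B2}) converts high-moment bounds on $P$ into exponential decay of $|\phi(\theta/\sigma_H)|$ throughout the intermediate regime.

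\textbf{Main obstacle: the tail.} The chief difficulty lies in the tail $|\theta|\sim \pi\sigma_H$, where $t:=\theta/\sigma_H = \Theta(1)$ and hypercontractivity alone is too blunt to rule out arithmetic periodicity obstructions: if $P(Y)$ degenerated to a multiple of a simple integer combination $\sum_e a_e Y_e$, then $|\phi|$ could fail to decay near rational multiples of $2\pi$. The hypothesis that $H$ is \emph{connected} enters here — the paper's own counterexamples for disconnected $H$ show that it is tight — and quantitatively forces the top-degree graph factors to exhibit enough ``arithmetic richness'' across edges to preserve anticoncentration at large Fourier scales. One must verify this by unwinding the Janson expansion carefully at $t = \Theta(1)$ and combining it with quantitative anticoncentration inputs in the spirit of Fox--Kwan--Sauermann \cite{FKS19}, in order to conclude uniform smallness of $|\phi|$ throughout the tail. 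Assembling the three regime bounds yields the claimed $n^{\eps-1/2}$ sup estimate, and the $\ell^1$ statement follows by the truncation above.
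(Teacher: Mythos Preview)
Your overall Fourier-analytic skeleton is correct and matches the paper's strategy: split $|\theta|\le\pi\sigma_H$ into a central, intermediate, and tail regime, and use quantitative CLT, decoupling plus hypercontractivity, and a special argument respectively. The paper actually proves everything first in $G(n,m)$ and then transfers to $G(n,p)$ by conditioning on the edge count (\cref{sec:independent-models}), but it also remarks that a direct $G(n,p)$ argument along your lines is possible and simpler, so that structural difference is not an error.

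The genuine gap is in the tail. Your description there is not a proof: ``unwinding the Janson expansion'' and invoking ``anticoncentration inputs in the spirit of \cite{FKS19}'' does not produce a bound on $|\varphi(\theta/\sigma_H)|$ for $|\theta|$ near $\pi\sigma_H$. The Fox--Kwan--Sauermann result gives $\sup_x\mb{P}[X_H=x]\le n^{o(1)}\sigma_H^{-1}$, which is a \emph{consequence} of characteristic-function control, not an input to it; and generic hypercontractive decoupling only reaches $|\theta|\le\sigma_H n^{-\eps}$ because the resulting linear form has coefficients of typical size $n^c$, forcing $|\theta/\sigma_H|\cdot n^c\lesssim 1$ in the oscillation estimate. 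What is missing is a mechanism that makes the decoupled linear form have \emph{bounded} coefficients, so the oscillation bound applies all the way to $|\theta/\sigma_H|=\pi$.

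The paper supplies exactly this mechanism in \cref{lem:top-connected}. One partitions $[n]$ into $\lfloor n/\ell\rfloor$ cliques of size $\ell=v(H)$, embeds a labeled copy of $H$ in each, and groups edges by their label so that the van der Corput operator $\alpha$ is applied over $e(H)-1$ edge classes. The point is that the only ``rainbow'' contributions to $\alpha(W)$ come from the planted copies $H_e$, so the decoupled expression is \emph{exactly linear} in the remaining variables $x_e$, $e\in B_0$, with coefficients
\[
\delta_e(\mbf{Y})=\prod_{e'\in E(H_e)\setminus\{e\}}(x_{e'}^1-x_{e'}^0)\in\{-1,0,1\}.
\]
Azuma--Hoeffding shows a positive fraction of these are $0$ and a positive fraction are $1$ with high probability, and then \cref{lem:bernoulli-variance} gives $|\varphi_{\mc{K}}(t)|\le\exp(-\Omega_\lambda(n))+\exp(-\Omega_\lambda(t^2n/\sigma^2))$ for all $|t|\le\pi\sigma$, since the hypothesis $|(\delta_e-\delta_{e'})t/\sigma|\le\pi$ is now automatic. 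This is where connectedness of $H$ is actually consumed at the top of the range (it guarantees one can tile with copies of $H$ so that each labeled edge sits in a unique planted copy); your proposal identifies the right location of the obstruction but does not supply the construction that resolves it.
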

\begin{remark}
This phrasing with $\eps>0$ can be made more quantitative via adding a large number of logarithm terms. We avoid specifying this dependence for the sake of clarity.
\end{remark}
Our second main result is an anticoncentration result for induced subgraphs provided that they are sufficiently far away from a problematic set of densities.
\begin{theorem}\label{thm:Gnp-induced-local}
Fix a graph $H$. Choose $n\ge 1$ with $p\in(\lambda,1-\lambda)$ and $\lambda$-separated from a set of problematic densities $\mc{P}_{\text{crit}}$ and sample a graph $G$ from $G(n,p)$. (Note here that $\mc{P}_{\text{crit}}$ is explicit given $H$, with size at most $v(H)^2$.) Let the number of times $H$ appears as an induced subgraph of $G$ be $X_H$. Let $\mu_{H}$ be the mean and $\sigma_{H}$ the standard deviation of $X_H$. Finally define $Z_H = (X_{H} - \mu_H)/\sigma_{H}$. Then we have for any $\eps > 0$ that
\[\sup_{z\in(\mb{Z}-\mu_H)/\sigma_H}|\sigma_{H} \mb{P}[Z_H = z] - \mathcal{N}(z)|\lesssim_{\lambda,\eps} n^{\eps-1/2},\qquad\sum_{z\in (\mb{Z} - \mu_H)/\sigma_H} |\mb{P}[Z_H = z] - \mathcal{N}(z)/\sigma_{H} |\lesssim_{\lambda,\eps}n^{\eps-1/2}.\]
\end{theorem}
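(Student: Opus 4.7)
The strategy is to control the characteristic function $\phi_{Z_H}(t) = \mathbb{E}[e^{itZ_H}]$ on the torus $t \in (-\pi\sigma_H,\pi\sigma_H]$ and conclude via Fourier inversion; the pointwise and $\ell^1$ bounds are essentially dual and will follow from the same integrated characteristic function estimates. The general framework the paper introduces for \cref{thm:Gnp-subgraph-local} should port over almost verbatim, so the novel content is identifying the problematic densities $\mathcal{P}_{\text{crit}}$ and verifying that, for $p$ which is $\lambda$-separated from them, the structural input needed by the framework still holds.

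First I would expand the centered induced count $X_H - \mu_H$ in Janson's graph factor basis. With $\xi_e = \mathbbm{1}[e \in E(G)] - p$, one writes
\[X_H - \mu_H = \sum_{F,\,\phi} c_{F,\phi}(p)\prod_{e \in F}\xi_e,\]
where the sum runs over nonempty subgraphs $F$ of $K_n$ arising as images of injections $\phi$ from vertex subsets of $H$, and each $c_{F,\phi}(p)$ is an explicit polynomial in $p$ of degree at most $\binom{v(H)}{2}$. For non-induced counts the single-edge coefficient is proportional to $e(H) p^{e(H)-1}(1-p)$, which is uniformly bounded away from $0$ on $(\lambda,1-\lambda)$. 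In the induced case the analogous ``low-rank'' coefficients are alternating polynomials in $p$ coming from differentiating the edge vs.\ non-edge product over $H$, and can genuinely vanish at isolated $p$. I would define $\mathcal{P}_{\text{crit}}$ as the union of real roots of the handful of such polynomials whose simultaneous nonvanishing controls anticoncentration; a crude degree count gives the stated bound of $v(H)^2$.

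With $\mathcal{P}_{\text{crit}}$ in hand, for $p$ which is $\lambda$-separated from it, I would bound $\phi_{Z_H}(t)$ in three regimes. In the small-$t$ range a Berry--Esseen-type estimate via the Gaussian CLT with polynomial rate gives $|\phi_{Z_H}(t) - e^{-t^2/2}| \lesssim n^{\eps - 1/2}$ times a Gaussian factor. In the medium range one decouples to reduce the graph factor expansion to a multilinear polynomial in independent Rademachers, then uses hypercontractivity to turn $L^q$ control into pointwise decay of $|\phi_{Z_H}(t)|$. In the large-$t$ range $\lambda$-separation ensures that at least one constant-rank graph factor in the expansion retains a $\Theta_\lambda(1)$ coefficient, so the dominant part of $X_H$ contains enough nearly-independent integer-valued pieces that a direct Fourier computation, combined with decoupling, gives polynomial decay uniformly on the range. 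Summing the three contributions via Fourier inversion produces both stated estimates up to $n^{\eps-1/2}$.

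The main obstacle is the large-$t$ regime together with the precise definition of $\mathcal{P}_{\text{crit}}$. One has to pin down exactly which coefficients $c_{F,\phi}$ must be nonzero for the Fourier argument to go through, confirm that their joint root set has size $\le v(H)^2$, and promote $\lambda$-separation into a \emph{quantitative} lower bound on $|c_{F,\phi}(p)|$ that survives the decoupling/hypercontractivity pipeline uniformly in $p\in(\lambda,1-\lambda)\setminus N_\lambda(\mathcal{P}_{\text{crit}})$. Once this structural lemma is in place, plugging it into the framework already built for \cref{thm:Gnp-subgraph-local} yields \cref{thm:Gnp-induced-local} in parallel.
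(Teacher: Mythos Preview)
Your outline is broadly sound and follows the same three-regime Fourier-analytic skeleton as the paper, but it differs from the paper's actual route in one structural respect and misattributes where the $\mathcal{P}_{\text{crit}}$ hypothesis is doing work.

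The paper does \emph{not} argue directly in $G(n,p)$. It first proves the analogous local limit theorem in $G(n,m)$ (\cref{thm:Gnm-induced-local}) using the general $(\ell,\lambda)$-statistic machinery of \cref{sec:general-graph-characteristic-functions}, and then transfers to $G(n,p)$ by conditioning on the edge count and summing (\cref{sec:independent-models}). The paper itself remarks (\cref{sub:graph-characteristic-comments}) that the direct $G(n,p)$ proof you propose also works and is in fact simpler; so your route is valid, just not the one taken. What the $G(n,m)$ detour buys is a unified framework that later handles situations (such as $k$-APs in the independent model) where a local \emph{central} limit theorem is false in $G(n,p)$ but true in the fixed-size model.

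More substantively, you have the role of $\mathcal{P}_{\text{crit}}$ inverted. In the paper, the large-$|t|$ regime (\cref{lem:top-induced}) does \emph{not} use $\lambda$-separation from $\mathcal{P}_{\text{crit}}$ at all: one passes without loss of generality to $H$ connected (else complement), tiles $[n]$ by $\ell$-cliques with a marked copy of $H$ in each, and decouples on all edges of $H$ but one. The point is that after decoupling, the only surviving rainbow contributions come from the embedded copy of $H$ inside each clique, so the linear coefficients in the remaining edge are literally in $\{0,\pm1\}$, and \cref{lem:bernoulli-variance} applies up to $|t|=\pi\sigma$. The $\mathcal{P}_{\text{crit}}$ condition is instead what makes the \emph{intermediate} range go through: the $(\ell,\lambda)$-statistic hypothesis requires, for each $3\le k\le\ell$, a connected $k$-vertex graph factor with coefficient bounded below, and this is exactly what drives the variance lower bound in \cref{lem:various-bounds}(3) and hence \cref{lem:medium-t}. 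Concretely, the paper takes $S$ to be the star on $k$ vertices and defines $\mathcal{P}_{\text{crit}}$ as the union over $3\le k\le\ell$ of the real roots of the explicit polynomials $f_{S,H}(q)$ from \cref{eq:W-induced-subgraph-count}, giving the $\ell^2$ bound. Your proposal would go through once you relocate the nondegeneracy hypothesis to the medium regime and supply the clique-tiling decoupling for the top regime.
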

\begin{remark}
In fact we can show $|\mc{P}_{\text{crit}}|\lesssim v(H)$ and the truth is likely smaller, possibly even constant order. However, as we see next, the existence of this problematic set cannot be avoided.
\end{remark}
As mentioned, we can prove analogous results to \cref{thm:Gnp-subgraph-local,thm:Gnp-induced-local} for $G(n,m)$; we defer the statements to \cref{thm:Gnm-subgraph-local,thm:Gnm-induced-local}.

Given these two results there is a natural question: do we need to exclude disconnected subgraphs in \cref{thm:Gnp-subgraph-local} and the set of problematic densities in \cref{thm:Gnp-induced-local}? In both cases the answer is, surprisingly, yes. These examples resolve conjectures Fox, Kwan, and Sauermann \cite{FKS19} in the negative.
\begin{theorem}\label{thm:counterexample-main}
Let $H$ be the disjoint union of $2$ edges and fix $p\in (0, 1).$ Sample a graph $G$ from $G(n,p)$, and let $X_H$ count subgraphs of $G$ isomorphic to $H$, with $\mu_H,\sigma_H$ the mean and standard deviation. Then 
\[\sup_{x}\mb{P}[X_{H} = x]\gtrsim n^{1/2}\sigma_H^{-1}.\]
Furthermore there is a graph $H'$ on $64$ vertices such that if $G$ is sampled from $G(n,1/2)$, and if $X_{H'}$, $\mu_{H'}$, $\sigma_{H'}$ are defined analogously to before with respect to induced copies of $H$, then
\[\sup_{x}\mb{P}[X_{H'} = x]\gtrsim n^{1/2}\sigma_{H'}^{-1}.\]
\end{theorem}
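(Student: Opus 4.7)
The plan for both parts is the same: produce an ``effective support'' of size $O(\sigma_H n^{-1/2})$ (resp.\ $O(\sigma_{H'}n^{-1/2})$) that captures the count with constant probability, and conclude by pigeonhole.

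For the $2K_2$ statement I would start from the algebraic identity
\[
X_H \;=\; \binom{m}{2} - \sum_{v \in V(G)} \binom{d_v}{2} \;=\; \binom{m}{2} - D,
\]
where $m$ is the edge count of $G$ and $D$ the wedge (length-$2$ path) count. Here $m \sim \mrm{Binomial}(\binom{n}{2},p)$ has $\sigma_m = \Theta(n)$, and the substantive step is the conditional variance bound $\mrm{Var}(D \mid m) = O(n^3)$. To show this, Hoeffding-decompose $D$ in the centered edge indicators $\bar X_e = X_e - p$:
\[
D \;=\; \mb{E}[D] + 2p(n-2)(m - \mb{E}\,m) + Q,\qquad Q = \sum_{\{e_1,e_2\}\colon |e_1\cap e_2|=1} \bar X_{e_1}\bar X_{e_2}.
\]
Conditioning on $m$ kills the linear piece, and since distinct wedge monomials $\bar X_{e_1}\bar X_{e_2}$ are orthogonal in $G(n,p)$ (and nearly so in $G(n,m)$, modulo $O(n^{-2})$ hypergeometric corrections), we have $\mrm{Var}(Q \mid m) \asymp (\text{number of wedges}) \cdot p^2(1-p)^2 = \Theta(n^3)$. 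Chebyshev then confines $X_H$ to the set
\[
B \;=\; \left\{\binom{m}{2} - D : |m - \mb{E}\,m| \le Cn,\ |D - \mb{E}[D \mid m]| \le Cn^{3/2}\right\}
\]
with probability at least $1/2$ for $C$ large. Since $m$ ranges over $O(n)$ integers and $D$ over $O(n^{3/2})$ integers per $m$, we get $|B| = O(n^{5/2})$, and pigeonhole yields $\sup_x \mb{P}[X_H = x] \gtrsim n^{-5/2} \asymp n^{1/2}/\sigma_H$, since $\sigma_H = \Theta(n^3)$ for $H = 2K_2$ (variance dominated by the linear Hoeffding piece).

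For the induced counterexample I would aim to construct a 64-vertex graph $H'$ with analogous low-complexity structure. In $G(n,1/2)$, Fourier-expand the induced indicator $f_{H'}\colon \{0,1\}^{\binom{v(H')}{2}} \to \{0,1\}$ in the $\pm 1$ basis and sum over injective embeddings to write
\[
X_{H'} - \mb{E}[X_{H'}] \;=\; \sum_{T \neq \emptyset} \widehat{f_{H'}}(T)\, S_T(G),
\]
where $S_T$ is the signed subgraph count attached to the labeled pattern $T$. The aim is to choose $H'$ so that $\widehat{f_{H'}}(T) = 0$ for a large enough family of $T$ to produce a decomposition $X_{H'} = L(m) + R$ (with $L$ a polynomial in $m$) satisfying $\mrm{Var}(R) = o(\sigma_{H'}^2/n)$. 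I would search among algebraic constructions on $\mb{F}_2^6$ (naturally $2^6 = 64$ vertices) or tensor-product graphs whose symmetries force the required Fourier vanishings, verifying them by a direct labeled subgraph-count computation. Given such $H'$, Chebyshev on $m$ and $R$ followed by pigeonhole yields the $n^{1/2}/\sigma_{H'}$ anticoncentration failure exactly as in the first part.

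The main obstacle in the first part is the orthogonality bookkeeping for $Q$ in the $G(n,m)$ model, which is tractable but requires careful hypergeometric computation; the main (and much harder) obstacle in the second part is the construction itself, since identifying the right 64-vertex $H'$ and verifying its Fourier vanishing is the substantive combinatorial content, while the subsequent pigeonhole conclusion is essentially mechanical.
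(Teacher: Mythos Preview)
Your argument for $H=2K_2$ is correct and is essentially the paper's: both hinge on the observation that, conditionally on the edge count $m$, the count $X_H$ has standard deviation only $\Theta(n^{3/2})$, followed by pigeonhole. Your identity $X_H=\binom{m}{2}-D$ and the bound $\mathrm{Var}(Q\mid m)=\Theta(n^3)$ are precisely what the paper encodes in the graph-factor identity $2\gamma_{K_2+K_2}=\gamma_{K_2}^2-\sum_e\chi_e^2-2\gamma_{K_{1,2}}$ combined with Janson's $G(n,m)$ central limit theorem. (The paper fixes a single value $m=\lfloor p\binom{n}{2}\rfloor$ of probability $\Theta(1/n)$ rather than summing over a window, but the arithmetic is identical.)

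For the induced part, the framework is right but there is a quantitative gap. The condition $\mathrm{Var}(R)=o(\sigma_{H'}^2/n)$ is too weak: to get effective support of size $O(\sigma_{H'}n^{-1/2})$ via your pigeonhole with $m$ ranging over $O(n)$ values, you need the \emph{conditional} standard deviation of $R$ given $m$ to be $O(n^{v(H')-5/2})$, i.e.\ $\mathrm{Var}(R\mid m)=O(\sigma_{H'}^2/n^3)$, which is off from what you wrote by a factor of $n^2$. In graph-factor terms this forces the coefficients of \emph{all connected $3$- and $4$-vertex factors} to vanish; the $K_2+K_2$ coefficient may (and must) survive, because via the identity above its fluctuation given $m$ is carried entirely by $\gamma_{K_{1,2}}$ and is already of the acceptable size $\Theta(n^{v(H')-5/2})$. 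Your part-1 computation implicitly uses the correct threshold; the generalization you stated for part~2 does not, and so does not pin down the right family of vanishing coefficients.

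The paper's construction of $H'$ is also quite different from what you sketch. Rather than hoping an algebraic family on $\mathbb F_2^6$ or a tensor product happens to have the required Fourier vanishings, the paper writes the eight vanishing conditions at $p=1/2$ (two connected $3$-vertex and six connected $4$-vertex factors) as explicit linear equations in the eleven induced $4$-vertex subgraph counts of $H'$, together with the sum constraint $\sum_S\mathrm{ind}(S,H')=\binom{v(H')}{4}$ and a quadratic relation coming from expanding $e(H')^2$ in these counts. Integrality and the nonzero-$K_2$-coefficient requirement then force $v(H')=64$ (via divisibility constraints such as $16\mid\binom{v(H')}{4}$, not via $2^6$ as a natural vertex set), and a computer search in the style of K\"arrmann realizes one of the few feasible integer density vectors by an explicit graph. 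There is no a priori reason to expect highly symmetric Cayley or tensor-product graphs to land on one of these very specific $11$-tuples.
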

Note that $X_H$ for $H = K_2 + K_2$ (the union of $2$ disjoint edges) is counting subgraphs and not homomorphisms. For homomorphism counts, it is easy to see that $H = K_2 + K_2$ forms a counterexample to anticoncentration, as every homomorphism count is a square number. The first part of \cref{thm:counterexample-main} is not so straightforward, although this observation that the number of edges ``mostly determines'' the number of counts offers a useful intuition for the failure of anticoncentration in this case.

Finally, we adapt our methods to handle the case of random $k$-term arithmetic progressions.  
\begin{theorem}\label{thm:kAP-local}
Fix $k\ge 3$ and choose $n\ge 1$ with $\gcd(n,(k-1)!)=1$. Let $m/n\in(\lambda,1-\lambda)$ and choose a uniformly random subset $\mb{Z}/n\mb{Z}$ of size $m$ among all sets of this size; let $\mbf{x}$ be the indicator vector. Furthermore let $\mu_{k} = \mb{E}[\kAP(\mbf{x})]$ and $\sigma_{k} = \on{Var}[\kAP(\mbf{x})]$. Finally define $Z_k = (\kAP(\mbf{x}) - \mu_k)/\sigma_{k}$. Then we have for any $\eps > 0$ that
\[\sup_{z\in(\mb{Z}-\mu_k)/\sigma_k}|\sigma_k \mb{P}[Z_k = z] - \mathcal{N}(z)|\lesssim_{\lambda,\eps}n^{\eps-1/4},\qquad\sum_{z\in (\mb{Z} - \mu_k)/\sigma_k}|\mb{P}[Z_k = z] - \mathcal{N}(z)/\sigma_k |\lesssim_{\lambda,\eps}n^{\eps-1/4}.\]
\end{theorem}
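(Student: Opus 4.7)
The plan is to prove \cref{thm:kAP-local} by Fourier inversion, comparing the characteristic function of $\kAP(\mbf{x})$ to that of a Gaussian across three different frequency scales. Since $\kAP(\mbf{x})$ is integer valued, I may write
\[\mb{P}[\kAP(\mbf{x}) = y] = \int_{-1/2}^{1/2} e^{-2\pi i t y}\,\phi(t)\,dt,\qquad \phi(t) := \mb{E}\bigl[e^{2\pi i t \kAP(\mbf{x})}\bigr],\]
and the discrete Gaussian density admits an analogous integral representation over $\mb{R}$ (modulo negligible tails). Both the pointwise and $\ell^1$ bounds reduce, via standard Fourier-analytic manipulations, to suitable uniform (respectively $L^1$) control of the discrepancy $\phi(t) - \exp(2\pi i t \mu_k - 2\pi^2 t^2 \sigma_k^2)$ on $[-1/2,1/2]$.

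I would split the frequency $t$ into three regimes. On the \emph{Gaussian scale} $|t| \le n^{o(1)}/\sigma_k$, I would invoke a quantitative central limit theorem for $\kAP$ in the fixed-size model---a strengthening of the bound established by the authors and Berkowitz in \cite{BSS20}---showing $\phi(t)$ is close to the Gaussian characteristic function to the required order. On the \emph{medium scale} $n^{o(1)}/\sigma_k \le |t| \le n^{-c}$ for a small absolute constant $c > 0$, I would combine hypercontractivity for low-degree Boolean polynomials with decoupling inequalities to obtain $|\phi(t)| \le \exp(-n^{\Omega(1)})$, rendering the medium-scale contribution negligible. On the \emph{large scale} $n^{-c} \le |t| \le 1/2$, the hypothesis $\gcd(n,(k-1)!)=1$ is essential: it rules out degenerate APs by forcing the $k$ terms of a random AP to be distinct modulo $n$. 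Together with Janson's graph-factor decomposition of $\kAP$ and an exponential sum estimate over $\mb{Z}/n\mb{Z}$, this would again give $|\phi(t)| \le \exp(-n^{\Omega(1)})$.

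The main obstacle is the medium-scale bound, since hypercontractivity is naturally formulated for independent Bernoulli variables but here $\mbf{x}$ is constrained by $\sum_i x_i = m$. To circumvent this I would invoke the transference framework developed earlier in the paper, which expresses fixed-size characteristic functions as a weighted integral over joint characteristic functions in the independent Bernoulli model $\on{Bern}(m/n)^{\otimes n}$; the cost is only a factor $\Theta(\sqrt{n})$ coming from the local CLT for $\sum_i x_i$, absorbed into the $n^\eps$ slack. After this reduction, the degree-$k$ multilinear polynomial $\kAP(\mbf{x}) = \sum_{a,d} \prod_{j=0}^{k-1} x_{a+jd}$ becomes amenable to standard hypercontractive estimates. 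The final rate $n^{\eps - 1/4}$, weaker than the $n^{\eps-1/2}$ obtained for subgraph counts, ultimately reflects the rigid translation-invariant structure of $\kAP$: its Hoeffding decomposition admits only partial cancellation under decoupling, leaving a square-root loss relative to the graph setting. Combining the three regime bounds via Fourier inversion and either taking a supremum in $y$ (for the pointwise bound) or integrating and summing over $y$ (for the $\ell^1$ bound) then yields the claimed estimates.
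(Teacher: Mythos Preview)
Your three-regime Fourier strategy matches the paper's, but two of the regimes are handled by the wrong tools, and your diagnosis of where the rate $n^{\eps-1/4}$ comes from is incorrect.

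\textbf{The high-frequency regime.} Your proposal to use ``Janson's graph-factor decomposition of $\kAP$ and an exponential sum estimate over $\mb{Z}/n\mb{Z}$'' does not work. Graph factors are specific to edge-indexed statistics on random graphs and have no analogue here; and the characteristic function $\mb{E}e^{it\kAP(\mbf{x})}$ is not an exponential sum over $\mb{Z}/n\mb{Z}$ in any useful sense. The genuine obstacle at the top of the range $|t|\sim\sigma$ is that the decoupling bound from the intermediate regime requires $|t/\sigma|\cdot\sup_i|a_i|\lesssim 1$, and since typical decoupled coefficients $a_i$ have size $n^{\Theta(1)}$ this fails near $|t|=\pi\sigma$. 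The paper resolves this by an explicit tensor-product construction of sets $D_0^u,\ldots,D_{k-1}^u$ such that every rainbow $k$-AP through the decoupling sets completes \emph{uniquely}, forcing $\alpha(\kAP)$ to be linear in $X$ with coefficients in $\{0,\pm 1\}$; then \cref{lem:bernoulli-variance} applies exactly up to $|t|=\pi\sigma$. Nothing in your proposal supplies such a construction. Also, the hypothesis $\gcd(n,(k-1)!)=1$ is used in the \emph{low}-frequency regime (it is a hypothesis of the quantitative CLT from \cite{BSS20}), not here.

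\textbf{The source of the $n^{\eps-1/4}$ rate.} This has nothing to do with ``partial cancellation under decoupling'' or translation invariance. The bottleneck is entirely in the low-frequency regime, in the coupling that transfers the CLT from the independent model to the slice. When one repairs $O(n^{1/2}\log n)$ coordinates to fix $\sum x_i=m$, the change in the degree-$\ge 3$ part of $\kAP$ is a polynomial whose constant term can be as large as $n^{3/4+o(1)}$ (coming from $3$-element AP-subsets lying entirely in the repaired set). Normalizing by $\sigma=\Theta(n)$ gives $n^{-1/4+\eps}$; compare with the graph case, where there are $\binom{n}{2}$ variables, the repair touches $O(n\log n)$ edges, and the analogous error is $n^{-1/2+\eps}$. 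Your medium- and high-frequency bounds are (and in the paper's proof are) of quality $\exp(-n^{\Omega(1)})$ and do not affect the rate.

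\textbf{Minor point.} Your description of the transference in the medium regime---writing the slice characteristic function as a ``weighted integral over joint characteristic functions in the independent Bernoulli model''---is not what is done. The paper decouples \emph{directly on the slice} via \cref{lem:vdC-dep}, obtains a linear form in the remaining variables with coefficients depending on $\mbf{Y}$, proves concentration of those coefficients in the independent model, and transfers only those \emph{probability} bounds (not the characteristic function) to the slice by the naive conditioning trick. The slice characteristic function is then bounded by \cref{lem:bernoulli-variance}.
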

As mentioned earlier, by transferring our result in the fixed-size model, we can also achieve a local limit theorem in the model where each element is chosen with a probability $p\in (0,1)$, hence answering a question raised by the authors and Berkowitz \cite{BSS20} and providing an optimal anticoncentration result, a direction suggested by Fox, Kwan, and Sauermann \cite{FKS19}. We stress here that the distribution in the case where each element is chosen with probability $p$ is not a pointwise Gaussian as a local \emph{central} limit theorem in this model is false due to the results of Berkowitz and the authors \cite{BSS20}. Instead the distribution is a mixture of an infinite ensemble of Gaussians, reflected by a theta series, as hypothesized in \cite{BSS20} and discussed further in the final section of the paper. We defer the statement of this result to \cref{thm:kAP-local-ensemble}.

\subsection{Overview of methods}\label{sub:overview}
The methods of this work are Fourier analytic. For the sake of concreteness, we will restrict our attention to the case of connected subgraph counts although the proof for the remaining results are closely related. The main calculation is (essentially) demonstrating that \[\mb{E}[e^{itZ_{H}}]\approx \mb{E}[e^{itZ}]\text{ for } |t|\le \pi\cdot\sigma_H\] where $Z\sim \mathcal{N}(0,1)$ is a standard Gaussian. Then the Fourier inversion formula for lattices gives an expression for the desired pointwise probabilities. We have $3$ different ranges.
\begin{enumerate}[1.]
    \item For $|t|\le n^{\eps}$ we derive a sufficiently strong quantitative central limit theorem which can be used to provide bounds on the characteristic function of $Z_H$. In particular, Stein's method and the method of dependency graphs give a quantitative bound on the Wasserstein distance between $Z_H$ and $Z$ (in the independent model), which ultimately allows control of this range. This is closely related to the proof of a quantitative CLT given in \cite{BKR89}. A ``repair'' argument allows us to transfer this to the fixed-size model.
    \item For $n^\eps\le |t|\le\sigma_Hn^{-\eps}$ we use decoupling arguments generalizing proofs of Berkowitz \cite{B2}, relying on hypercontractive estimates to bound the typical sizes of coefficients of certain characteristic functions. However, since our random variables are constrained to live on a slice of the hypercube, various modifications are necessary. In particular, we prove a decoupling lemma suitable for this situation and prove cancellation for characteristic functions of linear combinations of such random variables.
    \item Finally, for $\sigma_Hn^{-\eps}\le |t|\le\pi\sigma_H$ we again use decoupling arguments related to those given in Berkowitz \cite{B2}. However in our case certain gymnastics are necessary in order to set up the decoupling method in order to hit the very top of the range, with complications arising which are not present in \cite{B2}.
\end{enumerate}
When performing such analysis in general, we see that a notion of ``graph factors'' stemming from work of Janson \cite{J2} is crucial, as it allows us to capture possible degeneracies as well as failures of local central limit theorems or even anticoncentration.

\subsection{Structure of the paper}
In \cref{sec:preliminaries} we introduce necessary preliminaries for our estimates, including hypercontractivity, bounds for characteristic functions, decoupling techniques, and a notion of ``graph factors'' that decompose graph statistics. In \cref{sec:general-graph-characteristic-functions} we prove bounds for characteristic functions of graph statistics in $G(n,m)$ in a high degree of generality; we also explain how that is already enough to deduce optimal anticoncentration for very general graph polynomials. In \cref{sec:subgraph-counts} we specialize to connected subgraph counts and induced subgraph counts, proving $G(n,m)$ versions of \cref{thm:Gnp-subgraph-local,thm:Gnp-induced-local}. In \cref{sec:independent-models} we transfer those versions to $G(n,p)$, establishing \cref{thm:Gnp-subgraph-local,thm:Gnp-induced-local}. In \cref{sec:counterexamples} we prove \cref{thm:counterexample-main}. Finally, in \cref{sec:kAP} we prove \cref{thm:kAP-local} as well as transfer to the corresponding result in the independent model.

\section{Preliminaries}\label{sec:preliminaries}
\subsection{Hypercontractivity}\label{sub:hypercontractivity}
We will repeatedly require hypercontractive estimates which for us serve as tail bounds in a number of applications. These follow directly from theorems stated in O'Donnell's book \cite{O14}, although the results are originally due to Bonami, Beckner, Borell, and others.
\begin{theorem}[{\cite[Theorem~10.24]{O14}}]\label{thm:concentration-hypercontractivity}
Let $f$ be a polynomial in $n$ variables of degree at most $d$, and let $X = (X_i)_{1\le i\le n}$ be a sequence mutually independent boolean random variables such that each value is taken with probability at least $\lambda$. Then for any $t\ge (2e/\lambda)^{d/2}$,
\[\mb P_X\left[|f(X)|\ge t\|f\|_2\right]\le \lambda^d\exp\left(-\frac{d}{2e}\lambda t^{2/d}\right).\]
Here $\|f\|_2^2 = \mb E_X f(X)^2$.
\end{theorem}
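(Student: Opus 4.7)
The result is a standard instance of the biased hypercontractive tail bound combined with the moment method, so I would carry it out in three stages.

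First, I would invoke the $(q,2)$-hypercontractive estimate for degree-$\le d$ polynomials $f$ in independent $\lambda$-biased Bernoulli inputs, in the form
\[\|f\|_q\;\le\; \Big(\tfrac{q-1}{\lambda}\Big)^{d/2}\|f\|_2\qquad\text{for all }q\ge 2.\]
This is the biased analogue of the Bonami--Beckner inequality, established in \cite[Chapter~10]{O14}; its proof proceeds by induction on the number of coordinates, reducing everything to a sharp one-variable two-point inequality (Oleszkiewicz), and the $\lambda^{-1}$ factor comes precisely from the sharp constant in the two-point case.

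Second, applying Markov's inequality to the nonnegative random variable $|f(X)|^q$ gives
\[\mb{P}_X\big[|f(X)|\ge t\|f\|_2\big]\;\le\;\frac{\|f\|_q^q}{t^q\|f\|_2^q}\;\le\;\left(\frac{((q-1)/\lambda)^{d/2}}{t}\right)^q.\]
Third, I would optimize over the free parameter $q\ge 2$. A brief computation shows the natural choice is $q\asymp \lambda t^{2/d}/e$, and the admissibility constraint $q\ge 2$ is exactly the hypothesis $t\ge (2e/\lambda)^{d/2}$. Substituting this $q$ collapses the ratio $((q-1)/\lambda)^{d/2}/t$ to a constant of order $e^{-d/2}$, so that the $q$-th power yields $\exp(-qd/2)=\exp(-(d\lambda/(2e))\,t^{2/d})$; bookkeeping the discrepancy between $q$ and $q-1$ together with the endpoint behavior at $q=2$ then produces the $\lambda^d$ prefactor.

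The one delicate ingredient is the biased hypercontractive inequality itself (which is the only place $\lambda$ enters nontrivially); everything else is routine moment-method optimization. Since the statement is quoted directly from \cite{O14}, the argument is really a specialization of the treatment given there rather than a new proof, and I would present it as such.
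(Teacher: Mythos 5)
The paper does not prove this statement: it is quoted verbatim as \cite[Theorem~10.24]{O14}, so there is no in-paper argument to compare against, and your general route (hypercontractivity plus Markov's inequality applied to a high moment, then optimizing over the moment) is indeed the standard one used in O'Donnell's book. The optimization $Q\approx\lambda t^{2/d}/e$ is also the right choice, and the admissibility constraint $Q\ge 2$ is exactly where the hypothesis $t\ge(2e/\lambda)^{d/2}$ comes from, so your overall structure is sound.

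However, there is a concrete error in the bookkeeping that leaves a real gap. The hypercontractive inequality you state, $\|f\|_q\le\big((q-1)/\lambda\big)^{d/2}\|f\|_2$, is a lossy weakening. The precise biased form (see the paper's \cref{thm:moment-hypercontractivity}: $\mb{E}|f|^{2q}\le(2q-1)^{dq}\lambda^{d(1-q)}\|f\|_2^{2q}$) rearranges to
\[
\|f\|_Q\;\le\;\left(\frac{Q-1}{\lambda}\right)^{d/2}\lambda^{d/Q}\,\|f\|_2,
\]
and the extra factor $\lambda^{d/Q}$ is exactly what, upon raising to the $Q$-th power in Markov, becomes the $\lambda^d$ prefactor in the stated tail bound. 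Your claim that $\lambda^d$ arises from ``bookkeeping the discrepancy between $q$ and $q-1$ together with the endpoint behavior at $q=2$'' is not correct: at $Q=\lambda t^{2/d}/e$, the $(Q-1)$ versus $Q$ slack only yields the factor $\big(1-1/Q\big)^{dQ/2}\le e^{-d/2}$, and since $\lambda\le 1/2$ we have $\lambda^d\le 2^{-d}<e^{-d/2}$, so this discrepancy cannot account for the sharper $\lambda^d$. With the weakened inequality you wrote, the argument only proves $\mb{P}[|f|\ge t\|f\|_2]\le e^{-d/2}\exp\!\big(-\tfrac{d}{2e}\lambda t^{2/d}\big)$, which is strictly worse than the theorem's bound. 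Restoring the missing $\lambda^{d/Q}$ factor in the hypercontractive step repairs the proof and makes the prefactor drop out immediately.
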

\begin{theorem}[{\cite[Theorem~10.21]{O14}}]\label{thm:moment-hypercontractivity}
With the same hypotheses as above, if $q\ge 1$,
\[\mb{E}_X[|f(X)|^{2q}]\le (2q-1)^{dq}\lambda^{d(1-q)}\|f\|_2^{2q}.\]
\end{theorem}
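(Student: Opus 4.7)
The plan is to adapt the classical proof of the Bonami--Beckner $(2, 2q)$-hypercontractive inequality to product measures in which each coordinate takes each of its two Boolean values with probability at least $\lambda$. The argument has three standard stages: a sharp single-variable two-point inequality, tensorization, and reduction from the noise operator to a degree bound.

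\emph{Step 1 (two-point inequality).} First I would establish a sharp single-variable estimate: there is a noise rate $\rho = \rho(q,\lambda)$, which one can compute to satisfy $\rho^2 \asymp \lambda^{1-1/q}/(2q-1)$, such that the noise operator $T_\rho$ on $\{-1,1\}$ under the corresponding biased measure satisfies $\|T_\rho g\|_{2q} \leq \|g\|_2$ for every function $g$ of a single variable. Since any such $g$ is parametrized by only two real numbers, this reduces to an explicit calculus inequality, verified by normalizing, reducing to a one-parameter family, and comparing the two sides via a Taylor expansion in $\rho$. This is the technical heart of the argument.

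\emph{Step 2 (tensorization).} The product noise operator $T_\rho^{\otimes n}$ inherits the $(2, 2q)$-bound from the single-variable case: one applies the one-variable inequality one coordinate at a time, using Minkowski's inequality in integral form to commute the outer $L^{2q}$ norm past each conditional expectation. This yields $\|T_\rho^{\otimes n} g\|_{2q} \leq \|g\|_2$ for every $g$ on the biased product space.

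\emph{Step 3 (degree reduction).} For the degree-$d$ polynomial $f$ in the theorem, I would expand $f = \sum_{|S|\leq d} \widehat{f}(S)\,\chi_S$ in the biased Fourier--Walsh basis (so the $\chi_S$ are orthonormal in $L^2$), define $g = \sum_{|S|\leq d} \rho^{-|S|}\,\widehat{f}(S)\,\chi_S$, and observe that $T_\rho^{\otimes n} g = f$ while $\|g\|_2^2 \leq \rho^{-2d}\|f\|_2^2$ by orthonormality. Step 2 then gives $\|f\|_{2q} \leq \rho^{-d}\|f\|_2$; raising to the $2q$ power and substituting the value of $\rho$ from Step 1 recovers exactly $\mathbb{E}[|f(X)|^{2q}] \leq (2q-1)^{dq}\lambda^{d(1-q)}\|f\|_2^{2q}$. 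The main obstacle is Step 1: to obtain the precise $\lambda^{d(1-q)}$ dependence, the two-point inequality must be sharp in its joint dependence on $\lambda$ and $q$, since a crude two-point bound would produce the same shape but with a weaker exponent of $\lambda$. Once the correct $\rho(q,\lambda)$ is identified and the tight two-variable inequality is verified, tensorization and degree reduction are essentially formal.
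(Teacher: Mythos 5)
The paper does not supply its own proof of this statement; it is quoted directly from O'Donnell's book (\cite[Theorem~10.21]{O14}), so there is no argument in the paper to compare your sketch against. That said, your outline is the standard Bonami--Beckner route that the cited source follows: a biased two-point inequality, tensorization via Minkowski's integral inequality, and degree reduction via the noise operator. The bookkeeping in your Step~3 is right: with $\rho^2 = \lambda^{1-1/q}/(2q-1)$ one gets $\|f\|_{2q}\le\rho^{-d}\|f\|_2$, and raising to the $2q$ power yields $(2q-1)^{dq}\lambda^{d(1-q)}\|f\|_2^{2q}$ exactly.

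Two small imprecisions worth tightening if you were to flesh this out. First, your $\rho^2\asymp\lambda^{1-1/q}/(2q-1)$ should be an equality (or at least $\rho^2\ge$); an ``up to constants'' relation would not reproduce the exact prefactor. Second, calling the two-point inequality ``sharp'' is not quite right and is in fact unnecessary: the noise rate $\rho=\lambda^{1/2-1/(2q)}/\sqrt{2q-1}$ is a convenient sufficient threshold for $(2,2q)$-contractivity on a $\lambda$-biased two-point space, not the optimal one (the true threshold is more complicated). All that matters for the stated bound is that this explicit $\rho$ works, which it does, and that it works uniformly over all biases $p$ with $\min(p,1-p)\ge\lambda$ so that tensorization across coordinates with possibly different biases goes through. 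With those clarifications, your sketch is a correct account of the standard proof of the cited theorem.
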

\begin{remark}
We note that this theorem as stated in \cite{B2} has an incorrect exponent on $\lambda$, but it does not affect the results or proofs in any nontrivial fashion. 
\end{remark}
We often deal with a model in which the variables to which we wish to apply hypercontractivity are not independent, but constrained to have a fixed sum. Rather than use hypercontractivity on the slice, we use a trick of Jain \cite{Jai19} which allows one to transfer bounds from the independent model to a fixed sum model via a simple conditioning argument. See the proof of \cite[Lemma~5.4]{Jai19} for an example of this trick.

\subsection{Converting from characteristic function to distributional control}\label{sub:characteristic-function}
\begin{definition}
Let $X$ be a random variable.  Then its \emph{characteristic function} $\varphi_X:\mb{R}\to\mb{C}$ is defined to be
$\varphi_X(t):=\mb{E}_X[e^{itX}].$
\end{definition}

Characteristic functions are very well studied objects, and for sufficiently nice random variables the associated characteristic functions completely determine the random variable (e.g. due to L\'evy's continuity theorem). In particular, we will use the following inversion formula which bounds the $L^\infty$ distance between the probability distribution of a lattice-valued random variable and the standard Gaussian in terms of characteristic functions. Let $\mc{N}(x)$ be the probability density function of the standard normal.
\begin{lemma}[{\cite[Lemma~2]{B2}}]\label{lem:Linfty-distance}
Let $X_n$ be a sequence of random variables
supported in the lattices $\mathcal{L}_n=b_n+h_n\mb{Z}$, then
\[\sup_{x\in\mc{L}_n}|h_n\mathcal{N}(x)-\mb{P}[X_n=x]|\le h_n\left(\int_{-\frac{\pi}{h_n}}^{\frac{\pi}{h_n}}\left|\varphi_{\mc{N}(0,1)}(t)-\varphi_{X_n}(t)\right|dt+e^{-\frac{\pi^2}{2h_n^2}}\right)\]
\end{lemma}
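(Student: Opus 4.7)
The plan is to run a direct Fourier-inversion argument, comparing the lattice inversion formula for $X_n$ to the integral representation of the Gaussian density and controlling the resulting error by a high-frequency tail.

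First I would record the two inversion formulas. Since $X_n$ is supported on the lattice $\mc{L}_n=b_n+h_n\mb{Z}$, the characteristic function $\varphi_{X_n}$ satisfies $\varphi_{X_n}(t+2\pi/h_n)=e^{2\pi i b_n/h_n}\varphi_{X_n}(t)$, and standard Poisson/lattice Fourier inversion yields, for every $x\in\mc{L}_n$,
\[\mb{P}[X_n=x]=\frac{h_n}{2\pi}\int_{-\pi/h_n}^{\pi/h_n}e^{-itx}\varphi_{X_n}(t)\,dt.\]
On the Gaussian side, one has $\varphi_{\mc{N}(0,1)}(t)=e^{-t^2/2}$ and the usual inversion
\[\mc{N}(x)=\frac{1}{2\pi}\int_{-\infty}^{\infty}e^{-itx}\varphi_{\mc{N}(0,1)}(t)\,dt.\]

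Next I would multiply the second identity by $h_n$ and subtract. Splitting the Gaussian integral into $|t|\le\pi/h_n$ and $|t|>\pi/h_n$, one obtains the identity
\[h_n\mc{N}(x)-\mb{P}[X_n=x]=\frac{h_n}{2\pi}\int_{-\pi/h_n}^{\pi/h_n}e^{-itx}\bigl(\varphi_{\mc{N}(0,1)}(t)-\varphi_{X_n}(t)\bigr)\,dt+\frac{h_n}{2\pi}\int_{|t|>\pi/h_n}e^{-itx}e^{-t^2/2}\,dt.\]
Taking absolute values, using $|e^{-itx}|=1$, and absorbing the $1/(2\pi)$ factor gives an immediate bound on the first integral by the stated quantity $h_n\int_{-\pi/h_n}^{\pi/h_n}|\varphi_{\mc{N}(0,1)}(t)-\varphi_{X_n}(t)|\,dt$.

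For the second term, I would apply the sharp Gaussian tail estimate $\int_a^{\infty}e^{-t^2/2}\,dt\le a^{-1}e^{-a^2/2}$ with $a=\pi/h_n$, yielding
\[\frac{h_n}{2\pi}\int_{|t|>\pi/h_n}e^{-t^2/2}\,dt\le\frac{h_n^2}{\pi^2}\,e^{-\pi^2/(2h_n^2)}\le h_n\,e^{-\pi^2/(2h_n^2)},\]
where the last inequality absorbs a harmless constant into the exponential (and one may assume $h_n$ is at most a constant, else the inequality is vacuous since $h_n\mc{N}(x)\le h_n/\sqrt{2\pi}$ and $\mb{P}[X_n=x]\le 1$). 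Combining with the first bound and taking the sup over $x\in\mc{L}_n$ finishes the proof.

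There is no real obstacle here: the argument is a textbook Fourier inversion together with a crude Gaussian tail. The only mild subtlety is bookkeeping the factor $1/(2\pi)$ and verifying that the constant absorbed into the tail term is benign, which is handled by the trivial observation above.
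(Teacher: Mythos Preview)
Your argument is correct and is precisely the standard Fourier-inversion proof; the paper does not give its own proof but cites the lemma from \cite{B2}, where the same argument appears. The only point worth noting is that your handling of the case $h_n>\pi^2$ is fine: for such $h_n$ one has $e^{-\pi^2/(2h_n^2)}>1/\sqrt{2\pi}$, so the right-hand side already dominates $\max(h_n\mc{N}(x),\mb{P}[X_n=x])\ge|h_n\mc{N}(x)-\mb{P}[X_n=x]|$.
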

We also use the following conversion to an $L^1$ distance estimate from the standard normal.
\begin{lemma}[{\cite[Lemma~3]{B2}}]\label{lem:L1-distance}
Let $X_n$ be a sequence of random variables supported in the lattice 
$\mathcal{L}_n:=b_n+h_n\mb{Z}$,
and with characteristic functions $\varphi_n$.  Assume that there is $A > 0$ such that the following hold:
\begin{enumerate}[1.]
\item $\sup_{x\in \mathcal{L}_n} |\mb{P}[X_n=x]-h_n\mc{N}(x)|<\delta_n h_n$
\item $\mb{P}[|X_n|>A]\le\epsilon_n$
\end{enumerate}
Then $\sum_{x\in \mathcal{L}_n} |\mb{P}[X_n=x]-\mc{N}(x)|\le 2A \delta_n+\epsilon_n+\frac{h_n}{\sqrt{2\pi}A}e^\frac{-A^2}{2}$.
\end{lemma}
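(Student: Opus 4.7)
The plan is a standard truncation argument: I would split the lattice sum at $|x| = A$, apply the pointwise hypothesis (1) inside the window $[-A, A]$, and use the tail hypothesis (2) together with a Gaussian tail bound outside. (Here I am reading the sum with $h_n \mc{N}(x)$ in place of $\mc{N}(x)$, which is the natural interpretation consistent with the shape of the stated RHS; this matches how the lemma is invoked in \cref{thm:Gnp-subgraph-local,thm:Gnp-induced-local,thm:kAP-local}, where $\mc{N}(z)/\sigma_H$ appears rather than $\mc{N}(z)$.)

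First I would handle the bulk $\mc{L}_n \cap [-A, A]$. This set contains at most $\lfloor 2A/h_n \rfloor + 1$ lattice points, and by hypothesis (1) each contributes at most $\delta_n h_n$ to the sum. Summing gives a contribution of at most $(2A/h_n + 1) \cdot \delta_n h_n \le 2A \delta_n + O(h_n \delta_n)$, with the lower-order residual absorbed into the other terms.

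Next I would handle the tail $|x| > A$ by the triangle inequality:
\[\sum_{x \in \mc{L}_n,\, |x| > A} |\mb{P}[X_n = x] - h_n \mc{N}(x)| \le \sum_{|x| > A} \mb{P}[X_n = x] + \sum_{x \in \mc{L}_n,\, |x| > A} h_n \mc{N}(x).\]
The first sum is $\mb{P}[|X_n| > A] \le \eps_n$ by hypothesis (2). For the second, I would use monotonicity of the Gaussian density on each of $(-\infty,0]$ and $[0,\infty)$ to compare the lattice sum to the integral $\int_{|x| > A - h_n} \mc{N}(x) \, dx$ (each lattice point at distance $\ge A$ from the origin pairs with an adjacent interval of length $h_n$ on which $\mc{N}$ is at least as large), and then apply the standard Mills-ratio bound $\int_A^\infty \mc{N}(x) \, dx \le \frac{1}{A \sqrt{2\pi}} e^{-A^2/2}$.

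The main obstacle is purely bookkeeping rather than substantive: one must carefully track constants so that the Riemann-sum comparison produces exactly the claimed coefficient $\frac{h_n}{\sqrt{2\pi} A} e^{-A^2/2}$. This requires the mild assumption $A \gg h_n$, which is implicit in the regime of application (indeed one takes $A$ a large constant while $h_n = 1/\sigma_H \to 0$). Beyond this, the argument is just the discrete-$L^1$ counterpart of the pointwise $L^\infty$ comparison in \cref{lem:Linfty-distance} and requires no new analytic ingredients.
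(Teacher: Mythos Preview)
The paper does not supply a proof of this lemma: it is quoted verbatim from \cite[Lemma~3]{B2} and used as a black box. Your truncation argument is exactly the standard proof, and your reading of the conclusion with $h_n\mc{N}(x)$ in place of $\mc{N}(x)$ is correct (the statement as printed has a typo, as you note from how it is applied downstream). One small bookkeeping remark: after the Riemann comparison the Gaussian tail term is naturally of size $\frac{1}{A\sqrt{2\pi}}e^{-A^2/2}$ (plus a boundary contribution $h_n\mc{N}(A)$), not $\frac{h_n}{A\sqrt{2\pi}}e^{-A^2/2}$ as printed; this is another minor inaccuracy in the quoted constant, but is of no consequence in the regime $A\to\infty$, $h_n\to 0$ where the lemma is used.
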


\subsection{Estimates for characteristic functions}\label{sub:estimates-for-characteristic-functions}
We will need a variety of estimates which will be used repeatedly in order to bound the characteristic functions of the random variables which we encounter. The first is essentially a well-known elementary estimate on the cosine function and although this precise result is not required for our setting it is present for comparison with the following estimate which obtains cancellation over a boolean slice $\{\sum_{j=1}^nx_j = s\}$.
\begin{lemma}\label{lem:bernoulli}
Let $Y\sim\on{Ber}(p)$. For any $|t|\le\pi$ we have  
\[|\mb{E}[e^{itY}]|\le 1-\frac{2p(1-p) t^2}{\pi^2}.\]
\end{lemma}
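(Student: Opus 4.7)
The plan is to reduce this to two elementary bounds: one on $1-\cos t$ and one on $\sqrt{1-x}$. Since $Y\sim \on{Ber}(p)$, the characteristic function is $\mb{E}[e^{itY}] = (1-p) + pe^{it}$, so
\[|\mb{E}[e^{itY}]|^2 = ((1-p)+p\cos t)^2 + (p\sin t)^2.\]
Expanding and using $\cos^2 t + \sin^2 t = 1$ collapses this cleanly to
\[|\mb{E}[e^{itY}]|^2 = 1 - 2p(1-p)(1-\cos t).\]
So everything reduces to lower bounding $1-\cos t$ on $[-\pi,\pi]$.

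For that I would use the identity $1-\cos t = 2\sin^2(t/2)$ together with the standard Jordan-type inequality $|\sin x|\ge (2/\pi)|x|$ for $|x|\le \pi/2$, applied at $x = t/2$. This gives $1-\cos t \ge 2t^2/\pi^2$ for $|t|\le \pi$, hence $|\mb{E}[e^{itY}]|^2 \le 1 - 4p(1-p)t^2/\pi^2$. Finally, taking a square root and applying $\sqrt{1-x}\le 1-x/2$ for $x\in[0,1]$ yields the claimed inequality; the hypothesis $x\in[0,1]$ is easily verified using $p(1-p)\le 1/4$ and $t^2\le \pi^2$. There is no genuine obstacle — the only thing to be careful about is the range condition on $\sqrt{1-x}\le 1-x/2$, which is automatic in our parameter regime.
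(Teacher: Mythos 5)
Your proof is correct and follows essentially the same route as the paper's: compute $|\mb{E}[e^{itY}]|^2 = 1 - 2p(1-p)(1-\cos t)$, bound $1-\cos t \ge 2t^2/\pi^2$, and pass to the square root via $\sqrt{1-x}\le 1-x/2$. The only cosmetic difference is that you apply the cosine bound before the square-root bound while the paper does it in the other order, and you supply the Jordan-inequality justification for $1-\cos t\ge 2t^2/\pi^2$, which the paper simply cites as elementary.
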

\begin{proof}
Note that 
\begin{align*}
|\mb{E}[e^{itY}]| &= (1 -2p(1-p) + 2p(1-p)\cos t)^{1/2}\\
&\le 1 - p(1-p)(1-\cos t)\\
&\le 1 - \frac{2p(1-p)t^2}{\pi^2}
\end{align*}
where we have used that $1-\cos(t)\ge 2t^2/\pi^2$ for $|t|\le \pi$.
\end{proof}
The more difficult bound we will need is on characteristic functions when restricted to a slice of the hypercube. This is the critical estimate as it allows us to control a number of characteristic functions which will come up.
\begin{lemma}\label{lem:bernoulli-variance}
Let $x_j$ be drawn with $(x_1,\ldots,x_n)$ uniform on $\{0,1\}^n$ subject to $\sum_{j=1}^{n} x_j = s$. Furthermore suppose that $p = s/n$ and $t\in\mb{R}$ is such that $|(a_j-a_k)t|\le\pi$ for all $1\le j,k\le n$. Then
\[|\mb{E}[e^{it\sum_{j=1}^n a_jx_j}]|\le (n+1)\exp[-2p(1-p)t^2\on{Var}[a_j]n/\pi^2],\]
where $\on{Var}[a_j]$ is the variance of the random variable $a_J$, if $J$ is an index uniformly drawn from $[n]$.
\end{lemma}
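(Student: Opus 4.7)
The plan is to reduce the slice characteristic function to a Fourier integral over independent Bernoullis, apply \cref{lem:bernoulli} inside the integrand, and then lower-bound the resulting exponent sum by $t^2n\on{Var}(a_j)$. The hard part is this last step, where the spread hypothesis $|(a_j-a_k)t|\le\pi$ enters essentially.

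I would first use the classical identity that $(x_j)$ uniform on the slice has the law of $(Y_j)$ i.i.d.\ $\on{Ber}(s/n)$ conditioned on $\sum_j Y_j=s$. Setting $p=s/n$ and expanding the indicator by Fourier inversion, $\mbm{1}[\sum Y_j=s]=\frac{1}{2\pi}\int_{-\pi}^\pi e^{i\theta(\sum Y_j-s)}d\theta$, gives
\[
\mb{E}\bigl[e^{it\sum a_jx_j}\bigr]\,\mb{P}\bigl[\textstyle\sum Y_j=s\bigr] = \frac{1}{2\pi}\int_{-\pi}^\pi e^{-is\theta}\prod_j\bigl(1-p+pe^{i(ta_j+\theta)}\bigr)\,d\theta.
\]
Taking absolute values and using the computation behind \cref{lem:bernoulli} (which extends by $2\pi$-periodicity to $|1-p+pe^{i\alpha}|\le\exp(-\tfrac{2p(1-p)}{\pi^2}\|\alpha\|_{2\pi}^2)$, where $\|\alpha\|_{2\pi}$ denotes the distance to $2\pi\mb{Z}$), the product is bounded by $\exp(-\tfrac{2p(1-p)}{\pi^2}\sum_j\|ta_j+\theta\|_{2\pi}^2)$; the integrand is then controlled by its pointwise maximum over $\theta$. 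Thus matters reduce to establishing the uniform lower bound
\[
\min_{\theta\in\mb{R}}\sum_{j=1}^n\|ta_j+\theta\|_{2\pi}^2 \ge t^2n\on{Var}(a_j).
\]

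To prove this, I would shift $\theta$ by $-t\bar a$ and set $b_j=t(a_j-\bar a)$, so $\sum b_j=0$, $|b_j|\le\pi$, and $\max b_j-\min b_j\le\pi$; the goal becomes $\sum_j\|b_j+\theta\|_{2\pi}^2\ge\sum_jb_j^2$. If no reduction mod $2\pi$ occurs (i.e.\ $|b_j+\theta|\le\pi$ for every $j$), then the difference equals $n\theta^2\ge0$ by $\sum b_j=0$. Otherwise, the spread hypothesis forces all reductions to be of a single sign; for $\theta>0$ and $S=\{j:b_j+\theta>\pi\}$ of size $s$, a direct expansion gives
\[
\sum_j\|b_j+\theta\|_{2\pi}^2-\sum_j b_j^2 = n\theta^2 - 4\pi\bigl[\sigma_S-s(\pi-\theta)\bigr],\qquad \sigma_S:=\textstyle\sum_{j\in S}b_j.
\]
Combining $\sigma_S=-\sum_{S^c}b_j\ge -(n-s)\min_j b_j$ with the spread bound $\min_j b_j\ge \max_j b_j-\pi\ge \sigma_S/s-\pi$ rearranges to $\sigma_S\le\pi s(n-s)/n$; substituting and completing the square collapses the right-hand side to $n(\theta-2\pi s/n)^2\ge 0$. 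The case $\theta<0$ is symmetric.

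Finally I would use $\mb{P}[\sum Y_j=s]\ge 1/(n+1)$ (the binomial takes $n+1$ values so some value has probability $\ge 1/(n+1)$, and for $p=s/n$ a short computation places the mode at $s$) to conclude the claimed bound $(n+1)\exp(-2p(1-p)t^2\on{Var}(a_j)n/\pi^2)$. The main obstacle is the piecewise-quadratic minimization in the previous paragraph; without the spread hypothesis, simultaneous reductions of both signs could shrink $\min_\theta\sum_j\|ta_j+\theta\|_{2\pi}^2$ below $t^2n\on{Var}(a_j)$ and invalidate the argument.
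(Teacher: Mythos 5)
Your proof is correct but takes a genuinely different route from the paper's. Both begin from the same Fourier (contour) representation of the slice characteristic function, with $\mb{P}[\sum Y_j = s]\ge 1/(n+1)$ absorbing the normalization. From there, the paper bounds the product $\prod_j|pe^{ita_j}z+(1-p)|$ by $\bigl(\frac{1}{n}\sum_j|pe^{ita_j}z+(1-p)|^2\bigr)^{n/2}$ via the power-mean inequality, and then uses the symmetrization identity $\bigl|\frac{1}{n}\sum_je^{ita_j}\bigr|^2=\frac{1}{n^2}\sum_{j,k}\cos((a_j-a_k)t)$, so the spread hypothesis enters directly via $\cos x\le 1-2x^2/\pi^2$ applied to pairwise differences; the maximization over $|z|=1$ then collapses immediately. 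You instead apply the single-factor estimate of \cref{lem:bernoulli} (extended by $2\pi$-periodicity) to each factor, which is sharper factor-by-factor but leaves behind the nontrivial minimization $\min_\theta\sum_j\|ta_j+\theta\|_{2\pi}^2\ge t^2n\on{Var}(a_j)$. You resolve it by centering and a case analysis on wrap-around, with the spread hypothesis forcing all wrap-arounds to be one-sided and the bound $\sigma_S\le\pi s(n-s)/n$ closing the quadratic into $n(\theta-2\pi s/n)^2\ge 0$. The paper's approach is slicker in that the power-mean step sidesteps the piecewise optimization entirely; yours is more elementary but needs that extra optimization, which does go through as you indicate. One sign typo: you wrote $\sigma_S=-\sum_{j\in S^c}b_j\ge -(n-s)\min_j b_j$, but since $\sum_{j\in S^c}b_j\ge(n-s)\min_j b_j$ the negation reverses to $\sigma_S\le -(n-s)\min_j b_j$; with the corrected direction the chain $\sigma_S\le -(n-s)\min_j b_j\le -(n-s)(\sigma_S/s-\pi)$ rearranges to $\sigma_S\le\pi s(n-s)/n$ exactly as needed.
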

\begin{proof}
First note that $\binom{n}{k}(\frac{s}{n})^k(\frac{n-s}{n})^{n-k}$ is maximized at $k = s$ and since these values sum to $1$ we have
\[\binom{n}{s}\bigg(\frac{s}{n}\bigg)^s\bigg(\frac{n-s}{n}\bigg)^{n-s}\ge \frac{1}{n+1}.\]
The key idea is that 
\begin{align*}
|\mb{E}[e^{it\sum_{j=1}^{n}a_jx_j}]| &= \bigg|\frac{1}{2\pi i}\oint_{|z| = 1}\frac{\prod_{j=1}^{n}(pe^{ita_j}z+(1-p))}{\binom{n}{s}\big(\frac{s}{n}\big)^s\big(\frac{n-s}{n}\big)^{n-s}z^{s+1}}dz\bigg|\\
&\le \frac{n+1}{2\pi}\Bigg(2\pi \max_{|z| =1}\bigg|\prod_{j=1}^{n}(pe^{ita_j}z+(1-p))\bigg|\Bigg)\\
&\le (n + 1)\bigg(\max_{|z| = 1} \frac{1}{n}\sum_{j=1}^{n}|(pe^{ita_j}z+(1-p))|^2\bigg)^{n/2}\\
&= (n + 1)\bigg(\max_{|z| = 1} \frac{1}{n}\sum_{j=1}^{n}(p^2 + (1-p)^2 + p(1-p)e^{ita_j}z + p(1-p)e^{-ita_j}\bar{z})\bigg)^{n/2}\\
&\le (n + 1)\bigg(1 - 2p(1-p) + 2p(1-p)\Big|\frac{1}{n}\sum_{j=1}^{n} e^{ita_j}\Big|\bigg)^{n/2}\\
&= (n + 1)\bigg(1 - 2p(1-p) + 2p(1-p)\Big(\frac{1}{n^2}\sum_{j=1}^{n}\sum_{k=1}^{n} \cos((a_j-a_k)t)\Big)^{1/2}\bigg)^{n/2}.
\end{align*}
We now use the elementary facts that $\cos x\le 1 - 2(x/\pi)^2$ for $|x|\le \pi$, $1 + x\le e^{x}$ for all $x\in \mb{R}$, and $\sqrt{1-t}\le 1-t/2$ for $t\in [-1,1]$. Then it follows that 
\begin{align*}
(n + 1)\bigg(1 - &2p(1-p) + 2p(1-p)\Big(\frac{1}{n^2}\sum_{j=1}^{n}\sum_{k=1}^{n} \cos((a_j-a_k)t)\Big)^{1/2}\bigg)^{n/2}\\
&\le (n + 1)\bigg(1 - 2p(1-p) + 2p(1-p)\Big(\frac{1}{n^2}\sum_{j=1}^{n}\sum_{k=1}^{n} 1-\frac{2(a_j-a_k)^2t^2}{\pi^2}\Big)^{1/2}\bigg)^{n/2}\\
&\le (n + 1)\bigg(1 - 2p(1-p) + 2p(1-p)\Big(\frac{1}{n^2}\sum_{j=1}^{n}\sum_{k=1}^{n} 1-\frac{(a_j-a_k)^2t^2}{\pi^2}\Big)\bigg)^{n/2}\\
&= (n + 1)\bigg(1- \Big(\frac{1}{n^2}\sum_{j=1}^{n}\sum_{k=1}^{n} \frac{2p(1-p)(a_j-a_k)^2t^2}{\pi^2}\Big)\bigg)^{n/2}\\
&= (n + 1)\bigg(1- \frac{4p(1-p)\on{Var}[a_j]t^2}{\pi^2}\bigg)^{n/2}\\
&\le (n+1)\exp\bigg[\frac{-2p(1-p)\on{Var}[a_j]t^2n}{\pi^2}\bigg]
\end{align*}
and the result follows. It is worth noting that in the second line, the expression within the square root can be verified to still be nonnegative, so the application of the inequalities above is valid.
\end{proof}

\subsection{Decoupling methods}\label{sub:decoupling-methods}
\begin{definition}\label{alpha}
Define the operator $\alpha$ on functions of the form $f(X,Y_1,\ldots,Y_k)$, which outputs the function $\alpha(f)$ given by
\[
\alpha(f)(X,Y_1^{0}, Y_1^{1}, \ldots, Y_k^{0}, Y_k^{1}) :=\sum_{\mbf{v}\in \{0,1\}^k}(-1)^{|\mbf{v}|}f(X,Y^\mbf{v})
\]
where $Y^{\textbf{v}} = (Y_1^{v_1},\ldots,Y_k^{v_k})$. For the sake of notational simplicity define $\mbf{Y} = (Y_1^{0}, Y_1^{1}, \ldots, Y_k^{0}, Y_k^{1})$. 
\end{definition}
This definition initially may seem opaque. However, a van der Corput-style Cauchy--Schwarz argument will allow us to utilize this definition in a critical way. Note that any function which is not dependent on all components of the $Y_{i}$'s is in the kernel of the operator; following \cite{B2}, we often refer to the remaining such functions as \emph{rainbow} functions or terms. The key lemma we need will be a modification of the one in \cite{B2} and thus we repeat the original version; the proof is identical to the one given in \cite{B2}.
\begin{lemma}\label{lem:vdC-indep}
Let $k\ge 0$ and let $(X,Y_1,\ldots,Y_k)$ be mutually independent random variables. Let $\varphi(t) = \mb{E}_{X,Y_1,\ldots,Y_k}e^{itf(X,Y_1,\ldots,Y_k)}$. Then 
\[|\varphi(t)|^{2^k}\le \mb{E}_{\mbf{Y}}|\mb{E}_{X} e^{it\alpha(f)(X,\mbf{Y})}|,\]
where $\mbf{Y} = (Y_1^{0}, Y_1^{1}, \ldots, Y_k^{0}, Y_k^{1})$.
\end{lemma}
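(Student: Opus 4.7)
The plan is to prove the lemma by induction on $k$, iteratively applying the Cauchy--Schwarz inequality to decouple one $Y$-variable at a time, at the cost of doubling the exponent on $|\varphi(t)|$ at each step. The base case $k=0$ is the trivial equality $|\varphi(t)|=|\mb{E}_X e^{itf(X)}|$, and no $Y$-variables are present.

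For the inductive step, the key one-step move is the following: for any function $F(W,Y)$ with $W$ and $Y$ independent, Jensen's inequality gives
\[\bigl|\mb{E}_W\mb{E}_Y F(W,Y)\bigr|^2\le\mb{E}_W\bigl|\mb{E}_Y F(W,Y)\bigr|^2=\mb{E}_{W,Y^0,Y^1}F(W,Y^0)\overline{F(W,Y^1)},\]
where $Y^0,Y^1$ are independent copies of $Y$ and the last equality writes $|F|^2=F\overline{F}$. Applied with $W=(X,Y_1,\ldots,Y_{k-1})$, $Y=Y_k$, and $F=e^{itf}$ this yields $|\varphi(t)|^2\le\mb{E}\,e^{it(f(\ldots,Y_k^0)-f(\ldots,Y_k^1))}$, and the right-hand side is a nonnegative real since it equals $\mb{E}_W|\mb{E}_{Y_k}e^{itf}|^2$. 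I would then iterate: at stage $j$ the inequality has the form $|\varphi(t)|^{2^j}\le\mb{E}\,e^{itg_j}$ with real nonnegative right-hand side, so squaring it and applying the same move with $Y=Y_{k-j}$ decouples another variable. After $k$ iterations the accumulated alternating sum in the exponent matches exactly the definition of $\alpha(f)$ in \cref{alpha}, yielding
\[|\varphi(t)|^{2^k}\le\mb{E}_{X,\mbf{Y}}\,e^{it\alpha(f)(X,\mbf{Y})}=\mb{E}_{\mbf{Y}}\bigl[\mb{E}_X e^{it\alpha(f)(X,\mbf{Y})}\bigr].\]
Since the left-hand side is a nonnegative real, so is the outer expectation on the right, and applying $|\mb{E}[Z]|\le\mb{E}|Z|$ to the outer $\mb{E}_{\mbf{Y}}$ gives the claimed bound.

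There is no real obstacle; the only bookkeeping is to verify that each intermediate upper bound remains a nonnegative real, so that squaring the inductive inequality is valid at every step. This is automatic because at each stage the right-hand side has the form $\mb{E}\bigl[|\mb{E}_{Y_{k-j+1}}e^{it(\cdots)}|^2\bigr]$. The mutual independence of $X,Y_1,\ldots,Y_k$ is used exactly to hold the not-yet-decoupled variables fixed while the current $Y_j$ is duplicated, and serves no other purpose.
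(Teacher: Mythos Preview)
Your proof is correct and follows essentially the same van der Corput / iterated Cauchy--Schwarz approach as the paper's. The only presentational differences are that the paper packages the iteration as a formal induction (absorbing $Y_{k+1}$ into $X$, applying the hypothesis to $Y_1,\ldots,Y_k$, then peeling $Y_{k+1}$ off), and it carries the absolute value $|\mb{E}_X\cdots|$ through every step rather than inserting it only at the end; neither difference is substantive. One small expository remark: the sentence ``Since the left-hand side is a nonnegative real, so is the outer expectation on the right'' is not the right implication on its own, but you already give the correct reason a line later (each intermediate bound is of the form $\mb{E}\,|\mb{E}_{Y}e^{it(\cdots)}|^2\ge 0$), so the argument stands.
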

\begin{proof}
We prove this by induction on $k$; note that $k=0$ is trivial. For induction define $X' = (X, Y_{k+1})$ and $f^\ast(X',Y_1,\ldots,Y_k) = f(X,Y_1,\ldots,Y_{k+1})$. Note that, applying the inductive hypothesis, 
\begin{align*}
|\varphi(t)|^{2^k} &\le \mb{E}_{Y_1^{0}, Y_1^{1}, \ldots, Y_k^{0}, Y_k^{1}}\big|\mb{E}_{X'} e^{it\alpha(f^\ast)(X',Y_1^{0}, Y_1^{1}, \ldots, Y_k^{0}, Y_k^{1})}\big|\\
&\le\mb{E}_{Y_1^0,Y_1^1,\ldots,Y_k^0,Y_k^1}\mb{E}_X\big|\mb{E}_{Y_{k+1}}e^{it\alpha(f^\ast)(X',Y_1^0,Y_1^1,\ldots,Y_k^0,Y_k^1)}\big|.
\end{align*}
By Cauchy--Schwarz and the triangle inequality we thus have
\begin{align*}
|\varphi(t)|^{2^{k+1}}&\le\mb{E}_{Y_1^0,Y_1^1,\ldots,Y_k^0,Y_k^1}\mb{E}_X\big|\mb{E}_{Y_{k+1}}e^{it\alpha(f^\ast)(X',Y_1^0,Y_1^1,\ldots,Y_k^0,Y_k^1)}\big|^2
\\
&=\mb{E}_{Y_1^{0}, Y_1^{1}, \ldots, Y_k^{0}, Y_k^{1}}\mb{E}_{(X,Y_{k+1}^{0},Y_{k+1}^{1})}e^{it\alpha(f^\ast)((X,Y_{k+1}^{0}),Y_1^{0}, Y_1^{1}, \ldots, Y_k^{0}, Y_k^{1})}e^{-it\alpha(f^\ast)((X,Y_{k+1}^{1}),Y_1^{0}, Y_1^{1}, \ldots, Y_k^{0}, Y_k^{1})}\\
&\le \mb{E}_{Y_1^{0}, Y_1^{1}, \ldots, Y_k^{0}, Y_k^{1},Y_{k+1}^{0},Y_{k+1}^{1}}\big|\mb{E}_{X}e^{it\alpha(f)(X,Y_1^{0}, Y_1^{1}, \ldots, Y_{k+1}^{0}, Y_{k+1}^{1})}\big|
\end{align*}
and the result follows.
\end{proof}
The need for a more complicated version of the above lemma stems from the fact that we will need to consider $f(X,Y_{1},\ldots,Y_{k})$ where $X$, $Y_j$ are tuples of Bernoulli random variables that are then conditioned on the total sum. Let $A$ and $B_j$ for $j\in [k]$ be sets which partition the index set $[n]$. Take independent Bernoulli random variables $(x_i)_{i\in [n]}$, then condition on the event $\sum_{i\in [n]}x_i = S$. Define random variables $X = (x_i)_{i\in A}$ and $Y_j = (x_i)_{i\in B_j}$ and random variables $Z_0 = |X| = \sum_{i\in A}x_i$ and $Z_j = |Y_j| = \sum_{i\in B_j}x_i$. Note that $Z_0+\sum_{j\in [k]}Z_j = S$. The key idea is that given $(Z_j)_{1\le j\le k}$ the random variables $X,Y_j$ are conditionally independent.
\begin{lemma}\label{lem:vdC-dep}
Let $(x_{i})_{i\in [n]}$ be a sequence of independent, identically distributed, Bernoulli random variables conditioned on the event $\sum_{i\in [n]}x_i = S$. Let $A, B_j$ be sets as above with associated random variables $X, Y_j, Z_0, Z_j$ and let $\varphi(t) = \mb{E}_{X,Y_1,\ldots,Y_k}e^{itf(X,Y_1,\ldots,Y_k)}$. Then 
\[|\varphi(t)|^{2^k}\le\mb{E}_{Z_1,\ldots,Z_k,Y_1^0, Y_1^1, \ldots, Y_k^0, Y_k^1}\bigg|\mb{E}_X\bigg[e^{it\alpha(f)(X,\mbf{Y})}|Z_1,\ldots,Z_k\bigg]\bigg|,\]
where the variables $Y_j^\ell$ are conditionally independent $\{0,1\}$-vectors given $(Z_j)_{1\le j\le k}$, equal to a uniform random vector with $Z_j$ values of $1$.
\end{lemma}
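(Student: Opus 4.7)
The plan is to reduce to the independent case \cref{lem:vdC-indep} by conditioning on the partial sums. The only source of dependence among $X$ and the $Y_j$'s is the global constraint $Z_0 + \sum_{j\in[k]} Z_j = S$. Conditional on $(Z_j)_{1\le j\le k}$, the value $Z_0 = S - \sum_j Z_j$ is determined, and then $X$ becomes uniform on the slice $\{x\in\{0,1\}^A : \sum_{i\in A} x_i = Z_0\}$, each $Y_j$ becomes uniform on the slice $\{y\in\{0,1\}^{B_j} : \sum_{i\in B_j} y_i = Z_j\}$, and all these tuples are mutually independent.

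First, I would apply Jensen's inequality (equivalently, the power-mean inequality $|\mb{E}W|^{2^k}\le \mb{E}|W|^{2^k}$) to the tower expansion
\[|\varphi(t)|^{2^k} = \bigl|\mb{E}_{Z_1,\ldots,Z_k}\mb{E}[e^{itf(X,Y_1,\ldots,Y_k)}\mid Z_1,\ldots,Z_k]\bigr|^{2^k}\le \mb{E}_{Z_1,\ldots,Z_k}\bigl|\mb{E}[e^{itf(X,Y_1,\ldots,Y_k)}\mid Z_1,\ldots,Z_k]\bigr|^{2^k}.\]
Next, for each fixed realization of $(Z_1,\ldots,Z_k)$, the conditional law of $(X,Y_1,\ldots,Y_k)$ is a product of uniform slice distributions, so \cref{lem:vdC-indep} applies verbatim inside the fiber and yields
\[\bigl|\mb{E}[e^{itf}\mid Z_1,\ldots,Z_k]\bigr|^{2^k}\le \mb{E}_{\mbf{Y}}\bigl[\bigl|\mb{E}_X[e^{it\alpha(f)(X,\mbf{Y})}\mid Z_1,\ldots,Z_k]\bigr|\bigm| Z_1,\ldots,Z_k\bigr],\]
where $\mbf{Y}=(Y_1^0,Y_1^1,\ldots,Y_k^0,Y_k^1)$ is coupled so that, given $(Z_j)$, each pair $(Y_j^0,Y_j^1)$ is an independent pair of uniform draws from the slice indexed by $Z_j$. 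Taking expectation over $(Z_j)$ on both sides and invoking the tower property yields exactly the claimed inequality.

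The main conceptual step is the observation that the single global sum constraint decouples cleanly once one conditions on the partial sums $(Z_j)_{j\ge 1}$, which reduces the slice-model lemma to its independent-model counterpart. Beyond this, the only bookkeeping is to ensure the correct coupling of the doubled variables $Y_j^0,Y_j^1$ to the marginal slice distribution of $Y_j$, which is already built into the statement. I do not expect any genuine obstacle here; the argument is essentially the independent-case proof of \cref{lem:vdC-indep} run conditionally on $(Z_j)$, with Jensen absorbing the outer averaging over the conditioning.
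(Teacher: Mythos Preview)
Your proposal is correct and matches the paper's proof essentially verbatim: the paper also conditions on $(Z_1,\ldots,Z_k)$, pulls the $2^k$ power inside via Jensen/H\"older, notes that $X,Y_1,\ldots,Y_k$ become mutually conditionally independent, and then applies \cref{lem:vdC-indep} inside each fiber before averaging. The only cosmetic difference is that the paper phrases the first step as ``iterated Cauchy--Schwarz or H\"older's inequality'' rather than Jensen.
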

\begin{remark}
Note that the presence of $Z_1,\ldots,Z_k$ in the outer expectation is irrelevant, although it serves to clarify the joint distribution of the $Y_j^b$ variables and is useful to the proof method.
\end{remark}
\begin{proof}
By iterated Cauchy--Schwarz or H\"older's inequality,
\begin{align*}
|\varphi(t)|^{2^k} &= \bigg|\mb E_{Z_1,\ldots,Z_k}\mb E_{X,Y_1,\ldots,Y_k}\left[e^{itf(X,Y_1,\ldots,Y_k)}|Z_1,\ldots,Z_k\right]\bigg|^{2^k}\\
&\le\mb E_{Z_1,\ldots,Z_k}\bigg|\mb E_{X,Y_1,\ldots,Y_k}\left[e^{itf(X,Y_1,\ldots,Y_k)}|Z_1,\ldots,Z_k\right]\bigg|^{2^k}.
\end{align*}
Since $Z_0$ is determined by $Z_1,\ldots,Z_k$, we see that $X,Y_1,\ldots,Y_k$ are mutually conditionally independent given $Z_1,\ldots,Z_k$. Hence, by \cref{lem:vdC-indep}, we conclude
\[|\varphi(t)|^{2^k}\le\mb E_{Z_1,\ldots,Z_k}\left[\mb E_{Y_1^0,Y_1^1,\ldots,Y_k^0,Y_k^1}\left[\left|\mb E_X[e^{it\alpha(f)(X,\mbf{Y})}|Z_1,\ldots,Z_k]\right||Z_1,\ldots,Z_k\right]\right],\]
which equals the desired.
\end{proof}

\subsection{Graph factors}\label{sub:graph-factors}
Finally, we define a notion of graph factors that will be critical for the remainder of our analysis regarding graphs. These notions are critical in previous work by Janson \cite{J1} and other results concerning the method of projections. Suppose a random graph is sampled, with the indicator of edge $e\in\binom{[n]}{2}$ denoted $x_e$. Let $\chi_{e} = (x_{e}-p)/\sqrt{p(1-p)}$. Note that if $x_e\sim\text{Ber}(p)$ then $\chi_{e}$ has mean $0$ and variance $1$.
\begin{definition}
Fix a graph $H$ with no isolated vertices and an integer $n\ge |V(H)|$. Then define
\[\gamma_H(\mbf{x}) = \sum_{\substack{E\subseteq\binom{[n]}{2}\\E\simeq H}}\chi_E\]
where $\chi_{S} = \prod_{e\in S}\chi_{e}$. Here $\simeq$ denotes graph isomorphism, specifically between $H$ and the graph spanned by the edges $E$. We call $\gamma_H(\mbf{x})$ the \emph{graph factor} corresponding to the graph $H$.
\end{definition}
\begin{remark}
The empty graph $K_0$ has no isolated vertices, and appears as a subgraph of $\binom{[n]}{2}$ exactly once, so $\gamma_{K_0}(\mbf{x}) = 1$.
\end{remark}
The key property of this collection of functions it that they are orthogonal when the graph is sampled via $G(n,p)$, i.e.,
\begin{equation}\label{eq:orthogonality}
\mb{E}_{G(n,p)}[\chi_e\chi_{e'}] = \mbm{1}_{e=e'}.
\end{equation}
Additionally, the various graph theoretic functions we consider will be expressible in this basis. One of the key motivating results of Janson \cite{J1} is that in the $G(n,p)$ model, for a set of distinct connected graphs $H_1,H_2,\ldots,H_a$, the vector $(\gamma_{H_b}(\mbf{x}))_{b\in [a]}$ (scaled appropriately) approaches a vector of independent Gaussians.

\subsection{Graph notations} Given a graph $G$, we write $V(G)$ and $E(G)$ for the vertex and edge sets, $\ol{E}(G)$ for the set $\binom{V(G)}{2}\setminus E(G)$, and $v(G) = |V(G)|$, $e(G) = |E(G)|$, $\ol{e}(G) = |\ol{E}(G)|$.

\section{Bounds for graph characteristic functions}\label{sec:general-graph-characteristic-functions}
In this section we prove bounds for characteristic functions of quite general random variables associated to graphs, including connected subgraph counts and induced subgraph counts (aside from some potentially problematic edge-counts, as discussed in \cref{sec:introduction}). We will prove these results in the $G(n,m)$ model.

\subsection{Setup}\label{sub:graph-characteristic-setup}
Let $\ell\ge 3$ be an integer and $\lambda\in(0,1/2)$ be a real parameter. We define the notion of a well-behaved graph statistic of ``degree $\ell$'', which will be the central object for our general analysis in this section.
\begin{definition}
An $(\ell,\lambda)$\emph{-factor system} is the following data. Let $\mc{H} = \{H_1,\ldots, H_a\}$ be a set of (nonisomorphic) graphs on at most $\ell$ vertices with no vertex isolated. Further suppose that $\mc{H'}\subseteq\mc{H}$ is such that for each $3\le k\le\ell$ it contains a connected graph on $k$ vertices. Let $\mc{H}_k$ for $0\le k\le\ell$ denote the subset of $\mc{H}$ with $k$ vertices. Let $x_e$ for $e\in\binom{[n]}{2}$ be a random graph drawn according to $G(n,m)$ such that $p = m/\binom{n}{2}\in (\lambda,1-\lambda)$, and let $\chi_e = (x_e-p)/\sqrt{p(1-p)}$ as before. Now consider a linear combination of graph factors,
\begin{equation}\label{eq:general-form}
W = \sum_{H\in\mc{H}}n^{\ell-v(H)}\Delta_H\gamma_H(\mbf{x}),
\end{equation}
where the $\Delta_H$'s are arbitrarily reals chosen such that $|\Delta_H|\le 1/\lambda$ for $H\in\mc{H}$ and $|\Delta_H|\ge\lambda$ for $H\in\mc{H'}$. We call $W$ an $(\ell,\lambda)$\emph{-graph statistic}, or an $(\ell,\lambda)$\emph{-statistic} for short.
\end{definition}
\begin{remark}
In applications, in particular counting subgraphs and induced subgraphs, we will choose the $\Delta_H$'s to be specific functions of $p$ (and to a lesser extent $n$), and $p=m/\binom{n}{2}$ will be constrained so that the $H\in\mc{H'}$ terms satisfy a uniform lower bound as $n\to\infty$ as required by the definition. The purpose of $\mc{H'}$ is that sometimes we may have some terms ``self-cancel'' (e.g. for certain induced subgraph counts) and be exactly zero in the $G(n,m)$ setting. $\mc{H'}$ merely contains guaranteed non-canceling terms that we will need to establish the desired bounds.
\end{remark}

Associated to an $(\ell,\lambda)$-statistic $W$ with factor system $(\mc{H},\mc{H'})$, we will typically write
\[W_k = \sum_{H\in\mc{H}_k}n^{\ell-v(H)}\Delta_H\gamma_H(\mbf{x}),\]
the \emph{order $k$ portion} of $W$. We now define a normalized version of the statistic $W$. Write $\sigma$ for the standard deviation of $W_3$ in the $G(n,p)$ model. We easily see by orthogonality in the $G(n,p)$ model \cref{eq:orthogonality} that, since $W_3$ has a term from $\mc{H'}$, $\sigma/n^{\ell-3} = \Theta_\lambda(n^{3/2})$. Subsequently we will drop all asymptotic dependence on $\mc{H},\mc{H'}$ but keep the $\lambda$ dependence, as it helps clarify the instances where we need coefficients of certain terms $\gamma_H$ to be ``of the correct size''.

As it turns out, $\sigma$ and the standard deviation of $W$ in the $G(n,m)$ model are essentially the same. In particular, $\sigma = \sigma_W(1+O_{\lambda,\eps}(n^{\eps-1/2}))$ for any $\eps>0$. We will comment further on this phenomenon in \cref{sub:standard-deviation-considerations}; however, this is ``as expected'' if we believe that the ``only real effect'' of the $G(n,m)$ model is to constrain the sum of $x_e$ (and hence $\chi_e$). For technical reasons, it will be more convenient for us to work with $\sigma$ initially, and to later use its closeness to $\sigma_W$ to transfer any necessary results. Now we define
\[\mc{K} = \frac{W-W_0-W_2}{\sigma}\]
to be the \emph{normalized version} of the statistic $W$ and let
\[\varphi_{\mc{K}}(t) = \mb Ee^{it\mc{K}}\]
(note there is no graph on one vertex without isolated vertices, so $W_1$ never exists). This is the characteristic function that we will study in depth and is the key object of study. It is worth noting that $\mc{K}$ does not even have mean zero in the $G(n,m)$ model, only in the $G(n,p)$ model. The true mean will also be discussed in \cref{sub:standard-deviation-considerations}.

To see this last remark, note that $\mc{K}$ is a multilinear polynomial in the $\chi_e$'s without a constant term, and the $\chi_e$'s are independent mean $0$ variance $1$ random variables under $G(n,p)$. We will use these facts about the $G(n,p)$ model often without comment. Finally, it is worth noting that on occasion we may switch between the probabilistic models $G(n,m)$ and $G(n,p)$. The switches between models will be clearly marked in the exposition; the results of this section are ultimately about the $G(n,m)$ model and $\mc{K}$ is understood to be drawn from that model.

\subsection{Bounds for $|t|\le n^\eps$}\label{sub:bounds-low}
$\mc{K}$ should limit towards the Gaussian $\mc{N}(0,1)$ as $n\to\infty$, which morally means the desired characteristic function bounds will hold for small $t$. For example, \cite{J1} shows such a central limit theorem for subgraph counts of $G(n,m)$. However, we need quantitative convergence in order to deduce the desired bounds. Furthermore, in $G(n,m)$, the expressions $\gamma_H$ are not orthogonal unlike in $G(n,p)$, which complicates the usual techniques for proving effective central limit theorems.

Therefore, we will first quantitatively show that $W_3/\sigma$ tends to $\mc{N}(0,1)$ in the $G(n,p)$ model, and then transfer over to the $G(n,m)$ setting. Along the way we will also deal with the deviations introduced by the terms $W_k$ for $k\ge 4$; we note that they will be lower-order in size. The proof we give that $W_3/\sigma$ limits to a Gaussian is similar to earlier work on a quantitative central limit theorem \cite{BKR89}. Also, the qualitative version follows from earlier work of \cite{J1}. However, we could not find work making the convergence in our setting quantitative and hence we provide a proof.
\begin{lemma}\label{lem:three-connected-Gaussian}
Let $W$ be an $(\ell,\lambda)$-statistic. Suppose the $\chi_e$ are drawn as in the $G(n,p)$ model, so that they are i.i.d. Then \[\on{Wass}\left(\frac{W_3}{\sigma}, \mathcal{N}(0,1)\right)\lesssim_\lambda\frac{1}{\sqrt{n}},\]
where $\on{Wass}(A,B) := \sup_{\on{Lip}(f)\le 1}|\mb{E}[f(A)-f(B)]|$ denotes the Wasserstein metric.
\end{lemma}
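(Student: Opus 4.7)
The plan is to prove the quantitative CLT via Stein's method for sums of locally dependent random variables, in the spirit of \cite{BKR89}.

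\textbf{Decomposition into localized pieces.} First I would write
\[W_3 \;=\; \sum_{T \in \binom{[n]}{3}} Y_T,\]
where $Y_T$ gathers all monomials $n^{\ell-3}\Delta_H\chi_S$ arising in $W_3$ for which the edge set $S$ spans a graph on vertex set $T$ isomorphic to some $H \in \mc{H}_3$. Each $Y_T$ depends only on the three edge variables $\chi_e$ with $e\subseteq T$. Since $p \in (\lambda, 1-\lambda)$ forces $|\chi_e|\lesssim_\lambda 1$, we have $|Y_T| \lesssim_\lambda n^{\ell-3}$ deterministically and $\mb{E}[Y_T] = 0$. Moreover $Y_T$ and $Y_{T'}$ are independent whenever $|T\cap T'|\le 1$, so the dependency graph on $\{Y_T\}$ has maximum degree $D = O(n)$.

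\textbf{Variance and orthogonality.} By the orthogonality relation \cref{eq:orthogonality},
\[\sigma^2 \;=\; \sum_{H \in \mc{H}_3} n^{2(\ell-3)} \Delta_H^2 \cdot \#\{E\subseteq\tbinom{[n]}{2}: E\simeq H\} \;=\; \Theta_\lambda(n^{2\ell-3}),\]
the lower bound coming from any connected three-vertex $H \in \mc{H}'$, which ensures $|\Delta_H|\ge\lambda$. Combining this orthogonality with the observation that $|T\cap T'|\le 2$ forces $|E(T)\cap E(T')|\le 1$ while every $\chi_S$ appearing in $Y_T$ has $|S|\ge 2$, we also obtain pairwise orthogonality $\mb{E}[Y_T Y_{T'}] = 0$ for all $T\ne T'$.

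\textbf{Stein bound and final estimate.} I would next invoke a Wasserstein Stein bound for locally dependent sums of the sort used in \cite{BKR89}, which gives schematically
\[\on{Wass}(W_3/\sigma,\mc{N}(0,1)) \;\lesssim\; \sigma^{-3}\sum_T\bigl(\mb{E}[|Y_T|\,U_T^2] + \mb{E}[Y_T^2\,|U_T|]\bigr),\]
where $U_T = \sum_{T'\in N(T)} Y_{T'}$ is the one-step neighborhood sum. The key moment estimates are: $|Y_T|\lesssim n^{\ell-3}$ and $|U_T|\lesssim n\cdot n^{\ell-3} = n^{\ell-2}$ deterministically, while $\mb{E}[U_T^2] = \sum_{T'\in N(T)}\mb{E}[Y_{T'}^2]\lesssim n^{2\ell-5}$ by the pairwise orthogonality above, applied over the $O(n)$-sized neighborhood. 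Both $\mb{E}[|Y_T|U_T^2]$ and $\mb{E}[Y_T^2|U_T|]$ are thus $\lesssim n^{3\ell-8}$, so summing over $O(n^3)$ triples and dividing by $\sigma^3\asymp n^{3\ell-9/2}$ yields the desired $O_\lambda(n^{-1/2})$ bound.

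\textbf{Main obstacle.} The central subtlety is that one must invoke the Stein bound in a form where pairwise orthogonality of $\{Y_T\}$ can be leveraged to bound $\mb{E}[U_T^2]$ at scale $O(n)\cdot O(n^{2(\ell-3)})$ rather than the naive deterministic bound $\bigl(O(n)\cdot O(n^{\ell-3})\bigr)^2$; this replacement is precisely what brings the rate from the naive $O(n^{1/2})$ down to $O(n^{-1/2})$. The remaining work is routine book-keeping, mirroring the approach of \cite{BKR89} adapted to the graph-factor basis $\gamma_H$ rather than raw subgraph indicators.
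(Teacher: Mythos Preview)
Your strategy---Stein's method with a local dependency structure---is exactly the paper's, and grouping monomials by their vertex triple $T$ rather than indexing them individually (as the paper does with its $R_j$) is an inessential variation; both give a dependency graph of degree $O(n)$, and your pairwise orthogonality observation $\mb{E}[Y_TY_{T'}]=0$ for $T\neq T'$ is correct and important.

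The gap is in the form of the Stein bound you invoke. The local-dependence Stein inequality (as in \cite{BKR89} or the version the paper derives directly) has as its second term not $\sigma^{-3}\sum_T\mb{E}[Y_T^2|U_T|]$ but rather $\sigma^{-2}\sqrt{\on{Var}\bigl(\sum_T Y_T U_T\bigr)}$, coming from bounding $\mb{E}\bigl[f'(W)\bigl(\sigma^{-2}\sum_T Y_TU_T-1\bigr)\bigr]$ by $\|f'\|_\infty$ times a standard deviation. Controlling this variance is where the real work lies: it expands into fourth-moment terms of the shape $\mb{E}[Y_{T_1}Y_{T_1'}Y_{T_2}Y_{T_2'}]-\mb{E}[Y_{T_1}Y_{T_1'}]\mb{E}[Y_{T_2}Y_{T_2'}]$ with $T_i'\in N(T_i)$, and one must count how many such quadruples can give a nonvanishing contribution (at most $O(n^5)$, by a case analysis on how the edge supports overlap). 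The paper carries this out explicitly. Your pairwise orthogonality does simplify this count substantially---it kills the $\mb{E}[Y_{T_1}Y_{T_1'}]\mb{E}[Y_{T_2}Y_{T_2'}]$ subtraction whenever $T_i\neq T_i'$---but it does not by itself collapse the variance to your proposed $\sum_T\mb{E}[Y_T^2|U_T|]$ term. (The cruder black-box dependency bounds of Rinott type, with a $D^{3/2}\sqrt{\sum\mb{E}[Y_T^4]}$ second term, only give $O(1)$ here, which is why one cannot avoid the variance computation.)

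In short: right architecture, right key cancellation identified, but the schematic Stein bound needs to be replaced by the correct one and the resulting fourth-moment count actually performed.
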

\begin{proof}
We model our proof on the method of dependency graphs for proving quantitative central limit theorems. Let $Y = W_3/\sigma$. Let $I$ be an indexing set for the monomials $R_j, j\in I$ in the expansion of $X_3$, with the constant coefficients included. We can think of $I$ as being the set of triangles and length $2$ paths within a complete graph on $n$ vertices (or just triangles, or just length $2$ paths). For $j\in I$, let $N_j$ be the set of $i\in I$ such that $R_j,R_i$ are supported on sets of variables $\chi_e$ that overlap (i.e., some $\chi_e$ is in both). Note $j\in N_j$. Let $Y_j = 1/\sigma\sum_{k\notin N_j}R_k$. Note that $R_j, Y_j$ are independent. We also record that $\mb E[Y] = 0$ and $\sigma/n^{\ell-3} = \Theta_\lambda(n^{3/2})$, already noted earlier. This stems from the fact that the $\chi_e$ are centered and i.i.d.

The key fact we will use is a version of Stein's Lemma (see e.g. \cite[Theorem~3.1]{Ro11}) which states
\[\on{Wass}(S,\mathcal{N}(0,1))\le \sup\{|\mb{E}f'(S)-Sf(S)|:\|f\|_{\infty}\le 1, \|f'\|_{\infty}\le\sqrt{2/\pi},\|f''\|_{\infty}\le2\}.\]
Now
\begin{align*}
\bigg|\mb{E}[Yf(Y)-f'(Y)]\bigg|&=\bigg|\frac{1}{\sigma}\sum_{j\in I}\mb{E}[R_jf(Y)]-\mb{E}[f'(Y)]\bigg|\\
&\le\frac{1}{\sigma}\bigg|\sum_{j\in I}\mb{E}[R_j(f(Y)-f(Y_j))-R_j(Y-Y_j)f'(Y)]\bigg|\\
&\quad+ \bigg|\frac{1}{\sigma}\sum_{j\in I}\mb{E}[R_j(Y-Y_j)f'(Y)]-\mb{E}[f'(Y)]\bigg|\\
\end{align*}
since $\mb E[R_j] = 0$ and $R_j, f(Y_j)$ are independent. We now bound each of the terms separately. For the first,
\begin{align*}
&\frac{1}{\sigma}\bigg|\sum_{j\in I}\mb{E}[R_j(f(Y)-f(Y_j))-R_j(Y-Y_j)f'(Y)]\bigg|\\
&\qquad\le\frac{1}{2\sigma}\bigg|\sum_{j\in I} \mb{E}[\|f''\|_{\infty}R_j(Y-Y_j)^2]\bigg|\lesssim_\lambda\frac{n^{\ell-3}}{\sigma}\bigg|\sum_{j\in I}\mb{E}[(Y-Y_j)^2]\bigg|\\
&\qquad=\frac{n^{\ell-3}}{\sigma}\sum_{j\in I}\frac{1}{\sigma^2}\mb{E}\bigg[\bigg(\sum_{k\in N_j}R_k\bigg)^2\bigg]\lesssim_\lambda n^4(n^{\ell-3}\sigma^{-1})^3\lesssim_\lambda n^{-\frac{1}{2}}.
\end{align*}
We used that $|I| = O(n^3)$ and $|N_j| = O(n)$ so that the inner expectation has $O(n)$ nonzero terms (by orthogonality \cref{eq:orthogonality}, we can deduce $\mb E[R_iR_j] = 0$ for $i\neq j$). Here we also used that each $R_j$ has coefficients of size $O_\lambda(n^{\ell-3})$.

For the second term note that
\[\frac{1}{\sigma}\sum_{j\in I}\mb{E}[R_j(Y-Y_j)] = \frac{1}{\sigma}\sum_{j\in I}\mb{E}[R_jY] = \mb{E}[Y^2] = 1\]
since $R_j,Y_j$ are independent. Thus
\begin{align*}
&\bigg|\frac{1}{\sigma}\sum_{j\in I}\mb{E}[R_j(Y-Y_j)f'(Y)]-\mb{E}[f'(Y)]\bigg| = \bigg|\mb{E}\bigg[f'(Y)\bigg(\frac{1}{\sigma}\sum_{j\in I}{R_j(Y-Y_j)} - 1\bigg)\bigg]\bigg|\\
&\lesssim\bigg(\frac{1}{\sigma^2}\on{Var}\bigg[\sum_{j\in I}R_j(Y-Y_j)\bigg]\bigg)^{\frac{1}{2}}=\bigg(\frac{1}{\sigma^4}\sum_{\substack{k,m\in I\\t\in N_k, s\in N_{m}}}(\mb{E}[R_kR_tR_mR_s]-\mb{E}[R_kR_t]\mb{E}[R_mR_s])\bigg)^{\frac{1}{2}}\\
&=\frac{1}{\sigma^2}\bigg(\sum_{\substack{k,m\in I\\t\in N_k\setminus{k}, s\in R_{m}\setminus{m}}}(\mb{E}[R_kR_tR_mR_s]-\mb{E}[R_kR_t]\mb{E}[R_mR_s])\\
&\qquad\qquad\qquad+2\sum_{\substack{k,m\in I\\s\in N_{m}\setminus{m}}}(\mb{E}[R_k^2R_mR_s]-\mb{E}[R_k^2]\mb{E}[R_mR_s])+\sum_{k,m\in R}(\mb{E}[R_k^2R_m^2]-\mb{E}[R_k^2]\mb{E}[R_m^2])\bigg)^{\frac{1}{2}}
\end{align*}
Note that $\mb{E}[R_mR_s] = 0$ if $m\neq s$ so the above simplifies to 
\[\lesssim_\lambda\frac{1}{n^{2\ell-3}}\bigg(\sum_{\substack{k,m\in I\\t\in N_k\setminus{k}, s\in N_{m}\setminus{m}}}\mb{E}[R_kR_tR_mR_s] + 2\sum_{\substack{k,m\in I\\s\in N_{m}\setminus{m}}}\mb{E}[R_k^2R_mR_s]+\sum_{k,m\in I}(\mb{E}[R_k^2R_m^2]-\mb{E}[R_k^2]\mb{E}[R_m^2])\bigg)^{\frac{1}{2}}.\]
Now
\begin{equation}\label{eq:three-connected-1}
\sum_{k,m\in I}\mb{E}[R_k^2R_m^2]-\mb{E}[R_k^2]\mb{E}[R_m^2]\lesssim_\lambda n^4(n^{\ell-3})^4
\end{equation}
as $R_m^2$ and $R_k^2$ are independent unless they intersect in an edge and there are $O(n^4)$ such configurations. Next,
\begin{equation}\label{eq:three-connected-2}
\sum_{\substack{k,m\in I\\s\in N_{m}\setminus{m}}}\mb{E}[R_k^2R_mR_{s}] \lesssim_\lambda n^4(n^{\ell-3})^4.
\end{equation}
This is because in order for the term to be nonzero, since $m\neq s$, either $m$ or $s$ must share an edge with $k$, say $m$. If $m$ shares two edges or more, then it is contained within the same vertices as $k$, and we see that $s$ must have this property as well, leading to $O(n^3)$ configurations. If it shares exactly one edge with $k$, then we see $k, m$ span four vertices, and it is easy to see that $s$ cannot introduce a new vertex else some edge will have multiplicity $1$ in the term $R_k^2R_mR_s$. This leads to $O(n^4)$ configurations, and hence justified \cref{eq:three-connected-2}.

Finally,
\begin{equation}\label{eq:three-connected-3}
\sum_{\substack{k,m\in I\\t\in N_k\setminus{k}, s\in N_{m}\setminus{m}}}\mb{E}[R_kR_tR_mR_s]\lesssim_\lambda n^5(n^{\ell-3})^4.
\end{equation}
The reasoning is as follows. Note that $k, t$ overlap on an edge as do $m,s$, and since $t\neq k$ and $s\neq m$ we have that $R_kR_t$ and $R_mR_s$ share some edge as well. So the total graph spanned is connected. Additionally, each edge must be covered at least twice to be nonzero. Now consider how many edges, with multiplicity, are spanned by a term $R_kR_tR_mR_s$, or equivalently the degree of the term.

If the degree is at most $9$ for a nonzero term, each distinct edge is covered at least twice so we must have at most $4$ total edges in the resulting graph. The configuration is connected so has at most $5$ vertices. Hence there are $O(n^5)$ configurations.

For degrees $10$ and $11$, in a nonzero term there must be at least one triangle present among the $R$ factors, and there are at most $5$ edges in the support. Again we see that this leads to at most $5$ vertices. So we have $O(n^5)$ configurations again.

Finally, for degree $12$, every $R$ must correspond to a triangle. Hence in a nonzero term $R_k, R_t$ must be two triangles attached on an edge, and same for $R_m, R_s$. The only way to have a nonzero term is to superimpose these in some way. Thus there are again $4$ vertices, so $O(n^4)$ configurations. This justifies \cref{eq:three-connected-3}.

Using \cref{eq:three-connected-1,eq:three-connected-2,eq:three-connected-3}, we obtain the result.
\end{proof}

Since $x\mapsto\exp(itx)$ is $t$-Lipschitz, this allows for a comparison of the characteristic functions of $W_3/\sigma$ and $\mc{N}(0,1)$ when drawing from the $G(n,p)$ model, which is essentially what we need in this range as outlined in \cref{sub:overview}. However, we need to transfer this information about the $G(n,p)$ model into estimates on the $G(n,m)$ model.
\begin{lemma}\label{lem:low-t}
Let $W$ be an $(\ell,\lambda)$-statistic with normalized version $\mc{K}$. Then for all $\eps > 0$ and $t\in\mb{R}$ we have
\[|\mb{E}[e^{it\mc{K}}]-e^{-t^2/2}|\lesssim_{\lambda,\eps}\frac{|t|}{n^{1/2-\eps}}.\]
\end{lemma}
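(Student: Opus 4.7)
The plan is to decompose $\mc{K} = W_3/\sigma + R$ where $R = (W_4 + \cdots + W_\ell)/\sigma$, bound the tail piece $R$ via hypercontractivity plus a Jain-style conditioning, and then handle the main piece $W_3/\sigma$ by invoking \cref{lem:three-connected-Gaussian} in $G(n,p)$ and transferring the resulting characteristic-function estimate to $G(n,m)$ via a repair coupling. The final bound follows from the triangle inequality together with the Lipschitz estimate $|e^{itx}-e^{ity}|\le|t||x-y|$.

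For the $R$ contribution, orthogonality of graph factors in $G(n,p)$ (\cref{eq:orthogonality}) gives $\|W_k\|_{L^2(G(n,p))}^2\lesssim_{\lambda} n^{2\ell-k}$, so $\|W_k/\sigma\|_{L^2(G(n,p))}\lesssim_{\lambda} n^{(3-k)/2}\le n^{-1/2}$ for each $k\ge 4$. By \cref{thm:moment-hypercontractivity} the same bound holds in every fixed $L^q$. Since $\mb{P}_{G(n,p)}[\sum_e x_e = m]=\Theta_{\lambda}(1/n)$ (local CLT for the binomial sum), H\"older's inequality with a large dual exponent $p\asymp 1/\eps$ transfers the estimate to $L^2(G(n,m))$ with only an $n^\eps$ loss. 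Summing over $k$ yields $\|R\|_{L^2(G(n,m))}\lesssim_{\lambda,\eps}n^{\eps-1/2}$, hence
\[
|\mb{E}_{G(n,m)}[e^{it\mc{K}}]-\mb{E}_{G(n,m)}[e^{itW_3/\sigma}]|\le |t|\,\mb{E}|R|\lesssim_{\lambda,\eps}|t|n^{\eps-1/2}.
\]

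For the $W_3/\sigma$ piece, \cref{lem:three-connected-Gaussian} and the $|t|$-Lipschitz property of $x\mapsto e^{itx}$ give $|\mb{E}_{G(n,p)}[e^{itW_3/\sigma}]-e^{-t^2/2}|\lesssim_{\lambda}|t|n^{-1/2}$. The transfer to $G(n,m)$ uses a quantile coupling: sample $(U_e)_{e\in\binom{[n]}{2}}$ i.i.d.\ uniform on $[0,1]$, set $G_p=\{e:U_e<p\}$ and $G_m=\{e:U_e\le U_{(m)}\}$ where $U_{(m)}$ is the $m$-th smallest value. Then $G_p\sim G(n,p)$, $G_m\sim G(n,m)$, and concentration of order statistics yields $|D|:=|G_p\triangle G_m|\lesssim n^{1+\eps/2}$ outside an event of probability $e^{-n^{\Omega(\eps)}}$ (whose contribution is absorbed by the crude $O(n^{3/2})$ pointwise bound on the change). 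Taylor-expand $W_3(G_m)-W_3(G_p)$ in the $\chi_e$-increments $\delta_e=\pm 1/\sqrt{p(1-p)}$ (all sharing a sign, determined by whether $U_{(m)}$ lies above or below $p$), truncating at degree $3$. Each order is bounded in $L^2$ using orthogonality in $G(n,p)$: the first-order piece $\sum_{e\in D}\delta_e\partial_eW_3|_{G_p}$ produces a bilinear form $\sum_{e,e'\in D}\delta_e\delta_{e'}\mb{E}[\partial_eW_3\,\partial_{e'}W_3]$ whose direct combinatorial evaluation has size $O_{\lambda}(n^{2\ell-4+2\eps})$, and the second- and third-order pieces contribute at most $O_{\lambda}(n^{2\ell-4+2\eps})$ as deterministic shifts. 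Dividing by $\sigma^2=\Theta_{\lambda}(n^{2\ell-3})$ yields $\|(W_3(G_m)-W_3(G_p))/\sigma\|_{L^2}\lesssim_{\lambda,\eps}n^{\eps-1/2}$, which combined with the triangle inequality finishes the proof.

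The main obstacle will be the bilinear-form estimate in the first-order Taylor term. Orthogonality forces $\mb{E}[\partial_eW_3\,\partial_{e'}W_3]=0$ unless the supports of the two derivatives coincide: for the triangle factor $\gamma_{K_3}$ this essentially collapses to $e=e'$, but for the path factor $\gamma_{P_3}$ each $e$ admits $\Theta(n)$ non-orthogonal partners (coming from edges sharing a vertex with $e$). With the $\delta_e$ carrying a common sign there is no cancellation available, so one must exploit that $D$ is a \emph{random} edge set whose pairs-sharing-a-vertex count is only $O(|D|^2/n)=O(n^{1+2\eps})$ in order to keep the off-diagonal balance. A secondary technicality is ensuring that the $G(n,p)$ orthogonality calculation remains valid when conditioning on $U_{(m)}$, which requires a short decoupling argument showing that the $\chi_e$ for $e\notin D$ are only mildly biased given $U_{(m)}$.
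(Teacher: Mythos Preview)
Your high-level plan matches the paper's exactly: split off $R=(W_4+\cdots+W_\ell)/\sigma$ and control it by hypercontractivity plus conditioning, then compare $W_3/\sigma$ to a Gaussian in $G(n,p)$ via \cref{lem:three-connected-Gaussian} and transfer to $G(n,m)$ by a repair coupling. Your treatment of $R$ is correct and essentially identical to the paper's, and the quantile coupling you describe is equivalent to the repair coupling the paper uses.

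The gap is in your $L^2$ analysis of $W_3(G_m)-W_3(G_p)$. You write the first-order piece as producing the bilinear form $\sum_{e,e'\in D}\delta_e\delta_{e'}\,\mb{E}[\partial_eW_3\,\partial_{e'}W_3]$ and then evaluate it with $G(n,p)$ orthogonality. But $D$ is not independent of $\chi^{(p)}$ under the coupling: conditional on $D$ (and the direction), every $\chi_e^{(p)}$ with $e\in D$ is deterministic, and the remaining $\chi_e^{(p)}$ for $e\notin D$ live on a slice, not in the product measure. So neither interpretation of your $\mb{E}$ is the quantity you actually need, and the ``secondary technicality'' you flag is in fact the main analytic content of the step. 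The fact that the $\delta_e$ share a sign, so that the $\Theta(n)$ non-orthogonal $P_3$ partners per edge cannot cancel, makes this especially delicate; you correctly identify that one must use the randomness of $D$, but you cannot invoke it while simultaneously treating $\chi^{(p)}$ as unconditioned $G(n,p)$ variables.

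The paper resolves this by reversing the order of operations: fix $S_0=D$ and the direction first, and write $W_3(G_m)-W_3(G_p)$ as a polynomial $\sum_{T\subseteq S_0^c}c_T\chi_T$ in the \emph{remaining} variables $\chi_e$, $e\notin S_0$, with coefficients $c_T$ that are functions of $S_0$ only. One then shows that $\sum_T c_T^2$ is small with high probability over the randomness of $S_0$ (this is where the ``$S_0$ has few vertex-sharing pairs'' heuristic is actually used), and finally applies hypercontractivity plus the slice-transference trick to the fixed polynomial. This gives a pointwise high-probability bound $|W_3(G_m)-W_3(G_p)|\lesssim n^{\ell-2+\eps}$ rather than merely an $L^2$ bound, and sidesteps the correlation issue entirely. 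Your Taylor organization can be made to work, but only after this conditioning step, at which point it becomes the paper's argument.
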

\begin{remark}
Recall that for $\mc{K}$ the edges are drawn from $G(n,m)$.
\end{remark}
\begin{proof}
We first couple the independent and fixed-size models in order to compare their information. Sample $\chi' = (\chi_{ij}')_{1\le i<j\le n}$ from the $G(n,p)$ model. Then adjust a uniformly random subset of the edges (or non-edges, depending on if there are too many or too few edges) to obtain exactly $m$ edges. Call the resulting random variable $\chi$. By symmetry considerations, $\chi$ is distributed as if it came from the $G(n,m)$ model.

By Azuma--Hoeffding, with probability $1-\exp(-\Omega_\lambda((\log n)^2))$ the number of edges adjusted is $O_\lambda(n\log n)$. Now let $Y = W_3/n^{\ell-3}$ (different than in the proof of \cref{lem:three-connected-Gaussian}), so that the coefficients are constant-order. We claim that for any $\eps > 0$,
\begin{equation}\label{eq:low-t-coupling}
\mb{P}[|Y[\chi]-Y[\chi']|\ge n^{1+\eps}]\lesssim\exp(-\Omega_\lambda((\log n)^2)).
\end{equation}
Let $S_0$ be the set of edges that are changed and $S_1$ be the direction they changed in. Given these two variables, note that $\chi_e,\chi_e'$ are determined for $e\in S_0$, and $\chi_e = \chi_e'$ for $e\notin S_0$. We can therefore expand $Y[\chi]-Y[\chi']$ as a degree at most $3$ polynomial in the variables $\chi_e$, $e\notin S_0$. The coefficients are functions of $S_0,S_1$. Note that the monomials $\chi_T$ are supported only on $|T|\le 3$. We therefore can write
\begin{equation}\label{eq:low-t-expansion}
Y[\chi]-Y[\chi'] = \sum_{|T|\le 3, T\cap S_0 = \emptyset}c_T\chi_T.
\end{equation}

We show that the sum of the squares of the coefficients of this random polynomial is small, with high probability over the randomness of $S_0,S_1$. As noted, with high probability we have $|S_0|\lesssim_\lambda n\log n$. Next, the polynomial \cref{eq:low-t-expansion} only has terms that have interacted with $\chi_e$ for $e\in S_0$, i.e., is supported on $\chi_T$ for which there is a nonempty set $U\subseteq S_0$ of edges with $T\cup U$ a triangle or path of length $2$. Thus in fact \cref{eq:low-t-expansion} is degree at most $2$.

Each $e\in S_0$ is in $O(n)$ triangles or paths of length $2$, so the total number of nonzero coefficients in \cref{eq:low-t-expansion} is $O_\lambda(n^2\log n)$ with high probability. We now bound the sum of squares of coefficients for various types of terms $\chi_T$ in magnitude. It is worth noting before we do this that conditional on $|S_0|, S_1$, we have that the set $S_0$ of edges is uniformly distributed among all possible subsets of the given size.
\begin{enumerate}[1.]
    \item For $|T|=2$, since the coefficients of $Y$ are bounded by $\lambda$, we obtain a total contribution of $O_\lambda(n^2\log n)$ with high probability since \cref{eq:low-t-expansion} has that many terms with high probability.
    \item For $|T|=1$, say $T=\{e\}$, this coefficient is (up to a constant depending on $\lambda$) bounded by the number of triangles or length $2$ paths which contain $e$ and such that the remaining edges are in $S_0$. Now we fix some value $|S_0|\lesssim_\lambda n\log n$ and look over the randomness of $S_0$. We care only about edges incident to $e$, and in particular, the desired coefficient is linearly bounded by how many edges there are.
    
    Now, if we choose $S_0$ instead by selecting each of the $\binom{n}{2}$ edges with probability $(\log n)^2/n$, say, we see that with probability at least $1/3$, say, the number of resulting edges is greater than the desired value of $|S_0|$ fixed above. By symmetry and monotonicity of edge counts, we see that the probability of too many edges incident to $e$, under the desired model, is at most thrice the probability under the independent model. And in the independent model, by Chernoff, with probability $\exp(-(\log n)^2)$ the desired edge count is $O_\lambda((\log n)^2)$ hence the coefficient is $O_\lambda((\log n)^2)$. The total contribution to the sum of squares is $O_\lambda(n^2(\log n)^4)$ with high probability.
    \item For the constant term we are bounding the number of length $2$ paths and triangles which appear in $S_0$. Note that with high probability any edge of $S_0$ appears in $O_\lambda((\log n)^2)$ triangles or length $2$ paths in total by the analysis in item $2$ and thus this coefficient is bounded by $|S_0|(\log n)^2\lesssim_\lambda n(\log n)^3$ with high probability, giving a contribution of $O_\lambda(n^2(\log n)^6)$ to the sum of squares.
\end{enumerate}
In conclusion, using a union bound, we have shown that over the randomness of $S_0,S_1$, with probability $1-\exp(-\Omega_\lambda((\log n)^2))$, the sum of squares of the coefficients in \cref{eq:low-t-expansion} is $O_\lambda(n^2(\log n)^6)$, and also $|S_0| = O_\lambda(n\log n)$.

Now fix $S_0, S_1$ in such a case. We then sample every edge $e\notin S_0$ with a fixed sum depending on $|S_0|$ and $S_1$, since this has the correct distribution for $\chi$. If instead we sampled $e\notin S_0$ from $G(n,p)$, by hypercontractivity (\cref{thm:concentration-hypercontractivity}) on the polynomial \cref{eq:low-t-expansion} we have $|Y[\chi']-Y[\chi]|\ge n^{1+\eps}$ with probability $\exp(-\Omega_\lambda(n^{\eps'}))$. Since $|S_0|\lesssim_\lambda n\log n$, we see that we sample the correct total number of edges $e\notin S_0$ with probability $\exp(-O_\lambda((\log n)^2))$, which is much bigger than the failure probability above. Therefore, if we condition on drawing the right amount of edges, the quality of this estimate is preserved. Thus
\[\mb P[|Y[\chi']-Y[\chi]|\ge n^{1+\eps}|S_0,S_1]\le\exp(-\Omega_\lambda(n^{\eps'}))\]
for every $S_0,S_1$ in one of the cases delineated above. The remainder of cases occur with probability $\exp(-\Omega_\lambda((\log n)^2))$ over the randomness of $S_0,S_1$.

Overall, therefore, we have proven the desired \cref{eq:low-t-coupling}:
\[\mb P[|Y[\chi']-Y[\chi]|\ge n^{1+\eps}]\lesssim\exp(-\Omega_\lambda((\log n)^2)).\]
To complete the proof we note that
\[\mc{K} = \frac{n^{\ell-3}Y}{\sigma}+\frac{\sum_{k\ge 4}W_k}{\sigma}.\]
Letting $\mc{K}_{\text{rem}}$ denote the latter term, thus 
\begin{align*}
|\mb{E}[e^{it\mc{K}}]-e^{-t^2/2}|&\le\mb{E}[\min(|t\mc{K}_{\text{rem}}|,2)] + |\mb{E}[e^{itn^{\ell-3}Y/\sigma}]-e^{-t^2/2}|\\
&\le\mb{E}[\min(|t\mc{K}_{\text{rem}}|,2)] + |\mb{E}[e^{itn^{\ell-3}Y/\sigma}-e^{itn^{\ell-3}Y'/\sigma}]| + |\mb{E}[e^{itn^{\ell-3}Y'/\sigma}]-e^{-t^2/2}|\\
&\lesssim_{\lambda,\eps} |t|/n^{1/2-\eps} + |t|/n^{1/2-\eps} + |t|/\sqrt{n}+\exp(-\Omega_\lambda((\log n)^2)),
\end{align*}
where $Y=Y[\chi]$, $Y'=Y[\chi']$ are coupled together as described above. The last inequality is derived as follows. The third term comes from the fact that $x\mapsto e^{itx}$ is $t$-Lipschitz and using \cref{lem:three-connected-Gaussian} (technically we have to separate into real and imaginary parts). The second term comes from $\sigma/n^{\ell-3} = \Theta_\lambda(n^{3/2})$ and the distance estimate \cref{eq:low-t-expansion} on $Y$ vs. $Y'$.

The first term comes from hypercontractivity (\cref{thm:concentration-hypercontractivity}) once more: $\sigma\mc{K}_{\text{rem}}$ has standard deviation $O_\lambda(n^{\ell-2})$ in the $G(n,p)$ model, hence is at least $n^\eps$ times that with probability $\exp(-\Omega_\lambda(n^{\eps'}))$. With probability $\Theta_\lambda(1/n)$ we draw exactly $m$ edges, hence the integrity of this concentration estimate is maintained even if we condition on drawing $m$ edges. The failure probability is absorbed into the fourth error term when multiplied by $\min(|t\mc{K}_{\text{rem}}|,2)\in [0,2]$. In the non-failure cases, $|t\mc{K}_{\text{rem}}|\lesssim_{\lambda,\eps} |t|/n^{1/2-\eps}$.

The final bound easily implies the result.
\end{proof}
We will see later that for $|t|\le n^{\eps}$, \cref{lem:low-t} immediately establishes estimates of the desired quality.

\subsection{Mean and standard deviation considerations}\label{sub:standard-deviation-considerations} 
This is the promised discussion of the difference between $\sigma$ and $\sigma_W$, which is the true standard deviation in the model, and the difference between $0$ and $\mb{E}[\sum_{3\le k\le \ell}W_k|G(n,m)]$. 
\begin{lemma}\label{lem:std-close}
Let $W$ be an $(\ell,\lambda)$-statistic with $p = m/\binom{n}{2}\in (\lambda,1-\lambda)$ and define $\sigma^2 = \on{Var}[\sum_{k=3}^{\ell}W_k|G(n,p)]$ and $\sigma_W^2 = \on{Var}[W|G(n,m)]$. Then
\[\sigma = \sigma_W(1+O_{\lambda,\eps}(n^{\eps-1/2})).\]
Furthermore, if $\mathcal{K}$ is the normalized version of $W$ then
\[|\mb{E}[\mathcal{K}|G(n,m)]|\lesssim_{\lambda,p} n^{-1/2}.\]
\end{lemma}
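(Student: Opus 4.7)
The plan is to exploit the crucial observation that in the $G(n,m)$ model, $W_2 = n^{\ell-2}\Delta_{K_2}\sum_e \chi_e$ vanishes pointwise, since $\sum_e x_e = m$ forces $\sum_e \chi_e \equiv 0$. Thus $\mc{K} = V/\sigma$ where $V = \sum_{k\ge 3} W_k$, and the lemma reduces to bounding $\mb{E}[V|G(n,m)]$ and comparing $\on{Var}[V|G(n,m)]$ to $\sigma^2$.

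The key computational input is the sequence of ``slice moments'' $f(d) := \mb{E}[\chi_{e_1}\cdots\chi_{e_d}|G(n,m)]$ for $d$ distinct edges, which is well-defined by exchangeability. Inclusion-exclusion gives $f(d) = (p(1-p))^{-d/2}\sum_{k=0}^d\binom{d}{k}(-p)^{d-k}(m)_k/(N)_k$ where $N = \binom{n}{2}$. I would expand $(m)_k/(N)_k$ in powers of $1/N$ and invoke the classical vanishings $\sum_k\binom{d}{k}(-1)^{d-k}k^j = 0$ for $j<d$. This yields $f(0)=1$, $f(1)=0$, and the sharp decay $|f(d)| = O_\lambda(N^{-\lceil d/2\rceil})$ for $d\ge 2$ (one verifies by hand $f(2) = -1/(N-1)$ and $f(3) = 2(1-2p)/[\sqrt{p(1-p)}(N-1)(N-2)]$, matching the Gaussian-limit heuristic via Wick's theorem).

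For the mean bound, $\mb{E}[\gamma_H|G(n,m)] = |\{E\simeq H\}|\cdot f(e(H)) = \Theta(n^{v(H)})\cdot f(e(H))$. Since each $H\in\bigcup_{k\ge 3}\mc{H}_k$ has no isolated vertices and $v(H)\ge 3$, we have $e(H)\ge 2$, so $|\mb{E}[\gamma_H|G(n,m)]|\lesssim_\lambda n^{v(H)-2}$. Summing, $|\mb{E}[V|G(n,m)]|\lesssim_\lambda n^{\ell-2}$, and dividing by $\sigma = \Theta_\lambda(n^{\ell-3/2})$ gives $|\mb{E}[\mc{K}|G(n,m)]|\lesssim n^{-1/2}$. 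For the variance, I would use $\chi_e^2 = 1 + \tau\chi_e$ with $\tau = (1-2p)/\sqrt{p(1-p)}$ to expand $\chi_E\chi_{E'} = \sum_{S\subseteq E\cap E'}\tau^{|S|}\chi_{S\cup(E\triangle E')}$ and then apply $\mb{E}[\cdot|G(n,m)]$. The diagonal $E=E'$ gives $(1+O_\lambda(n^{-2}))\sigma^2$ by matching with $G(n,p)$-orthogonality. For the off-diagonal, stratifying pairs $(E,E')$ by the isomorphism type of $E\cup E'$ (with overlap $r = |E\cap E'|$ and symmetric difference $s = |E\triangle E'|\ge 1$) and using the sharp $f(d)$ decay, the worst-case contribution is $O_\lambda(n^{2\ell-4})$, coming from essentially edge-disjoint pairs with $s=4$. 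Combined with $(\mb{E}[V|G(n,m)])^2 = O_\lambda(n^{2\ell-4})$, this yields $|\sigma_W^2 - \sigma^2|\lesssim_\lambda n^{2\ell-4}$, and since $\sigma^2 = \Theta_\lambda(n^{2\ell-3})$, we get $\sigma_W = \sigma(1 + O_\lambda(n^{-1}))$, much stronger than required.

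The main obstacle is establishing the sharp slice-moment decay $|f(d)| = O_\lambda(N^{-\lceil d/2\rceil})$. A uniform bound of $O_\lambda(N^{-1})$ alone would be insufficient to control the off-diagonal variance contribution from edge-disjoint pairs, since there are $\Theta(n^{v(H)+v(H')})$ such configurations and only the sharp $n^{-e(H)-e(H')}$-type decay balances the count. Once this decay is established via careful expansion of the falling factorial ratios and repeated use of the combinatorial vanishings, the remaining overlap-stratified counts are routine.
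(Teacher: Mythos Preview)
Your approach is correct and genuinely different from the paper's. The paper proves the variance comparison by reusing the $G(n,p)\leftrightarrow G(n,m)$ coupling from the preceding lemma: it writes $Y_k = W_k' - W_k$, invokes the earlier bound $\mb{P}[|Y_3|\ge n^{\eps-1/2}\sigma]\lesssim\exp(-\Omega((\log n)^2))$, controls $W_k,W_k'$ via hypercontractivity-plus-subsampling, and then bounds the difference of variances by $\mb{E}[(\sum Y_k)(\sum(W_k+W_k'))]$-type terms. This yields only $O_{\lambda,\eps}(n^{\eps-1/2})$. For the mean, the paper uses the crude conditional-expectation bound $\mb{E}[\chi_{e_j}\mid\chi_{e_1},\dots,\chi_{e_{j-1}}]=O_\lambda(n^{-2})$ to get $\mb{E}[\text{monomial}]=O_\lambda(n^{-2})$ uniformly.

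Your route is more elementary and sharper. The slice-moment decay $|f(d)|=O_\lambda(N^{-\lceil d/2\rceil})$ follows cleanly from the recursion obtained by expanding $0=\mb{E}[(\sum_e\chi_e)\chi_{e_1}\cdots\chi_{e_{d-1}}]$, namely $(N-d+1)f(d)=-(d-1)[f(d-2)+\tau f(d-1)]$, so the ``main obstacle'' you flag is not really an obstacle. With that decay in hand, your overlap-stratified bound works: writing $w=|V(E)\cap V(E')|$ and $s=|E\triangle E'|$, one checks case by case that $w+2\lceil s/2\rceil\ge 4$ whenever $E\neq E'$ and $v(H),v(H')\ge 3$ (using $e(H),e(H')\ge 2$), which gives every off-diagonal contribution the exponent $2\ell-4$. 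You therefore obtain $\sigma_W=\sigma(1+O_\lambda(n^{-1}))$, strictly better than the paper's $O_{\lambda,\eps}(n^{\eps-1/2})$. What the paper's argument buys is that it recycles the coupling already built for \cref{lem:low-t} and avoids any new moment computation; what yours buys is a sharper bound with no coupling or hypercontractivity needed for this lemma.
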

\begin{proof}
We will first show that $\sigma = \sigma_W(1+O_{\lambda,\eps}(n^{\eps-1/2}))$ using the coupling between $G(n,m)$ and $G(n,p)$ given in the previous subsection. First note
\[\on{Var}[W|G(n,m)] = \on{Var}\bigg[\sum_{3\le k\le \ell} W_k\bigg|G(n,m)\bigg]\]
as fixing the number of edges fixes $W_2$. Now, in $G(n,p)$, by a trivial calculation we have
\[\on{Var}\bigg[\sum_{3\le k\le \ell} W_k\bigg|G(n,p)\bigg] = (1+\Theta_\lambda(n^{-1}))\on{Var}[W_3|G(n,p)].\]
Therefore, to prove our claim it suffices to show that
\[\on{Var}\bigg[\sum_{3\le k\le \ell} W_k\bigg|G(n,p)\bigg] = (1+O_{\lambda,\eps}(n^{\eps-1/2}))\on{Var}\bigg[\sum_{3\le k\le \ell} W_k\bigg|G(n,m)\bigg].\]
To prove this consider the coupling between $G(n,m)$ and $G(n,p)$ in the previous subsection; that is, sample $G(n,p)$ and then adjust the number of edges to be exactly $m$. Now define
\[Y_k = W_k' - W_k\]
where $W_k'$ has distribution corresponding to that in $G(n,p)$ and $W_k$ the appropriate distribution in $G(n,m)$ for $3\le k\le \ell$. The key claim in the previous subsection was essentially that 
\[\mb{P}[|Y_3|\ge n^{\eps-1/2}\sigma]\lesssim\exp(-\Omega_{\lambda,\eps}((\log n)^2))\]
and this coupling almost immediately gives the desired result. In particular,
\begin{align*}
\on{Var}\bigg[\sum_{3\le k\le\ell}{W_k'}\bigg]&-\on{Var}\bigg[\sum_{3\le k\le\ell}{W_k}\bigg]\\
&= \mb{E}\bigg[\bigg(\sum_{3\le k\le\ell}W_k'\bigg)^2 - \bigg(\sum_{3\le k\le\ell}W_k\bigg)^2\bigg] + 
\mb{E}\bigg[\sum_{3\le k\le\ell}W_k\bigg]^2-\mb{E}\bigg[\sum_{3\le k\le\ell}W_k'\bigg]^2\\
&= \mb{E}\bigg[\bigg(\sum_{3\le k\le\ell}Y_k'\bigg)\bigg(\sum_{3\le k\le\ell}W_k + W_k'\bigg)\bigg] - 
\mb{E}\bigg[\sum_{3\le k\le\ell}Y_k'\bigg]\mb{E}\bigg[\sum_{3\le k\le\ell}W_k + W_k'\bigg].\\
\end{align*}
By using that $W_i$ are mean zero in $G(n,p)$ and hypercontractivity (\cref{thm:concentration-hypercontractivity}) with sub-sampling it follows that 
\[\mb{P}\bigg[\bigg|\sum_{3\le k\le\ell}W_k'\bigg|\ge n^{\eps}\sigma\bigg]\lesssim\exp(-\Omega_{\lambda,\eps}((\log n)^2)),\qquad\mb{P}\bigg[\bigg|\sum_{3\le k\le\ell}W_k\bigg|\ge n^{\eps}\sigma\bigg]\lesssim\exp(-\Omega_{\lambda,\eps}((\log n)^2)).\]
Furthermore since $|\sum_{4\le k \le\ell} Y_k|\le |\sum_{4\le k\le \ell}W_k|+|\sum_{4\le k\le \ell}W_k'|$
it follows, again using hypercontractivity (\cref{thm:concentration-hypercontractivity}) with sub-sampling, that 
\[\mb{P}\bigg[\bigg|\sum_{4\le k\le\ell}Y_k\bigg|\ge n^{\eps-1/2}\sigma_W\bigg]\lesssim\exp(-\Omega_{\lambda,\eps}((\log n)^2))\]
as standard deviation of $\sum_{4\le k\le \ell}W_k$ is $\Theta_\lambda(n^{-1/2})$ smaller than $\sum_{3\le k\le \ell}W_k$. Finally, using that $W_k, W_k'$ are polynomially bounded random variables, substituting in the above analysis gives that 
\[\bigg|\mb{E}\bigg[\bigg(\sum_{3\le k\le\ell}Y_k\bigg)\bigg(\sum_{3\le k\le\ell}W_k + W_k'\bigg)\bigg] - 
\mb{E}\bigg[\sum_{3\le k\le\ell}Y_k\bigg]\mb{E}\bigg[\sum_{3\le k\le\ell}W_k + W_k'\bigg]\bigg|\lesssim_{\lambda,\eps}n^{2\eps-1/2}\sigma^2,\]
and rearranging and taking square roots the desired claim that $\sigma = \sigma_W(1+O_{\lambda,\eps}(n^{\eps-1/2}))$ follows.

We now derive that $|\mb{E}[\sum_{3\le k\le \ell}W_k|G(n,m)]|\lesssim_\lambda n^{-1/2}\sigma_W$ which is equivalent to the second estimate we wish to derive. To see this we simply use linearity of expectation. All we need is that any multi-linear degree $k$ or less monomial in the $\chi_{e}$ has expectation bounded in absolute value by $O_\lambda(n^{-2})$. To see this we compute
\[\mb{E}\big[\chi_{e_j}|\chi_{e_1},\ldots,\chi_{e_{j-1}}\big]\in\bigg[(m-j+1)/\bigg(\binom{n}{2}-j+1\bigg)-p, m/\bigg(\binom{n}{2}-j+1\bigg)-p\bigg]/\sqrt{p(1-p)},\]
which is $O_\lambda(n^{-2})$. Using this estimate directly it follows that
\[\mb{E}\bigg[\sum_{3\le k\le\ell} W_k\bigg]\lesssim_{\lambda,p}\sum_{3\le k\le \ell}n^{\ell-k}(n^{k})O_{\ell,p}(1/n^2)\lesssim_\lambda n^{\ell-2}\lesssim_\lambda n^{-1/2}\sigma_W.\qedhere\]
\end{proof}

\subsection{Bounds for $n^\eps\le|t|\le\sigma n^{-\eps}$}\label{sub:bounds-intermediate}
This subsection is by far the most elaborate in the paper due to various technical computations. At first reading the reader is recommended to take various probability and concentration claims at face value and not delve deeply into the calculations. It may also be useful to think of the $G(n,p)$ case as a model for calculations.

For this section consider the following decoupling. Choose some $1\le k\le\ell-2$. Partition the vertex set into $U_1,\ldots,U_k$ of size $n^\beta$, for some $\beta\in (0,1)$, and let the remaining vertices form $U_0$. We now separate the edge set into $k+1$ classes $B_0,\ldots,B_k$, where an edge between a vertex of $U_i$ and $U_j$ is put in $B_{\max(i,j)}$. We will require that $\beta$ is bounded away from $0$ and $1$ by a constant depending only on $\mc{H}$. Therefore $B_0$ has $\Theta(n^2)$ edges and each $B_i$ for $i\ge 1$ has $\Theta(n^{1+\beta})$ edges. This decoupling is closely related to that in \cite[Sections~9~and~10]{B2}.

Now sample $(Z_i)_{0\le i\le k}$, the number of edges chosen in each $B_i$, as if it is coming from $G(n,m)$, and then sample $X$, the actual vector of edges of $B_0$ (conditional on the $Z_i$). Then sample two independent copies $Y_i^0,Y_i^1$ of the vector of edges in $B_i$ for $1\le i\le k$, conditional on the previous information. Equivalently, we sampled from $G(n,m)$ and then resampled the edges in $(B_i)_{1\le i\le k}$ but preserved the number of edges in each $B_i$.

Define a \emph{suitable} outcome to be if $|Z_i-p|B_i||\le\sqrt{|B_i|}\log{|B_i|}$, say. The key point is that there is an overwhelming probability that all $Z_i$ are suitable by Azuma--Hoeffding and union bounding over a fixed number of events $k+1\le\ell$. Indeed, the probability of failure is $\exp(-\Omega_\lambda((\log n)^2))$.

If we sample the edges of $B_i$ with probability $p$ independently (sampling $i\ge 1$ twice) then we attain any particular suitable vector of edge counts over the $B_i$ with probability at least $\exp(-\Omega_\lambda((\log n)^2))$. Therefore, if in this independent model an event has probability at most $\exp(-\Omega_\lambda((\log n)^3))$, then even in the $G(n,m)$ model within suitable outcomes it occurs with this probability, perhaps weakening the constants in the exponent. (This is a version of the transference trick we used in the small $|t|$ regime as well.) Then we must add back in the unsuitable cases, which account for a probability of at most $\exp(-\Omega_\lambda((\log n)^2))$ by the above application of Azuma--Hoeffding.

Now define coefficients $\delta$, which are functions of $\mbf{Y}$, via 
\[\alpha(W)(X,\mbf{Y}) = \delta_\emptyset+\sum_{e\in B_0} \delta_{e}\chi_e + \sum_{\substack{S\subseteq B_0\\|S|\ge 2}}\delta_S\chi_S.\]
We now proceed to prove an absurd number of bounds on these coefficients with extremely high probability in the $G(n,p)$ model, such that the above argument applies to transfer the high probability to the $G(n,m)$ model. Note that the $\delta$ are polynomials in $\chi_e^b$ for $e\notin B_0$ and $b\in\{0,1\}$. In fact, we have
\[\delta_S = \sum_{H'\simeq H\in\mc{H}}\Delta_Hn^{\ell-v(H)}\prod_{j=1}^k\bigg(\prod_{\substack{e\in E(H')\setminus{S}\\e\in B_j}}\chi_{e}^1-\prod_{\substack{e\in E(H')\setminus{S}\\e\in B_j}}\chi_{e}^0\bigg),\]
where the sum is over subgraphs $H'$ of $\binom{[n]}{2}$ isomorphic to a graph in $\mc{H}$ such that $H'$ contains all $e\in S$, no other edges of $B_0$, and at least $1$ edge in each $B_i$ for $i\ge 1$. Therefore there is at least one vertex in each $U_i$ with $i\ge 1$, and all the vertices of $S$ are included. We will find it convenient to extract a ``main term'' from $\delta_e$, $e\in B_0$, namely
\[\delta_e' = \sum_{H'\simeq H\in\mc{H}_{k+2}}\Delta_Hn^{\ell-k-2}\bigg(\prod_{\substack{e\in E(H')\setminus{S}\\e\in B_j}}\chi_{e}^1-\prod_{\substack{e\in E(H')\setminus{S}\\e\in B_j}}\chi_{e}^0\bigg).\]
To be clear, the sum is over subgraphs $H'$ of $\binom{[n]}{2}$ isomorphic to a graph in $\mc{H}_{k+2}$ such that $H'$ contains $e$, no other edges of $B_0$, and at least one edge in each $B_i$ for $i\ge 1$. Thus it has $k+2$ vertices, which by the above considerations is the smallest number of vertices $H'$ could have. We can easily show by induction on $i\ge 1$ that every vertex is connected within $H'$ to the edge $e$, hence the $H'$ considered must be connected.  And by hypothesis, $\mc{H'}\cap\mc{H}_{k+2}$ has a connected graph, so $\delta_e'$ should be nontrivial. Let $r_e = \delta_e - \delta_e'$ be the remainder. We will now prove the following set of bounds on the sizes of these coefficients.
\begin{lemma}\label{lem:various-bounds}
Let $X$, $\mbf{Y}$, $\delta_S$, $\delta_e$, $\delta_e'$, and $r_e$ be as above. Let $C$ be a suitably large constant. Then have the following concentration bounds (in the $G(n,p)$ model).
\begin{enumerate}
    \item We have that
    \[\mb{P}[\sup_{e\in B_0}|\delta_e|\ge n^{\ell-k-2+k\beta/2}(\log n)^C]\le \exp(-\Omega_{\lambda}((\log n)^{3})).\]
    \item  We have that
    \[\mb{P}[\sup_{e\in B_0}|r_e|\ge n^{\ell-k-5/2+k\beta/2}(\log n)^C]\le \exp(-\Omega_{\lambda}((\log n)^{3})).\]
    \item We have that
    \[\mb{E}\bigg[\sum_{e\in B_0}\delta_e^2\bigg] = \Theta_\lambda(n^{2(\ell-k-1)+k\beta})\]
    and 
    \[\on{Var}\bigg[\sum_{e\in B_0}\delta_e^2\bigg] = O_{\lambda}(n^{4(\ell-k-1)+(2k-1)\beta)}).\]
    \item We have that 
    \[\mb{P}\bigg[\bigg|\sum_{e\in B_0}\delta_e\bigg|\le n^{(\ell-k-1/2)+k\beta/2}(\log n)^{C}\bigg]\le \exp(-\Omega_{\lambda}((\log n)^{3})).\]
    \item We have that 
    \[\mb{P}\bigg[\bigg|\sum_{\substack{S\subseteq B_0\\|S|\ge 2}}\delta_S^2\bigg|\le n^{2(\ell-k)-3+k\beta}(\log n)^{2C}\bigg]\le\exp(-\Omega_{\lambda}((\log n)^{3})).\]
\end{enumerate}
\end{lemma}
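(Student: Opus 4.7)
I would work throughout in the $G(n,p)$ model, where the $\chi_g^b$ for $g\notin B_0$, $b\in\{0,1\}$ are iid mean-zero variance-one random variables, and then transfer each tail bound to the conditional decoupling model via the suitable-outcome argument: since suitable outcomes of $(Z_i)_{1\le i\le k}$ have probability at least $\exp(-\Omega_\lambda((\log n)^2))$, any event of probability at most $\exp(-\Omega_\lambda((\log n)^3))$ in the iid model retains essentially the same probability in the conditional model (with slightly weakened constants). All quantities in \cref{lem:various-bounds} are polynomials of degree $O_\ell(1)$ in the $\chi_g^b$, so the plan is to compute second moments and then convert them to tail bounds using hypercontractivity (\cref{thm:concentration-hypercontractivity,thm:moment-hypercontractivity}).

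For items (1) and (2), I will compute $\mb E[\delta_e^2]$ and $\mb E[r_e^2]$ by expanding the sum over isomorphic subgraph embeddings and using orthogonality of the $\chi_g^b$. The dominant contribution to $\mb E[\delta_e^2]$ comes from $H\in\mc H_{k+2}$: the edge $e\in B_0$ uses both its endpoints in $U_0$, and the remaining $k$ vertices must be distributed one per $U_j$ for $j\ge 1$ (else the corresponding $(\chi^1-\chi^0)$ factor is empty and vanishes), yielding $\Theta_\lambda(n^{k\beta})$ configurations with coefficient scale $n^{2(\ell-k-2)}$. For $r_e$, the same enumeration restricted to $v(H)\ge k+3$ introduces one extra vertex (placed in $U_0$ for the leading contribution), yielding $\Theta_\lambda(n^{k\beta+1})$ configurations with coefficient scale $n^{2(\ell-k-3)}$. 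Applying \cref{thm:concentration-hypercontractivity} with $t=(\log n)^C$ for sufficiently large $C$, and union bounding over the $O(n^2)$ edges of $B_0$, yields items (1) and (2).

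For item (3), the mean $\mb E[\sum_e\delta_e^2]=|B_0|\cdot\Theta_\lambda(n^{2(\ell-k-2)+k\beta})=\Theta_\lambda(n^{2(\ell-k-1)+k\beta})$; the lower bound uses the hypothesis that $\mc H'\cap\mc H_{k+2}$ contains a connected graph with $|\Delta_H|\ge\lambda$, since connectedness guarantees the corresponding term in $\delta_e'$ is non-degenerate and cannot accidentally cancel against terms from other $H\in\mc H_{k+2}$ of different isomorphism type. The variance bound is obtained by enumerating quadruples $(H_1',H_2',H_3',H_4')$ contributing to $\sum_{e,f}\big(\mb E[\delta_e^2\delta_f^2]-\mb E[\delta_e^2]\mb E[\delta_f^2]\big)$ and checking that the ``disconnected'' quadruples (where the pair $(H_1',H_2')$ and the pair $(H_3',H_4')$ share no edges) cancel exactly in the subtraction, so only ``connected'' quadruples survive with strictly lower effective vertex count, giving the claimed exponent. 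Items (4) and (5) then follow by applying \cref{thm:moment-hypercontractivity} to the polynomials $\sum_e\delta_e^2$ and $\sum_{|S|\ge 2}\delta_S^2$ respectively (these have bounded degree after the standard reduction $(\chi_g^b)^2\mapsto$ linear via the two-valued identity), combined with the mean and variance estimates above, so a Markov argument at a $(\log n)^3$-sized moment gives failure probability $\exp(-\Omega_\lambda((\log n)^3))$; the lower bounds again use the hypothesis on $\mc H'$ to ensure the dominant terms are nonzero and positive.

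The main obstacle is the combinatorial bookkeeping in the variance calculation for item (3) (and analogously in the mean/variance for item (5)): one must precisely classify the overlap structures of quadruples of isomorphic subgraphs and verify that the ``product-like'' contributions cancel between $\mb E[\delta_e^2\delta_f^2]$ and $\mb E[\delta_e^2]\mb E[\delta_f^2]$, isolating precisely the connected configurations that contribute to the variance. A secondary subtlety is that the lower bounds rely crucially on the hypothesis that $\mc H'\cap\mc H_{k+2}$ contains a connected graph with $|\Delta_H|\ge\lambda$, needed to prevent accidental cancellations among different $H$'s of the same vertex count; once the means and variances are correctly computed, the passage to tail bounds via hypercontractivity and the transfer back to the conditional model is essentially automatic.
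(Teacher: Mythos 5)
Your high-level plan (compute second moments and convert to tail bounds via hypercontractivity) matches the paper's approach, and your treatment of items (1)--(2) is essentially correct. But there are concrete gaps in (3)--(5). Item (4) concerns $\sum_e\delta_e$, not $\sum_e\delta_e^2$ as you wrote; and even after correcting this, the naive sum-of-squares-of-coefficients bound (Cauchy--Schwarz over the $O(n^2)$ polynomials $\delta_{e'}$ that could contain a given monomial) only yields $|\sum_e\delta_e|\lesssim n^{\ell-k+k\beta/2}(\log n)^C$, which is $n^{1/2}$ larger than claimed. The paper's proof hinges on the nontrivial observation that for the dominant monomials (those coming from connected $H'$ on $k+2$ vertices with $w=k$), the monomial already determines both $U_0$-endpoints of $e$, so each such monomial appears in only $O(n)$ of the $\delta_{e'}$'s rather than $O(n^2)$. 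This one-power-of-$n$ saving in $\big\|\sum_e\delta_e\big\|_2$ is the entire content of item (4), and your proposal does not locate it.

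The variance bound in item (3) has a similar gap. Your description — ``disconnected quadruples cancel exactly; only connected quadruples with strictly lower effective vertex count survive'' — yields exponent $4(\ell-k-1)+2k\beta$ from the natural edge-sharing constraint, which is $n^\beta$ too large (no concentration). The paper's proof gets the stated $(2k-1)\beta$ by first replacing $\delta_e$ with $\delta_e'$ using the $L^\infty$ bounds of items (1)--(2) (a reduction you never invoke, and without which the quadruples do not all have $k+2$ vertices with one per $U_i$), and then classifying surviving quadruples for disjoint $e_1,e_2$ by how many of the union's vertices fall in each $U_i$: three or more in some $U_i$ trivially vanishes, exactly two in every $U_i$ (the generic ``connected'' configuration) also vanishes by a connectivity argument on vertex sets, and exactly one in some $U_i$ is the sole surviving case and supplies the extra $n^{-\beta}$. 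Your outline never identifies where this saving occurs. For item (5), your route (moment hypercontractivity applied to $\sum_{|S|\ge 2}\delta_S^2$) is a genuine alternative to the paper's per-$\delta_S$ $L^\infty$ bound plus union bound, but it requires controlling $\big\|\sum_S\delta_S^2\big\|_2$ against $\mathbb{E}\big[\sum_S\delta_S^2\big]$, i.e.\ a variance estimate you do not supply. Two smaller points: items (4) and (5) are upper-bound statements (the $\le$ inside the probabilities reads as a typo for $\ge$, as the usage in \cref{lem:medium-t} makes clear), so the ``lower bounds using $\mathcal{H}'$'' you attribute to them are not needed; and the lemma is stated and proved entirely in $G(n,p)$, so the transference argument you describe is part of the application, not this lemma's proof.
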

We now prove each of the lemma in order with each item corresponding to a separate subsection.
\subsubsection{Proof of \cref{lem:various-bounds} (1)}\label{subsub:linear-term-Linfty}
We wish to show that for all $e\in B_0$ we have that \[|\delta_e|\le n^{\ell-k-2+(k\beta/2)}(\log n)^C\]
with probability $1-\exp(-\Omega_\lambda((\log n)^3))$ for suitable $C$. To see this note $\delta_e$ is a polynomial of bounded degree and sum of squares of coefficients $O_\lambda(n^{2(\ell-k-2) + k\beta})$, and apply hypercontractivity (\cref{thm:concentration-hypercontractivity}). Here $C$ must be chosen large enough in terms of the degree of the polynomial, which is bounded by $\ell$, so can be taken to depend only on $\mc{H}$.
    
The sum of squares estimate is derived as follows. The contributing terms to $\delta_e$ are subgraphs $H'$ as delineated above, with $S = \{e\}$. Say it has $v$ vertices with $w$ outside of $U_0$. Note that $v\ge w+2$ as we have at least $2$ vertices in $U_0$. Then the coefficient is $O_\lambda(n^{\ell-v})$ and there are $n^{\beta w+(v-w-2)}$ choices for the location of the remaining vertices, since two are fixed by $e$. Hence the contribution is $O_\lambda(n^{2(\ell-v)+\beta w+(v-w-2)})$. As $w$ increases this decreases, so the major contribution is from $w=k$, the minimum, and as $v$ increases the resulting expression decreases, so the major contribution is from $v=k+2$, the minimum, yielding the desired bound. It is worth noting for later that the main contributors are those with $v=k+2$ and $w=k$ only, and the next highest term is from $v=k+3$ and $w=k$, which is $n^{-1}$ times smaller.

\subsubsection{Proof of \cref{lem:various-bounds} (2)}
We next show that for all $e\in B_0$, $|r_e|\le n^{\ell-k-5/2+(k\beta/2)}(\log n)^C$ with high probability (of the same quality as before). Indeed, the only point is that $\delta_e'$ contains all the main contributors discussed above, and thus the sum of squares of coefficients in $r_e$ is $O_\lambda(n^{2(\ell-k-2)+k\beta-1})$. Hypercontractivity (\cref{thm:concentration-hypercontractivity}) finishes.

\subsubsection{Proof of \cref{lem:various-bounds} (3)}
We now prove that $\sum_{e\in B_0} \delta_e^2$ concentrates on a value of size $\Theta_\lambda(n^{2(\ell-k-1)+k\beta})$. First we compute the expectation. Note that $\mb E[\chi_{e'}^1-\chi_{e'}^0] = 0$, hence the same is true of the products making up $\delta_e$ by independence. Further, we see that the product of two terms coming from $H_1',H_2'$ in the expansion of $\delta_e^2$ has zero expectation unless they have the same edge set, in which case it is constant. Summing over $H_1'=H_2'$ with $v$ vertices and $w$ outside $U_0$, we obtain $O_\lambda(n^{2(\ell-v)}\cdot n^{\beta w+(v-w-2)})$. This is maximized when $w=k$ and $v=k+2$. Furthermore, if we take a connected graph $H\in\mc{H'}\cap\mc{H}_{k+2}$ and then look at its embeddings containing $e$ with a vertex in each $U_i$ for $i\ge 1$, we find that this contributes $\Omega_\lambda(n^{2(\ell-k-2)+\beta k})$ in the above. So the expectation we obtain is $\Theta_\lambda(n^{2(\ell-k-2)+\beta k})$. Summing over $\Theta(n^2)$ edges in $B_0$, we obtain an expectation of the correct size.

Next, writing $\delta_e^2 = (\delta_e')^2 + 2\delta_er_e-r_e^2$, and using the $L^\infty$ bounds from \cref{lem:various-bounds} (1) and (2) above, we see with high probability that $\sum_{e\in B_0}\delta_e^2$ and $\sum_{e\in B_0}(\delta_e')^2$ differ by $O_\lambda(n^2\cdot n^{2(\ell-k)-9/2+k\beta}(\log n)^{2C})$. This is smaller in magnitude than the expectation, so with high probability this deviation is small.

Now it remains to show the standard deviation of $\sum_{e\in B_0}(\delta_e')^2$ is smaller in magnitude by some power of $n$ compared to the expectation. Then hypercontractivity (\cref{thm:concentration-hypercontractivity}) immediately demonstrates the desired concentration. Note that the variance is
\[\sum_{e_1,e_2\in B_0}(\mb E[\delta_{e_1}'^2\delta_{e_2}'^2]-\mb E[\delta_{e_1}'^2]\mb E[\delta_{e_2}'^2]).\]
First, if $e_1,e_2$ share a vertex, there are $O(n^3)$ choices for them. Using the $L^\infty$ bounds on $\delta_e$ (and $r_e$, hence $\delta_e'$) we see that the contribution to the sum above is $O_\lambda(n^3\cdot n^{4(\ell-k-2)+2k\beta}(\log n)^{4C})$. This bound is acceptable, by a factor of approximately $n^{-1/2}$ in the standard deviation.

Now consider the $O(n^4)$ cases where $e_1,e_2$ do not share a vertex. We write out $(\delta_{e_j}')^2$ as a sum over $H_{j,1},H_{j,2}$:
\begin{align*}
(\delta_{e_j}')^2 = \sum_{\substack{H_{j,1}'\simeq H_{j,1}\in\mc{H}\\H_{j2}'\simeq H_{j,2}\in\mc{H}}}\Delta_{H_{j,1}}\Delta_{H_{j,2}}\prod_{t=1}^k&\bigg(\prod_{\substack{e\in E(H_{j,1})\setminus{S}\\e\in B_t}}\chi_{e}^1-\prod_{\substack{e\in E(H_{j,1})\setminus{S}\\e\in B_t}}\chi_{e}^0\bigg)\\
&\bigg(\prod_{\substack{e\in E(H_{j,2})\setminus{S}\\e\in B_t}}\chi_{e}^1-\prod_{\substack{e\in E(H_{j,2})\setminus{S}\\e\in B_t}}\chi_{e}^0\bigg).
\end{align*}
Therefore we can write the above covariance $\mb E[\delta_{e_1}'^2\delta_{e_2}'^2]-\mb E[\delta_{e_1}'^2]\mb E[\delta_{e_2}'^2]$ as a further sum of covariances, with terms indexed by a choice of $H_{j,b}'$ for $j,b\in\{1,2\}$.

Since we are dealing with $\delta'$ these graphs are connected, with $2$ vertices in $B_0$ forming the prescribed edge and $1$ vertex in each $U_i$ with $i\ge 1$. Consider the union of all these graphs $H_{j,b}'$, $j,b\in\{1,2\}$ (within $K_n$). If any of its edges is only covered once, then we easily see the corresponding covariance will be zero (recall we are currently in the $G(n,p)$ model).

Suppose the union graph has at least three vertices in some $U_i$ with $i\ge 1$. Then one of the vertices is hit by a unique $H_{j,b}'$, which implies some edge is only hit by one. Thus these terms are zero. Therefore the remaining union graphs have at most $2$ vertices in each $U_i$ for $i\ge 1$. Now suppose that for some $U_i$ there is only $1$ vertex. Then the number of configurations that could give rise to this situation is $O_\lambda(n^4\cdot n^{(2k-1)\beta})$, with coefficient of size $O_\lambda(n^{4(\ell-k-2)})$. This gives an acceptable bound as well, by a factor of $n^{-\beta/2}$ in the standard deviation.

Now consider the case where there are exactly two vertices in each $U_i$ with $i\ge 1$. We claim that the remaining terms are zero. It can be nonzero only if every edge of $H_{1,1}',H_{1,2}',H_{2,1}',H_{2,2}'$ is covered more than once in the union of these graphs. However, our graphs $H_{j,b}'$ are connected with $1$ vertex in each $U_i$ for $i\ge 1$. We easily prove by induction on $U_i$ for $i\ge 1$ that to satisfy the edge covering condition, the graphs $H_{j,1}',H_{j,2}'$ have the same vertex set. (Carrying this out requires $e_1,e_2$ to have disjoint vertices.) Now, this implies the vertex sets of $H_{1,1}',H_{1,2}'$ versus $H_{2,1}',H_{2,2}'$ are disjoint in any remaining term. Therefore the edge sets are disjoint so the corresponding variables are independent, leading to a zero term once more.

Overall, we obtain a bound on the variance of quality $O_\lambda(n^{4(\ell-k-1)+(2k-1)\beta})$, so the standard deviation is $O_\lambda(n^{2(\ell-k-1)+k\beta-(\beta/2)})$, which is the desired bound.

\subsubsection{Proof of \cref{lem:various-bounds} (4)}
Next we show \[\bigg|\sum_{e\in B_0} \delta_e \bigg|\lesssim_\lambda n^{\ell-k-1/2+(k\beta/2)}(\log n)^C\] with probability $1-\exp(-\Omega_\lambda((\log n)^3))$. To bound the sum of squares of coefficients of $\sum_{e\in B_0}\delta_e$, note that every term (which is a product over $E(H')\setminus{e}$) is in at most $n^2$ polynomials $\delta_{e'}$ trivially. Therefore, after combining terms in $\sum_{e\in B_0}\delta_e$, by Cauchy, the new sum of squares of coefficients is at most $n^2$ times what we get by not combining, which is in turn $n^2$ times what we obtained in \cref{subsub:linear-term-Linfty}. This gives $O_\lambda(n^4\cdot n^{2(\ell-k-2)+k\beta})$, which is not good enough.
    
But in fact, for terms that contribute the most, namely the $H'$ with $v=k+2$ vertices and $w=k$ of them outside $B_0$, we see that $E(H')\setminus{e}$ can be completed to a valid contributor to some $\delta_{e'}$ only if $e'$ is incident to one of the two vertices in $V(H')\cap B_0$, which yields $2n$ possible polynomials a given term is in. Therefore we obtain $O_\lambda(n^3\cdot n^{2(\ell-k-2)+k\beta}+n^4\cdot n^{2(\ell-k-2)+k\beta-1})$, which yields the result directly upon using hypercontractivity (\cref{thm:concentration-hypercontractivity}).

\subsubsection{Proof of \cref{lem:various-bounds} (5)}
Finally, we prove
\[\sum_{S\subseteq B_0 \atop {|S|\ge 2}}\delta_{S}^2 \lesssim_\lambda n^{2(\ell-k)-3+k\beta}(\log n)^{2C}\]
with probability $1-\exp(-\Omega_\lambda((\log n)^3))$. We first consider each term $\delta_S$ individually. As above, it is a polynomial of bounded degree. The contributing terms are subgraphs $H'$ of $\binom{[n]}{2}$ isomorphic to some graph in $\mc{H}$ such that $H'$ contains the edges of $S$, no other edges of $B_0$, and at least $1$ edge in each $B_i$ for $i\ge 1$. Again suppose it has $v$ vertices, with $a$ of them spanned by the edges in $S$, and $w$ outside of $U_0$. There is at least one vertex in each $U_i$ for $i\ge 1$. Then we obtain an estimate of $O_\lambda(n^{\ell-v})$ for the coefficient over $n^{\beta w+(v-w-a)}$ different terms. Summing over $w\ge k$ we find that the sum of squares of coefficients therefore is $O_\lambda(n^{2(\ell-k-a)+\beta k)})$, similar to earlier. Again we have $|\delta_S|\le n^{\ell-k-a+(k\beta)/2}(\log n)^C$ with probability $1-\exp(-\Omega_\lambda((\log n)^3))$. There are $O(n^a)$ coefficients $S$ spanning $a$ vertices, and summing the squares of the above gives $O_\lambda(n^{2(\ell-k)-a+k\beta}(\log n)^{2C})$. Then summing over $3\le a\le f$ gives $O_\lambda(n^{2(\ell-k)-3+k\beta}(\log n)^{2C})$, as claimed. 
\begin{remark}
Note that in the case $k = \ell-2$, there in fact are no higher terms as such a term would require $H'$ to have at least $\ell+1$ vertices, but all $H\in\mc{H}$ satisfy $v(H)\le\ell$. This will be used later. 
\end{remark}
This (finally) concludes the proof of \cref{lem:various-bounds}. Now we use these bounds to conclude our argument in the intermediate range of $|t|$.

\subsubsection{Deriving characteristic function bounds}
Overall, we showed the above statements with high probability in the $G(n,p)$ model. As noted, this transfers to a statement with high probability in the $G(n,m)$ model via naive conditioning. Now we claim the following bound on the characteristic function.
\begin{lemma}\label{lem:medium-t}
Let $W$ be an $(\ell,\lambda)$-statistic with normalized version $\mc{K}$. Then for all $\eps > 0$ and $|t|\in[n^{\varepsilon},\sigma n^{-\varepsilon}]$ we have
\[|\mb{E}[e^{it\mc{K}}-e^{-t^2/2}]|\lesssim_{\lambda,\eps}n^{-\Omega_{\lambda,\eps}(\log\log n)}.\]
\end{lemma}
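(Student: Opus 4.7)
The plan is to apply the decoupling identity \cref{lem:vdC-dep} to reduce $\varphi_\mc{K}(t)$ to an expectation of a characteristic function that is essentially linear in a slice-uniform variable $X$, and then invoke \cref{lem:bernoulli-variance} to exploit oscillation on the slice. Fix $|t|\in[n^\eps,\sigma n^{-\eps}]$ and choose $k\in\{1,\ldots,\ell-2\}$ together with $\beta\in(0,1)$ depending on $|t|$ (with $\beta$ bounded away from $0$ and $1$ by constants depending on $\mc{H}$ and $\eps$). Applying \cref{lem:vdC-dep} with the edge partition $B_0,\ldots,B_k$ of \cref{sub:bounds-intermediate} gives
\[
|\varphi_\mc{K}(t)|^{2^k}\le\mb{E}_{Z_1,\ldots,Z_k,\mbf{Y}}\bigg|\mb{E}_X\bigg[e^{it\alpha(W)(X,\mbf{Y})/\sigma}\bigg|Z_1,\ldots,Z_k\bigg]\bigg|.
\]
Expand $\alpha(W)(X,\mbf{Y})=\delta_\emptyset+\sum_{e\in B_0}\delta_e\chi_e+\sum_{|S|\ge 2}\delta_S\chi_S$ and define a good event $\mc{G}$ on $\mbf{Y}$ where all five estimates of \cref{lem:various-bounds} simultaneously hold; the $G(n,p)\to G(n,m)$ transference argument in \cref{sub:bounds-intermediate} gives $\mb{P}[\mc{G}^c]\le\exp(-\Omega_\lambda((\log n)^3))$, which is far smaller than the target error and can be absorbed into it.

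On $\mc{G}$, the $L^\infty$ estimate \cref{lem:various-bounds}(1) ensures the $\pi$-condition of \cref{lem:bernoulli-variance} whenever $|t|\lesssim n^{k+1/2-k\beta/2}(\log n)^{-C}$, while \cref{lem:various-bounds}(3) and (4) combine to show that the empirical variance of $\delta_e/\sigma$ over $e\in B_0$ is bounded below by $\Omega_\lambda(n^{-1-2k+k\beta})$ (the mean-square piece $\mathbb{E}_e[(\delta_e/\sigma)^2]$ exceeds $(\mathbb{E}_e[\delta_e/\sigma])^2$ by a factor of $\Omega(n/(\log n)^{2C})$). Applying \cref{lem:bernoulli-variance} conditional on $(Z_i)$ to the linear functional $\sum_{e\in B_0}(\delta_e/\sigma)\chi_e$ of the slice-uniform vector $X$ yields
\[
\bigg|\mb{E}_X\bigg[e^{it\sum_e\delta_e\chi_e/\sigma}\bigg|Z_1,\ldots,Z_k\bigg]\bigg|\lesssim_\lambda n^2\exp(-c_\lambda t^2 n^{1-2k+k\beta}).
\]
For the higher-order contribution $Q=\sum_{|S|\ge 2}\delta_S\chi_S$, the remark following \cref{lem:various-bounds}(5) shows $Q\equiv 0$ when $k=\ell-2$ (every contributing $H'$ would need more than $\ell$ vertices), handling the top of the $|t|$-range cleanly. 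For smaller $k$, the triangle inequality $|\mb{E}_X e^{it(L+Q)/\sigma}|\le|\mb{E}_X e^{itL/\sigma}|+|t|\|Q\|_2/\sigma$ combined with \cref{lem:various-bounds}(5) gives a correction $\lesssim|t|\cdot n^{-k+k\beta/2}(\log n)^C$, which will be absorbed into the main decay by the parameter choice below.

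Finally, we adapt $(k,\beta)$ to $|t|$ so that $c_\lambda t^2 n^{1-2k+k\beta}\gg 2^k\log n\cdot\log\log n$; the estimate above then becomes $n^{-\Omega_{\lambda,\eps}(\log\log n)\cdot 2^k}$, and taking the $2^k$-th root yields $|\varphi_\mc{K}(t)|\le n^{-\Omega_{\lambda,\eps}(\log\log n)}$. A finite family of $(k,\beta)$-choices covers the full range: we take $k=\ell-2$ with $\beta$ small near $|t|\approx\sigma n^{-\eps}$ and progressively smaller $k$ with larger $\beta$ as $|t|$ decreases toward $n^\eps$. Comparison with $|e^{-t^2/2}|\le e^{-n^{2\eps}/2}$, which is super-polynomially small for $|t|\ge n^\eps$, completes the argument. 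The main obstacle is reconciling the two competing constraints: the $\pi$-condition of \cref{lem:bernoulli-variance} forces $k$ to grow with $|t|$, while effective exponential decay bounds $k\beta$ from below in the opposite direction, and both must be satisfied simultaneously while keeping the higher-order correction from $Q$ negligible—delicate balancing requires covering $[n^\eps,\sigma n^{-\eps}]$ by carefully overlapping parameter windows and leveraging the vanishing of $Q$ precisely at the top where the constraints are tightest.
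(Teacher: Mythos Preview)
Your overall architecture is correct and matches the paper: decouple via \cref{lem:vdC-dep}, condition on the good event from \cref{lem:various-bounds}, and then extract cancellation from the linear part $L$ using \cref{lem:bernoulli-variance}. The treatment of $k=\ell-2$, where $Q$ vanishes identically, is also fine.

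The genuine gap is in your handling of the higher-order piece $Q$ for $k<\ell-2$. The inequality
\[
|\mb E_X e^{it(L+Q)/\sigma}|\le|\mb E_X e^{itL/\sigma}|+|t|\,\|Q\|_2/\sigma
\]
is correct, but the correction $|t|\|Q\|_2/\sigma\lesssim |t|\,n^{-k+k\beta/2}(\log n)^C$ is only a \emph{fixed} negative power of $n$: over the $(k,\beta)$-window $|t|\in[n^{k-1/2-k\beta/2+o(1)},\,n^{k-k\beta/2-o(1)}]$ it ranges between roughly $n^{-1/2}$ and $n^{-\eps'}$. After taking the $2^k$-th root you obtain $|\varphi_\mc K(t)|\lesssim n^{-c/2^k}$ for a fixed $c>0$, which is \emph{weaker} than the claimed $n^{-\Omega(\log\log n)}$ (recall $n^{-c\log\log n}$ eventually beats every fixed $n^{-A}$). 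Your sentence ``the estimate above then becomes $n^{-\Omega(\log\log n)\cdot 2^k}$'' applies only to the exponential term from \cref{lem:bernoulli-variance}; the $Q$-correction does not improve with the parameter choice, and if you try to force it below $n^{-c\log\log n}$ the admissible $|t|$-window shrinks to width $n^{1/2-c\log\log n}$ in the exponent and becomes empty for large $n$. This is not merely cosmetic: the lemma is later integrated over a range of length $\Theta(\sigma)=\Theta(n^{\ell-3/2})$, so a bound of $n^{-c/2^k}$ with $c\le 1/2$ and $k\le\ell-2$ is far too weak downstream.

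The paper closes this gap by Taylor-expanding $e^{itQ/\sigma}$ to order $d$ rather than order $1$: writing $e^{itQ/\sigma}=\sum_{j\le d}(itQ/\sigma)^j/j!+O(|tQ/\sigma|^{d+1})$, the remainder is $O_d(n^{-\eps(d+1)})$ since $|tQ/\sigma|\le n^{-\eps}$ on the good event, while each monomial $M$ in the polynomial part is handled by conditioning on $\on{supp}(M)$ and applying \cref{lem:bernoulli-variance} to the remaining (still $\Theta(n^2)$ many) variables. Sending $d\to\infty$ slowly---$d\asymp\log\log n$ suffices---is what produces the super-polynomial decay. Your first-order truncation is the $d=1$ case of this argument and cannot be pushed further without the higher-order expansion.
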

\begin{proof}
Note that $e^{-t^2/2}$ is sufficiently small in the necessary range to ignore. Let $X,\mbf{Y}$ be as at the beginning of \cref{sub:bounds-intermediate}. We use \cref{lem:vdC-dep}, obtaining
\[|\varphi_{\mc{K}}(t)|^{2^k}\le\mb E_{\mbf{Y}}\big|\mb E_Xe^{it\alpha(W)(X,\mbf{Y})/\sigma}\big|.\]
Now with probability $1-\exp(-\Omega_\lambda((\log n)^2))$ over the randomness of $\mbf{Y}$, we can assume all the claims regarding the $\delta$ coefficients in \cref{lem:various-bounds} are true. This leaves an error term of size $\exp(-\Omega_\lambda((\log n)^2))$ which we will be able to disregard. We can also impose the condition that $|\sum_{e\in B_0}\chi_e|\lesssim_\lambda B_0^{1/2}\log B_0$ since Azuma--Hoeffding upon revealing the elements of $B_0$ reveals that with high probability over the randomness of $B_0$, its number of edges is as expected (hence the same over the randomness of $\mbf{Y}$, since that fixes the sum over $B_0$). This induces an error term of size $\exp(-\Omega_\lambda((\log n)^2))$, again acceptable.

Now condition on one of the suitable choices of $\mbf{Y}$. Define
\[L = \sum_{e'\in B_0}\delta_{e'}\chi_{e'},\qquad U = \sum_{\substack{S\subseteq B_0\\|S|\ge 2}}\delta_S\chi_S,\]
which are random variables now depending only on $X$ (as $\mbf{Y}$ is fixed). We need to bound
\[\mb E_Xe^{it(L+U)/\sigma},\]
noting we can disregard $\delta_\emptyset$ as $|e^{it\delta_\emptyset/\sigma}|=1$. To bound this quantity, we will adapt the method in \cite[Theorem~3]{B2}. Fix some integer $d\ge 1$ that we will later send to infinity slowly. Now by Taylor's theorem with Lagrange error,
\[\bigg|e^{itU/\sigma} - \sum_{j=0}^d\frac{(itU/\sigma)^j}{j!}\bigg|\le 2\frac{|tU/\sigma|^{d+1}}{(d+1)!},\]
where the $2$ comes from splitting into real and imaginary parts. Note that the interior sum is really a polynomial in the $\chi$'s of degree bounded in terms of $d$, with coefficients at most some polynomial in $n$ of degree bounded by $d$, noting that $|t|\le\pi\sigma$. Therefore, we can write
\[\sum_{j=0}^d\frac{(itU/\sigma)^j}{j!} = \sum_{M\in\mc{M}}a_M\cdot M,\]
where $\mc{M}$ is a set of bounded degree monomials in the $\chi$ variables, and in particular $\sum_{M\in\mc{M}}|a_M| = O(n^D)$ for some $D$ depending on $d$. We see
\[\big|\mb E_Xe^{it(L+U)/\sigma}\big|\lesssim_d\sum_{M\in\mc{M}}\big|a_M\mb E_XMe^{itL/\sigma}\big|+\mb E_X|tU/\sigma|^{d+1}.\]
Now note that $tU/\sigma$ is a polynomial in the $\chi_{e'}$ for $e'\in B_0$ of bounded degree, and by our assumptions on $\mbf{Y}$ we control its sum of squares of coefficients. By hypercontractivity (\cref{thm:concentration-hypercontractivity}) we have
\[\mb P(|tU/\sigma|\ge n^{-\eps}) = \exp(-\Omega_\lambda(n^{\eps'}))\]
as long as $(t^2n^{2\eps}/\sigma^2)n^{2(\ell-k)-3+k\beta} < n^{-\eps}$ for some $\eps' > 0$, using our $L^2$ control of the higher terms \cref{lem:various-bounds} (5). Here hypercontractivity applies in the independent model, but again using our subsampling trick we can make it over the randomness of $X$, which constrains $\sum_{e\in B_0}\chi_e$.

Now the last term has good bounds, since $|tU/\sigma|\ge n^{-\eps}$ occurs with very low probability and $|tU/\sigma|$ is bounded above by some fixed degree polynomial in $n$ always. Indeed, this allows us to bound the last term by $O_d(n^{-\eps(d+1)})$. Alternatively, we could have used the moment form (\cref{thm:moment-hypercontractivity}) of hypercontractivity.

Now each term
\[\big|\mb E_XMe^{itL/\sigma}\big|\le\mb E_{e'\in\on{supp}(M)}|M|\big|\mb Ee^{itL/\sigma}\big|,\]
where the inner expectation is only over $e'\in B_0$ not contained in the monomial $M$. This is all but $O_d(1)$ of them. Now, the inner term is of a form with which we can apply \cref{lem:bernoulli-variance} (say, shifting the $\chi_{e'}$ back to $x_{e'}$). The precise value of the conditioned sum $\sum_{e'\in B_0}\chi_{e'}$ that we chose at the beginning will change exactly what replaces $p$ in the statement of the lemma, but it is say in $(\lambda/2,1-\lambda/2)$ for $n$ sufficiently large, hence bounded away from $\{0,1\}$. Therefore we obtain a bound of quality
\[O_\lambda(n^D)\cdot n^2\exp(-\Omega_\lambda((t^2n^2/\sigma^2)\on{Var}[\delta_{e'}])),\]
where $D$ is some constant depending on $d$. Now the point is we control $\on{Var}[\delta_{e'}]$ because of all the bounds from earlier. Indeed, the average of $\delta_{e'}^2$ concentrates on a value of size $\Theta_\lambda(n^{2(\ell-k-2)+k\beta})$ by \cref{lem:various-bounds} (3) whereas the average of $\delta_{e'}$ is of size $O_\lambda(n^{\ell-k-5/2+(k\beta/2)}(\log n)^C)$ by \cref{lem:various-bounds} (4). Since that is smaller in magnitude than the square root of above, we see that the variance $\on{Var}[\delta_{e'}]$ over all $e'\in B_0$ is of order $\Theta_\lambda(n^{2(\ell-k-2)+k\beta})$. The deletion of $O_d(1)$ terms from the $\delta_{e'}$ does not change the variance from this order of magnitude due to the $L^\infty$ bounds on $\delta_{e'}$ established by \cref{lem:various-bounds} (1) and (2). Therefore if $(t^2n^2/\sigma^2)n^{2(\ell-k-2)+k\beta} > n^\eps$ then this bound is acceptable. Additionally, to apply \cref{lem:bernoulli-variance} we need $|t/\sigma|\cdot|\delta_{e'}|\lesssim 1$, hence $|t/\sigma|n^{\ell-k-2+(k\beta/2)}(\log n)^C\lesssim 1$ suffices.

In conclusion, fixing $\eps > 0$, we have shown for any fixed $d$ that
\[\big|\mb E_Xe^{it(L+U)/\sigma}\big|\lesssim_{d,\lambda}n^{-\eps(d+1)}\]
as long as
\begin{equation}\label{eq:medium-t-conditions}
(t^2n^{2\eps}/\sigma^2)n^{2(\ell-k)-3+k\beta}<n^{-\eps},\quad (t^2n^2/\sigma^2)n^{2(\ell-k-2)+k\beta}>n^{\eps},\quad |t/\sigma|n^{\ell-k-2+(k\beta/2)}(\log n)^C\lesssim 1.
\end{equation}
Now we send $d\to\infty$ slowly, finding ultimately that
\[|\varphi_{\mc{K}}(t)|\lesssim n^{-\Omega_{\lambda,\eps}(d(n))}\]
for some slow growing $d = d(n)$ that is monotonic and limits to infinity. Note that $d(n) = \log\log n$ surely suffices.

Now it remains to calculate which range of $t$ is covered by this computation. The three bounds \cref{eq:medium-t-conditions} show that the range
\[n^{k-(k\beta+1-\eps)/2}\lesssim_\lambda |t|\lesssim_\lambda n^{k-(k\beta+3\eps)/2}\]
certainly is valid. We are allowed to range $1\le k\le\ell-2$ and $0<\beta<1$, although remember the warning that $\beta$ must be bounded away from $\{0,1\}$. Restricting $\ell\beta\in(\eps,1-\eps)$ still allows us to cover the range $t\in [n^{(k-1)/2+2\eps},n^{k-3\eps}]$ for each $k$, say. For $1\le k\le\ell-2$ these intervals overlap and hit the range $[n^{2\eps},n^{\ell-2-3\eps}]$. This almost hits the entire range we want.

However, notice that for $k=\ell-2$, the top value, there are no higher-order terms: see the remark following the proof of \cref{lem:various-bounds} (5). Hence there is no $U$ term and the above analysis is simplified. In particular, the first of the three conditions on $t$ in \cref{eq:medium-t-conditions} can be dropped. So for $k=\ell-2$ we actually cover the larger range governed by
\[(t^2n^2/\sigma^2)n^{2(\ell-k-2)+k\beta}>n^{\eps},\quad |t/\sigma|n^{\ell-k-2+(k\beta/2)}(\log n)^C\lesssim 1,\]
which allows us to cover $n^{k-(k\beta+1-\eps)/2}\lesssim_\lambda |t|\lesssim_\lambda n^{k-(k\beta-1+3\eps)/2}$ when $k=\ell-2$. This lets us cover $|t|\in [n^{(\ell-3)/2+2\eps},n^{\ell-3/2-3\eps}]$, which gets the remaining portion of the range.

Therefore we have hit every necessary $t$ with a bound of the desired quality, taking $\eps$ sufficiently small.
\end{proof}
\begin{remark}
Ensuring the ranges cover everything is where we use the hypothesis that $\mc{H'}\cap\mc{H}_k$ contains a connected graph for all $3\le k\le\ell$. More specifically, this hypothesis is used in the proof of the first part of \cref{lem:various-bounds} (3). Looking closely, we see that this can be weakened; the precise condition coming from our argument is that $\mc{H'}\cap\mc{H}_k$ contains a connected graph for all $k$ in some set $\{k_1,\ldots,k_a\}$, where $k_1=3$, $k_a=\ell$, and $k_{j+1}\le 2k_j-2$.
\end{remark}

\subsection{Further comments}\label{sub:graph-characteristic-comments}
It is worth noting that the above proofs also work for the $G(n,p)$ model with the obvious alterations. In fact, there is significantly less headache because we have independence. The major difference is that $\mc{H}$ should include $K_2$, and now $X_2$ controls the standard deviation. Although this approach would allow us to conclude the necessary theorems about $G(n,p)$, we will instead demonstrate those results via transference from the $G(n,m)$ model, which is a more powerful technique as we will see from our study of $k$-APs as well as anticoncentration counterexamples. 

We would also like to briefly address how the results up to this point are already sufficient to prove anticoncentration for statistics which satisfy the hypotheses of \cref{sub:graph-characteristic-setup}. In particular, using Esseen's concentration inequality \cite{E66} (see \cite{TV06} for a modern treatment) one can convert the bounds we have derived into anticoncentration estimates, losing only a factor of $n^{o(1)}$ versus the optimal bound in $G(n,m)$ (which can bootstrapped to $G(n,p)$). In fact, using a variant of the decouplings provided in \cref{sec:subgraph-counts} one can establish Fourier control up to $|t|\le c_{\mc{H},\lambda}\sigma$ and thus establish optimal anticoncentration for $G(n,m)$, losing only constant factors. Again this can be bootstrapped to $G(n,p)$ (with some care being required).

Note here that these remarks extend to graph statistics such as two times the $C_6$ count plus the number of copies $P_1+P_3$, the disjoint union of paths of length $1$ and $3$. This example may seem rather obscure but note that this statistic has a parity bias due to results of DeMarco and Redlich \cite{DR16} and therefore a local central limit theorem fails. However, using our Fourier analytic methods, since this statistic ultimately takes the form \eqref{eq:general-form}, we can still obtain optimal anticoncentration for random variables that do not satisfy a local central limit theorem, overcoming a theoretical obstruction suggested in \cite{FKS19}.

We remark that difficulties in establishing anticoncentration through Fourier analytic methods arise when statistics display longer scale fluctuations in the pointwise probabilities than simple parity biases. This will be due to ``degeneracy'' in which certain terms in the expansion \eqref{eq:general-form} of the graph statistic are missing or of a different magnitude than expected. However, even such situations are not insurmountable as we will later demonstrate in the case of $k$-term arithmetic progressions in the independent model. In future work we intend on elaborating on these remarks and developing a systematic theory of anticoncentration for graph counts.

\section{Local Limit Theorems for Subgraph Counts}\label{sec:subgraph-counts}
We now prove local limit theorems for subgraph counts and induced subgraph counts. Throughout this section we will work in the $G(n,m)$ model. Specifically, we prove a local limit theorem for subgraph counts of $H$ where $H$ is connected, and induced subgraph counts for any $H$. We specify that the number of edges must be such that $p=m/\binom{n}{2}$ is at least $\lambda$ away from $\{0,1\}$. In the induced case, there may also be up to around $v(H)^2$ ``critical'' values $p_{\text{crit}}$ that $p$ is $\lambda$ apart from. We will see later that, with these caveats, the results in \cref{sec:general-graph-characteristic-functions} can be applied directly. Therefore it remains to bound the necessary characteristic functions in the top range $\sigma n^{-\eps}\le|t|\le\pi\sigma$.

\subsection{Connected subgraph counts}\label{sub:connected-subgraph-counts}
As this is the simpler case, we do it first. Let $H$ be a connected graph on $\ell\ge 2$ vertices. Write
\[W = \sum_{\substack{H'\subseteq\binom{[n]}{2}\\H'\simeq H}}\prod_{e\in E(H')}x_e.\]
If we let $\chi_e = (x_e-p)/\sqrt{p(1-p)}$ as usual, then it will expand into a form such as \eqref{eq:general-form}. In particular,
\begin{equation}\label{eq:W-subgraph-count}
W=\sum_{\substack{H'\subseteq\binom{[n]}{2}\\H'\simeq H}}\prod_{e\in E(H')}(p+\sqrt{p(1-p)}\chi_e)=\sum_{S\subseteq H}p^{e(H)-e(S)}(\sqrt{p(1-p)})^{e(S)}c_{S,H}d_{S,H}\binom{n-v(S)}{\ell-v(S)}\gamma_S(\mbf{x}),
\end{equation}
where the sum is over subgraphs $S$ (lacking isolated vertices) of $H$ up to isomorphism. Here $c_{S,H}$ explicitly equals $(\ell-v(S))!\on{aut}S/\on{aut}H$ and $d_{S,H}$ equals the number of times $S$ appears as a subgraph of $H$, e.g. $d_{K_2,H} = e(H)$. For the empty graph, these values are taken to be $\ell!/\on{aut}H$ and $1$, respectively. This follows from an easy double-counting argument.

In particular, for $p\in(\lambda,1-\lambda)$, we see the coefficient of $\gamma_S(\mbf{x})$ is of size $\Theta_\lambda(n^{\ell-v(S)})$. Furthermore, $H$ has a connected subgraph with $k$ vertices for each $3\le k\le\ell$ since $H$ is connected (e.g. take subtrees of a spanning tree). Thus the results of \cref{sec:general-graph-characteristic-functions} apply. In particular, define $\sigma$ and $\mc{K}$ from $W$ in the same way as in \cref{sub:graph-characteristic-setup}. Then by \cref{lem:low-t}, for $|t|\le n^\eps$ we have
\begin{equation}\label{eq:subgraph-characteristic-low}
|\varphi_{\mc{K}}(t) - e^{-t^2/2}|\lesssim_\lambda\frac{|t|}{n^{\frac{1}{2}-\eps}}
\end{equation}
and by \cref{lem:medium-t}, for $n^\eps\le|t|\le\sigma n^{-\eps}$ we have
\begin{equation}\label{eq:subgraph-characteristic-medium}
|\varphi_{\mc{K}}(t) - e^{-t^2/2}|\lesssim n^{-\Omega_{\lambda,\eps}(\log \log n)}.
\end{equation}

Now we present a decoupling which handles the top range $\sigma n^{-\eps}\le|t|\le\pi\sigma$. 
\begin{lemma}\label{lem:top-connected}
Let $W$ be as in \cref{eq:W-subgraph-count}, and define $\sigma,\mc{K}$ as in \cref{sub:graph-characteristic-setup}. Then for $|t|\le\pi\sigma$,
\begin{equation}\label{eq:subgraph-characteristic-high}
|\varphi_{\mc{K}}(t)|\le \exp(-\Omega_\lambda(n))+\exp\left(-\Omega_\lambda\left(\frac{t^2n}{\sigma^2}\right)\right).
\end{equation}
\end{lemma}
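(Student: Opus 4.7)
The plan is to apply the decoupling of \cref{sub:bounds-intermediate} with $k=\ell-2$ special vertices $v_1,\ldots,v_{\ell-2}$, each forming a singleton $U_i=\{v_i\}$ (i.e.\ $\beta=0$), and $B_0$ the set of edges inside $U_0=[n]\setminus\{v_1,\ldots,v_{\ell-2}\}$. By \cref{lem:vdC-dep},
\[|\varphi_{\mc{K}}(t)|^{2^{\ell-2}}\le\mb{E}_{\mbf{Y},\mbf{Z}}\bigl|\mb{E}_X\bigl[e^{it\alpha(W)(X,\mbf{Y})/\sigma}\,\big|\,\mbf{Z}\bigr]\bigr|.\]
The key observation, from the remark following the proof of \cref{lem:various-bounds}~(5), is that for this choice of $k$ the polynomial $\alpha(W)$ cannot have any $X$-term of degree $\ge 2$ (such a copy would require $\ge\ell+1$ vertices), so $\alpha(W)=\delta_\emptyset+\sum_{e\in B_0}\delta_e\chi_e$ is linear in $X$. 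With probability $1-\exp(-\Omega_\lambda((\log n)^2))$ over $\mbf{Y},\mbf{Z}$, the coefficient bounds of \cref{lem:various-bounds} hold; in particular $\sup_e|\delta_e|\lesssim_\lambda(\log n)^C$ and $\on{Var}_{e\sim\on{Unif}(B_0)}[\delta_e]=\Theta_\lambda(1)$.

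For good $\mbf{Y},\mbf{Z}$, repeating the contour-integral and arithmetic-geometric computation of the proof of \cref{lem:bernoulli-variance} but without invoking the hypothesis $|(\delta_e-\delta_{e'})t/\sigma|\le\pi$ produces
\[\bigl|\mb{E}_X\bigl[e^{it\sum_{e\in B_0}\delta_e\chi_e/\sigma}\bigr]\bigr|\le(|B_0|+1)\exp\!\bigl(-\Omega_\lambda\bigl(|B_0|\,(1-\rho(t))\bigr)\bigr),\quad\rho(t):=\biggl|\frac{1}{|B_0|}\sum_{e\in B_0}e^{it\delta_e/(\sigma\sqrt{p(1-p)})}\biggr|.\]
For $|t|\le\sigma(\log n)^{-O(1)}$, the rescaled phases $t\delta_e/(\sigma\sqrt{p(1-p)})$ are uniformly small, so a Taylor expansion of $e^{ix}$ combined with $\on{Var}_e[\delta_e]\gtrsim_\lambda 1$ gives $1-\rho(t)\gtrsim_\lambda t^2/\sigma^2$. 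Since $|B_0|=\Theta(n^2)$, taking $2^{2-\ell}$-th roots yields $|\varphi_{\mc{K}}(t)|\le\exp(-\Omega_\lambda(n^2 t^2/\sigma^2))$, which is strictly stronger than the claimed $\exp(-\Omega_\lambda(nt^2/\sigma^2))$ on this range.

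The main obstacle lies in the complementary range $\sigma(\log n)^{-O(1)}\le|t|\le\pi\sigma$, where we need $1-\rho(t)\gtrsim_\lambda 1$---i.e.\ an anticoncentration statement for the empirical measure of $\{\delta_e\}_{e\in B_0}$. To obtain it I would exploit the explicit structure $\delta_e=\sum_\phi c_\phi\prod_j(\chi_{a_j(\phi,e)}^1-\chi_{a_j(\phi,e)}^0)$ of $\delta_e$ as a sum over vertex labelings $\phi$ of products of independent differences of Bernoulli increments. Because a single such difference $\chi^1-\chi^0$ attains two distinct nonzero values of equal magnitude $1/\sqrt{p(1-p)}$ each with constant probability, with high probability over $\mbf{Y}$ a positive fraction of the $\delta_e$'s lie at several $\Omega_\lambda(1)$-separated values, keeping $\rho(t)$ bounded away from $1$ uniformly in the top range. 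The resulting bound on the inner modulus of order $\exp(-\Omega_\lambda(n^2))$, combined with the $\exp(-\Omega_\lambda((\log n)^2))$ error from exceptional $\mbf{Y},\mbf{Z}$, gives the claimed inequality after taking the appropriate root.
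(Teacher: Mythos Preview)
Your approach diverges from the paper's and has a genuine gap at the top of the range.

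In your setup each $e=\{a,b\}\in B_0$ lies in \emph{several} copies of $H$ on the fixed vertex set $\{a,b,v_1,\ldots,v_{\ell-2}\}$, so the coefficient of $x_e$ in $\alpha(W)$ is an integer equal to a sum of $N=N(H)$ terms in $\{0,\pm1\}$ and can range over $\{-N,\ldots,N\}$. Saying ``a positive fraction of the $\delta_e$'s lie at several $\Omega_\lambda(1)$-separated values'' does not rule out that, for a given realization of the shared variables $x_{v_iv_j}^b$, these integers all land in $d\mb{Z}$ for some $d\ge 2$; in that event $\rho(2\pi\sigma/d)=1$ with $2\pi\sigma/d\le\pi\sigma$, and your bound collapses precisely in the range you are trying to cover. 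Ruling this out needs a structural argument specific to $H$ that you have not supplied. Separately, invoking \cref{lem:various-bounds} at $\beta=0$ is outside its stated regime: the variance bound in part~(3) degenerates to $O_\lambda(n^4)$, the same order as the squared mean, because with singleton $U_i$'s all the $\delta_e$ share the $O(1)$ global variables $\chi_{v_iv_j}^b$; so the high-probability claim $\on{Var}_e[\delta_e]=\Theta_\lambda(1)$ is not delivered by that lemma (it may be recoverable by conditioning on those global variables, but this is extra work you have not done). Finally, even granting everything, your exceptional-set error is $\exp(-\Omega_\lambda((\log n)^2))$, not the stated $\exp(-\Omega_\lambda(n))$.

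The paper sidesteps all of this with a different decoupling. It partitions $[n]$ into $\lfloor n/\ell\rfloor$ disjoint $\ell$-cliques, embeds one labelled copy of $H$ in each, and decouples over the $e(H)-1$ \emph{edge-label classes} $B_1,\ldots,B_{e(H)-1}$. A rainbow copy of $H$ must then carry exactly one edge of each label and hence, by connectedness, must coincide with the designated copy $H_e$ inside the clique of the label-$0$ edge $e$. Consequently $\delta_e=\prod_{e'\in E(H_e)\setminus e}(x_{e'}^1-x_{e'}^0)\in\{0,\pm1\}$ \emph{by construction}, and these coefficients are independent across cliques. Azuma--Hoeffding over $\Theta(n)$ blocks gives the $\exp(-\Omega_\lambda(n))$ exceptional probability, and after restricting to the $\{0,1\}$-valued coefficients, \cref{lem:bernoulli-variance} applies with $|a_j-a_k|\le 1$, reaching exactly $|t|=\pi\sigma$.
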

\begin{proof}
Partition the vertex set $[n]$ into $\lfloor n/\ell\rfloor$ cliques of size $\ell$, with at most $\ell-1$ extra vertices that we will essentially ignore. Within each of the cliques take an isomorphic copy $H$ and label its edges $0$ to $e(H)-1$ arbitrarily. Let $\wt{B_0}$ be the union of all unlabeled edges along with those labeled $0$, and $B_i$ be all the edges labeled $i$ for $0\le i \le e(H)-1$. Thus $B_i$ for $1\le i\le e(H)-1$ and $\wt{B_0}$ partition the edges. Finally define $\wt{X} \in \{0,1\}^{\wt{B_0}}$ as the indicator vector of which edges are included in $G(n,m)$ and $Z_i$ for $1\le i\le e(H)-1$ as the number of edges in each set $B_i$ when sampling from $G(n,m)$. Then let $Y_i^0,Y_i^1$ for $1\le i\le e(H)-1$ be two independent samples of the edges within $B_i$ given $Z_i$. Also let $Y_0$ be the indicator of the edges of $\wt{X}$ in $B_0$ only, and $\wt{Y_0}$ be the indicator of the edges in $\wt{B_0}\setminus{B_0}$ only. Now note that 
\[\alpha(W)(\wt{X},\mbf{Y}) = \sum_{e\in\wt{B_0}} \delta_e(\mbf{Y}) x_e\]
with $\delta_e(Y)\in\{0,\pm 1\}$ for all $e\in B_0$ which are labeled. Indeed, for all $e\in B_0$ we have
\[\delta_e(\mbf{Y}) = \prod_{e'\in E(H_e)\setminus{e}}(x_{e'}^1-x_{e'}^0),\]
where $H_e$ is the unique isomorphic copy of $H$ containing $e$ that was embedded into one of the cliques.

Now we claim that with extremely high probability, the number of $e\in B_0$ such that $\delta_e(\mbf{Y}) = 1$ is greater than $\lambda^{2(e(H)-1)}n/(2\ell)$ and the number such that $\delta_e(\mbf{Y}) = 0$ satisfies the same. This is clear in the $G(n,p)$ model, as there are more than $n/(2\ell)$ edges $e\in B_0$, which have mutually independent coefficients which are easily seen to take on the desired values with positive probabilities. In particular, the probability of this event not occurring in the independent model is $\exp(-\Omega_\lambda(n))$.

In the $G(n,m)$ model, we repeatedly use Azuma--Hoeffding. First, it demonstrates that each $Z_i$ for $1\le i\le e(H)-1$ is approximately $pn/\ell$ with high probability, say, within the interval $[pn/(2\ell),(1+p)n/(2\ell)]$ with probability $1-\exp(-\Omega_\lambda(n))$. Conditional on a realization of the $Z_i$, the vectors $Y_i^0,Y_i^1$ are independent and uniform with a fixed sum. By Azuma--Hoeffding again, we can show that $x_{e'}^1 = 1$ and $x_{e'}^0 = 0$ for each $e'\in E(H_e)\setminus{e}$ in at least $\Omega_\lambda(n)$ of our cliques with probability $1 - \exp(-\Omega_\lambda(n))$. Similarly, we can show that $x_{e'}^1 = x_{e'}^0 = 1$ for some $e'\in E(H_3)\setminus{e}$ happens in at least $\Omega_\lambda(n)$ of our cliques with a similar probability.

We also control the number of edges among $Y_0$. Note that its distribution is the same as looking at the number of edges in a specific subset of $G(n,m)$. By Azuma--Hoeffding, with a process revealing edges within $B_0$ one at a time, we see with probability $1-\exp(-\Omega_\lambda(n))$ the fraction of edges chosen in this set is in $(p/2,(1+p)/2)$. Therefore, over the randomness of $Y_i^0$ and $\wt{Y_0}$, say, the number of edges in $Y_0$ is fixed to some value that is $\Theta_\lambda(n)$.

Now we are ready to apply \cref{lem:vdC-dep}. Let $B_0'$ be the set and $Y_0'$ be the vector within $Y_0$ which have coefficients not in $\{0,1\}$ (which is determined after $\mbf{Y}$ is chosen). We obtain
\begin{align*}
|\varphi_{\mc{K}}(t)|^{2^{e(H)-1}}&\le\mb E_{\mbf{Y}}\big|\mb E_{\wt{X}}e^{it\alpha(W)(\wt{X},\mbf{Y})/\sigma}\big|\le\mb E_{\mbf{Y},\wt{Y_0}}\big|\mb E_{Y_0}e^{it\alpha(W)(\wt{X},\mbf{Y})/\sigma}\big|\\
&\le\mb E_{\mbf{Y},\wt{Y_0},Y_0'}\big|\mb E_{Y_0\setminus{Y_0'}}e^{(it/\sigma)\sum_{e\in B_0\setminus{B_0'}}\gamma_e(\mbf{Y})x_e}\big|\le\exp(-\Omega_\lambda(n))+\exp\left(-\Omega_\lambda\left(\frac{t^2n}{\sigma^2}\right)\right),
\end{align*}
the last inequality using that the function is bounded by $1$ in the rare cases delineated above, and using \cref{lem:bernoulli-variance} in the remaining cases in which we know that the inner $x_e$ for $e\in Y_0\setminus{Y_0'}$ are drawn uniformly with a fixed sum depending on $\mbf{Y},\wt{Y_0}, Y_0'$. That sum is $\Theta_\lambda(n)$ in size, and additionally we use that a positive fraction (in terms of $\lambda, H$) of coefficients $\delta_e(\mbf{Y})$ are $1$ as well as $0$. Note that \cref{lem:bernoulli-variance} only applies if $(t/\sigma)\cdot(1-0)\le\pi$, which precisely hits the top of the range.
\end{proof}
Now we are ready to prove a local limit theorem for $G(n,m)$.
\begin{theorem}\label{thm:Gnm-subgraph-local}
Let $H$ be a connected graph, and fix $\lambda > 0$. Choose $n\ge 1$ and $m$ such that $p=m/\binom{n}{2}\in(\lambda,1-\lambda)$, and let $X_H$ be the number of times $H$ appears as a subgraph of the random graph $G(n,m)$. Let $\mu_H, \sigma_H$ be the mean and standard deviation of this random variable. Finally, define $Z_H = (X_H-\mu_H)/\sigma_H$. Then we have
\[|\sigma_H\mb P[Z_H=z]-\mc{N}(z)|\lesssim_{H, \lambda,\eps}n^{\eps-1/2}\]
for all $z\in(\mb{Z}-\mu_H)/\sigma_H$ and
\[\sum_{z\in(\mb{Z}-\mu_H)/\sigma_H}|\mb P[Z_H=z]-\mc{N}(z)/\sigma_H|\lesssim_{H, \lambda,\eps}n^{\eps-1/2}\]
for all $\eps > 0$.
\end{theorem}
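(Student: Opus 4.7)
The plan is to feed the three characteristic-function bounds \cref{eq:subgraph-characteristic-low}, \cref{eq:subgraph-characteristic-medium}, and \cref{eq:subgraph-characteristic-high} into the Fourier-inversion lemmas \cref{lem:Linfty-distance,lem:L1-distance}, after reducing the claim for $Z_H$ to the analogous claim for the normalized statistic $\mc{K} = (W - W_0 - W_2)/\sigma$ of \cref{sub:graph-characteristic-setup}. In the $G(n,m)$ model both $W_0$ and $W_2$ are deterministic (indeed $\gamma_{K_2}(\mbf{x}) = \sum_e \chi_e = 0$ when $m = p\binom{n}{2}$, so $W_2 = 0$), hence $\sigma\mc{K}$ and $X_H$ differ by an explicit constant and $\mc{K}$ is supported on a lattice of spacing $1/\sigma$. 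By \cref{lem:std-close}, $\sigma = \sigma_H(1+O_{\lambda,\eps}(n^{\eps-1/2}))$ and $|\mb{E}[\mc{K}]| \lesssim_\lambda n^{-1/2}$, so the affine passage between $\mc{K}$ and $Z_H$ perturbs the Gaussian argument by at most $O_{\lambda,\eps}(n^{\eps-1/2})$ uniformly on the window $|z|\lesssim \sqrt{\log n}$, while both $\mc{N}(z)$ and the pointwise mass are super-polynomially small outside this window; by the Lipschitz property and rapid decay of $\mc{N}$, any pointwise or $L^1$ estimate for $\mc{K}$ at rate $n^{\eps-1/2}$ transfers to $Z_H$ at the same rate (after relabeling $\eps$).

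For the pointwise statement I apply \cref{lem:Linfty-distance} with $h_n = 1/\sigma$ to $\mc{K}$ and estimate
\[\int_{-\pi\sigma}^{\pi\sigma} |\varphi_{\mc{N}(0,1)}(t) - \varphi_{\mc{K}}(t)|\,dt\]
by splitting at $|t| = n^\eps$ and $|t| = \sigma n^{-\eps}$. On the low range, \cref{eq:subgraph-characteristic-low} integrates to $O_{\lambda,\eps}(n^{3\eps-1/2})$; on the medium range, \cref{eq:subgraph-characteristic-medium} is super-polynomially small and easily beats the polynomial length $\sigma n^{-\eps}$; on the top range, the substitution $u = t\sqrt{n}/\sigma$ recasts the Gaussian-type factor in \cref{eq:subgraph-characteristic-high} as an integral with lower limit $u \ge n^{1/2-\eps}$, giving a super-polynomially small tail $\exp(-\Omega_\lambda(n^{1-2\eps}))$ (and likewise $|\varphi_{\mc{N}(0,1)}(t)| = e^{-t^2/2}$ is negligible there). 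The boundary contribution $e^{-\pi^2\sigma^2/2}$ in \cref{lem:Linfty-distance} is negligible since $\sigma$ is polynomially large. Relabeling $\eps$ yields the claimed pointwise $n^{\eps-1/2}$ bound for $\mc{K}$, and hence for $Z_H$.

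For the $L^1$ statement I invoke \cref{lem:L1-distance} with $A = n^\eps$, the pointwise error $\delta_n = n^{\eps-1/2}$ just obtained, and a tail hypothesis supplied by hypercontractivity: writing $X_H - W_0$ as a polynomial of degree $e(H)$ in the variables $\chi_e$, applying \cref{thm:concentration-hypercontractivity} in the $G(n,p)$ model and transferring to $G(n,m)$ by conditioning on the polynomially-likely event that exactly $m$ edges are drawn gives $\mb{P}[|Z_H| > n^\eps] \lesssim \exp(-\Omega_\lambda(n^{\eps'}))$ for some $\eps' > 0$. Plugging into \cref{lem:L1-distance} produces the desired $L^1$ bound at rate $n^{\eps-1/2}$. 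I expect the most delicate piece of bookkeeping to be the top range of the Fourier integral: the decay $\exp(-\Omega_\lambda(t^2 n/\sigma^2))$ in \cref{eq:subgraph-characteristic-high} only starts to save us once $|t|\gtrsim \sigma/\sqrt{n}$, which is well below the threshold $\sigma n^{-\eps}$ at which the estimate takes over, and it is precisely for this reason that \cref{lem:top-connected} had to be pushed all the way to $|t|\le \pi\sigma$ rather than being halted at a smaller threshold.
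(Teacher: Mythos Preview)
Your proof is correct and follows essentially the same route as the paper: reduce from $Z_H$ to $\mc{K}$ via \cref{lem:std-close}, feed the three characteristic-function bounds \cref{eq:subgraph-characteristic-low,eq:subgraph-characteristic-medium,eq:subgraph-characteristic-high} into \cref{lem:Linfty-distance} by splitting the integral at $n^\eps$ and $\sigma n^{-\eps}$, and then deduce the $L^1$ bound from \cref{lem:L1-distance} together with a hypercontractivity tail estimate transferred to the slice. Your observation that $W_2=0$ in $G(n,m)$ and your explicit handling of the top-range integral are slightly more detailed than the paper's writeup, but the argument is the same.
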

\begin{proof}
Let $W, \mc{K}, \sigma$ be defined as earlier. Then, by \cref{lem:Linfty-distance}, we have for $z\in(\mb{Z}-W_0-W_2)/\sigma$ that
\[|\sigma\mb{P}[\mc{K}=z]-\mc{N}(z)|\le e^{-\pi^2\sigma^2/2}+\int_{-\pi\sigma}^{\pi\sigma}|\varphi_{\mc{K}}(t)-\varphi_{\mc{N}(0,1)}(t)|dt\lesssim_{H,\lambda,\eps}\frac{1}{n^{1/2-\eps}}\]
for all $\eps > 0$, combining \eqref{eq:subgraph-characteristic-low}, \eqref{eq:subgraph-characteristic-medium}, and \eqref{eq:subgraph-characteristic-high} for different integration ranges. This is a local central limit theorem, with one minor technical issue, which is that $\mc{K}$ has neither mean $0$ nor variance $1$. In particular,
\[Z_H = \frac{\sigma}{\sigma_H}(\mc{K}-\mb{E}\mc{K}).\]
But now we recall $\sigma/\sigma_H = 1+O_\eps(n^{\eps-1/2})$ and $\mb{E}\mc{K}\lesssim_\eps n^{\eps-1/2}$, which follow from \cref{lem:std-close}. Thus,
\[\mb{P}[Z_H=z] = \mb{P}[\mc{K}=z(\sigma_H/\sigma)+\mb{E}\mc{K}]\]
is near $(1/\sigma)\mc{N}(z(\sigma_H/\sigma)+\mb{E}\mc{K})$, which is near $(1/\sigma_H)\mc{N}(z)$, and the necessary bounds follow using that $\mc{N}(z)$ is Lipschitz. To deduce the second statement, we use what we have already proved along with \cref{lem:L1-distance}. We simply need to verify that $\mb{P}[|Z_H|>n^\eps]$ is small. Since the standard deviation of $Z_H$ is $1$, this follows immediately by hypercontractivity (\cref{thm:concentration-hypercontractivity}) along with our trick of transferring bounds to the slice.
\end{proof}

\subsection{Induced subgraph counts}\label{sub:induced-subgraph-counts}
Let $H$ be a graph, not necessarily connected, with $\ell\ge 3$ vertices. Let $q = -p/(1-p)$, which is negative and bounded away from zero as well as bounded in size in terms of $\lambda$. Write
\[W = \sum_{\substack{H'\subseteq\binom{[n]}{2}\\H'\simeq H}}\prod_{e\in E(H')}x_e\prod_{e\in\ol{E}(H')}(1-x_e)\]
where $\ol{E}(H')$ is the complement of $E(H')$ within the set of all possible edges $\binom{V(H')}{2}$. We expand
\begin{align}
\begin{split}\label{eq:W-induced-subgraph-count}
W &= \sum_{\substack{H'\subseteq\binom{[n]}{2}\\H'\simeq H}}\prod_{e\in E(H')}(p+\sqrt{p(1-p)}\chi_e)\prod_{e\in\ol{E}(H')}(1-p-\sqrt{p(1-p)}\chi_e)\\
&= p^{e(H)}(1-p)^{\ol{e}(H)}\sum_{S\subseteq K_\ell}p^{-e(S)/2}(1-p)^{e(S)/2}f_{S,H}(q)\binom{n-v(S)}{\ell-v(S)}\gamma_S(\mbf{x}),
\end{split}
\end{align}
where the sum is over subgraphs $S$ (lacking isolated vertices) of $K_\ell$ up to isomorphism. Here $f_{S,H}(q)$ is a polynomial in $q$ with positive coefficients, computed as the sum
\[f_{S,H}(q) = \sum_{\substack{S',H'\subseteq K_\ell\\S'\simeq S,H'\simeq H}}q^{|E(S')\setminus{E(H')}|}\]
which is taken over embeddings of $S, H$ into $K_\ell$. In particular, $f_{S,H}$ is a nonzero polynomial for each subgraph $S$ of $K_\ell$. For the empty graph, we obtain the constant polynomial $\ell!/\on{aut}H$.

Now, in order for $W$ to satisfy the hypotheses of \cref{sub:graph-characteristic-setup}, we need there to be a term $\gamma_S$ on $k$ vertices for each $3\le k\le\ell$ which has the correct order of magnitude. In order to ensure this, we merely need $q$ to be bounded away from a root of $f_{S,H}$. Simply let $S = K_{1,k}$. Then we see $f_{S,H}$ has degree at most $k$, hence has at most $k$ roots. Therefore as long as $q$ is bounded away from a set of at most $3+4+\cdots+\ell<\ell^2$ values, or equivalently $p$ is bounded away (say by $\lambda$) from at most $\ell^2$ values as well as $\{0,1\}$, the necessary hypotheses will be satisfied.

In particular, define $\sigma, \mc{K}$ from $W$ in the same way as in \cref{sub:graph-characteristic-setup}. Then for $|t|\le n^\eps$ we have by \cref{lem:low-t} that
\begin{equation}\label{eq:induced-characteristic-low}
|\varphi_{\mc{K}}(t) - e^{-t^2/2}|\lesssim_\lambda\frac{|t|}{n^{\frac{1}{2}-\eps}}
\end{equation}
and for $n^\eps\le|t|\le\sigma n^{-\eps}$ we have by \cref{lem:medium-t} that
\begin{equation}\label{eq:induced-characteristic-medium}
|\varphi_{\mc{K}}(t) - e^{-t^2/2}|\lesssim n^{-\Omega_{\lambda,\eps}(\log \log n)}.
\end{equation}

Now we present a decoupling which handles the top range $\sigma n^{-\eps}\le|t|\le\pi\sigma$. 
\begin{lemma}\label{lem:top-induced}
Let $W$ be as in \cref{eq:W-induced-subgraph-count}, and define $\sigma,\mc{K}$ as in \cref{sub:graph-characteristic-setup}. Suppose $p$ is bounded away by $\lambda$ from a set of $\ell^2$ values. Then for $|t|\le\pi\sigma$,
\begin{equation}\label{eq:induced-subgraph-characteristic-high}
|\varphi_{\mc{K}}(t)|\le \exp(-\Omega_\lambda(n))+\exp\left(-\Omega_\lambda\left(\frac{t^2n}{\sigma^2}\right)\right).
\end{equation}
\end{lemma}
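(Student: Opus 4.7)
The plan is to mimic the clique decoupling from \cref{lem:top-connected}, with the labeling adapted to handle non-edges. Partition $[n]$ into $\lfloor n/\ell\rfloor$ cliques of size $\ell$ (discarding at most $\ell-1$ leftover vertices) and embed a copy $H_c$ of $H$ in each clique $c$. Since the induced indicator $\prod_{e\in E(H_c')}x_e\prod_{e\in\ol{E}(H_c')}(1-x_e)$ depends on every pair inside a clique, I would label all $\binom{\ell}{2}$ pairs in each clique by $0,1,\ldots,k$ with $k=\binom{\ell}{2}-1$; let $B_i$ be the set of pairs labeled $i$ across cliques and $\tilde{B}_0=B_0\cup(\text{cross-clique pairs})$, and sample $Z_i,Y_i^0,Y_i^1$ as in \cref{lem:top-connected}. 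For $\ell\ge 3$ any cross-clique copy $H'$ of $H$ satisfies $\sum_c\binom{|V(H')\cap V_c|}{2}\le\binom{\ell-1}{2}<k$, so its within-clique pairs cannot cover every label $i\ge 1$ and $\alpha$ kills it; only within-clique copies contribute. Grouping the $\ell!/\on{aut}(H)$ copies within each clique, a direct computation (exploiting a uniform cancellation between the sign $(-1)^{\ol{e}(H_c')-\mathbbm{1}[e^c_0\in\ol{E}(H_c')]}$ and the sign from expanding $(1-x_{e^c_0})$) collapses the coefficient of $x_{e^c_0}$ to
\[
\delta_{e^c_0}(\mbf{Y})=(-1)^{\ol{e}(H)}\,C\prod_{i=1}^{k}\bigl(x^{1}_{e^c_i}-x^{0}_{e^c_i}\bigr)\in\{-C,0,C\},\qquad C:=\frac{\ell!}{\on{aut}(H)},
\]
where $e^c_i$ denotes the pair labeled $i$ in clique $c$; the coefficients $\delta_e$ for the cross-clique $e\in\tilde{B}_0\setminus B_0$ remain bounded in magnitude by some $O_H(1)$ constant.

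Next, a probabilistic analysis parallel to that in \cref{lem:top-connected} shows that, except on a set of $\mbf{Y}$ of probability $\exp(-\Omega_\lambda(n))$, positive fractions (in terms of $\lambda$ and $H$) of cliques $c$ attain each of the values $\delta_{e^c_0}\in\{+C,0,-C\}$ and $|Y_0|=\Theta_\lambda(n)$; this combines Azuma--Hoeffding on the edge counts $Z_i$ with the $G(n,p)\leftrightarrow G(n,m)$ subsampling transfer from \cref{sub:bounds-low}. Following \cref{lem:top-connected}, I would let $Y_0'$ absorb the ``bad'' edges of $B_0$ having $\delta=-C$ together with the cross-clique edges of $\tilde{B}_0\setminus B_0$, condition on $(\mbf{Y},\tilde{Y}_0,Y_0')$, and apply \cref{lem:vdC-dep} followed by \cref{lem:bernoulli-variance} to the conditional expectation over $Y_0\setminus Y_0'$: the remaining coefficients lie in $\{0,C\}$ with fixed-sum variance $\Omega_\lambda(1)$, and the max-difference constraint $|tC/\sigma|\le\pi$ yields the claimed bound $\exp(-\Omega_\lambda(n))+\exp(-\Omega_\lambda(t^2n/\sigma^2))$ throughout the range $|t|\le\pi\sigma/C$.

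The main obstacle is reaching the very top of the range $|t|\in(\pi\sigma/C,\pi\sigma]$, where the multiplicative factor $C=\ell!/\on{aut}(H)$ in the difference condition of \cref{lem:bernoulli-variance} becomes binding; these are the ``gymnastics'' alluded to in \cref{sub:overview}. To address this I would re-run the decoupling using only the $e(H)$ edges of $H_c$ as labels (taking $k=e(H)-1$ as in \cref{lem:top-connected}) and leaving the $\ol{e}(H)$ non-edges of $H_c$ unlabeled. Every within-clique copy of $H$ still survives $\alpha$ since each labeled pair appears in some $x_{e_i}$ or $1-x_{e_i}$ factor, but the requirement that the constant part $1$ of each $(1-x_{e'})$ be taken for each unlabeled $e'$ in the pure-$x_{e^c_0}$ monomial forces $E(H_c')\subseteq E(H_c)$ and hence $H_c'=H_c$, so the linear coefficient reduces to $\prod_{i=1}^{e(H)-1}(x^1_{e_i}-x^0_{e_i})\in\{-1,0,1\}$, exactly as in the connected case and with max difference $1$ between the $\{0,+1\}$ subpopulations. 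The copies $H_c'\ne H_c$ produce higher-order terms $U=\sum_{|S|\ge 2}\delta_S\chi_S$ supported on the unlabeled non-edges, whose $L^2$ and $L^\infty$ norms on each generic $\mbf{Y}$-slice are controlled via \cref{thm:concentration-hypercontractivity,thm:moment-hypercontractivity} together with the usual subsampling transfer; bounding $|\mb{E}_{\tilde{X}}e^{it(L+U)/\sigma}|$ by splitting into the $\{-1\}$-conditioned linear term (handled by \cref{lem:bernoulli-variance} with max difference $1$, valid throughout $|t|\le\pi\sigma$) and a Taylor-expansion-plus-hypercontractivity control on $U$ (mirroring the proof of \cref{lem:medium-t}) completes the bound. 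Taking $2^k$-th roots over $\mbf{Y}$ and combining with the previous range finishes the proof.
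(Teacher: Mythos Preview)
Your first (all-pairs) decoupling is correct and does reach $|t|\le\pi\sigma/C$. The trouble is entirely in your second step, where you try to close the gap $(\pi\sigma/C,\pi\sigma]$. Two things go wrong.

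First, with only the $e(H)$ edges of $H_c$ labeled, you have not argued that cross-clique copies are killed by $\alpha$; for disconnected $H$ they generally are not. The paper handles this by first observing that either $H$ or its complement is connected, and that induced counts of $H$ in $G(n,m)$ and of $\overline{H}$ in $G(n,\binom{n}{2}-m)$ are affinely related, so one may assume $H$ connected without loss of generality. With $H$ connected, the label-propagation argument (each vertex label must appear exactly once, and adjacent labels must lie in the same clique) forces every surviving rainbow copy to satisfy $V(H')=V(H_e)$, so cross-clique copies vanish.

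Second, and more seriously, your ``higher-order $U$'' is a mis-diagnosis. Each clique contains exactly one $B_0$ edge $e_0^c$, so $\alpha(W)(\wt{X},\mbf{Y})$ is automatically \emph{linear} in $Y_0$; there are no $|S|\ge 2$ terms in the $B_0$ variables. What you are calling $U$ is really dependence on the \emph{unlabeled non-edges} $\wt{Y}_0$, and the correct move is simply to condition on $\wt{Y}_0$ rather than Taylor-expand it away. Indeed, at $|t|=\pi\sigma$ the quantity $|tU/\sigma|$ is typically of order $\sqrt{n}$, not $o(1)$, so the Taylor-plus-hypercontractivity route of \cref{lem:medium-t} cannot work here. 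The paper instead observes that once $\wt{Y}_0$ is fixed, the linear coefficient $\delta_e(\mbf{Y},\wt{Y}_0)$ equals exactly $\prod_{e'\in E(H_e)\setminus\{e\}}(x_{e'}^1-x_{e'}^0)\in\{-1,0,1\}$ on the event that every non-edge of $H_e$ is absent (i.e.\ $x_{e'}=0$ for $e'\in\ol{E}(H_e)$). By Azuma--Hoeffding a positive $\lambda$-dependent fraction of cliques simultaneously satisfy this event \emph{and} have the product equal to $1$ (respectively $0$), so after conditioning on $\mbf{Y},\wt{Y}_0$ and on the bad $B_0$ edges one applies \cref{lem:bernoulli-variance} with max difference exactly $1$, valid all the way to $|t|=\pi\sigma$.
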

\begin{proof}
For simplicity, without loss of generality we assume $H$ is connected. We can do this because replacing $H$ with its complement in $K_\ell$, replacing $p$ by $1-p$, and replacing $W$ by $\ell!\binom{n}{\ell}/\on{aut}H-W$ keeps the random variable the same, and either $H$ or its complement is connected.

After doing this, we use the same decoupling as in \cref{sub:connected-subgraph-counts}. Partition the vertex set $[n]$ into $\lfloor n/\ell\rfloor$ cliques of size $\ell$, with at most $\ell-1$ extra vertices. Label the vertices of $H$ by $[\ell]$ arbitrarily, and its edges from $0$ to $e(H)-1$. Within each clique take an isomorphic copy of this labeled $H$. Let $\wt{B_0}$ be the union of all unlabeled edges along with those labeled $0$, and $B_i$ be all the edges labeled $i$ for $0\le i \le e(H)-1$. Thus $B_i$ for $1\le i\le e(H)-1$ and $\wt{B_0}$ partition the edges. Finally define $\wt{X} \in \{0,1\}^{\wt{B_0}}$ as the indicator vector of which edges are included in $G(n,m)$ and $Z_i$ for $1\le i\le e(H)-1$ as the number of edges in each set $B_i$ when sampling from $G(n,m)$. Let $Y_i^0,Y_i^1$ for $1\le i\le e(H)-1$, similarly, be independent copies of the $G(n,m)$ draw given $Z_i$. Also let $Y_0$ be the indicator of the edges of $\wt{X}$ in $B_0$ only, and $\wt{Y_0}$ be the indicator of the edges in $\wt{B_0}\setminus{B_0}$ only. Though the decoupling is the same, the resulting decoupled function is more complex. We first show that we can write
\[\alpha(W)(\wt{X},\mbf{Y}) = \delta_\emptyset(\mbf{Y},\wt{Y_0})+\sum_{e\in B_0} \delta_e(\mbf{Y},\wt{Y_0})x_e\]
for polynomials $\delta_e$, $e\in B_0$. To prove this, we consider which terms in the definition of $W$ (in the $x$ basis) provide a term in $\alpha(W)$ dependent on $x_e$. For a term to not become zero, it must have an edge from each of $B_i$ for $i\ge 1$, as well as the edge $e\in B_0$. Consider a term corresponding to a copy $H'$ of $H$. We claim that $V(H') = V(H_e)$, where $H_e$ is the copy of $H$ within the clique that contains $e$. First, since the above shows we have an edge in $B_i$ for each $i\ge 1$, and since $H$ has no vertex isolated, we have that $V(H')$ has at least one vertex with each label from $[\ell]$. Since $v(H') = \ell$, this means it has each label exactly once. Now, if vertex labels $a$ and $b$ are connected by an edge labeled $c$ within our labeled version of $H$, then to ensure an edge from $B_c$ exists in $H'$, the unique elements of $V(H')$ labeled by $a$ and $b$ must be in the same one of the $\lfloor n/\ell\rfloor$ cliques. This fact, along with $e\in E(H')$ and the connectedness of $H$, immediately shows that $V(H') = V(H_e)$.

But now this means the only terms with a dependence on $x_e$ are the terms
\[W_e = \sum_{\substack{H'\subseteq\binom{V(H_e)}{2}\\H'\simeq H}}\prod_{e'\in E(H')}x_{e'}\prod_{e'\in\ol{E}(H')}(1-x_{e'}).\]
These terms do not contain any other $x_{e'}$ for $e'\in B_0$, hence we obtain only linear terms in $\alpha(W)$ when collecting in the variable set $\{x_e\}_{e\in B_0}$. That is, $\alpha(W)$ is of the claimed form. Not only that, but we now know how to explicitly compute each $\delta_e$.

In fact, we will merely compute $\delta_e(\mbf{Y},\mbf{0})$. In this case,
\[W_e = \prod_{e'\in E(H_e)}x_{e'}\]
since the remaining terms corresponding to $H'\neq H_e$ have a factor of $x_{e'}$ for $e'\in\wt{B_0}\setminus{B_0}$, which were set to $0$, and since the terms from $e'\in\ol{E}(H_e)$, equal to $1-x_{e'}$, merely become $1$. Therefore $\delta_e(\mbf{Y},\mbf{0})$ equals what it did in \cref{sub:connected-subgraph-counts}, namely
\[\delta_e(\mbf{Y},\mbf{0}) = \prod_{e'\in E(H_e)\setminus{e}}(x_{e'}^1-x_{e'}^0).\]
In fact, this formula still holds as long as just $x_{e'} = 0$ for all $e'\in\ol{E}(H_e)$ (rather than all of $\wt{Y_0}$).

Now we claim that with extremely high probability, the number of $e\in B_0$ such that $\delta_e(\mbf{Y}) = 1$ is greater than $\lambda^{\ell^2}n/(2\ell)$ and the number such that $\delta_e(\mbf{Y}) = 0$ satisfies the same. This is clear in the $G(n,p)$ model, as there are more than $n/(2\ell)$ edges $e\in B_0$, and so long as $x_{e'} = 0$ for all $e'\in\ol{E}(H_e)$ and
\[\prod_{e'\in E(H_e)\setminus{e}}(x_{e'}^1-x_{e'}^0) = 1\]
(or $0$, respectively) we have the desired event for $e$. Independence finishes: the probability of this event not occurring in the independent model is $\exp(-\Omega_\lambda(n))$.

In the $G(n,m)$ model, as in the proof of \cref{lem:top-connected} we repeatedly use Azuma--Hoeffding. First, it demonstrates that each $Z_i$ for $1\le i\le e(H)-1$ is approximately $pn/\ell$ with high probability, say, within the interval $[pn/(2\ell),(1+p)n/(2\ell)]$ with probability $1-\exp(-\Omega_\lambda(n))$. Conditional on a realization of the $Z_i$, the vectors $Y_i^0,Y_i^1$ are independent and uniform with a fixed sum. By Azuma--Hoeffding again, we can show that $x_{e'}^1 = 1$ and $x_{e'}^0 = 0$ for each $e'\in E(H_e)\setminus{e}$ as well as $x_{e'} = 0$ for all $e'\in\ol{E}(H_e)$, simultaneously, in at least $\Omega_\lambda(n)$ of our cliques with probability $1 - \exp(-\Omega_\lambda(n))$. Similarly, we can show that $x_{e'}^1 = x_{e'}^0 = 1$ for some $e'\in E(H_3)\setminus{e}$ happens in at least $\Omega_\lambda(n)$ of our cliques with a similar probability.

We also control the number of edges among $Y_0$. Note that its distribution is the same as looking at the number of edges in a specific subset of $G(n,m)$. By Azuma--Hoeffding, with a process revealing edges within $B_0$ one at a time, we see with probability $1-\exp(-\Omega_\lambda(n))$ the fraction of edges chosen in this set is in $(p/2,(1+p)/2)$. Therefore, over the randomness of $Y_i^0$ and $\wt{Y_0}$, say, the number of edges in $Y_0$ is fixed to some value that is $\Theta_\lambda(n)$.

Now we are ready to apply \cref{lem:vdC-dep}. Let $B_0'$ be the set and $Y_0'$ be the vector $(x_e)$ for $e\in B_0$ satisfying $\delta_e\not\in\{0,1\}$ (which depends on $\mbf{Y},\wt{Y_0}$). We obtain
\begin{align*}
|\varphi_{\mc{K}}(t)|^{2^{e(H)-1}}&\le\mb E_{\mbf{Y}}\big|\mb E_{\wt{X}}e^{it\alpha(W)(\wt{X},\mbf{Y})/\sigma}\big|\le\mb E_{\mbf{Y},\wt{Y_0}}\big|\mb E_{Y_0}e^{it\alpha(W)(\wt{X},\mbf{Y})/\sigma}\big|\\
&\le\mb E_{\mbf{Y},\wt{Y_0},Y_0'}\big|\mb E_{Y_0\setminus{Y_0'}}e^{(it/\sigma)\sum_{e\in B_0\setminus{B_0'}}\delta_e(\mbf{Y},\wt{Y_0})x_e}\big|\le\exp(-\Omega_\lambda(n))+\exp\left(-\Omega_\lambda\left(\frac{t^2n}{\sigma^2}\right)\right),
\end{align*}
the last inequality using that the function is bounded by $1$ in the rare cases delineated above, and using \cref{lem:bernoulli-variance} in the remaining cases in which we know that the inner $x_e$ for $e\in Y_0\setminus{Y_0'}$ are drawn uniformly with a fixed sum depending on $\mbf{Y},\wt{Y_0}, Y_0'$. That sum is $\Theta_\lambda(n)$ in size, and additionally we use that a positive fraction (in terms of $\lambda, H$) of coefficients $\delta_e(\mbf{Y})$ are $1$ as well as $0$. Note that \cref{lem:bernoulli-variance} only applies if $(t/\sigma)\cdot(1-0)\le\pi$, which precisely hits the top of the range.
\end{proof}

Now, in exactly the same way as for \cref{thm:Gnm-subgraph-local}, we deduce a local limit theorem for $G(n,m)$.
\begin{theorem}\label{thm:Gnm-induced-local}
Let $H$ be a graph, and fix $\lambda > 0$. There is a set $\mc{P}_{\text{crit}}$ of size at most $v(H)^2$ such that the following holds. Choose $n\ge 1$ and $m$ such that $p=m/\binom{n}{2}$ is $\lambda$-separated from $\{0,1\}\cup\mc{P}_{\text{crit}}$, and let $X_H$ be the number of times $H$ appears as an induced subgraph of the random graph $G(n,m)$. Let $\mu_H, \sigma_H$ be the mean and standard deviation of this random variable. Finally, define $Z_H = (X_H-\mu_H)/\sigma_H$. Then we have
\[|\sigma_H\mb P[Z_H=z]-\mc{N}(z)|\lesssim_{H, \lambda,\eps}n^{\eps-1/2}\]
for all $z\in(\mb{Z}-\mu_H)/\sigma_H$ and
\[\sum_{z\in(\mb{Z}-\mu_H)/\sigma_H}|\mb P[Z_H=z]-\mc{N}(z)/\sigma_H|\lesssim_{H, \lambda,\eps}n^{\eps-1/2}\]
for all $\eps > 0$.
\end{theorem}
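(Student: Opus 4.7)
The plan is to follow the template of \cref{thm:Gnm-subgraph-local}, combining the three characteristic function estimates (low, medium, high range of $t$) with the Fourier inversion lemmas of \cref{sub:characteristic-function}. Let $W$ be the induced-count polynomial as in \cref{eq:W-induced-subgraph-count}, expanded into the graph factor basis. The first step is to identify $\mc{P}_{\text{crit}}$: by the discussion following \cref{eq:W-induced-subgraph-count}, the coefficient of $\gamma_S(\mbf{x})$ in $W$ is $\Theta_\lambda(n^{\ell-v(S)})$ precisely when the polynomial $f_{S,H}(q)$ does not vanish at $q=-p/(1-p)$. To realize the factor system requirement that $\mc{H}'\cap\mc{H}_k$ contains a connected graph for each $3\le k\le\ell$, it suffices to take $S=K_{1,k}$ (a star on $k$ vertices), which is connected, and to insist that $f_{K_{1,k},H}(q)\ne 0$. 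Since each $f_{K_{1,k},H}$ is a nonzero polynomial of degree $\le k$ in $q$, the union of their roots, translated back to $p$, forms a set of size at most $3+4+\cdots+\ell<\ell^2$. Taking $\mc{P}_{\text{crit}}$ to be this set (together with any necessary padding to reach $v(H)^2$) ensures that $W$ is an $(\ell,\lambda')$-statistic for some $\lambda'$ depending on $\lambda$ alone, provided $p$ is $\lambda$-separated from $\mc{P}_{\text{crit}}\cup\{0,1\}$.

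Once $W$ is recognized as a statistic in the sense of \cref{sub:graph-characteristic-setup}, define $\sigma$ and $\mc{K}$ as usual. The three Fourier estimates \eqref{eq:induced-characteristic-low}, \eqref{eq:induced-characteristic-medium}, and \eqref{eq:induced-subgraph-characteristic-high} then cover $|t|\le n^\eps$, $n^\eps\le|t|\le\sigma n^{-\eps}$, and $\sigma n^{-\eps}\le|t|\le\pi\sigma$ respectively, where the last range uses the just-proved \cref{lem:top-induced}. Feeding these into \cref{lem:Linfty-distance} with $h=1/\sigma$, the three integrals contribute $O_{\lambda,\eps}(n^{\eps-1/2})$ from the low range, $n^{-\Omega_{\lambda,\eps}(\log\log n)}$ from the medium range, and an exponentially small contribution from the top range after integrating $\exp(-\Omega_\lambda(t^2 n/\sigma^2))$ over $[\sigma n^{-\eps},\pi\sigma]$. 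The Gaussian tail $e^{-\pi^2\sigma^2/2}$ is also negligible. This yields
\[\sup_{z\in(\mb{Z}-W_0-W_2)/\sigma}|\sigma\mb{P}[\mc{K}=z]-\mc{N}(z)|\lesssim_{H,\lambda,\eps}n^{\eps-1/2}.\]

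To convert this into the statement for $Z_H$, note that $Z_H=(\sigma/\sigma_H)(\mc{K}-\mb E\mc{K})$. By \cref{lem:std-close}, $\sigma/\sigma_H=1+O_{\lambda,\eps}(n^{\eps-1/2})$ and $|\mb E\mc{K}|\lesssim_{\lambda,p}n^{-1/2}$, so the affine change of variable between $\mc{K}$ and $Z_H$ shifts evaluation points by $O_{\lambda,\eps}(n^{\eps-1/2})$; since $\mc{N}$ is Lipschitz and uniformly bounded, this only costs another $O(n^{\eps-1/2})$ in both the pointwise bound and the normalizing scale, giving the first estimate of the theorem.

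For the $L^1$ statement, apply \cref{lem:L1-distance} with $h_n=1/\sigma_H$, $\delta_n=O_{\lambda,\eps}(n^{\eps-1/2})$ from the first estimate, and $A=n^\eps$. The tail bound $\mb P[|Z_H|>n^\eps]\le\exp(-\Omega_\lambda(n^{\eps'}))$ follows from hypercontractivity (\cref{thm:concentration-hypercontractivity}) applied to $W$ viewed in the $G(n,p)$ model, transferred to $G(n,m)$ via the same subsampling/conditioning trick used throughout \cref{sec:general-graph-characteristic-functions} (we pay a factor of order $n$ for conditioning on the exact edge count). Combining these ingredients in \cref{lem:L1-distance} gives the desired $L^1$ bound. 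The main conceptual obstacle, which the paper has already addressed, is locating $\mc{P}_{\text{crit}}$ and verifying that away from it the induced-count statistic genuinely fits the abstract $(\ell,\lambda)$-statistic framework; once that verification is granted, the remainder of the proof is a bookkeeping exercise essentially identical to \cref{thm:Gnm-subgraph-local}.
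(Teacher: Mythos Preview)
Your proposal is correct and follows essentially the same approach as the paper: the paper simply states that the proof is ``in exactly the same way as for \cref{thm:Gnm-subgraph-local},'' and you have accurately reconstructed that argument, including the identification of $\mc{P}_{\text{crit}}$ via the roots of the $f_{K_{1,k},H}$, the three-range Fourier estimates fed into \cref{lem:Linfty-distance}, the mean/variance correction via \cref{lem:std-close}, and the $L^1$ bound via \cref{lem:L1-distance} with a hypercontractive tail estimate.
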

\begin{remark}
In fact, as noted in \cref{sec:introduction}, we can reduce our bound on the number of critical values to $O(v(H))$. This follows from the remark at the end of \cref{sub:bounds-intermediate}, which demonstrates that we only need to ensure a dyadically separated set of $k$ have nontrivial coefficients. Of course, since there are many more graphs on $k$ vertices than merely $K_{1,k}$, and only one of them must be nonzero, it is likely that there are even fewer critical values than we can prove.
\end{remark}

\section{Independent Models}\label{sec:independent-models}
In this section we deduce a local limit theorem for subgraph counts of connected graphs in $G(n,p)$ from the corresponding result for $G(n,m)$. It is worth remarking that all the earlier calculations done to prove the $G(n,m)$ case can be done analogously and with much more simplicity (as variables are actually independent) to directly prove the $G(n,p)$ case. However, the method of transfer is still important as it will allow us to prove more general results such as a local limit theorem in the $k$-AP case. For the sake of not belaboring the issue we prove the reduction only for \cref{thm:Gnp-subgraph-local}, as the analysis for \cref{thm:Gnp-induced-local} is completely analogous.
\begin{proof}[Proof of \cref{thm:Gnp-subgraph-local}]
Let $X_H$ denote the number of copies of $H$. Let $\sigma_H^2 = \on{Var}[X_H|G(n,p)]$ and $\mu_H = \mb{E}[X_H|G(n,p)]$. We now recall from \cref{sub:connected-subgraph-counts} that
\[X_H=\sum_{\substack{H'\subseteq\binom{[n]}{2}\\H'\simeq H}}\prod_{e\in E(H')}(p+\sqrt{p(1-p)}\chi_e)=\sum_{S\subseteq H}p^{e(H)-e(S)}(\sqrt{p(1-p)})^{e(S)}c_{S,H}d_{S,H}\binom{n-v(S)}{\ell-v(S)}\gamma_S(\mbf{x}),\]
where the sum is over subgraphs $S$ (lacking isolated vertices) of $H$ up to isomorphism. Recall that as before, $c_{S,H}$ explicitly equals $(\ell-v(S))!\on{aut}S/\on{aut}H$ and $d_{S,H}$ equals the number of times $S$ appears as a subgraph of $H$, e.g. $d_{K_2,H} = e(H)$. Let 
\[X_{2} = p^{e(H)-1}(\sqrt{p(1-p)})c_{S,K_2}e(H)\binom{n-2}{\ell-2}\gamma_{K_2}(\mbf{x})\]
and 
\[X_{\text{rem}} = X_H - p^{e(H)}\binom{n}{\ell}\frac{\ell!}{\on{aut}H} - X_2.\]
First note by direct computation that if $p\in (\lambda,1-\lambda)$ then
\[\on{Var}[X_{\text{rem}}|G(n,p)] = (1+O_\lambda(\log n/n)) \on{Var}[X_{\text{rem}}|G(n,p')]\]
if $p' = (1+\Theta(\log n/n))p$. Given this and the deductions in \cref{lem:std-close} that 
\[\on{Var}[X_H|G(n,m)] = \on{Var}[X_{\text{rem}}|G(n,m)] = (1+O_{\lambda,\eps}(n^{\eps-1/2}))\on{Var}[X_{\text{rem}}|G(n,q)]\]
(here $q = m/\binom{n}{2}\in(\lambda,1-\lambda)$), we find for any $m,m'\in [p\binom{n}{2} - n\log n, p\binom{n}{2}+n\log n]$ that 
\begin{align*}
\on{Var}[X_H|G(n,m)] & = (1+O_{\lambda,\eps}(n^{\eps-1/2})) \on{Var}[X_H|G(n,m')]\\
& = (1+O_{\lambda,\eps}(n^{\eps-1/2})) \on{Var}[X_{\text{rem}}|G(n,p)].
\end{align*}
From now we denote $\sigma^2 = \on{Var}[X_{\text{rem}}|G(n,p)]$. We now explicitly use that the expectation of $X_H$ varies essentially linearly given the number of edges. In particular, for $m\in [p\binom{n}{2}-n\log n, p\binom{n}{2}+n\log n]$, note by linearity of expectation that
\begin{align*}
\mb{E}[X_H|G(n,m)] &= \binom{n}{\ell}\frac{\ell!}{\on{aut}{H}}\prod_{i=0}^{e(H)-1}\bigg(\frac{m-i}{\binom{n}{2}-i}\bigg)\\
&= \binom{n}{\ell}\frac{\ell!}{\on{aut}{H}}\bigg(p^{e(H)} + e(H)p^{E(H)-1}\bigg(\frac{m-p\binom{n}{2}}{\binom{n}{2}}\bigg)\bigg)(1+\Theta_\lambda((\log n)^2/n^2)).
\end{align*}
An essentially similar estimate was derived in \cref{lem:std-close} for more general graph statistics. Now, for the sake of clarity define 
\[f(m) = \binom{n}{\ell}\frac{\ell!}{\on{aut}{H}}\bigg(p^{e(H)} + e(H)p^{e(H)-1}\bigg(\frac{m-p\binom{n}{2}}{\binom{n}{2}}\bigg)\bigg).\]

Finally we are in a position to explicitly calculate the distribution of $X_H$ under $G(n,p)$. Let $\sigma_m^2 = \on{Var}[X_H|G(n,m)]$ and $\mu_m = \mb{E}[X_H|G(n,m)]$. Now note that
\begin{align*}
\mb{P}[X_H = x] &= \sum_{m\in \mb{Z}}\mb{P}[X_H = x|G(n,m)]\mb{P}\big[\sum x_e = m\big]\\
&= \sum_{\substack{m\in [p\binom{n}{2} - n\log n,\\ ~\quad p\binom{n}{2} + n\log n]}}\mb{P}[X_H = x|G(n,m)]\mb{P}\big[\sum x_e = m\big] + \exp(-\Omega_\lambda((\log n)^2))
\end{align*}
where we have used Chernoff to bound the probability that number of edges deviates too far from the mean. For the sake of clarity we will implicitly assume that $x$ is within $\sigma_H(\log n)^C$ of the mean; for $x$ outside this range and $C$ sufficiently large the probability of attaining $x$ is super-polynomially small by hypercontractivity (\cref{thm:concentration-hypercontractivity}) so the desired statement is trivial. This assumption will be used implicitly later on. Now let $\mc{M}_{x}$ denote the set of $m$ such that
\[|x-f(m)|\le\sigma(\log n)^C\]
and
\[m\in \bigg[p\binom{n}{2} - n\log n, p\binom{n}{2} + n\log n\bigg]\]
for a suitably large $C$. Now suppose that $m\in [p\binom{n}{2} - n\log n, p\binom{n}{2} + n\log n]\backslash\mc{M}_x$. Then
\begin{align*}
\mb{P}[X_H = x | G(n,m)] & \le \mb{P}\big[|X_{\text{rem}}| \ge \sigma(\log n)^C/2\big]/ \mb{P}\big[\sum x_e = m\big]\\
& \lesssim\exp(-\Omega_\lambda((\log n)^2)),
\end{align*}
using that $\mb{P}[\sum x_e = m]\gtrsim\exp(-O_\lambda((\log n)^2))$ and then choosing $C$ sufficiently large so that the bound coming from hypercontractivity (\cref{thm:concentration-hypercontractivity}) on the numerator is sufficiently strong. The key point is that since $f(m)$ is a linear function with slope $\Theta_\lambda(n^{\ell -2})$ we have $|\mc{M}_x| = \Theta_\lambda(n^{1/2}(\log n)^C)$. Thus we have that \begin{align*}
\mb{P}[X_H = x] &= \sum_{\substack{m\in [p\binom{n}{2} - n\log n,\\ ~\quad p\binom{n}{2} + n\log n]}}\mb{P}[X_H = x|G(n,m)]\mb{P}\big[\sum x_e = m\big] + \exp(-\Omega_\lambda((\log n)^2))\\
& = \sum_{m\in \mc{M}_x}\mb{P}[X_H = x|G(n,m)]\mb{P}\big[\sum x_e = m\big] + \exp(-\Omega_{\lambda}((\log n)^2)).
\end{align*}
Now using \cref{thm:Gnm-subgraph-local} and that $\sigma_{m}$ is approximately equal to $\sigma$, the last summation equals
\begin{align*}
\sum_{m\in \mc{M}_x}\bigg(\frac{1}{\sigma_m}\mathcal{N}\left(\frac{x-\mu_m}{\sigma_{m}}\right)&+O_\lambda\bigg(\frac{n^{\frac{\eps-1}{2}}}{\sigma}\bigg)\bigg)\mb{P}\big[\sum x_e = m\big]\\
& = \sum_{m\in \mc{M}_x}\frac{1}{\sigma_m}\mathcal{N}\left(\frac{x-\mu_m}{\sigma_{m}}\right)\mb{P}\big[\sum x_e = m\big] + O_\lambda\left(\frac{|\mc{M}_{x}|}{n^{\frac{3-\eps}{2}}\sigma}\right)
\end{align*}
where use that probability of having a given number of edges is $O_\lambda(1/n)$. Now note that $\sigma_H$ is order $n^{1/2}$ larger than $\sigma$. Therefore the error term can be seen to be $O_\lambda(n^{\eps-1/2}\sigma_H^{-1})$, which is the correct magnitude. Now $\sigma_m = (1+O_\lambda(n^{\eps-1/2}))\sigma$ and $\mu_m = f(m) + O_\lambda((\log n)^2n^{-1/2}\sigma)$ for all $m\in [p\binom{n}{2}-n\log n, p\binom{n}{2}+n\log n]$ by the remarks which began the section. It follows that
\begin{align*}
\sum_{m\in \mc{M}_x}\frac{1}{\sigma_m}\mathcal{N}\left(\frac{x-\mu_{m}}{\sigma_{m}}\right)&\mb{P}\big[\sum x_e = m\big] \\
&= \sum_{m\in \mc{M}_x}\frac{1}{\sigma}\mathcal{N}\left(\frac{x-f(m)}{\sigma}\right)\mb{P}\big[\sum x_e = m\big] + O_\lambda\bigg(\frac{n^{\eps-1/2}}{\sigma_H}\bigg).
\end{align*}
At this point the rest is elementary calculation. Let $m^\ast$ be the solution to $f(m^\ast) = x$ and note that $|m-m^\ast|\lesssim_\lambda n^{1/2}(\log n)^C$ since $f$ has slope $\Theta_\lambda(\sigma n^{-1/2})$. This is enough to conclude that $\mb{P}[\sum x_e = m]$ is essentially constant over $m\in\mc{M}_x$, close enough to replace the above with
\begin{align*}
\mb{P}\big[\sum x_e=\lfloor m^\ast\rfloor\big]&\sum_{m\in\mc{M}_x}\frac{1}{\sigma}\mc{N}\left(\frac{x-f(m)}{\sigma}\right) \\
&= \mb{P}\big[\sum x_e=\lfloor m^\ast\rfloor\big]\sum_{m\in\mb{Z}}\frac{1}{\sigma}\mc{N}\left(\frac{x-f(m)}{\sigma}\right)+O_\lambda(\exp(-(\log n)^2))
\end{align*}
without increasing the error term. Now since $\mc{N}$ is continuous, unimodal, and integrable, and since $f$ has slope
\[\eta = \binom{n}{\ell}\frac{\ell!}{\on{aut}H}\frac{e(H)p^{e(H)-1}}{\binom{n}{2}},\]
standard results on Riemann approximation show that this equals
\[\mb{P}\big[\sum x_e = \lfloor m^\ast\rfloor\big]\frac{1+O_\lambda(n^{-1/2})}{\eta}.\]
Again the error term is acceptable, and using Stirling's approximation shows that this is approximately
\[\frac{1}{\sqrt{p(1-p)\binom{n}{2}}}\frac{1}{\eta}\mc{N}\left(\frac{m^\ast-p\binom{n}{2}}{\sqrt{p(1-p)\binom{n}{2}}}\right)\]
Finally we note that by calculation that
\[\frac{m^{*}-p\binom{n}{2}}{\sqrt{p(1-p)\binom{n}{2}}} = \frac{x - \mb{E}[X_H|G(n,p)]}{\eta\cdot\sqrt{p(1-p)\binom{n}{2}}},\text{ and }\eta\cdot\sqrt{p(1-p)\binom{n}{2}} = (1+O_{H,p}(1/n))\sigma_H.\]
These two estimates, combined with the rest in this proof finally give that
\[\mb{P}[X_H = x] = \frac{1}{\sigma_H}\mc{N}\left(\frac{x-\mu_H}{\sigma_H}\right) + O_\lambda\bigg(\frac{n^{\eps-1/2}}{\sigma_H}\bigg).\]
To deduce the necessary $L^1$ bound, use \cref{lem:L1-distance} and hypercontractivity (\cref{thm:concentration-hypercontractivity}).
\end{proof}

\section{Counterexamples}\label{sec:counterexamples}
In this section we establish counterexamples to some anticoncentration conjectures of Fox, Kwan, and Sauermann \cite{FKS19}. The main technical result is that the following class of graph-related polynomials do not exhibit anticoncentration. 
\begin{theorem}\label{thm:counterexample}
Let $\chi_{e} = (x_{e}-p)/\sqrt{p(1-p)}$ where $x_e$ are independent Bernoulli random variables with expectation $p\in (0,1)$ for all $e\in\binom{[n]}{2}$. Suppose that
\[F(\mbf{x}) = \sum_{H:\, v(H)\le \ell} \binom{n-v(H)}{\ell-v(H)}\Phi_H\gamma_H(\mbf{x})\]
where $\Phi_H$ are constants independent of $n$ and satisfy
\begin{enumerate}[1.]
    \item $\Phi_H = 0$ for all connected graphs on $3$ and $4$ vertices
    \item $\Phi_H\neq 0$ for $H$ being an edge and $H$ being the disjoint union of two edges.
\end{enumerate}
Furthermore suppose that $F$ is integer valued. Then there exists a sequence $y_n$ such that
\[\mb{P}[F(\mbf{x}) = y_n]\gtrsim_{H, p} n^{3/2-\ell}.\]
\end{theorem}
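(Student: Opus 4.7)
The plan is to condition on the edge count $m = \sum_e x_e$ and exploit a hidden algebraic relation that forces nearly all of the variance of $F$ to lie in the $m$-direction. Concretely, I will show $\on{Var}(F \mid G(n,m^*)) \lesssim_{p,H} n^{2\ell-5}$ at a well-chosen edge count $m^*$. By Chebyshev's inequality, the conditional distribution of $F$ is supported in an interval of length $O(n^{\ell-5/2})$ around $\mb E[F\mid m^*]$ with probability at least $3/4$; since $F$ is integer valued, pigeonhole produces an integer $y_n$ with $\mb P[F = y_n \mid m = m^*] \gtrsim_{p,H} n^{5/2-\ell}$. Taking $m^* = \lfloor pM \rfloor$ (where $M = \binom{n}{2}$), the local CLT for $\on{Bin}(M, p)$ gives $\mb P[\sum_e x_e = m^*] \gtrsim_p n^{-1}$, yielding the desired $\mb P[F = y_n] \gtrsim_{p,H} n^{3/2-\ell}$.

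The algebraic input is the polynomial identity
\[
2\bigl(\gamma_{2K_2}(\mbf x) + \gamma_{P_3}(\mbf x)\bigr) = \gamma_{K_2}(\mbf x)^2 - M - \frac{1-2p}{\sqrt{p(1-p)}}\gamma_{K_2}(\mbf x),
\]
obtained by expanding $\gamma_{K_2}(\mbf x)^2 = \bigl(\sum_e \chi_e\bigr)^2$ and using the pointwise relation $\chi_e^2 = 1 + \frac{1-2p}{\sqrt{p(1-p)}}\chi_e$ (which holds because $x_e^2 = x_e$). Since $\gamma_{K_2}$ is an affine function of $m$, conditional on $m$ the identity reduces to $\gamma_{2K_2} = -\gamma_{P_3} + h(m)$ for an explicit quadratic $h$. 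Combined with the hypothesis that $\Phi_H = 0$ for every connected $H$ on three or four vertices, no other ``middle-order'' $\gamma_H$ appears in $F$, so we can write
\[
F = G_0(m) - \Phi_{2K_2}\binom{n-4}{\ell-4}\gamma_{P_3}(\mbf x) + \sum_{H:\,v(H)\ge 5}\Phi_H\binom{n-v(H)}{\ell-v(H)}\gamma_H(\mbf x),
\]
where $G_0(m)$ is a polynomial in $m$ alone. Fixing $m = m^*$, the term $G_0(m^*)$ is constant, and the residual variance comes only from $\gamma_{P_3}$ and the $v(H)\ge 5$ terms.

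Applying the coupling of $G(n,p)$ with $G(n,m)$ used in \cref{lem:low-t} gives $\on{Var}(\gamma_H \mid G(n,m^*)) \lesssim_H n^{v(H)}$ for $m^*$ close to $pM$. Thus the $\gamma_{P_3}$ contribution to $\on{Var}(F \mid m^*)$ is of order $(n^{\ell-4})^2 \cdot n^3 = n^{2\ell-5}$, and each $v(H)\ge 5$ contribution is at most $n^{2\ell - v(H)} \le n^{2\ell-5}$. Summing the $O_\ell(1)$ pairwise covariances via Cauchy--Schwarz yields $\on{Var}(F \mid G(n,m^*)) \lesssim_{p,H} n^{2\ell-5}$, which is the key ingredient. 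The main obstacle is precisely establishing this variance bound: naively $\gamma_{2K_2}$ would contribute $n^{2\ell-4}$ to the conditional variance, and any surviving connected $\gamma_H$ with $v(H) \in \{3, 4\}$ would contribute at scale $n^{2\ell-3}$ or $n^{2\ell-4}$. The algebraic identity collapses the would-be $n^{2\ell-4}$ contribution of $\gamma_{2K_2}$ entirely into $G_0(m)$, and the vanishing of $\Phi_H$ on connected graphs with three and four vertices eliminates the remaining middle-order obstructions, leaving only the $n^{2\ell-5}$ scale that powers the pigeonhole step.
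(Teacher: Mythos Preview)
Your proof is correct and follows the same overall architecture as the paper's: condition on the edge count $m^* = \lfloor pM\rfloor$, exploit the identity relating $\gamma_{2K_2}$, $\gamma_{K_{1,2}}$, and $\gamma_{K_2}^2$ to see that the conditional fluctuation of $F$ drops to scale $n^{\ell-5/2}$, and pigeonhole over the integers in the resulting short interval. The paper's version of this step is slightly different in execution: rather than bounding $\on{Var}(F\mid G(n,m^*))$ directly, it invokes Janson's central limit theorem \cite[Theorem~III.8]{J1} to assert that $(F-\alpha_n)/\beta_n\xrightarrow{d}\mc N(0,1)$ in $G(n,m^*)$ with $\beta_n=\Theta_p(n^{\ell-5/2})$, and then reads off $\mb P[|F-\alpha_n|\le n^{\ell-5/2}\mid m^*]\gtrsim 1$ from the limiting Gaussian. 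Your route avoids this external citation by computing the conditional second moment via the coupling of \cref{lem:low-t} and closing with Chebyshev, which is more elementary and entirely self-contained within the paper's toolkit. The Cauchy--Schwarz step on the covariances is fine once one checks the arithmetic: the cross term between the $\gamma_{P_3}$ piece (with coefficient $\asymp n^{\ell-4}$) and any $\gamma_H$ with $v(H)\ge 5$ contributes at most $n^{\ell-4}\cdot n^{\ell-v(H)}\cdot n^{(3+v(H))/2}=n^{2\ell-5/2-v(H)/2}\le n^{2\ell-5}$, so no term exceeds the target scale.
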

\begin{remark}
Recall that $\gamma_H$ is only defined if $H$ has no isolated vertices. Also, note that the standard deviation of $F$ is of order $n^{\ell-1}$ since $\Phi_{K_2}\neq 0$ and therefore anticoncentration fails by order $n^{1/2}$. Furthermore such polynomials are $\mc{U}_3\cup\mc{U}_4^{c}$-proportional (and not $\mc{U}_2$-proportional) in the notation of Janson \cite{J2}. This result is ultimately derived from the results in \cite{J1} along with a conditioning argument.
\end{remark}
\begin{proof}
Note that the probability that the number of edges is $\lfloor p\binom{n}{2}\rfloor$ is $\Omega_p(n^{-1})$. Due to results of Janson \cite{J1}, statistics satisfying the hypothesis of this theorem converge to Gaussians of standard deviation $\Theta_p(n^{\ell-5/2})$ (see \cite[Theorem~III.8]{J1}) in $G(n,m)$. The reason for this is 
\[2\gamma_{K_2+K_2}(\mbf{x}) = (\sum_{e}\chi_{e})^2 - \sum_{e}\chi_{e}^2 - 2\sum_{i,j,k}\chi_{i,j}\chi_{i,k}\]
\[ = \gamma_{K_2}(\mbf{x})^2 - \sum_{e}\chi_{e}^2 - 2\gamma_{K_{1,2}}(\mbf{x}),\]
hence the number of edges determines the first two terms. (This identity also appears as \cite[Theorem~4]{J2}.) Thus, given the number of edges the standard deviation coming from the $K_2+K_2$ term is only $\Theta_p(n^{\ell-5/2})$ in $G(n,m)$ (for $m$ near $p\binom{n}{2}$) instead of $\Theta_p(n^{\ell-2})$ as would be ``typical''.

In particular, by the aforementioned theorem of Janson, we have a sequence $\alpha_{n}$ and $\beta_{n} = \Theta_{F,p}(n^{\ell-5/2})$ such that
\[\frac{F(\mbf{x})-\alpha_{n}}{\beta_{n}}\xrightarrow{d}\mc{N}(0,1)\]
if the edges are sampled in $G(n,m)$ with $m = \lfloor p\binom{n}{2}\rfloor$. Therefore we have that
\[\mb{P}[|F(\mbf{x})-\alpha_{n}|\le n^{\ell-5/2}] \ge \mb{P}\Bigg[|F(\mbf{x})-\alpha_{n}|\le n^{\ell-5/2} \cap \sum_e x_e = \Big\lfloor p\binom{n}{2}\Big\rfloor\Bigg]\]
\[\gtrsim_{p}(c_H + o(1))/n\gtrsim_{p,H} 1/n\]
where in the final step we have used convergence in distribution to a Gaussian. The theorem then follows as one of the integers in the range $[\alpha_n-n^{\ell-5/2},\alpha_n + n^{\ell-5/2}]$ has the desired property. 
\end{proof}
This immediately disproves the conjecture of Fox, Kwan, and Sauermann regarding anticoncentration of subgraph counts as the polynomial counting the number of copies of $K_2 + K_2$ trivially satisfies the conditions of \cref{thm:counterexample}, so it in turn proves the first part of \cref{thm:counterexample-main}. We remark that in fact that any disjoint union of edges also trivially satisfies the conditions of \cref{thm:counterexample}.

We similarly disprove the conjecture of Fox, Kwan, and Sauermann regarding anticoncentration of induced subgraph counts (the second part of \cref{thm:counterexample-main}) by constructing a graph $H$ for which the polynomial counting induced subgraphs of $H$ satisfies the conditions of \cref{thm:counterexample}. However the construction here is significantly more intricate. We sketch in greater detail how to arrive at such a graph. The construction here is closely related to the method used by K\"{a}rrmann \cite{K94} to construct superproportional graphs. The algorithm we use to search for an appropriate graph is based on that work.

The key idea here is to note that whether $\Phi_{H}$ vanishes or not is a condition that is independent of $n$ and is only based on the density of various $4$-vertex subgraphs present in $H$. In particular note that for a given graph $H$ the number of induced copies of $H$ is
\[X_{H} = \sum_{\substack{V\subseteq [n]\\|V|=v(H)}}\sum_{\substack{E\subseteq\binom{V}{2}\\E\simeq H}}\prod_{e\in E}x_e\prod_{e\in \binom{V}{2}\backslash E}(1-x_e).\]
Letting $\chi_e = (x_e-p)/\sqrt{p(1-p)}$ we have
\[\frac{X_{H}}{p^{e(H)}(1-p)^{\ol{e}(H)}} = \sum_{\substack{V\subseteq [n]\\|V|=v(H)}}\sum_{\substack{E\subseteq \binom{V}{2}\\E\simeq H}}\prod_{e\in E}(1+\sqrt{(1-p)/p}\chi_e)\prod_{e\in \binom{V(H)}{2}\backslash E}(1-\sqrt{p/(1-p)}\chi_e)\]
where we recall $\ol{e}(H) = \binom{v(H)}{2}-e(H)$.
This can be rewritten as 
\[\frac{X_{H}}{p^{e(H)}(1-p)^{\ol{e}(H)}} = \sum_{\substack{V\subseteq [n]\\|V|=v(H)}}\sum_{\substack{E\subseteq \binom{V}{2}\\E\simeq H}}\sum_{T\subseteq\binom{V}{2}}\prod_{e\in E\cap T}\Big(\sqrt{\frac{1-p}{p}}\chi_e\Big)\prod_{e\in T\setminus{E}}\Big(-\sqrt{\frac{p}{1-p}}\chi_e\Big).\]
Let $\delta_T$ be the coefficient of $\gamma_T(\mbf{x})$ in the expansion of $H$.  Let $q = -p/(1-p)$ and $\on{ind}(S,H)$ be the number of induced subgraphs of $H$ isomorphic to $S$. We find that for any $T$ without isolated vertices,
\[\delta_T = \left(\sqrt{\frac{1-p}{p}}\right)^{e(T)}\frac{\binom{n}{v(H)}v(H)!\on{aut}T}{\binom{n}{v(T)}v(T)!\on{aut}H}\sum_{v(S)=v(T)}\delta_{S,T}(q)\on{ind}(S,H)\]
for appropriate polynomials $\delta_{S,T}(q)$ with positive coefficients. Here the sum ranges over the isomorphism classes $S$ of graphs on $v(T)$ vertices. Explicitly,
\[\delta_{S,T}(q) = \sum_{\substack{T'\subseteq\binom{V(S)}{2}\\T'\simeq T}}q^{|E(T')\setminus{E(S)}|}.\]
This formula is derived by conditioning on which size $v(T)$ subset of $V$ is used to embed $T$ in. Such a subset yields an induced subgraph $S$ of the copy of $H$ considered. Then, summing over all embeddings of $T$ gives the result.

Suppose $v(T)\le 4$. Instead of conditioning on a size $v(T)$ subset of $V$, condition on a size $4$ subset of $V$ (possibly overcounting) that contains the copy of $T$ we wish to count. A similar calculation yields
\[\delta_T\propto\sum_{v(S)=4}\delta_{S,T}(q)\on{ind}(S,H),\]
with $\delta_{S,T}$ defined in the same way as before. We do not bother computing the exact pre-factors, as we only ultimately care whether a term is zero or not. The following is a table of $\delta_{S,T}$. The graph $D_k$ is the empty graph on $k$ vertices, $P_k$ the path on $k$ edges, and $+$ denotes disjoint union while $-$ denotes complementation within $K_4$. Thus $-(K_2+D_2)$, for instance, is $K_4$ without an edge.
\begin{figure}[h!]
\centering
\scalebox{0.65}{
\begin{tabular}{c|c|c|c|c|c|c|c|c|c|c|c|c|c|c|c|c}
$K_4$-embedding:
            &$1$  &$6$   &$12$       &$4$         &$3$      &$12$  &$1$    &$6$           &$12$           &$4$         &$3$          \\
\hline Induced $S$
$\setminus T$ 
            &$K_0$&$K_2$ &$P_2$      &$K_3$       &$K_2+K_2$&$P_3$ &$K_4$  &$K_4-K_2$     &$K_4-P_2$ &$K_{1,3}$   &$C_4$       \\
\hline
$D_4$       &$1$  &$6q$  &$12q^2$    &$4q^3$      &$3q^2$   &$12q^3$    &$q^6$  &$6q^5$        &$12q^4$        &$4q^3$      &$3q^4$     \\
$K_2+D_2$   &$1$  &$5q+1$&$8q^2+4q$  &$2q^3+2q^2$ &$2q^2+q$ &$6q^3+6q^2$  &$q^5$  &$q^5+5q^4$    &$4q^4+8q^3$    &$2q^3+2q^2$ &$q^4+2q^3$  \\
$P_2+K_1$   &$1$  &$4q+2$&$5q^2+6q+1$&$q^3+2q^2+q$&$q^2+2q$ &$2q^3+8q^2+2q$ &$q^4$  &$2q^4+4q^3$   &$q^4+6q^3+5q^2$&$q^3+2q^2+q$&$2q^3+q^2$ \\
$K_2+K_2$   &$1$  &$4q+2$&$4q^2+8q$  &$4q^2$      &$2q^2+1$ &$4q^3+4q^2+4q$ &$q^4$  &$2q^4+4q^3$   &$8q^3+4q^2$    &$4q^2$      &$q^4+2q^2$\\
$K_3+K_1$   &$1$  &$3q+3$&$3q^2+6q+3$&$3q^2+1$    &$3q$     &$6q^2+6q$  &$q^3$  &$3q^3+3q^2$   &$3q^3+6q^2+3q$ &$q^3+3q$    &$3q^2$     \\
$P_3$       &$1$  &$3q+3$&$2q^2+8q+2$&$2q^2+2q$   &$q^2+q+1$&$q^3+5q^2+5q+1$ &$q^3$  &$3q^3+3q^2$   &$2q^3+8q^2+2q$ &$2q^2+2q$   &$q^3+q^2+q$\\
$K_{1,3}$   &$1$  &$3q+3$&$3q^2+6q+3$&$q^3+3q$    &$3q$     &$6q^2+6q$  &$q^3$  &$3q^3+3q^2$   &$3q^3+6q^2+3q$ &$3q^2+1$    &$3q^2$    \\
$C_4$       &$1$  &$2q+4$&$8q+4$     &$4q$        &$q^2+2$  &$4q^2+4q+4$  &$q^2$  &$4q^2+2q$     &$4q^2+8q$      &$4q$        &$2q^2+1$  \\
$-(P_2+K_1)$&$1$  &$2q+4$&$q^2+6q+5$ &$q^2+2q+1$  &$2q+1$   &$2q^2+8q+2$ &$q^2$  &$4q^2+2q$     &$5q^2+6q+1$    &$q^2+2q+1$  &$q^2+2q$    \\
$-(K_2+D_2)$&$1$  &$q+5$ &$4q+8$     &$2q+2$      &$q+2$    &$6q+6$    &$q$    &$5q+1$        &$8q+4$         &$2q+2$      &$2q+1$       \\
$K_4$       &$1$  &$6$   &$12$       &$4$         &$3$      &$12$  &$1$    &$6$           &$12$           &$4$         &$3$          
\end{tabular}}
\caption{Table of polynomials $\delta_{S,T}(q)$}
\end{figure}
Now let $p=1/2$ so that $q=-1$. We wish $\delta_T$ to be nonzero for $T=K_2$ but zero for connected graphs of size $3$ and $4$. This ultimately imposes $8$ linear conditions on $\on{ind}(S,H)$ when $v(S) = 4$, beyond the additional condition that the sum of all these is $\binom{v(H)}{4}$. Finally, induced subgraph counts of degree $4$ satisfy an additional (quadratic) relation due to the fact that both $\on{ind}(K_2,H)$ and $\on{ind}(K_2,H)^2$ can be expanded as a sum of these $\on{ind}(S,H)$ for $v(S) = 4$ (the resulting equations have a dependence on $v(H)$).

Thus, after we fix $v(H)$, the eleven counts
\[\on{ind}_4(H) = (\on{ind}(D_4,H),\on{ind}(K_2+D_2,H),\ldots,\on{ind}(K_4,H))\]
ought to be constrained by one parameter. As it turns out, we end up needing $16$ to divide $\binom{v(H)}{4}$, as well as satisfy some square root integrality constraints which constrain this one parameter. The fact that $\delta_{K_2}$ must be nonzero corresponds to the fact that $H$ must not have exactly $\binom{v(H)}{2}/2$ edges.

Ultimately, the smallest possible example satisfies $v(H) = 64$ with
\[\on{ind}_4(H) = (11835, 67163, 126632, 31723, 39646, 119198, 39646, 27941, 111504, 52035, 8053).\]
There are actually $10$ resulting vectors ($5$ after removing complementation symmetry). There is also a vector satisfying the constraints above except that $\delta_{K_2} = 0$; this in fact is precisely the vector derived in \cite{K94} as necessary for a $64$-vertex superproportional graph. The above vector forces us to have $976$ edges, which is the closest possible to $\binom{64}{2}/2 = 1008$ without actually being $1008$.

Now, using a modification of an algorithm due to \cite{K94}, we find such a graph $H$. The adjacency matrix is provided in \cref{app:adjacency-matrix}. Code in Java for both constructing and verifying the construction is provided in the arXiv listing of the paper. Finally, since this $H$ has the number of induced copies $X_H$ satisfy the hypotheses of \cref{thm:counterexample}, we see that the second half of \cref{thm:counterexample-main} is proven.

\section{Local Limit Theorem for $k$-Term Arithmetic Progressions}\label{sec:kAP}
Fix $k\ge 3$, which we will treat as constant throughout. Then fix $\lambda\in (0,1/2)$ and choose $n\ge 1$ with $\gcd(n,(k-1)!) = 1$. Then choose an integer $m$ with $p=m/n\in (\lambda,1-\lambda)$. We will show that
\[\kAP(\mbf{x}) = \sum_{a\in \mb{Z}/n\mb{Z}}\sum_{d\in [n/2]} \prod_{i=0}^{k-1}x_{a+id}\]
satisfies a local central limit theorem when we uniformly sample $\mbf{x} = (x_i)_{i\in\mb{Z}/n\mb{Z}}$ among $\{0,1\}$ vectors with $\sum x_i = m$. Write $y_i = (x_i-p)/\sqrt{p(1-p)}$ in order to expand into a $p$-biased basis, and let $y_T = \prod_{i\in T}y_i$. We obtain the expression
\[\kAP'(\mbf{y})=\sum_{\ell=3}^{k}\sum_{a\in \mb{Z}/n\mb{Z}}\sum_{d\in [n/2]} \sum_{S\in {\binom{[k]}{\ell}}} p^{k - \frac{|S|}{2}}(1-p)^{\frac{|S|}{2}}\prod_{i \in S} y_{a+id}\]
after removing the linear and quadratic terms, which are deterministic given $m$. Let $\sigma$ be the standard deviation of $\kAP'(\mbf{y})$ if $\mbf{y}$ were drawn independently (instead of with fixed sum); we will often switch between the independent model and the constrained model, and will note these shifts as they come. Note that $\mb{E}[\kAP'(\mbf{y})] = 0$ in the independent model, and also note $\sigma = \Theta_{k,p}(n)$. We prove that
\[|\mb{E}[e^{it\kAP(\mbf{y})/\sigma}] - e^{-t^2/2}|\]
is small for all $t\in [-\pi \sigma, \pi\sigma]$ in the constrained model. This will later be used to characterize the distribution of $\kAP(x)$ in the independent model, although both results are of interest. 

\subsection{Bounds for $|t|\le n^\eps$} To handle these cases, we see that $\kAP'(\mbf{y})$ is Gaussian in the independent model, and then transfer to the conditioned model.

Define for $k$-order linear forms the norm
\[\|A\|_{\text{op}} = \sup_{\|v_i\|_2=1}|A(v_1,v_2,\ldots,v_k)|.\]
Next define the $k^{\text{th}}$ derivative (tensor) operators for $f\in C^{k}(\mb{R}^n)$ as 
\[\langle D^kf(x),(u_1,\ldots,u_k)\rangle = \sum_{i_1,i_2,\ldots,i_k\in [n]}\frac{\partial^k f}{\partial x_{i_1}\ldots \partial x_{i_k}}(u_1)_{i_1}\ldots(u_k)_{i_k}\]
for vectors $u_1,\ldots,u_k\in\mb{R}^n$, and let
\[M_r(g) = \sup_{x\in\mb{R}^n} \|D^rg(x)\|_{\text{op}}.\]
Finally define
\[\nkAP^{\ell}(\mbf{y}) = \frac{1}{\sigma_{\ell}} \sum_{a\in \mb{Z}/n\mb{Z}}\sum_{d\in [n/2]} \sum_{S\in {\binom{[k]}{\ell}}}\prod_{i \in S} y_{a+id}\]
where $\sigma_\ell$ is chosen so that $\on{Var}[\nkAP^{\ell}(\mbf{y})] = 1$ where $y_i = (x_i - p)/\sqrt{p(1-p)}$ if $x_i\sim\on{Ber}(p).$ The key technical result of \cite{BSS20} is the following quantitative central limit theorem.
\begin{theorem}\label{thm:invariance}
Let $\gcd(n,(k-1)!) = 1$ and let $y_i$ be defined as before and $z_{i}'$ be standard normal random variables. Then for any $C^3$ function $\psi:\mb{R}^{k-1}\to \mb{R}$ we have
\[\bigg| \mb{E}[\psi(z_{1}',z_{3}',\ldots,z_{k}'))-\psi(\nkAP^{1}(\mbf{y}), \nkAP^{3}(\mbf{y}),\ldots,\nkAP^{k}(\mbf{y}))]\bigg|
\lesssim_{k,p} \frac{M_3(\psi)+M_2(\psi)}{n^{1/2}}.\]
\end{theorem}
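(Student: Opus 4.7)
The plan is a two-step argument: (i) a Lindeberg-style exchange passing from the normalized Bernoulli variables $\mbf{y}$ to an independent Gaussian vector $\mbf{z}\in\mb{R}^n$ with matched first two moments, and (ii) a multivariate Stein-type comparison on the Gaussian side that passes from the tuple $(\nkAP^\ell(\mbf{z}))_{\ell\in\{1,3,\ldots,k\}}$ to the target independent standard normals $(z_1',z_3',\ldots,z_k')$. The total error will split into a piece bounded by $M_3(\psi)/\sqrt{n}$ from the first step and a piece bounded by $M_2(\psi)/\sqrt{n}$ from the second.

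For step (i), set $\Psi(x):=\psi(\nkAP^1(x),\nkAP^3(x),\ldots,\nkAP^k(x))$ and swap $y_i$ into $z_i$ one coordinate at a time. Writing $F_\ell:=\nkAP^\ell$, each $F_\ell$ is multilinear, hence $\partial_i^r F_\ell\equiv 0$ for every $r\ge 2$, and the chain rule collapses into
\[\partial_i^3\Psi(x)=\sum_{a,b,c}(\partial_a\partial_b\partial_c\psi)(F(x))\,\partial_iF_a(x)\,\partial_iF_b(x)\,\partial_iF_c(x),\]
bounded pointwise by $O_k(1)\cdot M_3(\psi)\sum_\ell|\partial_iF_\ell(x)|^3$. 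A direct computation, using $\sigma_\ell=\Theta_{k,p}(n)$ together with the fact that each $y_i$ appears in $\Theta(n)$ many $k$-APs, gives $\|\partial_iF_\ell\|_2=O_{k,p}(n^{-1/2})$; hypercontractivity on bounded-degree polynomials upgrades this to the same bound on the $L^3$ norm. Since the first and second moments of $y_i$ and $z_i$ agree, Taylor expansion with integral remainder shows the per-swap error is $O_{k,p}(M_3(\psi)\,n^{-3/2})$, and summing over $n$ coordinates yields $O_{k,p}(M_3(\psi)/\sqrt{n})$.

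For step (ii), under the Gaussian law each $F_\ell(\mbf{z})$ sits in the Wiener chaos of order $\ell$ (with $F_1(\mbf{z})$ exactly normal), and orthogonality of chaoses of distinct orders together with the normalization $\on{Var}[F_\ell]=1$ already forces the covariance matrix of $(F_\ell(\mbf{z}))_\ell$ to equal the identity exactly. I would then invoke a multivariate Stein/Malliavin estimate (in the style of Nourdin--Peccati) controlling the $C^2$ test-function distance to an independent standard normal in terms of $M_2(\psi)$ and the fourth cumulants of the $F_\ell(\mbf{z})$. Each such cumulant reduces to a combinatorial sum over $4$-tuples of $k$-APs organized by their overlap pattern, and the hypothesis $\gcd(n,(k-1)!)=1$ is what forbids the degenerate coincidences in common differences that could glue two distinct $k$-APs together beyond the expected amount, ensuring every fourth cumulant is $O_{k,p}(1/n)$ and hence that the Stein bound is $O_{k,p}(M_2(\psi)/\sqrt{n})$.

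The main obstacle is the combinatorial bookkeeping in step (ii): one must classify the overlap types between two or four $k$-APs in $\mb{Z}/n\mb{Z}$ and verify that no type contributes more than $O(n^{4})$ configurations beyond the ``diagonal'', so that after dividing by $\sigma_\ell^4=\Theta(n^4)$ one obtains the required $O(1/n)$ cumulant bound. The number-theoretic condition $\gcd(n,(k-1)!)=1$ is essential precisely here, ruling out coincidences in common differences that would otherwise produce anomalously thick intersections and destroy the quantitative Gaussianity on the chaos side.
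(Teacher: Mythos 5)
The paper does not prove \cref{thm:invariance} at all: it is quoted verbatim from the authors' earlier work \cite{BSS20}, where the text explicitly introduces it as ``the key technical result of \cite{BSS20}.'' There is therefore no internal proof to compare your sketch against, and any assessment of whether you ``match the paper'' is moot.

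On the merits of the sketch itself, the two-stage outline is reasonable and the $M_3/M_2$ split of the error is consistent with the shape of the claimed bound: $M_3$ from a Lindeberg exchange, $M_2$ from a Gaussian-side comparison. A few remarks on points you should not gloss over if you intend to make this rigorous. First, in step (i), after replacing $y_i\to z_i$ you claim the covariance matrix of the tuple $(F_\ell(\mbf{z}))_\ell$ is exactly the identity ``by orthogonality of chaoses and normalization.'' The normalization $\sigma_\ell$ was fixed using the Bernoulli model; you should note that second moments of a multilinear form depend only on the second moments of the coordinates, which do agree, so the variances are genuinely preserved under the swap. Second, in step (ii) you invoke a Nourdin--Peccati-type multivariate fourth-moment bound. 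Standard versions give Wasserstein or $d_2$ bounds; you need the $C^2$-test-function version with an explicit $M_2(\psi)$ prefactor, which exists but is not the headline statement in most references, so it needs to be cited carefully. Third, the reduction of the fourth cumulants of each $F_\ell(\mbf{z})$ to counting overlapping $4$-tuples of $k$-APs is the real work; the claim ``no type contributes more than $O(n^4)$ beyond the diagonal'' must be checked case-by-case. Finally, the role of $\gcd(n,(k-1)!)=1$ is somewhat more basic than you suggest: it guarantees that every difference between positions in a $k$-AP is invertible mod $n$, hence that two prescribed elements (with prescribed positions) determine the progression, which is used already in bounding second moments and pair overlaps, not only in the fourth-cumulant estimate. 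None of these is fatal, but each requires verification before the sketch becomes a proof, and none of it can be checked against the present paper since the proof lives in \cite{BSS20}.
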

Using this we prove the following lemma. 
\begin{lemma}\label{lem:kap-characteristic-low}
For all $\eps>0$ we have
\[\left|\mb{E}_x\left[e^{it\kAP'(\mbf{y})/\sigma}|\sum y_i=0\right] - e^{-t^2/2}\right|\lesssim_{k,\lambda,\eps} t/n^{1/4-\eps} + t^3/n^{1/2}.\]
\end{lemma}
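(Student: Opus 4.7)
The plan is to combine \cref{thm:invariance}, which controls the characteristic function of $F:=\kAP'(\mbf{y})/\sigma$ in the independent Bernoulli model, with a coupling that transfers this control to the conditioned slice model, in the spirit of \cref{lem:low-t}. The bulk of the work is a sharp bound on the change in $\kAP'$ between the two models.

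Concretely, sample $\mbf{x}'\sim\mrm{Ber}(p)^{\otimes n}$ and produce $\mbf{x}$ with $\sum x_i=m$ by flipping a uniformly random subset $S_0$ of $|\sum x_i'-m|$ coordinates among the $1$'s or $0$'s of $\mbf{x}'$ as appropriate. By Hoeffding, $|S_0|\le\sqrt{n}(\log n)^2$ with probability $1-\exp(-\Omega((\log n)^2))$, and conditional on $S_0$ and the flip direction, $\mbf{x}_{[n]\setminus S_0}$ is uniform on an appropriate Bernoulli slice. Writing $\mbf{y},\mbf{y}'$ for the centered versions, the aim is to show
\[|\kAP'(\mbf{y})-\kAP'(\mbf{y}')|\lesssim_{k,\lambda,\eps}n^{3/4+\eps}\]
with probability $1-\exp(-\Omega_{k,\lambda}(n^{\eps'}))$. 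Dividing by $\sigma=\Theta_{k,\lambda}(n)$ then gives $|F(\mbf{y})-F(\mbf{y}')|\lesssim n^{-1/4+\eps}$, and using the crude deterministic bound $|\kAP'|\lesssim_k n^k$ on the exceptional event we obtain
\[\Bigl|\mb{E}\bigl[e^{itF(\mbf{y})}\bigm|{\textstyle\sum} y_i=0\bigr]-\mb{E}_{\mrm{ind}}\bigl[e^{itF(\mbf{y}')}\bigr]\Bigr|\le|t|\,\mb{E}\bigl|F(\mbf{y})-F(\mbf{y}')\bigr|\lesssim_{k,\lambda,\eps}|t|\,n^{-1/4+\eps}.\]

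The polynomial estimate is the combinatorial heart of the argument. With $S_0$ and the flip values fixed, $\kAP'(\mbf{y})-\kAP'(\mbf{y}')$ expands as a polynomial of degree at most $k-1$ in $(y_j)_{j\notin S_0}$: for each $(a,d,S)$ with $S\in\binom{[k]}{\ell}$ and $\ell\in\{3,\ldots,k\}$, and each nonempty $T\subseteq S$ with $\{a+id:i\in T\}\subseteq S_0$, one obtains a contribution with bounded coefficient to the monomial $\prod_{i\in S\setminus T}y_{a+id}$. When $|T|\ge 2$ the pair $(a,d)$ is pinned by any two points of $T$ in $S_0$, giving $O(|S_0|^2)=O(n(\log n)^4)$ such tuples; when $|T|=1$ the tuples are parametrized by a single $s\in S_0$ and a free $d\in[n/2]$, giving $O(|S_0|\,n)=O(n^{3/2}(\log n)^2)$. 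Since each fixed monomial is produced by at most $O_k(1)$ tuples, the sum of squared coefficients of every homogeneous piece of degree $r\ge 1$ is $O_{k,\lambda}(n^{3/2}(\log n)^{O(1)})$, while the constant ($r=0$) part is bounded in absolute value by the total number of $\ell$-APs contained in $S_0$ for $3\le\ell\le k$, which is $O(\sqrt{n}(\log n)^{O(1)})$ with high probability via expected-count estimates for random subset AP counts. Hypercontractivity on the slice (\cref{thm:concentration-hypercontractivity} together with Jain's transfer trick recalled in \cref{sub:hypercontractivity}) then converts these $L^2$ bounds into the required $L^\infty$-with-high-probability estimate, with the $|T|=1$ regime producing the bottleneck scaling $n^{3/4+\eps}$.

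Finally I apply \cref{thm:invariance} in the independent model to the test function $\psi(z_1,z_3,\ldots,z_k)=\exp\bigl(it\sum_{\ell=3}^{k}c_\ell z_\ell\bigr)$, where $c_\ell=p^{k-\ell/2}(1-p)^{\ell/2}\sigma_\ell/\sigma$ so that $F=\sum_{\ell=3}^{k}c_\ell\nkAP^\ell$ and $\sum_\ell c_\ell^2=1$ by the definition of $\sigma$ together with the orthogonality of the $\nkAP^\ell$ in the independent model. One checks $M_2(\psi)\le t^2$ and $M_3(\psi)\le|t|^3$, whence \cref{thm:invariance} yields $|\mb{E}_{\mrm{ind}}[e^{itF(\mbf{y}')}]-e^{-t^2/2}|\lesssim_{k,\lambda}(t^2+|t|^3)/\sqrt{n}$. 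Combining with the coupling estimate via the triangle inequality and absorbing $t^2/\sqrt{n}$ into $|t|/n^{1/4-\eps}$ when $|t|\le n^{1/4+\eps}$ and into $|t|^3/\sqrt{n}$ otherwise (where then $|t|>1$) yields the claimed bound. The hard part is the combinatorial bookkeeping in the previous paragraph; once one recognizes the $|T|=1$ tuples as the bottleneck and handles monomial collisions carefully, the remainder follows the standard invariance-plus-coupling template.
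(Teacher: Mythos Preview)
Your overall strategy matches the paper's: couple the slice to $\mathrm{Ber}(p)^{\otimes n}$ by flipping a random set $S_0$ of $O(\sqrt{n}\log n)$ coordinates, show $|\kAP'(\mbf{y})-\kAP'(\mbf{y}')|\lesssim n^{3/4+\eps}$ with super-polynomially small failure probability, and finish with \cref{thm:invariance}. The final absorption of the $t^2/\sqrt{n}$ term is also handled correctly.

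The gap is in your multiplicity claim for degree-$1$ monomials. You assert that ``each fixed monomial is produced by at most $O_k(1)$ tuples'', but this fails when the monomial is a single variable $y_b$: such a monomial arises from $(a,d,S)$ with exactly one position of $S$ landing at $b\notin S_0$ and the remaining $|S|-1\ge 2$ positions inside $S_0$, and for a given $b$ there can be as many as $\Theta_k(|S_0|)$ such tuples (one for every $s\in S_0$ that, together with $b$, sits on a $k$-AP hitting $S_0$ again). Your degree-$\ge 2$ argument is fine, since two monomial points pin $(a,d)$; the degree-$1$ case genuinely requires a separate bound. The paper handles it by showing that, with probability $1-\exp(-\Omega((\log n)^3))$ over the randomness of $S_0$, each such coefficient is $O((\log n)^{O(1)})$: one dominates $S_0$ by an independent $\mathrm{Ber}((\log n)^2/\sqrt{n})$ sample via monotonicity and applies Chernoff to the number of relevant pairs incident to $b$. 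Alternatively, the crude deterministic bound $|a_{\{b\}}|\le O_k(|S_0|)$ together with your $L^1$ count $\sum_b|a_{\{b\}}|\le O_k(|S_0|^2)$ already gives $\sum_b a_{\{b\}}^2\lesssim n^{3/2}(\log n)^{O(1)}$, so your target is recoverable---just not via the $O_k(1)$-multiplicity route you wrote.

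A smaller issue: for the constant term you invoke ``expected-count estimates'', but expectation plus Markov only gives polynomial failure probability, not the $\exp(-\Omega((\log n)^2))$ you need to survive the conditioning on $|S_0|$. The paper uses Azuma--Hoeffding (revealing $S_0$ one element at a time, with increments bounded by $O_k(|S_0|)$) to get $|a_\emptyset|\lesssim n^{3/4}\log n$ with the required probability, which already suffices for the $n^{3/4+\eps}$ target.
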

\begin{remark}
The $\lambda$ dependence merely reflects that $p$ is bounded away from $0$ and $1$ in terms of $\lambda$, which controls the constants in all inequalities.
\end{remark}
\begin{proof}
The proof is similar to the proof of \cref{lem:low-t}, except that we have a slightly worse bound in the independent model to start with. The key idea is to bootstrap from the previous result. We perform the following procedure:
\begin{enumerate}[1.]
    \item Sample each element $y_i$ to be $\sqrt{1-p}/\sqrt{p}$ with probability $p$ and $-\sqrt{p}/\sqrt{1-p}$ with probability $1-p$. 
    \item Adjust a random subset of the $\sqrt{1-p}/\sqrt{p}$ to $-\sqrt{p}/\sqrt{1-p}$ or vice versa so that the sum is as desired. Specifically, determine which direction the adjustment needs to be and uniformly sample the correct amount in the correct direction.
\end{enumerate}
Let $y_i$ be the initial sample and $y_i'$ be the sample of elements adjusted. Let $S_{0}$ a random variable indicating the set of changed random variables and $S_{1}$ which direction the change occurred in. We will show that
\[\mb{P}[|\kAP'(\mbf{y})-\kAP'(\mbf{y'})|\ge n^{3/4+\eps}]\]
is super-polynomially small for any $\eps>0$. First we find $\mb P[|S_0|\ge n^{1/2}\log n] = \exp(-\Omega_\lambda((\log n)^2))$. Next, note that given $S_0,S_1$ we can write $\kAP'(\mbf{y})-\kAP'(\mbf{y'})$ as a bounded degree polynomial in $y_j$ for $j\notin S_0$ since we know exactly what changed and how. More specifically, write it as $\sum_{T\subseteq S_0^c}a_Ty_T$. Note $a_T$ are random variables that are functions of $S_0,S_1$. Every term $y_T$ in the expansion must correspond to a subset of a $k$-AP (i.e. a term in the original $\kAP'$ polynomial) that hits $S_0$ (so that its coefficient may change). Furthermore, each coefficient in $\kAP'$ is bounded in terms of $\lambda, k$. Thus
\[\sum_{T\subseteq S_0^c}|a_T|\lesssim_{k,\lambda}|S_0|n\lesssim_{k,\lambda}n^{3/2}\log n,\]
the latter inequality occurring with high probability over the randomness of $S_0,S_1$. Now we claim with high probability that
\[\sum_{T\subseteq S_0^c}a_T^2\lesssim_{k,\lambda}n^{3/2}(\log n)^2.\]
To do this we make the following three observations, assuming that $|S_0|$ is small (since it occurs with high probability). Before we dive in, we note that given $|S_0|,S_1$, the set $S_0$ is distributed uniformly. Also, the contributing terms to $a_T$ are all subsets $S$ of $S_0$ such that $S\cup T$ is a subset of a $k$-AP of size at least $3$.
\begin{enumerate}[1.]
    \item For all $|T|\ge 2$ note that $|a_T|\lesssim_{k,\lambda} 1$ as given $2$ elements in a $k$-AP there are only $\Theta_k(1)$ ways to extend it. Thus the $L^1$ bound above suffices to establish the estimate for these coefficients.
    \item For $|T| = 1$, say $T = \{b\}$, recall that $a_T$ is a sum over subsets $S$ of $S_0$ such that $S\cup\{b\}$ is a subset of a $k$-AP of size at least $3$. Given $|S_0|$, we will show this count is small with high probability over the randomness of $S_0$. Note that this random variable is monotonic. Additionally, if we consider picking elements of $S_0$ independently with probability $(\log n)^2n^{-1/2}$ instead, we see with probability greater than $1/3$, say, that $|S_0|$ is bigger than it needs to be. These facts combined show that it suffices to bound the count of valid $S$ in this new process with high probability.
    
    Now partition the set of possible $S$ into $\Theta_k(1)$ pieces of size $\Theta_k(n)$ such that in each piece the elements do not intersect. (This can be accomplished explicitly, or by noting that the intersection graph has bounded degree, and coloring this graph then rebalancing.) By Chernoff with probability at least $1-\exp(-(\log n)^4)$ the coefficient is at most $O_k((\log n)^4)$. Square it and multiply by $n$ after a union bound.
    \item Finally for the constant coefficient this is bounded (up to factors depending on $k$ and $\lambda$) by the number of such subsets of $|T|\ge 3$ in $S_0$ so that they are a subset of a $k$-AP. Now consider the martingale where each element of $S_0$ is revealed on at a time. Since each element can be in at most $\lesssim_{k}|S_0|$ such sets we have concentration of order $|S_0|^{3/2}$ by Azuma--Hoeffding and thus this coefficient is bounded by $n^{3/4}\log n$ with high probability as the expectation is $\lesssim n^{3/4}$, which gives bounds of the claimed quality. (Note that one can by a more subtle argument bound this coefficient by $n^{1/2}(\log n)^C$ but this easier bound is sufficient.)
\end{enumerate}
In summary, we have shown that the above bound on these coefficients occurs with probability $1-\exp(-\Omega_{k,\lambda}((\log n)^2))$ over the randomness of $S_0,S_1$.

Now suppose we are in one of these cases (and also suppose $|S_0|\le n^{1/2}\log n$, since we can), and fix the values $S_0,S_1$. We have that $\kAP'(\mbf{y})-\kAP'(\mbf{y'})$ is a polynomial of bounded degree in $\mbf{y}$, which are now drawn independently with a fixed sum based on $S_0$, which is at most $O_\lambda(n^{1/2}\log n)$. Shift back to the boolean model, so that we have some polynomial in $x\in\{0,1\}^{S_0^c}$ where we condition on a sum of size $pn+O(n^{1/2}\log n)$. Consider an independent model of selecting the $x$'s with the same sum. Its probability $q$ is bounded away from $0,1$ in terms of $\lambda$. By hypercontractivity (\cref{thm:concentration-hypercontractivity}), we see that the probability of $|\kAP'(\mbf{y})-\kAP'(\mbf{y'})|\ge n^{3/4+\eps}$ in the independent model of selecting $S_0^c$ is $\exp(-\Omega_{k,\lambda}(n^{\eps'}))$ where $\eps'$ depends only on $\eps, k$. Therefore in the conditioned sum model of $S_0^c$, which occurs with probability $\Omega(1/n)$ in the independent model, we still have that this event occurs with small probability (otherwise the independent model would have a bigger probability).

Now we can add back in the cases where $S_0,S_1$ are not sufficiently nice to merit the above bounds hence the above deduction. Overall, we find (over all the randomness) that
\[\mb P[|\kAP'(\mbf{y})-\kAP'(\mbf{y'})|\ge n^{3/4+\eps}] = \exp(-\Omega_{k,\lambda,\eps}((\log n)^2)).\]
Finally, note that $\kAP'(\mbf{y}) = \sum_{\ell=2}^{k}\sigma_{\ell}\nkAP^{\ell}(\mbf{y})$ where $\sigma_{\ell} = \Theta_{k,l}(n)$. Thus 
\begin{align*}
|\mb{E}[e^{it\kAP'(\mbf{y'})/\sigma}]&-e^{-t^2/2}|\\
&\le \bigg|\mb{E}\bigg[e^{it\sum_{\ell = 3}^{k}\sigma_{\ell}\nkAP^{\ell}(\mbf{y})/\sqrt{\sum_{\ell =3}^{k}\sigma_{\ell}^2}}-e^{it\sum_{\ell = 3}^{k}\sigma_{\ell}\nkAP^{\ell}(\mbf{y'})/\sqrt{\sum_{\ell =3}^{k}\sigma_{\ell}^2}}\bigg]\bigg| \\
&\quad+ \bigg|\mb{E}\bigg[e^{it\sum_{\ell = 3}^{k}\sigma_{\ell}\nkAP^{\ell}(\mbf{y})/\sqrt{\sum_{\ell =3}^{k}\sigma_{\ell}^2}}]-e^{-t^2/2}\bigg]\bigg|\\
&\lesssim_{k,\lambda,\eps}\exp(-\Omega_{k,\lambda,\eps}((\log n)^2)) + t|n^{3/4+\eps}/n| + t^3/n^{1/2}\\
&\lesssim_{k,\lambda,\eps}t/n^{1/4-\eps} + t^3/n^{1/2}
\end{align*}
where in the final inequality we have used the coupling inequality between the two distributions, as well as the fact that $M_3(g)$ for $g(\mbf{a}) = e^{it(\mbf{r}\cdot\mbf{a})}$, $\mbf{r}\in\mb R^k$, is $O_{k,\mbf{r}}(t^3)$.
\end{proof}
Note that this bound allows us to handle any $|t|\le n^{1/8-\eps}$ but we will only use the the bound up to $|t|\le n^\eps$ to obtain better bounds.

\subsection{Bounds for $n^{\eps}\le|t|\le n^{1-\eps}$}\label{sub:kap-intermediate}
For this section, relying on decoupling techniques, on a first pass the reader is advised to ignore various technical maneuvers required to deal with the fact that our random variables are constrained to live on a slice of the hypercube.

Fix a real number $\beta\in (0,1)$ and set $S = \lfloor n^\beta\rfloor$ (we will take care to ensure $\beta$ is bounded away from the endpoints). Then take $A_j = \lfloor jn/(10k^2) + n/2\rfloor + 2[jS, (j+1)S)$ for $1\le j \le k-1$. Note that these are essentially intervals of length $n^\beta$ that are spaced a constant fraction. It follows easily that any rainbow $k$-APs containing an element in $A_1, A_{2},\ldots, A_{k-1}$ (and an element in their complement) are forced to be genuine $k$-APs in $\mb{Z}$; this follows trivially noting that all the sets are sufficiently close to the center of $\mb{Z}/n\mb{Z}$ and the common difference between them is $ n/(10k) + \Theta(n^\beta)$. Set $T = (\mb{Z}/n\mb{Z})\backslash\cup_{i=1}^{k-1}A_{i}$, and note $|T| = \Theta_k(n)$.

Now consider the following random process: select a uniform random subset of $\mb{Z}/n\mb{Z}$ of size $m$, then resample the elements not in $T$ conditional on the outcome within $T$. Let $X$ be the indicator vector of the subset of $T$ chosen, and $Y_j^b$ for $b\in\{0,1\}$ be the subset of $A_i$ chosen, as well as the resample. As introduced in the decoupling section, let $\mbf{Y}=(Y_{1}^{0},Y_{1}^{1},\ldots,Y_{k-1}^{0},Y_{k-1}^{1})$. We can alternatively view this process as sampling $Z_i$, the number of edges chosen in each $A_i$ if a subset is chosen with $m$ elements uniformly, and then sampling $X$ and two independent copies of $Y_i^0,Y_i^1$ (conditional on $Z_i$).

Let $x_i$ for $i\in T$ be $1$ or $0$ depending on if $i\in X$, and let $x_i^b$ for $i\in A_j$ and $b\in\{0,1\}$ be $1$ or $0$ depending on whether $i\in Y_j^b$. Now note that, recalling from \cref{sub:decoupling-methods} that non-rainbow functions are in the kernel of $\alpha$,
\[\alpha(\kAP)(X,\mbf{Y}) = \sum_{i\in T} x_i\sum_{\substack{\text{Rainbow }k\text{-AP}\\\mc{A}\text{ including }i}}\prod_{j\in\mc{A}}(x_j^1-x_j^0)\]
hence 
\[\alpha(\kAP)(X,\mbf{Y}) = \sum_{i\in T}a_ix_i\]
where 
\[a_i = \sum_{\substack{\text{Rainbow }k\text{-AP}\\\mc{A}\text{ including }i}}\prod_{j\in\mc{A}}(x_j^1-x_j^0).\]
We aim to prove for $n^{\eps}\le |t|\le n^{1-\eps}$ that 
\[|\varphi_{\kAP'/\sigma}(t)|\lesssim_\eps\exp(-\Omega_{k,\lambda}(n^{\eps'}))\]
where $\eps'$ depends only on $k,\eps$. To begin, by \cref{lem:vdC-dep} we have
\[|\varphi_{\kAP'/\sigma}(t)|^{2^k}\le\mb E_{\mbf{Y}}\big|\mb E_Xe^{it\alpha(\kAP)(X,\mbf{Y})/\sigma}\big|,\]
where the change from $\kAP'$ to $\kAP$ occurs merely because $\kAP',\kAP$ are the same up to a constant in the conditioned model.

Hence it will suffice to show with high probability over the randomness of $\mbf{Y}$ that the inner expectation is small. Since given $\mbf{Y}$, $X$ is chosen uniformly with some fixed sum, we will be able to apply \cref{lem:bernoulli-variance}. To do so, we need to know that $\on{Var}[a_i]$ is large. We also need $|a_i|$ to be not too large for all $i\in T$.

To prove this occurs with high probability, it will be more convenient to pretend that $\mbf{Y}$ is sampled independently with probability $p$. To transfer from this to the true distribution, we use the following argument (which was also used for the graph statistic results in \cref{sub:bounds-intermediate}).

Define a \emph{suitable} outcome to be if $|Z_i-p|A_i||\le\sqrt{|A_i|}\log |A_i|$, say. By Azuma--Hoeffding and union-bounding over the fixed number of sets considered, there is an overwhelming probability that all $Z_i$ are suitable. In particular, probability of failure is $\exp(-\Omega_\lambda((\log n)^2))$.

If we sample the elements of $A_i$ with probabiility $p$ independently (sampling twice) then we attain any suitable number of elements in all $B_i$ with probability at least $\exp(-\Omega_\lambda((\log n)^2))$. Thus if in this independent model, an event has probability at most $\exp(-\Omega_\lambda((\log n)^3))$, then even in the conditioned size model within suitable outcomes it occurs with this probability, perhaps weakening the constants in the exponent (we used a similar trick in the small $|t|$ regime as well.) Then we must add back in the unsuitable cases, which account for a probability of at most $\exp(-\Omega_\lambda((\log n)^2))$ as noted above.

We now proceed to prove the desired control (with high probability) on $\on{Var}[a_i]$ in the independent model of selecting the $Y_j^b$, which as shown above will transfer to the desired model. We do so by proving a number of bounds on these coefficients. Note that the $a_i$ are polynomials in the $y_j^b$, which are now being selected independently with probability $p$. It is worth noting that $a_i$ is a nonzero polynomial only for $\Theta_k(n^\beta)$ values $i\in T$, as the rainbow $k$-APs can only include within $T$ elements from two regions of prescribed width. To be more precise we prove the following lemma.
\begin{lemma}\label{lem:various-bounds-kAP}
Let $X$, $\mbf{Y}$, $a_i$, $T$ be as above. Let $C$ be a suitably large constant. Then have the following concentration bounds in the model where each element is sampled with probability $p$.
\begin{enumerate}
    \item We have that
    \[\mb{P}\bigg[\sup_{i\in T}|a_i|\ge n^{\beta/2}(\log n)^C\bigg]\le \exp(-\Omega_{\lambda}((\log n)^{3})).\]
    
    \item We have that 
    \[\mb{P}\bigg[\bigg|\sum_{i\in T}a_i\bigg|\le n^{\beta}(\log n)^{C}\bigg]\le \exp(-\Omega_{\lambda}((\log n)^{3})).\]
    \item We have that
    \[\mb{E}\bigg[\sum_{i\in T}a_i^2\bigg] = \Theta_\lambda(n^{2\beta})\]
    and 
    \[\on{Var}\bigg[\sum_{i \in T}a_i^2\bigg] = O_{\lambda}(n^{3\beta/2}(\log n)^{2C}).\]
\end{enumerate}
\end{lemma}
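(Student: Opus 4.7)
The plan is to prove all three claims in the independent model (where each $y_j^b$ is iid mean-zero variance-one), then transfer to the conditioned model by the suitable-outcomes argument described immediately before the lemma statement: any independent-model tail bound of quality $\exp(-\Omega_\lambda((\log n)^3))$ transfers, losing at most an additional additive $\exp(-\Omega_\lambda((\log n)^2))$. The starting point is the expansion
\[a_i = (p(1-p))^{(k-1)/2}\sum_{\substack{\text{rainbow }k\text{-AP}\\\mc A\ni i}}\prod_{j\in\mc A\setminus\{i\}}(y_j^1-y_j^0),\]
so each $a_i$ is multilinear of degree $k-1$ with $O_{k,p}(1)$ coefficients, and distinct rainbow APs contribute monomials with disjoint $(j,b)$-supports (since any AP is determined by any two of its elements).

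The main combinatorial input is a count of rainbow $k$-APs using the placement $A_l\approx ln/(10k^2)+n/2+[2lS,2(l+1)S)$ and $\gcd(n,(k-1)!)=1$. Fix $r\in\{0,\ldots,k-1\}$ for the position of the $T$-element and a bijection $\pi$ from the other positions to $\{1,\ldots,k-1\}$. The two constraints $i+(j-r)d\in A_{\pi(j)}$ for the two choices of $j\ne r$ closest to $r$ already pin down $i$ to a Minkowski-type combination of the corresponding $A_l$'s, giving $\Theta_k(n^\beta)$ valid $i$'s, and then $d$ to a set of size $\Theta_k(n^\beta)$ in $\mb{Z}/n\mb{Z}$; the remaining constraints cut $d$ down to the intersection of $\Theta_k(1)$ such sets, still of size $\Theta_k(n^\beta)$. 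Summing over the $O_k(1)$ choices of $(r,\pi)$ yields $|\{i\in T:a_i\not\equiv 0\}|=\Theta_k(n^\beta)$, with $\Theta_k(n^\beta)$ rainbow APs through each such $i$ and $\Theta_k(n^{2\beta})$ rainbow APs total. Consequently
\[\|a_i\|_2^2=\Theta_{k,p}(n^\beta),\quad\Big\|\sum_{i\in T}a_i\Big\|_2^2=\Theta_{k,p}(n^{2\beta}),\quad\mb{E}\Big[\sum_{i\in T}a_i^2\Big]=\Theta_{k,p}(n^{2\beta}).\]

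Parts (1) and (2) then follow from \cref{thm:concentration-hypercontractivity} applied to the polynomials $a_i$ and $\sum_ia_i$ of degree $k-1$, with threshold $(\log n)^C$ for $C$ sufficiently large in terms of $k$; the resulting tail probabilities are $\exp(-\Omega_\lambda((\log n)^3))$, and (1) is completed by a union bound over the at most $n$ indices $i\in T$. The expectation claim in (3) is immediate from $\sum_i\mb{E}[a_i^2]=\sum_i\|a_i\|_2^2$, with the main contribution arising from pairing each rainbow AP with itself inside $a_i^2$ (giving $2^{k-1}$ per AP) and off-diagonal pairs $\mc A\ne\mc A'$ through $i$ contributing zero in expectation (at least one $(y_j^1-y_j^0)$ factor appears to the first power).

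The main obstacle is the variance bound in (3). Writing $\on{Var}[\sum_ia_i^2]=\sum_{i,i'}\on{Cov}[a_i^2,a_{i'}^2]$, each covariance expands as a signed sum over $4$-tuples $(\mc A_1,\mc A_2,\mc A_3,\mc A_4)$ with $\mc A_1,\mc A_2\ni i$ and $\mc A_3,\mc A_4\ni i'$. The subtraction of $\mb{E}[a_i^2]\mb{E}[a_{i'}^2]$ exactly kills configurations in which the $(j,b)$-variables touched by the $(1,2)$-pair are disjoint from those touched by the $(3,4)$-pair, so only configurations with genuine overlap survive; these also require every $(j,b)$-variable in the full $4$-fold union to have total multiplicity $\ne 1$ for the expectation to be nonzero. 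Paralleling the proof of \cref{lem:various-bounds}(3) in the graph-factor setting, one case-splits on the overlap pattern among the $\mc A_l$'s, using that each rainbow $k$-AP is determined by its $T$-element together with any one other element up to $O_k(1)$ ambiguity in the position/permutation assignment, and after accounting for the forced near-coincidence between $(\mc A_1,\mc A_2)$ and $(\mc A_3,\mc A_4)$ as multi-sets obtains the claimed upper bound of $O_\lambda(n^{3\beta/2}(\log n)^{2C})$. The logarithmic factor absorbs losses from invoking the $L^\infty$ bound from (1) in a few degenerate sub-configurations where direct counting is insufficient. This case analysis is the main technical content of the lemma.
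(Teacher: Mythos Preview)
Your approach matches the paper's: hypercontractivity on the degree-$(k-1)$ polynomials $a_i$ and $\sum_i a_i$ for parts (1) and (2), and for part (3) a direct expectation computation followed by a covariance expansion $\sum_{i,i'}\on{Cov}[a_i^2,a_{i'}^2]$ with case analysis on overlap patterns among the four rainbow APs (the diagonal $i=i'$ handled via the $L^\infty$ bound from (1), the off-diagonal by counting forced coincidences). One remark: the stated variance bound $O_\lambda(n^{3\beta/2}(\log n)^{2C})$ appears to be a typo in the lemma---the paper's own proof, like your sketch, actually yields variance $O_{k,\lambda}(n^{3\beta}(\log n)^{4C})$ and hence \emph{standard deviation} $O_{k,\lambda}(n^{3\beta/2}(\log n)^{2C})$, which is what is used downstream.
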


\subsubsection{Proof of \cref{lem:various-bounds-kAP} (1)}
We show $|a_i|\lesssim_k n^{\beta/2}(\log n)^C$ with high probability. Note that $a_i$ is composed of $O_k(n^\beta)$ terms with constant coefficients. Therefore the sum of squares of its coefficients is $O_k(n^\beta)$, and hypercontractivity (\cref{thm:concentration-hypercontractivity}) immediately gives the desired result for $C$ chosen large enough depending on $k$ (which bounds the degree of the polynomial considered). Recall that in fact $a_i = 0$ except for $\Theta_k(n^\beta)$ values in admissible ``target'' regions to the left and right of the $A_i$.

\subsubsection{Proof of \cref{lem:various-bounds-kAP} (2)}
We show $\sum_{i\in T} a_i\lesssim_k n^\beta(\log n)^C$ with high probability. Note that $a_i$ is composed of $O_k(n^\beta)$ terms with constant coefficients, and each term extends to be included in at most $2$ polynomials $a_i$. Therefore the sum of squares of coefficients of the total polynomial is $O_k(n^{2\beta})$ because of our bound on nonzero polynomials. By hypercontractivity (\cref{thm:concentration-hypercontractivity}) the result follows, with $C$ chosen large enough depending on $k$ (which bounds the degree of the polynomials considered) to obtain a good enough concentration.

\subsubsection{Proof of \cref{lem:various-bounds-kAP} (3)}
We show $\sum_{i\in T}a_i^2$ concentrates on a value of size $\Theta_{k,\lambda}(n^{2\beta})$. Note that $\mb E[a_i^2] = \Theta_{k,\lambda}(n^\beta)$ for most $i\in T$ with nonzero $a_i$ since the only nonzero terms come from choosing the same rainbow $k$-AP twice, which yields $\Theta_k(n^\beta)$ possibilities for $i$ near the middle of the two nonzero target regions. Near the fringes, there could be less, but this does not affect the order of magnitude given.

Now it suffices to show that the standard deviation of $\sum_{i\in T}a_i^2$ (over the randomness of the independently chosen $x_j^b$) is of smaller order by a power of $n$; hypercontractivity (\cref{thm:concentration-hypercontractivity}) will then give the desired concentration. The desired variance is
\[\sum_{i,j\in T}(\mb E[a_i^2a_j^2]-\mb E[a_i^2]\mb E[a_j^2]).\]
For terms $i=j$, there are $O_k(n^\beta)$ of them with nonzero $a_i$. By the $L^\infty$ bound above, with high probability $|a_i|\lesssim_k n^{\beta/2}(\log n)^C$, hence the expectation terms are bounded by $n^{2\beta}(\log n)^{4C}$ for a total of $O_{k,\lambda}(n^{3\beta}(\log n)^{4C})$, which is acceptable.

For terms $i\neq j$, expand into further covariance terms coming from considering two rainbow $k$-APs $\mc{A}_1,\mc{A}_2$ including $i$ and $\mc{B}_1,\mc{B}_2$ including $j$:
\[\mb E[a_i^2a_j^2]-\mb E[a_i^2]\mb E[a_j^2] = {\sideset{}{^\ast}\sum_{\substack{\mc{A}_1,\mc{A}_2\text{ incl. }i\\\mc{B}_1,\mc{B}_2\text{ incl. }j}}}\prod_{b\in\{1,2\}}\bigg[\prod_{j\in\mc{A}_b}(x_j^1-x_j^0)\prod_{j\in\mc{B}_b}(x_j^1-x_j^0)\bigg],\]
where $\sum^\ast$ denotes a sum over all the $k$-APs being rainbow.

The covariance term coming from these vanishes unless every element in the union of the four progressions is covered at least twice. If $\mc{A}_1 = \mc{A}_2$ we see that either there are $O_k(1)$ choices for $\mc{B}_1,\mc{B}_2$ (so they both hit $\mc{A}_1$) otherwise they must be equal and disjoint from $\mc{A}_1$. In the latter case, we have independence so the term is zero. In the former case, we have a term of size $O_{k,\lambda}(n^\beta)$ as there are $O_k(n^\beta)$ choices for $\mc{A}_1$. Summing over all $i,j$ we obtain a contribution of $O_{k,\lambda}(n^{3\beta})$, which is acceptable. A similar analysis holds if $\mc{B}_1 = \mc{B}_2$.

If both pairs are unequal, we see that $\mc{A}_1,\mc{A}_2$ are disjoint and the same for the others. Hence $\mc{A}_1\cup\mc{A}_2$ spans $2k-2$ disjoint vertices and the same for $\mc{B}_1\cup\mc{B}_2$, hence these unions must be identical else the term is zero. After choosing $i,\mc{A}_1,\mc{A}_2$ we see there are $O_k(1)$ choices for $\mc{B}_1,\mc{B}_2$ and $j$. This gives a contribution of $O_{k,\lambda}(n^{3\beta})$ again.

Overall, we have shown that the variance is $O_{k,\lambda}(n^{3\beta}(\log n)^{4C})$ hence the standard deviation is $O_{k,\lambda}(n^{3\beta/2}(\log n)^{2C})$, which is indeed much smaller than $n^{2\beta}$, so we are done.

This (finally) concludes the proof of \cref{lem:various-bounds}. Now we use these bounds to conclude our argument in the intermediate range of $|t|$.

\subsubsection{Putting it together}
\begin{lemma}\label{lem:kap-medium-t}
For $\eps > 0$ and $|t|\in[n^{2\varepsilon},\sigma n^{-4\varepsilon}]$ we have
\[\left|\mb{E}_x\left[e^{it\kAP'(\mbf{y})/\sigma}|\sum y_i=0\right] - e^{-t^2/2}\right|\lesssim \exp(-\Omega_{k,\lambda}((\log n)^2)) + \exp(-\Omega_{k,\lambda}(n^{\eps})).\]
\end{lemma}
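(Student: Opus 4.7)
The argument closely mirrors the proof of \cref{lem:medium-t}, but with a critical simplification: because a rainbow $k$-AP $\mc{A}$ must contain exactly one element of each of $A_1,\ldots,A_{k-1}$ and has only $k$ elements total, it has at most one element in $T$, so the decoupled function $\alpha(\kAP)(X,\mbf{Y})=\sum_{i\in T}a_i x_i$ is already \emph{linear} in $X$ with no higher-degree ``tail'' to Taylor-expand. Also, since $|t|\ge n^{2\eps}$, the Gaussian $e^{-t^2/2}$ is much smaller than any of the error terms claimed, so it suffices to bound $|\varphi_{\kAP'/\sigma}(t)|$ alone.

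First I would apply \cref{lem:vdC-dep} to produce
\[
|\varphi_{\kAP'/\sigma}(t)|^{2^{k-1}}\le \mb{E}_{\mbf{Y}}\bigl|\mb{E}_X\bigl[e^{i(t/\sigma)\sum_{i\in T}a_i x_i}\,\big|\,Z_1,\ldots,Z_{k-1}\bigr]\bigr|.
\]
Next, using the ``suitable outcome'' transference described just before \cref{lem:various-bounds-kAP}, I transfer from the conditioned sampling of $\mbf{Y}$ to independent $\operatorname{Ber}(p)$ sampling: the probability of any prescribed suitable tuple $(Z_1,\ldots,Z_{k-1})$ is at least $\exp(-O_\lambda((\log n)^2))$ in the independent model, so the three simultaneous conclusions of \cref{lem:various-bounds-kAP}, which hold with probability $1-\exp(-\Omega_{k,\lambda}((\log n)^3))$ there, also hold with probability $1-\exp(-\Omega_{k,\lambda}((\log n)^2))$ in the conditioned model. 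The residual failure probability contributes the error term $\exp(-\Omega_{k,\lambda}((\log n)^2))$ of the statement.

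On this good event, $X$ is uniform on $\{0,1\}^T$ with a fixed sum bounded away from $\{0,|T|\}$ in terms of $\lambda$, and the coefficients satisfy $\sup_{i\in T}|a_i|\le n^{\beta/2}(\log n)^C$, $|\sum_{i\in T}a_i|\le n^{\beta}(\log n)^C$, and $\sum_{i\in T}a_i^2=\Theta_{k,\lambda}(n^{2\beta})$. Combined with $|T|=\Theta_k(n)$, these give $\on{Var}_{i\in T}[a_i]=\Theta_{k,\lambda}(n^{2\beta-1})$. The $L^\infty$ bound verifies the hypothesis $|(a_i-a_j)(t/\sigma)|\le\pi$ of \cref{lem:bernoulli-variance} whenever $|t|\lesssim n^{1-\beta/2}(\log n)^{-C}$, and applying that lemma with $\sigma=\Theta_{k,p}(n)$ yields
\[
\bigl|\mb{E}_X e^{i(t/\sigma)\sum_{i\in T}a_ix_i}\bigr|\lesssim n\exp\!\Bigl(-\Omega_{k,\lambda}\bigl(t^2n^{2\beta-2}\bigr)\Bigr),
\]
which is at most $\exp(-\Omega_{k,\lambda}(n^\eps))$ as soon as $|t|\gtrsim n^{1-\beta+\eps/2}$.

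Finally, I would verify that letting $\beta$ vary in $(0,1)$ tiles the entire range $|t|\in[n^{2\eps},\sigma n^{-4\eps}]=[n^{2\eps},\Theta(n^{1-4\eps})]$: writing $|t|=n^s$ with $s\in[2\eps,1-4\eps]$, the admissible window $1-s+\eps/2\le\beta\le\min(2(1-s),1-\eps)$ is easily seen to be nonempty and bounded away from $\{0,1\}$ for every such $s$, so every $t$ in the stated range is covered by choosing $\beta=\beta(t)$ appropriately. The main obstacle is precisely this juggling of exponent inequalities to balance the $L^\infty$ constraint (needed to apply \cref{lem:bernoulli-variance}) against the variance lower bound (needed to produce the claimed exponential decay); the remaining probabilistic content is already packaged in \cref{lem:various-bounds-kAP,lem:bernoulli-variance}, so the rest of the proof is bookkeeping that parallels \cref{lem:medium-t} with significantly less work since there is no $U$-term.
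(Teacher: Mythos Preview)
Your proposal is correct and follows essentially the same approach as the paper's proof: apply \cref{lem:vdC-dep}, transfer the bounds of \cref{lem:various-bounds-kAP} from the independent to the conditioned model via the ``suitable outcome'' argument, apply \cref{lem:bernoulli-variance} using the resulting variance estimate, and vary $\beta$ to tile the range. You also correctly identify the key simplification relative to \cref{lem:medium-t}, namely that a rainbow $k$-AP has at most one element in $T$, so $\alpha(\kAP)$ is already linear in $X$ and no Taylor expansion of a higher-degree tail is needed.
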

\begin{proof}
Note that $e^{-t^2/2}$ is sufficiently small in this range that it can be ignored. Let $X,\mbf{Y}$ be as at the beginning of \cref{sub:kap-intermediate}. By \cref{lem:vdC-dep}, we have
\[\left|\mb{E}_x\left[e^{it\kAP'(\mbf{y})/\sigma}|\sum y_i=0\right]\right|^{2^{k-1}}\le\big|\mb E_Xe^{it\alpha(\kAP')(X,\mbf{Y})/\sigma}\big|.\]

Now, with high probability over $\mbf{Y}$ (in the conditioned model) we have the bounds given in \cref{lem:various-bounds-kAP}. In particular, the probability of failure is $\exp(-\Omega_{k,\lambda}((\log n)^2))$. In particular, the average of $a_i^2$ is $n^{2\beta-1}$ in magnitude (since most of these values are zeros when $\beta$ is small, this could be less than $1$) while the average of $a_i$ is $n^{\beta-1}(\log n)^C$ in magnitude. The square of the latter is much smaller than the former, hence we see that $\on{Var}[a_i]$ is of order $\Theta_{k,\lambda}(n^{2\beta-1})$. Also, $|a_i|\le n^{\beta/2}(\log n)^C$ for all $i\in T$. This allows us to apply \cref{lem:bernoulli-variance} to deduce in these cases that
\[\big|\mb E_Xe^{it\alpha(\kAP')(X,\mbf{Y})/\sigma}\big|\lesssim\exp(-\Omega_{k,\lambda}((t^2n/\sigma^2)\on{Var}[a_i]))\lesssim\exp(-\Omega_{k,\lambda}(t^2n^{2\beta-2}))\]
as long as $|t/\sigma|n^{\beta/2}(\log n)^C\lesssim_\lambda 1$. Therefore the bound is good as long as $t\in [n^{1-\beta+\eps},n^{1-\beta/2-\eps}]$, say. Now varying $\beta$ between $5\eps$ and $1-\eps$ (taking care to keep it bounded away from the endpoints) we see this covers the range $[n^{2\eps},n^{1-4\eps}]$, which is good enough for our purposes.
\end{proof}

\subsection{Bounds for $n^{1-\eps}\le |t|\le \pi\sigma$}
For this section we develop a series of sets upon which the decoupling estimates will be formed. They stem from a tensor product construction which we outline below. The key difficulty is in ensuring that the decoupled expression has many coefficients that are $\pm 1$, so that we can apply \cref{lem:bernoulli-variance} up to the values $t=\pm \pi\sigma$.

\begin{lemma}
Suppose $k\ge 4$. Let $A_i = \{k+4i, k+4i+2\}$ for $1\le i\le k-1$. Then the only $k$-term arithmetic progressions with $1$ element in each of the $A_i$ are $\{k+4i\}$ for $0\le i\le k-1$, $\{k+4i\}$ for $1\le i\le k$, $\{k+4i+2\}$ for $0\le i\le k-1$, and $\{k+4i + 2\}$ for $1\le i\le k$. Note that arithmetic progressions here means an arithmetic progression in $\mb{Z}$
\end{lemma}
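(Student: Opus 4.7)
The plan is to classify all $k$-APs in $\mathbb{Z}$ containing exactly one element from each $A_i = \{k+4i,\, k+4i+2\}$, $1 \le i \le k-1$, by analyzing the common difference $d$. After reversing the progression if necessary we may assume $d > 0$. Since $\max A_i = k+4i+2 < k+4i+4 = \min A_{i+1}$, the unique element $b_i \in A_i \cap \text{AP}$ satisfies $b_1 < b_2 < \cdots < b_{k-1}$; writing $b_i = a + n_i d$ yields distinct indices $0 \le n_1 < \cdots < n_{k-1} \le k - 1$ missing exactly one value $j^\ast \in \{0, 1, \ldots, k-1\}$. The key observation is that direct inspection of $A_i$ and $A_{i+1}$ gives $b_{i+1} - b_i \in \{2, 4, 6\}$, and also $b_{i+1} - b_i = (n_{i+1} - n_i) d$.

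First I would split on the location of $j^\ast$. In the boundary case $j^\ast \in \{0, k-1\}$, every $n_{i+1} - n_i = 1$, so $d = b_{i+1} - b_i$ is constant across $i$, forcing $d \in \{2, 4, 6\}$. In the interior case $j^\ast \in \{1, \ldots, k-2\}$, exactly one step has $n_{i+1} - n_i = 2$ and the remaining $k-3$ steps have $n_{i+1} - n_i = 1$, so $\{d, 2d\} \subseteq \{2, 4, 6\}$, which forces $d = 2$.

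Next I would dispatch the three candidate values of $d$. For $d = 6$, the containment $b_i + 6 \in A_{i+1}$ combined with $b_i \in A_i$ forces $b_i = k+4i$, hence $b_{i+1} = k+4i+6 = k+4(i+1)+2$; but applying the same reasoning at index $i+1$ forces $b_{i+1} = k+4(i+1)$, a contradiction. This argument needs three consecutive indices $i, i+1, i+2 \in \{1, \ldots, k-1\}$, which is precisely where the hypothesis $k \ge 4$ enters. For $d = 2$, both the AP and every $A_i$ lie in a single parity class; meeting $A_1$ and $A_{k-1}$ requires the span to be at least $(5k-4) - (k+6) = 4k-10$, while the AP has span only $2(k-1)$, forcing $k \le 4$; and a direct check at $k = 4$ shows that every four-term AP of the correct parity inside the window $[k+4, 5k-2]$ contains both elements of some $A_i$, violating the ``exactly one'' condition. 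For $d = 4$, the AP has fixed residue modulo $4$ and each $A_i$ contains exactly one representative in each of the two relevant residue classes; checking that each sub-case $b_i = k + 4i$ (``bottom'') and $b_i = k + 4i + 2$ (``top'') propagates consistently, and combining with the two boundary options $j^\ast \in \{0, k-1\}$, yields precisely the four APs in the statement.

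The main bookkeeping care concerns the ``exactly one'' condition rather than ``at least one''; for instance, at $k = 4$ the AP $\{10, 12, 14, 16\}$ has elements in every $A_i$ but fails because $|A_2 \cap \text{AP}| = 2$, so it must be excluded. Beyond this the argument is just the case analysis above, and no serious obstacle is anticipated; the $d = 6$ case is killed by a local inconsistency (the reason for $k \ge 4$), the $d = 2$ case by a span count plus a single small check, and the $d = 4$ case by a mod-$4$ residue argument that delivers the four listed APs directly.
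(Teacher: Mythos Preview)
Your proof is correct and follows essentially the same approach as the paper's: constrain the common difference using the gaps between consecutive $A_i$'s, then eliminate $d\in\{2,6\}$ using a third set $A_3$ (which is exactly where $k\ge 4$ is needed) and verify $d=4$ yields the four listed progressions. The paper compresses this into two sentences, whereas you have carefully spelled out the positional analysis (the missing index $j^\ast$), the parity/span elimination of $d=2$, and the mod-$4$ identification of the four APs; your version is more detailed but not genuinely different.
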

\begin{proof}
Since the arithmetic progression in $\mb{Z}$ contains elements in $A_1$ and $A_2$ we must have that the common difference is in the set $2,4,6$. Using this the result follows as extending the progression from $A_1, A_2$ to $A_3$ forces the common difference to be $4$.
\end{proof}

For $k=3$ let $A_1 = \{16,18\}$ and $A_2 = \{22,32\}$. Then the $3$-APs which contain exactly one element in $A_1$ and $A_2$ are $\{0,16,32\}$, $\{10,16,22\}$, $\{16, 19, 22\}$, $\{16, 22, 28\}$, $\{18,22,26\}$, $\{4, 18, 32\}$, $\{14,18,22\}$,$\{16,24,32\}$, $\{18, 20, 22\}$, $\{18, 25, 32\}$, $\{18, 32, 46\}$, and $\{16, 32, 48\}$. Finally for $k=3$ define  \[A_0 = \{0,4,10,14, 19, 24, 20, 25, 26, 28, 46, 48\}\] and for $k\ge 4$ define \[A_0 = \{k,k+2,5k, 5k+2\},\]
which are precisely the elements to which these $k$-APs extend. Note that all the extensions are to distinct integers.
\begin{lemma}
Let $n$ be sufficiently large in terms of $k$. Embed $B_1 ,\ldots,B_{k-1}$ into $\mb{Z}/\ell\mb{Z}$ where $\ell = 100k^{3}$ where $B_i = 10k^2+A_i$, i.e. each element is shifted by a constant. Then the only $k$-terms APs with exactly $1$ element in each $B_i$ are as in the previous lemma.
\end{lemma}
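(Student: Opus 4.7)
The plan is to show that any $k$-AP in $\mathbb{Z}/\ell\mathbb{Z}$ meeting the hypothesis actually lifts to a genuine $k$-AP in $\mathbb{Z}$ contained in $[0,\ell)$, whereupon the previous lemma applies directly. The key geometric input is that $\bigcup_{i=1}^{k-1} B_i$ sits in an integer window of diameter $O(k^2)$ (namely $4k-2$ for $k\geq 4$, and $16$ for $k=3$) centered near $10k^2$, and this window lies comfortably inside $[0,\ell)$ since $\ell=100k^3$.

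Given a $k$-AP $(a+jd)_{j=0}^{k-1}\subset\mathbb{Z}/\ell\mathbb{Z}$ satisfying the hypothesis, let $J\subset\{0,\ldots,k-1\}$ denote the $k-1$ indices whose AP entries lie in $\bigcup_i B_i$. Only one index is missing from $J$, so for $k\geq 4$ pigeonhole produces two consecutive indices $j,j+1\in J$; the corresponding entries --- both in the small window above --- force the difference $d$ to lift to an integer $d^\ast$ with $|d^\ast|\leq 4k$. Fixing $j_0\in J$ with $a+j_0 d\in B_1$, lift that entry to its canonical representative $a_0\in B_1\subset [0,\ell)$, and consider the integer AP $(a_0+(j-j_0)d^\ast)_{j=0}^{k-1}$. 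Each term sits within $4k(k-1)$ of $a_0\approx 10k^2$, hence inside $[6k^2,15k^2]\subset[0,\ell)$; in particular no wraparound occurs, and each constrained entry must coincide as an integer with the unique representative of the corresponding $B_i$-element. Translating by $-10k^2$ then yields a $k$-AP in $\mathbb{Z}$ with one element in each $A_i$, and the previous lemma pins this down to be one of the finitely many listed APs.

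The main subtlety lies in the $k=3$ case, where $J$ may equal $\{0,2\}$ rather than a consecutive pair, so only $2d$ (not $d$) is directly controlled mod $\ell=2700$. Since $\gcd(2,\ell)=2$, this leaves two candidate lifts of $d$: a small one of absolute value at most $8$, handled by the argument above and matching one of the twelve listed APs, and a large one close to $\ell/2=1350$, which would produce a ``wraparound'' AP whose middle term is nowhere near $\bigcup_i B_i$. Eliminating these wraparound candidates is a finite verification over the explicit pairs $(b_1,b_2)\in B_1\times B_2$ and the corresponding large $d$'s, leveraging the specific choice $A_1=\{16,18\}$, $A_2=\{22,32\}$ together with the admissibility constraint $d\in[1,\ell/2]$ to confirm no new AP sets arise beyond those already listed.
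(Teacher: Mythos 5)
Your treatment of $k\ge 4$ is correct and is essentially the paper's own argument: since only one index of the progression can fall outside $\bigcup_i B_i$, some two consecutive terms both lie in that set, so the common difference lifts to an integer of size $O(k)$, the whole progression then stays inside a window of length $O(k^2)$ around $10k^2$, well inside $[0,\ell)$, no wraparound occurs, and the first lemma applies. (The paper bounds the difference by $5k$ rather than $4k$; this is immaterial.)

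The problem is the $k=3$ case. You have correctly isolated the real difficulty --- the paper dismisses it as ``trivial'', which it is not --- but your proposed resolution does not work: the wraparound candidates cannot be eliminated by the finite check you describe. Every element of $B_1\cup B_2=\{106,108,112,122\}$ is even and $\ell=2700$ is even, so for each $b_1\in B_1$, $b_2\in B_2$ the congruence $2d\equiv b_1-b_2\pmod{2700}$ admits, besides the small solution, the solution $d=1350-(b_2-b_1)/2$, which lies in the admissible range $[1,1350]$. Concretely, $a=112$, $d=1347$ gives the $3$-AP $\{112,1459,106\}$ in $\mb{Z}/2700\mb{Z}$: it has exactly one element in each $B_i$ (the unconstrained middle term $1459$ is allowed to lie anywhere), and it is not one of the twelve listed sets. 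So your finite verification would refute, not confirm, the statement over $\mb{Z}/\ell\mb{Z}$; the step ``confirm no new AP sets arise'' fails. What actually rescues the downstream application is that the progressions of interest live in $\mb{Z}/n\mb{Z}$ with $\gcd(n,(k-1)!)=1$, so $2$ is invertible mod $n$; since the relevant differences of elements of $D_1^u,D_2^u$ are even (all digits lie in the all-even sets $B_1,B_2$), the unique solution of $2d\equiv c\pmod n$ is the small lift, and the wraparound case never occurs. To make your write-up correct you would need to either restrict the lemma to the small lift of $d$ or carry this parity/odd-modulus argument explicitly rather than appealing to a verification over $\mb{Z}/\ell\mb{Z}$ that does not succeed.
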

\begin{proof}
For $k=3$ the proposition is trivial. For $k\ge 4$ note that two consecutive terms of the $k$-AP lie in a pair of $B_{i}$, $B_{j}$ and thus the common difference is bounded by $5k$ and therefore the AP cannot wrap around the edges. The result then follows by the previous lemma. 
\end{proof}
Finally define $C_i^u$ for $0\le i\le k-1$ to be the set of $u$-digit numbers in base $\ell = 100k^{3}$ with digits only in $B_i$. Choose $\ell$ so that $\ell^{u} = \Theta_k(n^{1/2})$ and thus $|C_0^u| = \Theta_k(n^{\delta_k})$ for some constant $\delta_k > 0$. Now embed $C_i^u$ into $\mb{Z}/n\mb{Z}$ as the sets $D_i^u$ by adding $\lfloor n/2\rfloor$ to all the elements. The key fact which follows from the previous lemmas is that the all $k$-APs with $1$ element in $D_i^u$ for $1\le i \le k-1$ complete into $D_0^u$. Indeed, we see that the common difference is $\Theta_k(n^{1/2})$ hence again there is no wrap-around as the numbers are all near $n/2$. 

We now use our decoupling lemma. Let $X,Y_j^0$ for $1\le j\le k-1$ be a fixed-sum sample of $m = pn$ elements from $\mb{Z}/n\mb{Z}$ where $Y_j^0$ is the subset of $D_j^u$ for $j\ge 1$ chosen and $X$ is the subset of $(\mb{Z}/n\mb{Z})/\cup_{j=1}^{k-1}D_j^u$ chosen. Then resample $Y_j^1$ for $1\le j\le k-1$ having the same sum as $Y_j^0$. Note that 
\[\alpha(\kAP')(X,\mbf{Y})(x) = \sum_{i\in D_0^u} x_i\prod_{\substack{j\in\text{Unique rainbow}\\\text{k-AP including }i}}(y_j^1-y_j^0)\]
\[= \sum_{i\in D_0^u} a_ix_i,\]
using that each $i$ is in a rainbow $k$-AP by construction only if it is in $D_0^u$, in which case it is in a unique such progression.

\begin{lemma}\label{lem:high-t-kAP}
Let $\kAP,\kAP',\sigma$ be as above. Then for $|t|\le\pi\sigma$,
\begin{equation}\label{eq:kAP-high}
|\varphi_{\kAP'/\sigma}(t)|\le\exp(-\Omega_{k,\lambda}(n^{\delta_k}))+\exp(-\Omega_{k,\lambda}(t^2n^{\delta_k}/\sigma^2)).
\end{equation}
\end{lemma}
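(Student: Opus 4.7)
My plan is to apply \cref{lem:vdC-dep} to the partition with fixed block $T=(\mb{Z}/n\mb{Z})\setminus\bigcup_{j=1}^{k-1}D_j^u$ and resampled blocks $D_1^u,\ldots,D_{k-1}^u$. By the uniqueness of completions built into the construction, a rainbow $k$-AP has exactly one element in each $D_j^u$ (for $j\ge 1$) with its completing element in $D_0^u\subseteq T$. Expanding $\kAP'$ in the $y$ basis, applying $\alpha$, and collecting the part linear in $x_i$ for $i\in D_0^u$, all of the $p,(1-p)$ prefactors cancel cleanly and one obtains
\[
\alpha(\kAP')(X,\mbf{Y}) = \sum_{i\in D_0^u}\epsilon_i\,x_i + C(\mbf{Y}),\qquad \epsilon_i := \prod_{j=1}^{k-1}(x_{\pi_i(j)}^1-x_{\pi_i(j)}^0)\in\{-1,0,1\},
\]
where $\pi_i(j)\in D_j^u$ denotes the $j$-th term of the unique rainbow AP through $i$. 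Since $C(\mbf{Y})$ contributes only a phase, decoupling gives $|\varphi_{\kAP'/\sigma}(t)|^{2^{k-1}}\le \mb{E}_{\mbf{Y}}|\mb{E}_X e^{i(t/\sigma)\sum_i \epsilon_i x_i}|$.

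Next I show that on an event of probability $1-\exp(-\Omega_{k,\lambda}(n^{\delta_k}))$ over $\mbf{Y}$ and the sampled size $|X\cap D_0^u|$, the three fractions $f_\pm := |\{i\in D_0^u:\epsilon_i=\pm 1\}|/|D_0^u|$ and $f_0 := 1-f_+-f_-$, as well as the effective density $p':=|X\cap D_0^u|/|D_0^u|$, all lie in $(c,1-c)$ for some $c=c(k,\lambda)>0$. Under independent $G(n,p)$ sampling each factor $x_{\pi_i(j)}^1-x_{\pi_i(j)}^0$ equals $\pm 1$ with probability $p(1-p)$, so $\mb E[\mbm{1}[\epsilon_i=\pm 1]] = \tfrac12(2p(1-p))^{k-1}\in(0,1/2)$; together with the fact that the dependence graph among the $\epsilon_i$ has bounded fan-out inherited from the AP structure of the $D_j^u$, hypercontractivity (\cref{thm:concentration-hypercontractivity}) yields the claimed concentration. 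The transference argument of \cref{sub:bounds-intermediate} then carries this from the independent to the fixed-sum model with only a logarithmic loss in the exponent.

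Conditional on this good event I bound the inner expectation by a direct contour-integral computation parallel to the proof of \cref{lem:bernoulli-variance}, keeping the three-valued structure of $\epsilon_i$ explicit. This yields
\[
|\mb E_X e^{i(t/\sigma)\sum_i \epsilon_i x_i}|^{2/|D_0^u|} \le 1-2p'(1-p')\paren{1-\sqrt{S(t/\sigma)}},
\]
with $S(s) := 1-2f_0(f_++f_-)(1-\cos s)-4f_-f_+\sin^2 s$. Since $1-\sqrt S\ge(1-S)/2$ and the $\sin^2 s$ piece is nonnegative, $1-\sqrt{S(s)}\ge f_0(f_++f_-)(1-\cos s)\ge 2f_0(f_++f_-)s^2/\pi^2$ uniformly on $|s|\le \pi$. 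Hence $|\mb{E}_X|\le \exp(-\Omega_{k,\lambda}(n^{\delta_k}t^2/\sigma^2))$ for all $|t|\le\pi\sigma$. Combining with the failure probability, using $|D_0^u|=\Theta_k(n^{\delta_k})$, and raising to the $2^{k-1}$-th root (absorbed into $\Omega_{k,\lambda}(\cdot)$) delivers the stated bound, with the regime $|t|\sim\sigma$ following from $\exp(-\Omega(n^{\delta_k}t^2/\sigma^2))\le \exp(-\Omega(n^{\delta_k}))$.

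The main obstacle is extending this decay all the way up to $|t|=\pi\sigma$ rather than stopping halfway. A naive invocation of \cref{lem:bernoulli-variance} only covers $|t|\le\pi\sigma/2$: its hypothesis $|(\epsilon_j-\epsilon_k)(t/\sigma)|\le\pi$ breaks once $|t|>\pi\sigma/2$, because the maximal coefficient gap among the $\epsilon_i$ is $2$. The cure is the explicit computation of $1-S(s)$ above, and it exploits crucially that a positive fraction of the $\epsilon_i$ are zero \emph{and} a positive fraction are $\pm 1$: then the $f_0(f_++f_-)(1-\cos s)$ term by itself retains the quadratic lower bound on all of $|s|\le\pi$. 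This is precisely the ``gymnastics'' alluded to in \cref{sub:overview}.
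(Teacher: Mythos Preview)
Your proposal is correct and takes a genuinely different route at the key step. The paper resolves the $|t|\le\pi\sigma$ versus $|t|\le\pi\sigma/2$ issue not by reopening the contour computation but by a further conditioning: it lets $E=\{i\in D_0^u:\epsilon_i\in\{0,1\}\}$, conditions on $X$ outside $E$, and then applies \cref{lem:bernoulli-variance} verbatim to $X|_E$, where all coefficient gaps are at most $1$. Your approach instead keeps all of $D_0^u$, computes $S(s)=|f_0+f_+e^{is}+f_-e^{-is}|^2$ explicitly, and extracts the $f_0(f_++f_-)(1-\cos s)$ term, which remains quadratically bounded below on all of $|s|\le\pi$. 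Both work; the paper's trick is shorter since it reuses the black box, while yours makes the mechanism more transparent.

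One inaccuracy worth flagging: the claim that the dependence graph among the $\epsilon_i$ has ``bounded fan-out'' is false. In the tensor construction each element of $D_j^u$ (for $j\ge 1$) lies in $\Theta_k(|A_0|/|A_j|)^u$ rainbow APs, which is a positive power of $n$. This does not break your argument, because hypercontractivity only needs the ratio $\mb{E}[N_\pm]/\|N_\pm-\mb{E}N_\pm\|_2$ to be large: counting overlapping pairs gives $\on{Var}(N_\pm)=O_k(|D_0^u|^{2-c})$ for some $c=c(k)>0$, so \cref{thm:concentration-hypercontractivity} still yields failure probability $\exp(-\Omega_{k,\lambda}(n^{\delta'}))$ for some $\delta'>0$ (possibly smaller than the $\delta_k$ in the statement, which is harmless downstream). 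The paper's Azuma--Hoeffding argument has the same feature. Also, be explicit that your contour bound applies to $\mb{E}_{X|_{D_0^u}}$ after conditioning on $X|_{T\setminus D_0^u}$ (which fixes $p'$); you imply this via the definition of $p'$ but write the expectation over all of $X$.
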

\begin{proof}
Sample $X,\mbf{Y}$ as above. The key claim is that with high probability over the randomness of $\mbf{Y}$, we have a positive fraction of the coefficients $a_i$ for $i\in D_0^u$ are $0$, $1$, and $-1$. (Note that these are the only possible coefficients as each rainbow AP includes a unique element in $D_0^u$.).

We first consider the independent model of sampling. In it, we see there is a $\Theta_{k,\lambda}(1)$ probability of obtaining each of $\{0,\pm 1\}$ as a coefficient for each $i\in D_0^u$. By Azuma--Hoeffding, this translates to a probability of $1-\exp(-\Omega_{k,\lambda}(n^{\delta_k}))$ that each value occurs in a $\Theta_{k,\lambda}(1)$ fraction of the $a_i$ for $i\in D_0^u$. Now, if instead we sample $X,Y_1^0,\ldots,Y_{k-1}^0$ with constrained sum, and then resample $Y_1^1,\ldots,Y_{k-1}^1$, similar bounds hold by repeatedly applying Azuma--Hoeffding: first, the number of elements $X,\mbf{Y}$ are concentrated near a $p$ fraction, and then the coefficients $a_i$ are concentrated near some positive fraction. (See \cref{lem:top-connected,lem:top-induced} for similar arguments in the graph statistic setting.)

Now we apply \cref{lem:vdC-dep}. We again have
\[|\varphi_{\kAP'/\sigma}(t)|^{2^{k-1}}\le\mb E_{\mbf{Y}}\big|\mb E_Xe^{it\alpha(\kAP)(X,\mbf{Y})/\sigma}\big|,\]
and now let $E$ be the subset of $D_0^u$ with coefficient in $\{0,1\}$, with corresponding vector $X'$ (this only depends on the randomness of $\mbf{Y}$). Then we have
\[|\varphi_{\kAP'/\sigma}(t)|^{2^{k-1}}\le\mb E_{\mbf{Y},X\setminus{X'}}\bigg|\mb E_{X'}e^{(it/\sigma)\sum_{j\in E}a_jx_j}\bigg|.\]
By the above considerations, with high probability over the randomness of $\mbf{Y}$ we have a positive proportion of $E$ being $0,1$, and $|E| = \Theta_{k,\lambda}(n^{\delta_k})$. Then $\on{Var}[a_j]$ over $j\in E$ is $\Theta_{k,\lambda}(1)$, hence we deduce by \cref{lem:bernoulli-variance} that
\[|\varphi_{\kAP'/\sigma}(t)|^{2^{k-1}}\le\exp(-\Omega_{k,\lambda}(n^{\delta_k}))+\exp(-\Omega_{k,\lambda}(t^2n^{\delta_k}/\sigma^2)).\]
This also only applies if $|t/\sigma|\cdot(1-0)\le\pi$, which precisely hits the top of the range.
\end{proof}
Therefore, for $t$ in the given range, we deduce the desired quality of bounds.
 
\subsection{Deriving the final result}
We are ready to prove \cref{thm:kAP-local}, and then transfer the result to the independent setting.
\begin{proof}[Proof of \cref{thm:kAP-local}]
Recall from earlier that
\[\kAP(\mbf{y})=\sum_{\ell=0}^{k}\sum_{a\in \mb{Z}/n\mb{Z}}\sum_{d\in [n/2]} \sum_{S\in {\binom{[k]}{\ell}}} p^{k - \frac{\ell}{2}}(1-p)^{\frac{\ell}{2}}\prod_{i \in S} y_{a+id},\]
which differs from $\kAP'(\mbf{y})$ differ by a deterministic constant given the sum of $y$, call it $Y$, and thus our various decoupling estimates apply. To be more explicit, let
\begin{align*}
\mu = \kAP(\mbf{y})-\kAP'(\mbf{y}) & = p^k\binom{n}{2} + p^{k-\frac{1}{2}}(1-p)^{\frac{1}{2}}k\frac{n-1}{2}Y+\frac{p^{k-1}(1-p)}{2}\binom{k}{2}\bigg(Y^2+\frac{2p-1}{\sqrt{p(1-p)}}Y-n\bigg)\\
& = p^k\binom{n}{2}-np^{k-1}(1-p)\binom{k}{2}/2,
\end{align*}
the last expression coming from the facts
\[\sum_{i\neq j}y_iy_j = Y^2 - \sum_i y_i^2,\qquad y_i^2+\frac{2p-1}{\sqrt{p(1-p)}}y_i-1=0.\]
Let $Z' = (\kAP(\mbf{y})-\mu)/\sigma = \kAP'(\mbf{y})/\sigma$ and set $\varphi_{n}(t) = \mb{E}[e^{itZ'}]$ and $\varphi(t) = \mb{E}[e^{itZ}]$ where $Z\sim\mathcal{N}(0,1)$. Now note that  \begin{align*}
\bigg(\int_{-\pi\sigma}^{\pi\sigma}&|\varphi(t) - \varphi_{n}(t)|~dt\bigg)/\sigma\\
&= \bigg(\int_{|t|\le n^{\eps}}|\varphi(t) - \varphi_{n}(t)|~dt + \int_{n^{\eps}\le |t|\le \sigma\cdot n^{-\eps}}|\varphi(t) - \varphi_{n}(t)|~dt \\
&\qquad \qquad + \int_{\sigma\cdot n^{-\eps}\le |t|\le \pi\cdot \sigma}|\varphi(t) - \varphi_{n}(t)|~dt\bigg)/\sigma\\
&\lesssim_{\lambda} \bigg(\int_{|t|\le n^{\eps}}|t|/n^{1/4-\eps}~dt + \int_{n^{\eps}\le |t|\le \sigma\cdot n^{-\eps}}\exp(-\Omega_{k,\lambda}(n^{\eps'}))~dt\\
&\qquad \qquad + \int_{\sigma\cdot n^{-\eps}\le |t|\le \pi\cdot\sigma} \exp(-\Omega_{k,\lambda}(n^{\delta_k}))+\exp(-\Omega_{k,\lambda}(t^2n^{\delta_k}/\sigma^2))~dt\bigg)/\sigma\\
&\lesssim_{\lambda} 1/(\sigma\cdot n^{1/4-3\eps}).
\end{align*}
The bounds applied were derived in the previous subsections. Given this we are almost able to derive the necessary result; however once again we have the issue that $\mu, \sigma$ are not exactly the true mean or standard deviation $\mu_k, \sigma_k$ of $\kAP$.

Using techniques completely analogous to \cref{lem:std-close}, we can use the coupling in the proof of \cref{lem:kap-characteristic-low} to see that $\sigma_k = \sigma(1+O_{\lambda,\eps}(n^{\eps-1/4}))$, and we also have $\mu_k = \mu(1+O_{\lambda,\eps}(n^{-2}))$ via explicit calculation (similar to in \cref{lem:std-close}). This finishes.
\end{proof}
A similar analysis to the transfer given in \cref{sec:independent-models} allows us to obtain a local limit theorem for $k$-APs in the independent model. However, the local behavior is not Gaussian but rather comes from the superimposition of an infinite ensemble of Gaussians. For the following theorem note that we have, in the independent model,
\[(\on{Var}[\kAP'(\mbf{y})|\on{Ber}(p)])^{1/2} = c_{k,p}n (1+O_{k,p}(1/n))\]
and 
\[(\on{Var}[\kAP(\mbf{y})|\on{Ber}(p)])^{1/2} = p^{k-1/2}(1-p)^{1/2}kn^{3/2}/2(1+O_{k,p}(1/n))\]
for some constant $c_{k,p}$ depending only on $k,p$. It is worth noting that $c_{k,p}$ can be seen to be continuous on $p\in[\lambda,1-\lambda]$. Finally, define $C_{k,p}$ via
\[\frac{\sigma_k}{\sigma\sqrt{n}\sqrt{p(1-p)}} = \frac{1}{C_{k,p}}(1+O_{k,p}(1/n)).\]

\begin{theorem}\label{thm:kAP-local-ensemble}
Fix $k\ge 3$ and let $p\in(\lambda,1-\lambda)$. Choose $n\ge 1$ with $\gcd(n,(k-1)!)=1$ and sample a random set with indicator vector $\mbf{x}$, with each element drawn independently with probability $p$. Furthermore let $\mu_{k} = \mb{E}[\kAP(\mbf{x})]$ and $\sigma_{k} = \on{Var}[\kAP(\mbf{x})]$. Finally define $Z_k = (\kAP(\mbf{x}) - \mu_k)/\sigma_{k}$. Then we have for any $\eps > 0$ that
\[\sup_{z\in(\mb{Z}-\mu_k)/\sigma_k}\bigg|\sigma_k \mb{P}[Z_k = z] - \mathcal{N}(z)\sum_{m\in\mb{Z}}\frac{1}{C_{k,p}}\mathcal{N}\Bigg(\frac{pn + \sqrt{p(1-p)}z\sqrt{n}+\frac{(1-p)(k-1)(1-z^2)}{2} - m}{C_{k,p}}\Bigg)\bigg|\lesssim_{\lambda,\eps}n^{\eps-1/4},\]
where $C_{k,p}$ is defined as above.
\end{theorem}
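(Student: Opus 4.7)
The plan is to condition on $M := \sum_i x_i$, thereby reducing the independent-model probability to a weighted sum of fixed-size-model probabilities, each controlled by \cref{thm:kAP-local}. Specifically,
\[
\mb{P}[\kAP(\mbf{x}) = x] = \sum_{m\in\mb{Z}}\mb{P}[M = m]\cdot\mb{P}[\kAP(\mbf{x}) = x \mid M = m].
\]
First I would restrict to $|m - pn|\le \sqrt{n}(\log n)^C$ using Chernoff and to $|z|\le (\log n)^C$ using hypercontractivity on $Z_k$; both discarded contributions, on each side of the claimed identity, are super-polynomially small (on the right, the theta sum is uniformly bounded and $\mc{N}(z)$ decays). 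For $m$ in this window, the classical local CLT for the binomial gives $\mb{P}[M=m] = (2\pi p(1-p)n)^{-1/2}\exp(-(m-pn)^2/(2p(1-p)n)) + O(n^{-1}(\log n)^C)$, while \cref{thm:kAP-local} supplies $\mb{P}[\kAP(\mbf{x}) = x\mid M=m] = \sigma_m^{-1}\mc{N}((x-\mu_m)/\sigma_m) + O_{\lambda,\eps}(n^{\eps-1/4}/\sigma_m)$, where $\mu_m,\sigma_m$ are the mean and standard deviation in the $m$-slice. This follows the same transfer template as in \cref{sec:independent-models}.

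The heart of the argument is the algebraic identification with the target theta expression. Writing $\mu_m = N_n\cdot m^{\underline{k}}/n^{\underline{k}}$ for an $n$-dependent constant $N_n = n^2/2 + O(n)$ counting honest $k$-APs, logarithmic differentiation gives $\mu'_{pn}/\mu_{pn} = k/(pn)+O(n^{-2})$ and $\mu''_{pn}/\mu'_{pn} = (k-1)/(pn)+O(n^{-2})$. A second-order Taylor expansion of $\mu_m$ around $m=pn$, combined with the iterated expectation identity $\mu_k = \mb{E}_M\mu_M \approx \mu_{pn}+\tfrac12\mu''_{pn}\,p(1-p)n$, shows that the unique real $m^\ast(z)$ solving $\mu_{m^\ast(z)} = \mu_k+\sigma_k z$ satisfies
\[
m^\ast(z) = pn + \sqrt{p(1-p)n}\,z + \frac{(k-1)(1-p)(1-z^2)}{2} + O_{k,\lambda}\big((\log n)^C/\sqrt{n}\big).
\]
With $\eta := \mu'_{pn}$, the definition $C_{k,p}=\sigma/\eta$ follows (up to $O(1/n)$) from the explicit formulas for $\sigma_k$ and $\sigma$, and $\sigma_m = \sigma(1+O_\eps(n^{\eps-1/4}))$ comes from a \cref{lem:std-close}-type estimate proved via the coupling used in \cref{lem:kap-characteristic-low}. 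For $|m-m^\ast(z)|\le n^{1/4}$, Taylor-expanding $\mu_m$ around $m^\ast(z)$ and $(m-pn)^2$ around $(m^\ast(z)-pn)^2$ yields
\[
\frac{x-\mu_m}{\sigma_m} = -\frac{m-m^\ast(z)}{C_{k,p}} + O_{\lambda,\eps}(n^{\eps-1/4}),\qquad\frac{(m-pn)^2}{2p(1-p)n} = \frac{z^2}{2} + O_{\lambda,\eps}(n^{\eps-1/4}),
\]
so the summand collapses to $(\mc{N}(z)/\sqrt{p(1-p)n})\cdot\sigma^{-1}\mc{N}((m^\ast(z)-m)/C_{k,p})$ up to a multiplicative $1+O_{\lambda,\eps}(n^{\eps-1/4})$. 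Summing over $m$ in the window, extending to all $m\in\mb{Z}$ at cost $\exp(-\Omega(n^{1/2}))$ (the Gaussian has constant scale $C_{k,p}$), multiplying by $\sigma_k$, and using $\sigma_k/(\sigma\sqrt{p(1-p)n}) = 1/C_{k,p}+O(1/n)$ produces the stated identity with error $O_{\lambda,\eps}(n^{\eps-1/4})$ uniformly in $z$.

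The main obstacle is the precise bookkeeping that produces the exact center $m^\ast(z)$: the correction $(k-1)(1-p)(1-z^2)/2$ comes from a delicate cancellation among three second-order effects, namely the curvature of $\mu_m$ in $m$, the shift $\mu_k - \mu_{pn} \approx \tfrac12\mu''_{pn}p(1-p)n$, and the cross term in the expansion of the outer Gaussian $\exp(-(m-pn)^2/(2p(1-p)n))$, all of which must be tracked at the same order to land on the stated formula. Everything else is a routine assembly of the two local limit theorems together with standard Gaussian approximation, in parallel with the reasoning carried out in \cref{sec:independent-models} for $G(n,p)$.
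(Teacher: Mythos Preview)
Your proposal is correct and follows essentially the same approach as the paper: condition on $M=\sum_i x_i$, truncate to a window via Chernoff and hypercontractivity, apply \cref{thm:kAP-local} on each slice together with the binomial local CLT for $\mb{P}[M=m]$, Taylor-expand $\mu_m$ to second order to locate the center $m^\ast(z)$, and then sum the resulting Gaussians over $m$ to produce the theta series. Your formula for $m^\ast(z)$ and the identification $C_{k,p}=\sigma/\mu'_{pn}$ agree with the paper's computation (the paper works in the variable $y=(m-pn)/\sqrt{p(1-p)}$ and solves $f(y^\ast)=x$, but this is the same calculation). One small remark: the shift $(k-1)(1-p)(1-z^2)/2$ in $m^\ast(z)$ is determined entirely by the curvature of $\mu_m$ together with the offset $\mu_k-\mu_{pn}$; the outer Gaussian $\mb{P}[M=m]$ is in fact constant (up to acceptable error) over the $O((\log n)^C)$-wide window $\mc{M}_x$, so its cross term does not enter the bookkeeping for the center.
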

\begin{proof}
Let $\kAP$ denote the number of $k$-term arithmetic progressions. In what follows we suppress $k$ dependence in asymptotic notation. Let $\sigma_k^2 = \on{Var}[\kAP|\on{Ber}(p)]$ and $\mu_k = \mb{E}[\kAP|\on{Ber}(p)]$ define the standard deviation and mean in the independent model. We now recall from the proof of \cref{thm:kAP-local} above that if $Y = \sum y_i$ then
\[\kAP(\mbf{y}) = p^k\binom{n}{2} + p^{k-\frac{1}{2}}(1-p)^{\frac{1}{2}}k\frac{n-1}{2}Y+p^{k-1}(1-p)\binom{k}{2}\frac{Y^2+\frac{2p-1}{\sqrt{p(1-p)}}Y-n}{2}+\kAP'(\mbf{y}).\]
For the sake of simplicity let 
\[f(Y) = p^k\binom{n}{2} + p^{k-\frac{1}{2}}(1-p)^{\frac{1}{2}}k\frac{n-1}{2}Y+p^{k-1}(1-p)\binom{k}{2}\frac{Y^2+\frac{2p-1}{\sqrt{p(1-p)}}Y-n}{2}.\]
First note that continuity of $c_{k,p}$ implies that
\[\on{Var}[\kAP'(\mbf{x})|\on{Ber}(p)] = (1+O_\lambda(\log n/n^{1/2})) \on{Var}[\kAP'(\mbf{x})|\on{Ber}(p')]\]
if $p' = (1+O_\lambda(\log n/n^{1/2}))p$. Given this and the deductions at the end of the proof of \cref{thm:kAP-local} that
\[\on{Var}[\kAP(\mbf{x})|\sum x_i = m] = \on{Var}[\kAP'(\mbf{x})|\sum x_i = m] = (1+O_{\lambda,\eps}(n^{\eps-1/4}))\on{Var}[\kAP'|\on{Ber}(q)]\]
(here $q = m/n\in(\lambda,1-\lambda)$), it follows for any $m,m'\in [pn - n^{1/2}\log n, pn+n^{1/2}\log n]$ that 
\begin{align*}
    \on{Var}[\kAP(\mbf{x})|\sum x_i=m] &= (1+O_{\lambda,\eps}(n^{\eps-1/4}))\on{Var}[\kAP(\mbf{x})|\sum x_i=m']\\
& = (1+O_{\lambda,\eps}(n^{\eps-1/4})) \on{Var}[\kAP'(\mbf{x})|\on{Ber}(p)].
\end{align*}
From now we denote $\sigma^2 = \on{Var}[\kAP'|\on{Ber}(p)]$, so $\sigma = \Theta_\lambda(n)$. We now explicitly use that the expectation of $\kAP$ varies with the function $f(y)$ to deduce our local limit theorem. In particular consider $m\in [pn-n^{1/2}\log n, pn+n^{1/2}\log n]$ and let $y = (m-pn)/\sqrt{p(1-p)}$. Note $y = O_\lambda(n^{1/2}\log n)$. Then by linearity of expectation we have that
\begin{align*}
\mb{E}[\kAP(\mbf{x})|\sum x_i = m] &= \binom{n}{2}\prod_{i=0}^{k-1}\bigg(\frac{m-i}{n-i}\bigg)\\
&= \bigg(\frac{m}{n}\bigg)^{k}\binom{n}{2}-\frac{n(\frac{m}{n})^{k-1}(1-\frac{m}{n})\binom{k}{2}}{2} + O_\lambda(1)\\
&= f(y) + O_\lambda(n^{1/2}\log n)
\end{align*}
We are now in position to explicitly calculate the distribution of $\kAP$ under the independent model. Let $\sigma_m^2 = \on{Var}[\kAP(\mbf{x})|\sum x_i = m]$ and $\mu_m = \mb{E}[\kAP(\mbf{x})|\sum x_i = m]$. Now note that
\begin{align*}
\mb{P}[\kAP(x) &= x] = \sum_{m\in \mb{Z}}\mb{P}\big[\kAP(x) = x|\sum x_i = m\big]\mb{P}\big[\sum x_i = m\big]\\
& = \sum_{\substack{m\in [pn - n^{1/2}\log n,\\ ~\quad pn + n^{1/2}\log n]}} \mb{P}[\kAP(x) = x|\sum x_i = m]\mb{P}\big[\sum x_i = m\big] + \exp(-\Omega_\lambda((\log n)^2))
\end{align*}
where we have used Chernoff to bound the probability that number of elements deviates too far from the mean. For the sake of clarity we will implicitly assume that $x$ is within $\sigma_k(\log n)^C$ of the mean; for $x$ outside this range and $C$ sufficiently large the probability of attaining $x$ is super-polynomially small by hypercontractivity (\cref{thm:concentration-hypercontractivity}) so the desired statement is trivial. This assumption will be used implicitly later on. Now let $\mc{M}_{x}$ denote the set of $m$ such that
\[|x-f(y)|\le\sigma(\log n)^C\]
and
\[m\in [pn - n^{1/2}\log n, pn + n^{1/2}\log n]\]
for a suitably large $C$. (As before, $y = (m-pn)/\sqrt{p(1-p)}$.) Now suppose that $m\in [pn - n^{1/2}\log n, pn + n^{1/2}\log n]\backslash\mc{M}_x$. Then
\begin{align*}
\mb{P}[\kAP(\mbf{x}) = x | \sum x_i = m] & \le \mb{P}\big[|\kAP'(\mbf{x})| \ge \sigma(\log n)^C/2\big]/ \mb{P}\big[\sum x_i = m\big]\\
&\lesssim\exp(-\Omega_\lambda((\log n)^2)),
\end{align*}
using that $\mb{P}[\sum x_e = m]\gtrsim\exp(-O_\lambda((\log n)^2))$ and then choosing $C$ sufficiently large so that the bound coming from hypercontractivity (\cref{thm:concentration-hypercontractivity}) on the numerator is sufficiently strong. The key point is that since $f'(y)$ is a linear function with slope $p^{k-1/2}(1-p)^{1/2}k(n-1)/2 + O_\lambda(n^{1/2}\log n)$ for $|y|\lesssim_\lambda n^{1/2}\log n$, we deduce $|\mc{M}_x| = \Theta_\lambda((\log n)^C)$. Thus we have that 
\begin{align*}
\mb{P}[\kAP(x) = x] &= \sum_{\substack{m\in [pn - n^{1/2}\log n,\\ ~\quad pn + n^{1/2}\log n]}}\mb{P}\big[\kAP(x) = x|\sum x_i = m\big]\mb{P}\big[\sum x_i = m\big] + \exp(-\Omega_\lambda((\log n)^2))\\
& = \sum_{m\in \mc{M}_x}\mb{P}\big[\kAP(x) = x|\sum x_i = m\big]\mb{P}\big[\sum x_i = m\big] + \exp(-\Omega_{\lambda}((\log n)^2)).
\end{align*}
Now using \cref{thm:kAP-local} and that $\sigma_{m}$ is approximately equal to $\sigma$, the last summation equals
\begin{align*}
\sum_{m\in \mc{M}_x}&\bigg(\frac{1}{\sigma_m}\mathcal{N}\left(\frac{x-\mu_m}{\sigma_{m}}\right)+O_\lambda\bigg(\frac{n^{(\eps-1)/4}}{\sigma}\bigg)\bigg)\mb{P}\big[\sum x_i = m\big]\\
& = \sum_{m\in \mc{M}_x}\frac{1}{\sigma_m}\mathcal{N}\left(\frac{x-\mu_m}{\sigma_{m}}\right)\mb{P}\big[\sum x_e = m\big] + O_\lambda\left(\frac{|M_{x}|}{n^{\frac{3-\eps}{4}}\sigma}\right)
\end{align*}
where use that probability of having a given number of elements is $O_\lambda(1/n^{1/2})$. Now note that $\sigma_k$ is order $n^{1/2}$ larger than $\sigma$. Therefore the error term can be seen to be $O_\lambda(n^{\eps-1/4}\sigma_H^{-1})$, which is the correct magnitude. Now $\sigma_m = (1+O_\lambda(n^{\eps-1/4}))\sigma$ and $\mu_m = f(y) + O_\lambda((\log n)^2n^{-1/2}\sigma)$ for all $m\in [pn-n^{1/2}\log n, pn+n^{1/2}\log n]$ by the remarks which began the proof. It follows that
\[\sum_{m\in \mc{M}_x}\frac{1}{\sigma_m}\mathcal{N}\left(\frac{x-\mu_{m}}{\sigma_{m}}\right)\mb{P}\big[\sum x_e = m\big] = \sum_{m\in \mc{M}_x}\frac{1}{\sigma}\mathcal{N}\left(\frac{x-f(y)}{\sigma}\right)\mb{P}\big[\sum x_e = m\big] + O_\lambda\bigg(\frac{n^{\eps-1/4}}{\sigma_k}\bigg).\]
At this point the rest is elementary, but nontrivial, calculation. Let $y^\ast$ be the solution to $f(y^{*}) = x$ and let $m^\ast=y^\ast\sqrt{p(1-p)}+pn$ be the corresponding $m$. Note that $|m-m^\ast|\lesssim_\lambda (\log n)^C$ since $f$ has slope $\Theta_\lambda(\sigma)$ on $\mc{M}_x$. This is enough to conclude that $\mb{P}[\sum x_e = m]$ is essentially constant over $m\in\mc{M}_x$, close enough to replace the above with
\[\mb{P}\big[\sum x_e=\lfloor m^\ast\rfloor\big]\sum_{m\in\mc{M}_x}\frac{1}{\sigma}\mc{N}\left(\frac{f(y^{*})-f(y)}{\sigma}\right)\]
without increasing the error term. Now, using that $f(y)$ has derivative $p^{k-1/2}(1-p)^{1/2}k(n-1)/2 + O_\lambda(n^{1/2}\log n)$ for $|y|\lesssim_\lambda n^{1/2}\log n$, we can (up to acceptable errors) rewrite the above as
\[\mb{P}\big[\sum x_e=\lfloor m^\ast\rfloor\big]\sum_{m\in\mc{M}_x}\frac{1}{\sigma}\mc{N}\left(\frac{(y^{*}-y)(p^{k-1/2}(1-p)^{1/2}k(n-1)/2)}{\sigma}\right).\]
Using the value of $\sigma_{k}$ and completing the above sum we find that it is close to
\[\mb{P}\big[\sum x_e=\lfloor m^\ast\rfloor\big]\sum_{y\in(\mb{Z}-pn)/\sqrt{p(1-p)}}\frac{1}{\sigma}\mc{N}\left(\frac{(y^{*}-y)\sigma_k}{\sqrt{n}\cdot\sigma}\right).\]
Note that $m^{*}-m = (y^{*}-y)\sqrt{p(1-p)}$. Now we compute $y^{*}$ up to a $o(1)$ additive accuracy. Letting $z = (x-\mu_k)/\sigma_k$, and using $|z|\le (\log n)^C$ we find that
\begin{align*}
y^{*} & = z\sqrt{n} + \frac{(1-p)^{1/2}(k-1)}{2p^{1/2}} - \frac{(1-p)^{1/2}(k-1)z^2}{2p^{1/2}} + O_{\lambda,\eps}(n^{\eps-1/2}).\\
 & = z\sqrt{n} + O_\lambda((\log n)^{2C}).
\end{align*}
Substituting in this expression we find that the above, up to tolerable losses, is
\[\frac{1}{\sqrt{p(1-p)n}}\mc{N}(z)\sum_{y\in(\mb{Z}-pn)/\sqrt{p(1-p)}}\frac{1}{\sigma}\mc{N}\left(\frac{(y^{*}-y)\sigma_k}{\sqrt{n}\cdot\sigma}\right),\]
which up to appropriate errors is
\[\frac{1}{\sigma_{k}}\mathcal{N}(z)\sum_{m\in\mb{Z}}\frac{1}{C_{k,p}}\mathcal{N}\Bigg(\frac{pn + \sqrt{p(1-p)}z\sqrt{n}+\frac{(1-p)(k-1)(1-z^2)}{2} - m}{C_{k,p}}\Bigg).\]
The result follows.
\end{proof}
This allows us to answer a question of the authors and Berkowitz \cite[Question~16]{BSS20} regarding the maximum ratio between pointwise probabilities near the mean. Indeed \cref{thm:kAP-local-ensemble} precisely pins down these probabilities to what was expected given the heuristics in \cite{BSS20}. The answer ultimately is the (predicted) ratio of two infinite sums as given above; explicitly, if
\[g(x) = \sum_{m\in\mb{Z}}\frac{1}{C_{k,p}}\mc{N}\left(\frac{x-m}{C_{k,p}}\right),\]
the maximum ratio of pointwise probabilities near the mean is $\sup g(x)/\inf g(x)$.

This example highlights the power of deducing a local limit theorem from a ``fixed size'' model, especially in a case such as this where the independent model does not satisfy a local central limit theorem as demonstrated in \cite{BSS20}. Indeed, we end up with the ``central limit behavior'' at the scale of $\sigma_k$, along with a multiplier that depends on the $\sigma_k n^{-1/2}$ scale that oscillates according to a theta series.

This technique also immediately gives the precise asymptotic for the maximal point in the distribution of the number of $k$-term arithmetic progressions, which answers a question in \cite{FKS19}. From the above, we see that the answer is
\[\sup_{a\in\mb{Z}}\mb{P}[\kAP(\mbf{x})=a|\on{Ber}(p)] = \frac{\sup g(x)}{\sigma_k}(1+O_{\lambda,\eps}(n^{\eps-1/4})).\]

\section*{Acknowledgements}
We thank Yufei Zhao for suggesting the problem, and thank Ross Berkowitz for useful discussions about the subgraph count problem. We also thank Vishesh Jain for mentioning the trick of using hypercontractivity on the hypercube to deduce bounds on a slice.
\bibliographystyle{amsplain0}
\bibliography{references}
\appendix
\section{Adjacency matrix of the construction}\label{app:adjacency-matrix}
Below is the adjacency matrix of the counterexample mentioned in \cref{sec:counterexamples}. It is also included separately in the arXiv listing of this paper.
\begin{figure}[h!]
\centering
\scalebox{0.35}{
$\begin{pmatrix}
0& 0& 0& 0& 0& 0& 0& 1& 1& 0& 1& 1& 1& 0& 1& 1& 0& 1& 1& 0& 0& 1& 0& 0& 0& 0& 0& 0& 0& 0& 0& 0& 1& 0&

1& 1& 1& 1& 1& 0& 1& 1& 1& 0& 0& 1& 1& 0& 1& 1& 0& 1& 1& 0& 1& 1& 1& 0& 0& 0& 0& 0& 1& 0 \\

0& 0& 0& 1& 1& 0& 1& 1& 1& 1& 0& 0& 1& 1& 0& 1& 0& 0& 1& 0& 0& 0& 1& 0& 0& 1& 1& 1& 1& 1& 1& 0& 0& 1&

0& 0& 1& 0& 0& 1& 0& 1& 0& 0& 1& 0& 0& 0& 0& 0& 1& 1& 1& 0& 0& 1& 1& 0& 1& 0& 0& 0& 0& 1 \\

0& 0& 0& 0& 0& 0& 1& 0& 1& 0& 1& 1& 0& 0& 0& 0& 1& 1& 0& 0& 0& 1& 0& 0& 0& 1& 0& 1& 0& 1& 1& 0& 0& 0&

0& 1& 1& 1& 0& 0& 0& 1& 0& 0& 0& 0& 0& 0& 1& 0& 1& 0& 0& 1& 0& 1& 0& 0& 1& 1& 1& 0& 0& 0 \\

0& 1& 0& 0& 0& 0& 0& 0& 1& 0& 0& 0& 1& 0& 1& 1& 1& 0& 0& 0& 1& 1& 1& 0& 1& 0& 0& 0& 1& 0& 0& 1& 1& 1&

0& 0& 0& 0& 0& 1& 1& 0& 1& 0& 0& 1& 1& 1& 1& 1& 1& 0& 0& 1& 1& 0& 0& 1& 0& 1& 0& 1& 1& 0 \\

0& 1& 0& 0& 0& 0& 0& 1& 0& 0& 0& 1& 0& 0& 1& 1& 0& 0& 1& 1& 1& 0& 0& 1& 0& 0& 1& 1& 0& 0& 0& 1& 0& 0&

1& 1& 0& 0& 1& 0& 1& 0& 1& 0& 0& 1& 0& 0& 1& 1& 1& 1& 0& 1& 1& 1& 0& 1& 1& 1& 0& 1& 0& 1 \\

0& 0& 0& 0& 0& 0& 1& 1& 1& 1& 0& 0& 0& 0& 1& 1& 1& 0& 1& 0& 1& 1& 1& 1& 1& 0& 1& 0& 1& 1& 0& 0& 1& 0&

1& 1& 1& 0& 0& 1& 1& 0& 0& 0& 1& 0& 1& 0& 1& 1& 0& 0& 0& 0& 1& 1& 1& 0& 0& 0& 1& 0& 1& 0 \\

0& 1& 1& 0& 0& 1& 0& 0& 0& 0& 1& 0& 1& 1& 0& 0& 0& 1& 0& 0& 0& 0& 0& 1& 0& 0& 0& 1& 0& 1& 0& 1& 1& 1&

1& 1& 0& 0& 0& 1& 0& 0& 1& 0& 0& 0& 0& 0& 1& 0& 0& 0& 0& 0& 0& 0& 1& 0& 1& 0& 1& 0& 1& 1 \\

1& 1& 0& 0& 1& 1& 0& 0& 0& 1& 1& 1& 0& 1& 0& 0& 1& 0& 1& 0& 1& 1& 0& 1& 0& 1& 1& 0& 1& 1& 1& 1& 0& 0&

0& 0& 1& 1& 0& 0& 1& 0& 1& 0& 1& 0& 0& 0& 0& 0& 1& 1& 1& 1& 1& 0& 0& 1& 1& 0& 0& 0& 0& 0 \\

1& 1& 1& 1& 0& 1& 0& 0& 0& 0& 1& 1& 0& 1& 1& 1& 1& 1& 1& 1& 1& 0& 1& 1& 1& 1& 0& 1& 0& 1& 1& 1& 0& 1&

0& 0& 0& 1& 1& 1& 1& 0& 0& 1& 1& 0& 0& 0& 0& 0& 0& 0& 1& 0& 0& 0& 1& 0& 0& 1& 0& 0& 0& 0 \\

0& 1& 0& 0& 0& 1& 0& 1& 0& 0& 1& 1& 1& 0& 0& 0& 1& 0& 1& 0& 1& 0& 0& 1& 1& 0& 1& 1& 0& 0& 1& 0& 1& 0&

1& 0& 1& 1& 0& 1& 0& 0& 1& 1& 0& 1& 1& 1& 0& 1& 1& 0& 0& 1& 0& 1& 1& 1& 1& 1& 1& 1& 0& 1 \\

1& 0& 1& 0& 0& 0& 1& 1& 1& 1& 0& 0& 0& 0& 1& 0& 1& 0& 0& 0& 1& 1& 0& 0& 0& 0& 1& 0& 1& 1& 1& 1& 1& 1&

1& 1& 0& 1& 0& 1& 0& 0& 0& 0& 1& 1& 0& 0& 0& 1& 1& 0& 0& 0& 0& 1& 0& 1& 1& 1& 0& 1& 1& 0 \\

1& 0& 1& 0& 1& 0& 0& 1& 1& 1& 0& 0& 0& 1& 1& 0& 1& 0& 0& 1& 1& 0& 1& 1& 1& 0& 0& 0& 0& 1& 0& 1& 0& 0&

0& 1& 1& 1& 1& 0& 0& 1& 0& 0& 1& 1& 1& 0& 1& 0& 1& 0& 1& 1& 1& 1& 0& 1& 0& 0& 0& 0& 1& 1 \\

1& 1& 0& 1& 0& 0& 1& 0& 0& 1& 0& 0& 0& 0& 1& 0& 1& 1& 0& 0& 1& 1& 1& 1& 1& 0& 1& 1& 0& 0& 1& 1& 1& 1&

0& 1& 1& 0& 0& 0& 0& 0& 0& 0& 0& 1& 1& 1& 1& 1& 1& 0& 0& 0& 1& 0& 1& 0& 0& 0& 1& 1& 1& 0 \\

0& 1& 0& 0& 0& 0& 1& 1& 1& 0& 0& 1& 0& 0& 0& 0& 0& 0& 0& 1& 0& 0& 0& 0& 0& 0& 0& 1& 1& 0& 1& 1& 1& 1&

1& 1& 1& 0& 0& 1& 0& 1& 1& 0& 1& 0& 1& 1& 0& 0& 0& 0& 0& 0& 0& 0& 0& 0& 0& 0& 0& 0& 1& 0 \\

1& 0& 0& 1& 1& 1& 0& 0& 1& 0& 1& 1& 1& 0& 0& 0& 1& 0& 1& 0& 0& 1& 1& 0& 0& 1& 1& 0& 1& 0& 0& 1& 0& 0&

0& 0& 0& 0& 0& 1& 1& 1& 0& 1& 0& 0& 1& 0& 1& 0& 0& 0& 0& 0& 0& 1& 0& 1& 0& 1& 1& 1& 0& 1 \\

1& 1& 0& 1& 1& 1& 0& 0& 1& 0& 0& 0& 0& 0& 0& 0& 0& 1& 0& 0& 0& 1& 0& 0& 0& 0& 1& 0& 0& 1& 1& 0& 1& 1&

1& 0& 1& 1& 1& 1& 1& 0& 0& 0& 0& 0& 0& 1& 1& 0& 1& 0& 0& 1& 0& 0& 0& 0& 1& 1& 0& 1& 1& 1 \\

0& 0& 1& 1& 0& 1& 0& 1& 1& 1& 1& 1& 1& 0& 1& 0& 0& 0& 1& 1& 0& 1& 0& 1& 0& 0& 1& 1& 1& 1& 1& 1& 0& 0&

0& 1& 0& 0& 0& 1& 0& 0& 0& 0& 0& 1& 1& 1& 1& 1& 1& 0& 1& 1& 1& 0& 1& 0& 0& 0& 0& 1& 1& 0 \\

1& 0& 1& 0& 0& 0& 1& 0& 1& 0& 0& 0& 1& 0& 0& 1& 0& 0& 1& 1& 1& 1& 0& 0& 1& 0& 1& 1& 0& 0& 1& 0& 1& 1&

0& 1& 0& 1& 0& 1& 1& 0& 0& 1& 0& 1& 1& 1& 0& 0& 0& 1& 1& 1& 1& 0& 1& 1& 1& 0& 0& 0& 0& 0 \\

1& 1& 0& 0& 1& 1& 0& 1& 1& 1& 0& 0& 0& 0& 1& 0& 1& 1& 0& 0& 1& 1& 0& 0& 1& 0& 0& 0& 0& 1& 1& 0& 0& 1&

0& 1& 0& 0& 0& 0& 1& 1& 0& 1& 0& 1& 1& 1& 0& 1& 0& 1& 0& 1& 1& 0& 0& 0& 1& 1& 1& 0& 1& 0 \\

0& 0& 0& 0& 1& 0& 0& 0& 1& 0& 0& 1& 0& 1& 0& 0& 1& 1& 0& 0& 1& 1& 1& 0& 1& 0& 1& 0& 1& 0& 0& 1& 1& 1&

0& 0& 0& 0& 1& 0& 0& 0& 1& 0& 1& 1& 0& 1& 0& 0& 0& 1& 1& 1& 0& 1& 1& 0& 1& 1& 1& 1& 1& 0 \\

0& 0& 0& 1& 1& 1& 0& 1& 1& 1& 1& 1& 1& 0& 0& 0& 0& 1& 1& 1& 0& 0& 1& 1& 1& 0& 0& 1& 1& 0& 1& 0& 0& 0&

0& 1& 0& 1& 0& 1& 1& 0& 0& 1& 1& 1& 1& 0& 0& 1& 1& 0& 0& 1& 1& 1& 0& 1& 1& 0& 0& 1& 0& 1 \\

1& 0& 1& 1& 0& 1& 0& 1& 0& 0& 1& 0& 1& 0& 1& 1& 1& 1& 1& 1& 0& 0& 0& 1& 0& 0& 0& 0& 0& 0& 1& 1& 0& 1&

1& 0& 0& 0& 0& 0& 1& 0& 1& 0& 1& 0& 0& 1& 0& 0& 1& 1& 1& 0& 1& 0& 1& 0& 1& 0& 1& 0& 0& 0 \\

0& 1& 0& 1& 0& 1& 0& 0& 1& 0& 0& 1& 1& 0& 1& 0& 0& 0& 0& 1& 1& 0& 0& 0& 1& 0& 1& 1& 0& 0& 0& 0& 0& 0&

0& 0& 1& 0& 1& 1& 1& 0& 1& 1& 0& 0& 0& 0& 0& 0& 1& 1& 0& 0& 1& 0& 0& 1& 1& 0& 0& 1& 0& 0 \\

0& 0& 0& 0& 1& 1& 1& 1& 1& 1& 0& 1& 1& 0& 0& 0& 1& 0& 0& 0& 1& 1& 0& 0& 0& 1& 0& 0& 1& 1& 0& 1& 1& 0&

0& 1& 0& 1& 1& 1& 0& 0& 1& 0& 0& 1& 1& 1& 1& 0& 0& 0& 1& 0& 0& 1& 0& 1& 0& 0& 1& 1& 1& 0 \\

0& 0& 0& 1& 0& 1& 0& 0& 1& 1& 0& 1& 1& 0& 0& 0& 0& 1& 1& 1& 1& 0& 1& 0& 0& 1& 0& 1& 0& 0& 0& 0& 0& 1&

0& 1& 0& 1& 1& 1& 0& 0& 0& 0& 0& 0& 0& 1& 1& 0& 0& 0& 0& 1& 1& 0& 1& 0& 0& 0& 0& 0& 0& 0 \\

0& 1& 1& 0& 0& 0& 0& 1& 1& 0& 0& 0& 0& 0& 1& 0& 0& 0& 0& 0& 0& 0& 0& 1& 1& 0& 0& 0& 1& 0& 0& 1& 0& 1&

1& 1& 1& 0& 1& 0& 0& 0& 1& 0& 0& 1& 1& 1& 1& 1& 1& 1& 1& 0& 0& 0& 0& 0& 1& 1& 1& 1& 1& 0 \\

0& 1& 0& 0& 1& 1& 0& 1& 0& 1& 1& 0& 1& 0& 1& 1& 1& 1& 0& 1& 0& 0& 1& 0& 0& 0& 0& 1& 1& 1& 0& 0& 0& 0&

1& 1& 0& 1& 1& 0& 1& 0& 0& 1& 1& 0& 0& 1& 1& 1& 1& 0& 1& 1& 1& 0& 0& 1& 0& 0& 1& 0& 0& 0 \\

0& 1& 1& 0& 1& 0& 1& 0& 1& 1& 0& 0& 1& 1& 0& 0& 1& 1& 0& 0& 1& 0& 1& 0& 1& 0& 1& 0& 1& 1& 0& 1& 1& 1&

0& 0& 1& 1& 0& 0& 0& 0& 1& 0& 1& 1& 0& 0& 1& 1& 1& 0& 1& 1& 1& 1& 1& 1& 1& 1& 0& 1& 1& 1 \\

0& 1& 0& 1& 0& 1& 0& 1& 0& 0& 1& 0& 0& 1& 1& 0& 1& 0& 0& 1& 1& 0& 0& 1& 0& 1& 1& 1& 0& 0& 1& 0& 1& 1&

0& 1& 1& 1& 1& 0& 1& 0& 0& 1& 1& 0& 0& 1& 0& 1& 1& 1& 0& 1& 0& 0& 1& 0& 1& 0& 1& 1& 0& 0 \\

0& 1& 1& 0& 0& 1& 1& 1& 1& 0& 1& 1& 0& 0& 0& 1& 1& 0& 1& 0& 0& 0& 0& 1& 0& 0& 1& 1& 0& 0& 1& 1& 1& 1&

0& 1& 1& 1& 0& 1& 1& 1& 0& 0& 1& 0& 0& 0& 0& 1& 0& 0& 1& 0& 0& 0& 0& 0& 1& 1& 0& 1& 0& 1 \\

0& 1& 1& 0& 0& 0& 0& 1& 1& 1& 1& 0& 1& 1& 0& 1& 1& 1& 1& 0& 1& 1& 0& 0& 0& 0& 0& 0& 1& 1& 0& 0& 1& 1&

1& 0& 1& 0& 0& 0& 1& 1& 0& 0& 0& 0& 1& 1& 1& 0& 0& 1& 1& 0& 0& 0& 0& 1& 1& 0& 1& 0& 0& 1 \\

0& 0& 0& 1& 1& 0& 1& 1& 1& 0& 1& 1& 1& 1& 1& 0& 1& 0& 0& 1& 0& 1& 0& 1& 0& 1& 0& 1& 0& 1& 0& 0& 0& 0&

1& 1& 1& 1& 1& 1& 0& 1& 0& 0& 1& 0& 1& 0& 1& 0& 1& 1& 1& 1& 1& 1& 1& 1& 1& 0& 0& 0& 1& 0 \\

1& 0& 0& 1& 0& 1& 1& 0& 0& 1& 1& 0& 1& 1& 0& 1& 0& 1& 0& 1& 0& 0& 0& 1& 0& 0& 0& 1& 1& 1& 1& 0& 0& 0&

0& 1& 0& 0& 1& 1& 1& 0& 0& 1& 0& 1& 1& 1& 0& 1& 0& 1& 0& 1& 1& 0& 1& 1& 0& 0& 0& 1& 1& 1 \\

0& 1& 0& 1& 0& 0& 1& 0& 1& 0& 1& 0& 1& 1& 0& 1& 0& 1& 1& 1& 0& 1& 0& 0& 1& 1& 0& 1& 1& 1& 1& 0& 0& 0&

1& 1& 1& 1& 0& 0& 0& 1& 1& 1& 0& 0& 0& 0& 1& 1& 1& 1& 1& 0& 0& 1& 1& 0& 1& 0& 0& 0& 1& 0 \\

1& 0& 0& 0& 1& 1& 1& 0& 0& 1& 1& 0& 0& 1& 0& 1& 0& 0& 0& 0& 0& 1& 0& 0& 0& 1& 1& 0& 0& 0& 1& 1& 0& 1&

0& 1& 1& 1& 1& 0& 0& 1& 0& 1& 1& 1& 0& 0& 0& 0& 0& 0& 0& 0& 0& 0& 0& 1& 0& 1& 0& 0& 1& 1 \\

1& 0& 1& 0& 1& 1& 1& 0& 0& 0& 1& 1& 1& 1& 0& 0& 1& 1& 1& 0& 1& 0& 0& 1& 1& 1& 1& 0& 1& 1& 0& 1& 1& 1&

1& 0& 1& 1& 0& 1& 1& 1& 0& 0& 1& 1& 0& 1& 1& 1& 1& 0& 1& 0& 0& 1& 0& 1& 1& 1& 1& 0& 0& 0 \\

1& 1& 1& 0& 0& 1& 0& 1& 0& 1& 0& 1& 1& 1& 0& 1& 0& 0& 0& 0& 0& 0& 1& 0& 0& 1& 0& 1& 1& 1& 1& 1& 0& 1&

1& 1& 0& 1& 1& 0& 0& 0& 1& 1& 1& 1& 1& 0& 0& 1& 0& 1& 1& 0& 1& 0& 0& 0& 1& 1& 1& 1& 0& 0 \\

1& 0& 1& 0& 0& 0& 0& 1& 1& 1& 1& 1& 0& 0& 0& 1& 0& 1& 0& 0& 1& 0& 0& 1& 1& 0& 1& 1& 1& 1& 0& 1& 0& 1&

1& 1& 1& 0& 1& 1& 1& 1& 1& 1& 1& 0& 0& 1& 1& 0& 0& 1& 0& 0& 1& 0& 1& 1& 1& 1& 1& 0& 0& 1 \\

1& 0& 0& 0& 1& 0& 0& 0& 1& 0& 0& 1& 0& 0& 0& 1& 0& 0& 0& 1& 0& 0& 1& 1& 1& 1& 1& 0& 1& 0& 0& 1& 1& 0&

1& 0& 1& 1& 0& 0& 1& 1& 1& 1& 1& 0& 0& 1& 1& 0& 1& 0& 0& 1& 0& 0& 1& 0& 1& 1& 0& 0& 0& 1 \\

0& 1& 0& 1& 0& 1& 1& 0& 1& 1& 1& 0& 0& 1& 1& 1& 1& 1& 0& 0& 1& 0& 1& 1& 1& 0& 0& 0& 0& 1& 0& 1& 1& 0&

0& 1& 0& 1& 0& 0& 0& 0& 0& 0& 1& 0& 1& 0& 1& 1& 1& 0& 1& 1& 1& 0& 1& 1& 0& 1& 0& 0& 0& 1 \\

1& 0& 0& 1& 1& 1& 0& 1& 1& 0& 0& 0& 0& 0& 1& 1& 0& 1& 1& 0& 1& 1& 1& 0& 0& 0& 1& 0& 1& 1& 1& 0& 1& 0&

0& 1& 0& 1& 1& 0& 0& 1& 1& 1& 0& 0& 0& 0& 1& 1& 0& 1& 0& 1& 0& 1& 0& 0& 0& 0& 1& 1& 1& 0 \\

1& 1& 1& 0& 0& 0& 0& 0& 0& 0& 0& 1& 0& 1& 1& 0& 0& 0& 1& 0& 0& 0& 0& 0& 0& 0& 0& 0& 0& 1& 1& 1& 0& 1&

1& 1& 0& 1& 1& 0& 1& 0& 0& 0& 1& 1& 0& 1& 1& 0& 0& 0& 0& 0& 0& 0& 0& 1& 0& 1& 0& 0& 1& 0 \\

1& 0& 0& 1& 1& 0& 1& 1& 0& 1& 0& 0& 0& 1& 0& 0& 0& 0& 0& 1& 0& 1& 1& 1& 0& 1& 0& 1& 0& 0& 0& 0& 0& 1&

0& 0& 1& 1& 1& 0& 1& 0& 0& 1& 0& 0& 1& 1& 0& 1& 1& 1& 0& 1& 0& 1& 0& 1& 1& 1& 1& 1& 0& 0 \\

0& 0& 0& 0& 0& 0& 0& 0& 1& 1& 0& 0& 0& 0& 1& 0& 0& 1& 1& 0& 1& 0& 1& 0& 0& 0& 1& 0& 1& 0& 0& 0& 1& 1&

1& 0& 1& 1& 1& 0& 1& 0& 1& 0& 1& 0& 1& 0& 0& 0& 0& 1& 1& 1& 0& 1& 0& 0& 1& 1& 1& 0& 0& 1 \\

0& 1& 0& 0& 0& 1& 0& 1& 1& 0& 1& 1& 0& 1& 0& 0& 0& 0& 0& 1& 1& 1& 0& 0& 0& 0& 1& 1& 1& 1& 0& 1& 0& 0&

1& 1& 1& 1& 1& 1& 0& 1& 0& 1& 0& 0& 1& 0& 1& 0& 0& 1& 0& 0& 0& 1& 0& 0& 1& 0& 1& 0& 0& 1 \\

1& 0& 0& 1& 1& 0& 0& 0& 0& 1& 1& 1& 1& 0& 0& 0& 1& 1& 1& 1& 1& 0& 0& 1& 0& 1& 0& 1& 0& 0& 0& 0& 1& 0&

1& 1& 1& 0& 0& 0& 0& 1& 0& 0& 0& 0& 1& 1& 1& 1& 1& 0& 0& 0& 0& 0& 1& 1& 0& 1& 0& 1& 0& 1 \\

1& 0& 0& 1& 0& 1& 0& 0& 0& 1& 0& 1& 1& 1& 1& 0& 1& 1& 1& 0& 1& 0& 0& 1& 0& 1& 0& 0& 0& 0& 1& 1& 1& 0&

0& 0& 1& 0& 0& 1& 0& 0& 1& 1& 1& 1& 0& 1& 1& 1& 0& 0& 0& 1& 1& 0& 0& 0& 0& 0& 1& 1& 0& 1 \\

0& 0& 0& 1& 0& 0& 0& 0& 0& 1& 0& 0& 1& 1& 0& 1& 1& 1& 1& 1& 0& 1& 0& 1& 1& 1& 1& 0& 1& 0& 1& 0& 1& 0&

0& 1& 0& 1& 1& 0& 0& 1& 1& 0& 0& 1& 1& 0& 0& 1& 1& 1& 1& 0& 0& 1& 0& 0& 0& 1& 0& 1& 0& 1 \\

1& 0& 1& 1& 1& 1& 1& 0& 0& 0& 0& 1& 1& 0& 1& 1& 1& 0& 0& 0& 0& 0& 0& 1& 1& 1& 1& 1& 0& 0& 1& 1& 0& 1&

0& 1& 0& 1& 1& 1& 1& 1& 0& 0& 1& 1& 1& 0& 0& 0& 1& 0& 0& 0& 0& 1& 0& 0& 1& 1& 0& 0& 1& 1 \\

1& 0& 0& 1& 1& 1& 0& 0& 0& 1& 1& 0& 1& 0& 0& 0& 1& 0& 1& 0& 1& 0& 0& 0& 0& 1& 1& 1& 1& 1& 0& 0& 1& 1&

0& 1& 1& 0& 0& 1& 1& 0& 1& 0& 0& 1& 1& 1& 0& 0& 1& 1& 1& 0& 1& 0& 0& 0& 1& 1& 1& 0& 0& 0 \\

0& 1& 1& 1& 1& 0& 0& 1& 0& 1& 1& 1& 1& 0& 0& 1& 1& 0& 0& 0& 1& 1& 1& 0& 0& 1& 1& 1& 1& 0& 0& 1& 0& 1&

0& 1& 0& 0& 1& 1& 0& 0& 1& 0& 0& 1& 0& 1& 1& 1& 0& 0& 0& 0& 0& 1& 0& 0& 1& 0& 0& 1& 1& 0 \\

1& 1& 0& 0& 1& 0& 0& 1& 0& 0& 0& 0& 0& 0& 0& 0& 0& 1& 1& 1& 0& 1& 1& 0& 0& 1& 0& 0& 1& 0& 1& 1& 1& 1&

0& 0& 1& 1& 0& 0& 1& 0& 1& 1& 1& 0& 0& 1& 0& 1& 0& 0& 0& 1& 0& 0& 1& 0& 1& 0& 1& 0& 1& 0 \\

1& 1& 0& 0& 0& 0& 0& 1& 1& 0& 0& 1& 0& 0& 0& 0& 1& 1& 0& 1& 0& 1& 0& 1& 0& 1& 1& 1& 0& 1& 1& 1& 0& 1&

0& 1& 1& 0& 0& 1& 0& 0& 0& 1& 0& 0& 0& 1& 0& 1& 0& 0& 0& 0& 0& 1& 1& 0& 1& 1& 0& 1& 1& 1 \\

0& 0& 1& 1& 1& 0& 0& 1& 0& 1& 0& 1& 0& 0& 0& 1& 1& 1& 1& 1& 1& 0& 0& 0& 1& 0& 1& 1& 1& 0& 0& 1& 1& 0&

0& 0& 0& 0& 1& 1& 1& 0& 1& 1& 0& 0& 1& 0& 0& 0& 0& 1& 0& 0& 0& 0& 0& 0& 0& 1& 1& 0& 1& 1 \\

1& 0& 0& 1& 1& 1& 0& 1& 0& 0& 0& 1& 1& 0& 0& 0& 1& 1& 1& 0& 1& 1& 1& 0& 1& 0& 1& 1& 0& 0& 0& 1& 1& 0&

0& 0& 1& 1& 0& 1& 0& 0& 0& 0& 0& 0& 1& 0& 0& 1& 0& 0& 0& 0& 0& 1& 1& 1& 1& 0& 0& 1& 0& 0 \\

1& 1& 1& 0& 1& 1& 0& 0& 0& 1& 1& 1& 0& 0& 1& 0& 0& 0& 0& 1& 1& 0& 0& 1& 0& 0& 0& 1& 0& 0& 0& 1& 0& 1&

0& 1& 0& 0& 0& 0& 1& 0& 1& 1& 1& 0& 0& 1& 1& 0& 1& 0& 1& 0& 1& 0& 0& 0& 1& 0& 1& 1& 1& 0 \\

1& 1& 0& 0& 0& 1& 1& 0& 1& 1& 0& 0& 1& 0& 0& 0& 1& 1& 0& 1& 0& 1& 0& 0& 1& 0& 0& 1& 1& 0& 0& 1& 1& 1&

0& 0& 0& 1& 1& 1& 0& 0& 0& 0& 0& 1& 0& 0& 0& 0& 0& 1& 1& 0& 1& 0& 0& 1& 1& 1& 1& 1& 0& 1 \\

0& 0& 0& 1& 1& 0& 0& 1& 0& 1& 1& 1& 0& 0& 1& 0& 0& 1& 0& 0& 1& 0& 1& 1& 0& 0& 1& 1& 0& 0& 1& 1& 1& 0&

1& 1& 0& 1& 0& 1& 0& 1& 1& 0& 0& 1& 0& 0& 0& 0& 0& 0& 0& 0& 1& 0& 1& 0& 0& 1& 0& 0& 0& 0 \\

0& 1& 1& 0& 1& 0& 1& 1& 0& 1& 1& 0& 0& 0& 0& 1& 0& 1& 1& 1& 1& 1& 1& 0& 0& 1& 0& 1& 1& 1& 1& 1& 0& 1&

0& 1& 1& 1& 1& 0& 0& 0& 1& 1& 1& 0& 0& 0& 1& 1& 1& 1& 1& 0& 1& 1& 1& 0& 0& 1& 0& 1& 0& 1 \\

0& 0& 1& 1& 1& 0& 0& 0& 1& 1& 1& 0& 0& 0& 1& 1& 0& 0& 1& 1& 0& 0& 0& 0& 0& 1& 0& 1& 0& 1& 0& 0& 0& 0&

1& 1& 1& 1& 1& 1& 0& 1& 1& 1& 0& 1& 0& 1& 1& 1& 0& 0& 1& 1& 0& 0& 1& 1& 1& 0& 1& 0& 0& 0 \\

0& 0& 1& 0& 0& 1& 1& 0& 0& 1& 0& 0& 1& 0& 1& 0& 0& 0& 1& 1& 0& 1& 0& 1& 0& 1& 1& 0& 1& 0& 1& 0& 0& 0&

0& 1& 1& 1& 0& 0& 1& 0& 1& 1& 1& 0& 1& 0& 0& 1& 0& 1& 0& 1& 0& 1& 1& 0& 0& 1& 0& 0& 1& 0 \\

0& 0& 0& 1& 1& 0& 0& 0& 0& 1& 1& 0& 1& 0& 1& 1& 1& 0& 0& 1& 1& 0& 1& 1& 0& 1& 0& 1& 1& 1& 0& 0& 1& 0&

0& 0& 1& 0& 0& 0& 1& 0& 1& 0& 0& 1& 1& 1& 0& 0& 1& 0& 1& 0& 1& 1& 1& 0& 1& 0& 0& 0& 1& 0 \\

1& 0& 0& 1& 0& 1& 1& 0& 0& 0& 1& 1& 1& 1& 0& 1& 1& 0& 1& 1& 0& 0& 0& 1& 0& 1& 0& 1& 0& 0& 0& 1& 1& 1&

1& 0& 0& 0& 0& 0& 1& 1& 0& 0& 0& 0& 0& 0& 1& 0& 1& 1& 1& 1& 0& 1& 0& 0& 0& 0& 1& 1& 0& 0 \\

0& 1& 0& 0& 1& 0& 1& 0& 0& 1& 0& 1& 0& 0& 1& 1& 0& 0& 0& 0& 1& 0& 0& 0& 0& 0& 0& 1& 0& 1& 1& 0& 1& 0&

1& 0& 0& 1& 1& 1& 0& 0& 0& 1& 1& 1& 1& 1& 1& 0& 0& 0& 1& 1& 0& 0& 1& 0& 1& 0& 0& 0& 0& 0 \\
\end{pmatrix}$ }
\label{fig:counterexample}
\caption{Adjacency matrix of the 64 vertex counterexample}
\end{figure}\\

A visualization of this graph is shown below.
\begin{figure}[h!]
\begin{center}
\includegraphics[width=7in]{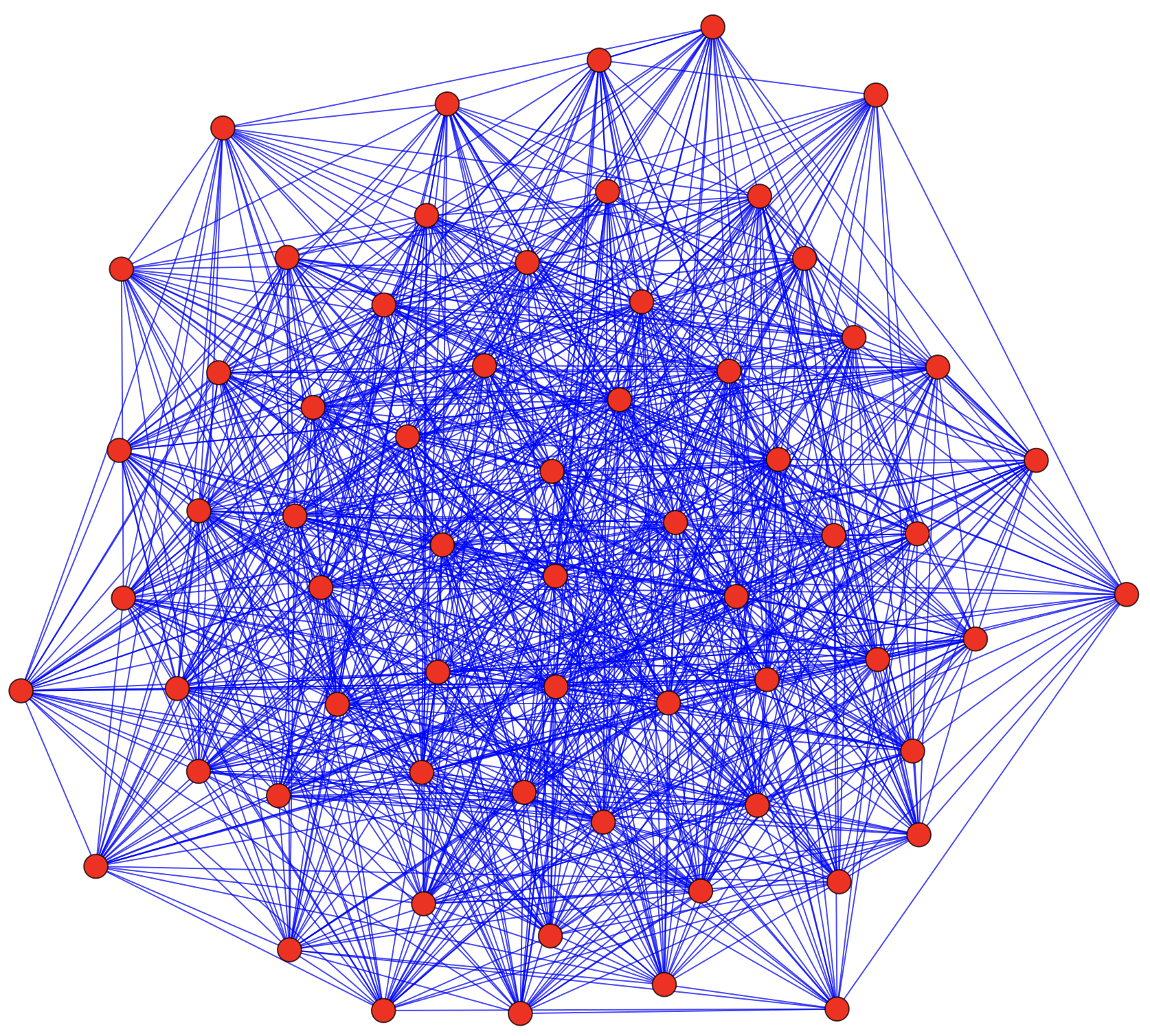}
\caption{Graph drawing of the 64 vertex counterexample}
\label{fig:experiment}
\end{center}
\end{figure}

\end{document}